\newtheorem{theorem}{Theorem}[section]
\newtheorem{lemma}[theorem]{Lemma}
\newtheorem{proposition}[theorem]{Proposition}
\newtheorem{corollary}[theorem]{Corollary}
\newtheorem{definition}[theorem]{Definition}
\renewcommand{\leq}{\leqslant}
\renewcommand{\geq}{\geqslant}
\newcommand{\rmks}{\vskip 0.5 truecm\noindent{\sc Remarks: }}
\newcommand{\ad}{\operatorname{Ad}}
\newcommand{\interior}{\operatorname{int}}
\newcommand{\seq}[1]{
\{{#1}_m\}_{m\in\mathbb N}}
\renewcommand{\d}{{\rm d}}
\newcommand{\rpn}{\mathbb{RP}(m)}
\newcommand{\sln}{\mathsf{SL}_m(\mathbb R)}
\newcommand{\psln}{\mathsf{PSL}_m(\mathbb R)}
\newcommand{\Var }{\operatorname{Var }}
\newcommand{\Cov }{\operatorname{Cov }}
\newcommand{\Cg}{{\mathcal C}_g(\Gamma, \mathsf G)}
\newcommand{\Cm}{{\mathcal C}(\Gamma,m)}
\newcommand{\Cc}{{\mathcal C}_c}
\newcommand{\tr}{\operatorname{Tr}}
\newcommand{\LL}{\mathsf L}
\newcommand{\TT}{\mathsf{T}}
\newcommand{\II}{{\bf I}}
\newcommand{\JJ}{{\bf J}}
\newcommand{\PP}{{\bf P}}
\newcommand{\p}{{\bf{p}}}
\newcommand{\q}{{\bf{q}}}
\newcommand{\m}{{\bf{m}}}
\newcommand{\CC}{\mathbb C}
\newcommand{\Real}{\mathbb R}
\newcommand{\bg}{\partial_\infty\Gamma}
\newcommand{\Gg}{{\mathsf U_0}\Gamma}
\newcommand{\Ug}{{\mathsf U}_\rho\Gamma}
\DeclareMathOperator{\tope}{top}
\newcommand{\hsf}{\mathsf}
\newcommand{\hdelta}{\widehat{\delta}}
\newcommand{\eps}{\varepsilon}
\newcommand{\ms}{\mathsf}
\newcommand{\mk}{\mathfrak}
\newcommand{\bVert}{\big\Vert}
\newcommand{\im}{\operatorname{Im}}
\newcommand{\tR}{_{t\in\mathbb R}}
\renewcommand{\hom}{\operatorname{Hom}}
\title{The pressure metric for Anosov representations}
\author[Bridgeman]{Martin Bridgeman}
\address{Boston College, Chestnut Hill, MA 02467 USA}
\author[Canary]{Richard Canary}
\address{University of Michigan, Ann Arbor, MI 41809 USA}
\author[Labourie]{Fran\c cois Labourie}
\address{ Univ. Nice Sophia-Antipolis, Laboratoire Jean Dieudonn\'e,  UMR 7351, Nice F-06000; FRANCE}
\author[Sambarino]{Andres Sambarino}
\address{ Univ. Paris-Sud, Laboratoire de Math\'ematiques, Orsay F-91405 Cedex; CNRS, Orsay cedex, F-91405 FRANCE}
\thanks{Canary was partially supported by NSF grant DMS - 1006298.
Labourie and Sambarino were partially supported by
the European Research Council under the {\em European Community}'s seventh Framework Programme (FP7/2007-2013)/ERC {\em grant agreement} ${\rm n}^{\tiny o}$ FP7-246918, as well as by the  ANR program ETTT (ANR-09-BLAN-0116-01) The authors  also
acknowledge  support from U.S. National Science Foundation grants 
DMS 1107452, 1107263, 1107367 "RNMS: GEometric structures And Representation varieties" (the GEAR Network)}
\begin{document}
\maketitle

\begin{abstract}
Using the thermodynamic formalism, we introduce a notion of intersection for projective Anosov representations, 
show analyticity results for the intersection and the entropy, and rigidity results for the intersection.
We use the renormalized intersection to produce an ${\rm Out}(\Gamma)$-invariant  Riemannian metric
on the  smooth points  of the deformation space of irreducible, generic, projective Anosov representations of a word 
hyperbolic group $\Gamma$  into $\sln$.
In particular, we produce  mapping class group invariant Riemannian metrics on  Hitchin components 
which restrict to the Weil--Petersson metric on the Fuchsian loci. 
Moreover, we produce ${\rm Out}(\Gamma)$-invariant metrics on deformation
spaces of convex cocompact representations into $\mathsf{PSL}_2(\mathbb C)$ and show that the Hausdorff dimension
of the limit set varies analytically over analytic families of convex cocompact representations into any rank 1
semi-simple Lie group.
\end{abstract}

\section{Introduction}

In this paper we produce a mapping class group invariant Riemannian metric on a Hitchin component of the
character variety of representations of a closed surface group into $\sln$
whose restriction to the Fuchsian locus is a multiple of the Weil-Petersson metric. More generally, we produce 
a ${\rm Out}(\Gamma)$-invariant  Riemannian metric on the  smooth generic points  of the deformation space of 
irreducible, projective Anosov
representations of a word hyperbolic group $\Gamma$ into $\sln$.  We use
Pl\"ucker representations to produce metrics on deformation spaces of convex cocompact
representations into $\ms{PSL}_2(\mathbb C)$ and  on the smooth points of deformation spaces of
Zariski dense Anosov representations into an arbitrary semi-simple Lie group. 

Our metric is produced using the thermodynamic formalism
developed by Bowen \cite{bowen1,bowen2}, Parry--Pollicott \cite{parry-pollicott}, Ruelle  \cite{ruelle}
and others. It generalizes earlier work done in the Fuchsian
and quasifuchsian cases by McMullen \cite{mcmullen} and Bridgeman \cite{bridgeman-pressure}.
In order to use the thermodynamic formalism, we associate a natural flow $\Ug$ to any projective Anosov 
representation $\rho,$
and show that it is a topologically transitive metric Anosov flow and is a H\"older reparameterization
of the geodesic flow $\Gg$ of  $\Gamma$ as defined by Gromov.
We then see that entropy varies analytically over any smooth analytic family of projective
Anosov homomorphisms of $\Gamma$ into $\sln$. 
As a consequence, again
using the Pl\"ucker embedding, we see that the Hausdorff dimension of the limit set varies analytically over
analytic families of convex cocompact representations into a rank one semi-simple Lie group. We also
introduce a renormalized intersection  $\JJ$ on the space of projective Anosov representations. Our metric
is given by the Hessian of  this renormalised intersection $\JJ$.
 
\bigskip

We now introduce the notation necessary to give more careful statements of our results.
Let $\Gamma$ be a word hyperbolic group with Gromov boundary $\bg$. 
Loosely speaking, a representation $\rho:\Gamma\to \sln$ is {\em projective Anosov} if it
has transverse projective limit maps, the image of every infinite order element is proximal,
and the proximality ``spreads uniformly'' (see Section \ref{convex-anosov} for a careful definition).
An element $A\in\sln$ is {\em proximal} if its action on $\rpn$ has an attracting fixed point.
A representation $\rho:\Gamma\to \sln$ is  said to have {\em transverse projective limit maps}
if there exist continuous $\rho$-equivariant maps 
\hbox{$\xi:\bg\to\rpn$} and \hbox{$\theta:\bg\to \rpn^*$} such that if $x$ and $y$ are distinct points in $\bg$, then
$$
\xi(x)\oplus\theta(y)=\mathbb R^m
$$
(where we identify $\rpn^*$ with the Grassmanian of $(m-1)$-dimensional vector subspaces of $\mathbb R^m$).
If $\gamma\in\Gamma$ has infinite order, $\rho$ is projective Anosov and $\gamma^+$ is the attracting fixed
point of the action of $\gamma$ on $\partial\Gamma$, then $\xi(\gamma^+)$ is the attracting fixed point for
the action of $\rho(\gamma)$ on $\rpn$.
Moreover, Guichard and Wienhard \cite[Proposition 4.10]{guichard-wienhard} proved that
every irreducible representation $\rho:\Gamma\to \sln$ with transverse projective limit maps is projective Anosov.

If $\rho$ is a projective Anosov representation, we can associate to every conjugacy class 
$[\gamma]$ of $\gamma\in\Gamma$ its {\em spectral radius}  $\Lambda(\gamma)(\rho).$
The collection of these radii form the {\em radius spectrum} of $\rho$. 
For every positive real number $T$ we define
$$
R_T(\rho)=\{[\gamma]\mid \log(\Lambda(\gamma)(\rho))\leq T\}.
$$
We will see that $R_T(\rho)$ is finite  (Proposition \ref{finiteclass}).
We also define  the {\em entropy} of a representation by
$$
h(\rho)=\lim_{T\to\infty}\frac{1}{T}\log\sharp(R_T(\rho)).
$$ 

If $\rho_1$ and $\rho_2$ are two projective Anosov representations, we  define their {\em intersection} by
$$
\II(\rho_1,\rho_2)=\lim_{T\to\infty}\left(\frac{1}{\sharp (R_T(\rho_1))}\sum_{[\gamma]\in R_T(\rho_0)}\frac{\log(\Lambda(\gamma)(\rho_2))}{\log(\Lambda(\gamma)(\rho_1))}\right).
$$
We also define the {\em renormalised intersection} by
$$
\JJ(\rho_1,\rho_2)=\frac{h(\rho_2)}{h(\rho_1)}\ \II(\rho_1,\rho_2).
$$
We prove, see Theorem \ref{analytic}, that all these quantities are well defined and obtain 
the following inequality and rigidity result for the renormalised intersection.
Let $\pi_m:\sln\to\ms{PSL}_m(\mathbb R)$ be the projection map. 
If $\rho:\Gamma\to \sln$ is a representation, let $\ms G_\rho$ be the Zariski closure of $\rho(\Gamma)$.

\begin{theorem}{\sc[Intersection]}\label{intersection}
If $\Gamma$ is a word hyperbolic group and  \hbox{$\rho_1:\Gamma\to \ms{SL}_{m_1}(\mathbb R)$} and 
$\rho_2:\Gamma\to \ms{SL}_{m_2}(\mathbb R)$ are projective Anosov representations, then
$$ {\JJ}(\rho_1,\rho_2)\geq 1. $$ 

Moreover, if $\rho_1$ and $\rho_2$ are irreducible, $\ms G_{\rho_1}$ and  $\ms G_{\rho_2}$ are connected
and \hbox{${\JJ}(\rho_1,\rho_2)= 1$}, then there exists an isomorphism 
$\phi:\pi_{m_1}(\ms G_{\rho_1})\to \pi_{m_2}(\ms G_{\rho_2})$ such that 
$$\phi\circ\pi_{m_1}\circ\rho_1= \pi_{m_2}\circ\rho_2.$$
\end{theorem}

We also establish a spectral rigidity result.  If $\rho:\Gamma\to\sln$ is projective Anosov and
$\gamma\in\Gamma$, then let
$
\LL(\gamma)(\rho)
$
denote the eigenvalue of maximal absolute value of $\rho(\gamma)$,
so 
$$\Lambda(\gamma)(\rho)=|\ms L(\gamma)(\rho)|.$$

\begin{theorem}{\sc [Spectral rigidity]}\label{theorem:length} 
Let $\Gamma$ be a word hyperbolic group and let $ \rho_1:\Gamma\to\sln$  and $ \rho_2:\Gamma\to\sln$ 
be  projective Anosov representations with limit maps $\xi_1$ and $\xi_2$ such that
$$\LL(\gamma)(\rho_1)=\LL(\gamma)(\rho_2)$$
for every $\gamma$ in $\Gamma.$ 
Then there exists $g\in\ms{GL}_m(\mathbb R)$ such that $g\xi_1=\xi_2.$

Moreover, if $\rho_1$ is irreducible, then $g\rho_1 g^{-1}= \rho_2.$
\end{theorem}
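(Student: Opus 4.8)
The plan is to recover the limit maps from eigenvalue data by a classical trace‑equality argument, adapted to the convex Anosov setting, and then to upgrade the resulting intertwiner from the limit set to the whole group using irreducibility. The starting observation is that for a convex Anosov representation the limit map $\xi$ is \emph{transverse} to the dual map $\theta$, and the attracting fixed point of $\rho(\gamma)$ in $\rpn$ is exactly $\xi(\gamma^+)$, where $\gamma^+\in\bg$ is the attracting fixed point of $\gamma$. Hence, for each $\gamma$, the line $\xi(\gamma^+)$ is the eigenline of $\rho(\gamma)$ for the eigenvalue $\LL(\gamma)(\rho)$ of maximal modulus, and $\rho(\gamma)$ acts on this line by the scalar $\LL(\gamma)(\rho)$. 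This is the bridge between the dynamical objects $\xi_i$ and the algebraic data $\LL(\gamma)(\rho_i)$.

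First I would show that the equality $\LL(\gamma)(\rho_1)=\LL(\gamma)(\rho_2)$ for all $\gamma$ forces, for suitable finite collections of elements, an equality of traces of words in the $\rho_i(\gamma)$'s, using the standard trick of evaluating $\LL$ on high powers $\gamma^n$ and on products. More precisely, writing $v_i(\gamma)\in\xi_i(\gamma^+)$ for a generator of the top eigenline and $\phi_i(\gamma)\in\theta_i(\gamma^-)$ for a generator of the complementary hyperplane's annihilator, one has, by transversality and the Anosov contraction property, the asymptotic
$$
\tr\!\bigl(\rho_i(\gamma_1)\cdots\rho_i(\gamma_k)\bigr)\ \sim\ \prod_{j} \LL(\gamma_j)(\rho_i)\ \cdot\ \prod_j \langle \phi_i(\gamma_j^-),v_i(\gamma_{j+1}^+)\rangle
$$
along appropriate sequences; comparing the two representations and using that the scalar factors $\LL$ agree, one deduces that the projective cross‑ratio type quantities built from $\xi_i,\theta_i$ agree for $\rho_1$ and $\rho_2$. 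The cleaner route, which I would actually carry out, is: fix a basis of $\mathbb R^m$ coming from $m$ eigenlines $\xi_1(\gamma_1^+),\dots,\xi_1(\gamma_m^+)$ of $\rho_1$ (possible when $\rho_1$ is irreducible, since the limit set spans), define $g$ on this basis by sending $\xi_1(\gamma_j^+)$ to $\xi_2(\gamma_j^+)$ with the normalization dictated by matching a further element, and then check $g\xi_1(x)=\xi_2(x)$ for all $x\in\bg$ by density of the $\gamma^+$ and continuity of the limit maps, with the eigenvalue equalities guaranteeing that $g$ is well defined independently of the chosen basis.

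For the "moreover" clause I would argue as follows. Once $g\xi_1=\xi_2$, both $g^{-1}\rho_2(\gamma)g$ and $\rho_1(\gamma)$ fix every line $\xi_1(x)$, $x\in\bg$; when $\rho_1$ is irreducible this family of lines is not contained in any proper subspace, in fact it contains $m$ lines in general position together with their "generic" position property inherited from transversality, so a linear map fixing all of them must be scalar on each and, matching the eigenvalues $\LL(\gamma)(\rho_1)=\LL(\gamma)(\rho_2)$ on the attracting line $\xi_1(\gamma^+)$, the scalar is $1$. A short Schur‑type lemma then gives $g^{-1}\rho_2(\gamma)g=\rho_1(\gamma)$ for every $\gamma$, i.e.\ $g\rho_1 g^{-1}=\rho_2$.

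The main obstacle I anticipate is the first step: extracting genuine linear‑algebraic identities (trace equalities, or equalities of the bilinear pairings $\langle\phi_i,v_i\rangle$) purely from equality of \emph{top} eigenvalues. Knowing only $\LL(\gamma)$, and not the full eigenvalue list, one must be careful to choose the auxiliary sequences $\gamma_1,\gamma_2,\dots$ so that the product $\rho_i(\gamma_1^{n_1})\cdots\rho_i(\gamma_k^{n_k})$ is itself proximal with computable top eigenvalue, and so that the limiting trace really is governed by the product of the top eigenlines; this requires the uniform transversality and the Anosov property to control error terms, and this is exactly where one must invoke the convex Anosov hypothesis (and presumably the finiteness statement of Proposition~\ref{finiteclass} and the analyticity/thermodynamic machinery only indirectly, through the structure of the limit maps). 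Once that identity is in hand, the construction of $g$ and the irreducibility upgrade are routine linear algebra.
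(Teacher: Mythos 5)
Your overall strategy is the same as the paper's (spectral data $\Rightarrow$ pairing/cross-ratio identities via proximality asymptotics $\Rightarrow$ reconstruction of the limit map in matched projective frames $\Rightarrow$ Schur-type upgrade using irreducibility and the signs of the top eigenvalues), but as written there are two genuine gaps. First, the step you yourself flag as the main obstacle is exactly the mathematical content of the theorem and is only announced, not carried out: one must show that quantities of the form $\frac{\braket{\phi_1(\alpha^-)|v_1(\beta^+)}\braket{\phi_1(\beta^-)|v_1(\alpha^+)}}{\braket{\phi_1(\alpha^-)|v_1(\alpha^+)}\braket{\phi_1(\beta^-)|v_1(\beta^+)}}$ are determined by the function $\gamma\mapsto\LL(\gamma)$. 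The paper does this through the uniform spectral decomposition of Proposition \ref{proxi2} (the remainder has spectral radius at most $\delta^{\ell(\gamma)}\Lambda(\gamma)$), which yields $\TT(\p(\alpha),\p(\beta))=\lim_n \LL(\alpha^n\beta^n)/\bigl(\LL(\alpha)^n\LL(\beta)^n\bigr)$ (Proposition \ref{typk}) and identifies this limit with the cross ratio ${\rm b}_\rho(\alpha^-,\beta^-,\beta^+,\alpha^+)$ (Proposition \ref{cr formula}); density of pairs of fixed points and continuity then give ${\rm b}_{\rho_1}={\rm b}_{\rho_2}$. Your asymptotic trace formula is the right idea, but without the uniform error control it is not a proof, and the subsequent reconstruction of $g$ (your ``routine linear algebra'') is precisely Labourie's argument (Lemma \ref{lemma:4.3}): the projective coordinates of $\xi_i(y)$ in the matched frames are themselves cross ratios, so equality of cross ratios for \emph{all} boundary points, not just fixed points, is what makes $g$ well defined and gives $g\xi_1=\xi_2$.

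Second, the first assertion of the theorem does not assume irreducibility, while your ``cleaner route'' constructs $g$ from a basis of eigenlines $\xi_1(\gamma_1^+),\dots,\xi_1(\gamma_m^+)$, which requires the limit set to span $\mathbb R^m$. In the reducible case you must first prove that $\dim\braket{\xi_1(\bg)}=\dim\braket{\xi_2(\bg)}$ --- in the paper this is read off from the cross ratio via the determinant criterion of Lemma \ref{cross ratio and dim} --- and then build $g$ using simultaneous frames for the two spans (Lemma \ref{mutual frame}); your proposal has no mechanism for this. Finally, a small but real slip in the ``moreover'' step: $\rho_1(\gamma)$ and $g^{-1}\rho_2(\gamma)g$ do not fix each line $\xi_1(x)$; they agree on each such line (equivalently $\rho_1(\gamma)^{-1}g^{-1}\rho_2(\gamma)g$ fixes every $\xi_1(x)$), after which a projective frame in the limit set (Lemma \ref{projective frame}) forces this composition to be scalar, and the equality of the \emph{signed} top eigenvalues pins the scalar to $1$, as in the paper's final step.
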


We now introduce the deformation spaces which occur in our work. In section \ref{MCR}, 
we will see that each of these deformation spaces is  a real analytic manifold.
Let us introduce some terminology. 
If $\ms G$ is a reductive subgroup of $\sln$, 
we say that an element of $\ms G$ is  {\em generic} if its centralizer is
a maximal torus in $\ms G$. For example, an element of $\sln$ is generic if and only if it is diagonalizable over $\mathbb C$ with
distinct eigenvalues. We say that a representation $\rho:\Gamma\to \ms G$ is \hbox{\em $\ms G$-generic} if the
Zariski closure of $\rho(\Gamma)$ contains a generic element of $\ms G$. 
Finally, we say that $\rho\in\hom(\Gamma,\ms G)$ is {\em regular} if it is a smooth point of the algebraic variety $\hom(\Gamma,\ms G)$. 
\begin{itemize}
\item Let $\Cm$ denote the space of (conjugacy classes of) regular, irreducible, projective Anosov  representations
of $\Gamma$ into $\sln$.
\item Let $\Cg$ denote the space of  (conjugacy classes of) $\ms G$-generic, regular, irreducible, projective Anosov representations.
\end{itemize}

We show that the entropy and the renormalised intersection vary analytically over our deformation spaces.
Moreover, we obtain analyticity on analytic families of projective Anosov homomorphisms. An analytic family of
projective Anosov homomorphisms is a continuous map $\beta:M\to \operatorname{Hom}(\Gamma,\sln)$ such
that $M$ is an analytic manifold, $\beta_m=\beta(m)$ is projective Anosov for all $m\in M$, and 
$m\to \beta_m(\gamma)$ is an analytic map of $M$ into $\sln$ for all $\gamma\in\Gamma$.

\begin{theorem}{\sc [Analyticity]}\label{analytic}
If $\Gamma$ is a word hyperbolic group, then
the entropy $h$  and the renormalised intersection $\JJ$ are well-defined positive, 
${\rm Out}(\Gamma)$-invariant analytic functions on the spaces $\Cm$ and \hbox{$\Cm\times\Cm$} respectively. More generally,
they are analytic functions on any analytic family of projective Anosov homomorphisms.

Moreover, let \hbox{$\gamma:(-1,1) \to \Cm$}  be  any analytic path with values in the deformation space,  let 
\hbox{${\bf J}_\gamma(t)={\bf J}(\gamma(0),\gamma(t))$} then
\begin{equation}
\left.\frac{\rm d}{{\d}t}\right\vert_{t=0}{\bf J}_\gamma=0 \  \hbox{and} \ \left.\frac{\rm d^2}{{\d}t^2}\right\vert_{t=0}{\bf J}_\gamma\geq 0.
\end{equation}
\end{theorem}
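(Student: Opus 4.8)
The plan is to express every quantity in the statement through the thermodynamic formalism of the flow $\Ug$, and then to reduce all the assertions to two facts: the analyticity of the topological pressure of the Gromov geodesic flow $\Gg$ as a function of its H\"older potential, and the analyticity of the reparameterization function $\rho\mapsto f_\rho$. Recall that $\Ug$ is a H\"older reparameterization of $\Gg$ by a positive H\"older function $f_\rho$ on $\Gg$, normalized so that the periodic orbit of $\Gg$ carrying the conjugacy class $[\gamma]$ has $f_\rho$-period $\log\Lambda(\gamma)(\rho)$. Writing $P$ for the topological pressure of $\Gg$, the entropy $h(\rho)$ is the unique positive number with $P(-h(\rho)f_\rho)=0$, so it coincides with the topological entropy of $\Ug$; hence $R_T(\rho)$ is finite (Proposition~\ref{finiteclass}) with cardinality growing like $e^{h(\rho)T}/(h(\rho)T)$, and $h(\rho)>0$ because $P(0)=h_{\mathrm{top}}(\Gg)>0$. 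Let $\mu_\rho$ be the equilibrium state on $\Gg$ of the potential $-h(\rho)f_\rho$; equidistribution of the closed orbits of the flow attached to $\rho_0$ with respect to its measure of maximal entropy turns the defining limit of $\II$ into
$$
\II(\rho_0,\rho_1)=\frac{\int f_{\rho_1}\,\d\mu_{\rho_0}}{\int f_{\rho_0}\,\d\mu_{\rho_0}},\qquad\hbox{so that}\qquad \JJ(\rho_0,\rho_1)=\frac{h(\rho_1)}{h(\rho_0)}\cdot\frac{\int f_{\rho_1}\,\d\mu_{\rho_0}}{\int f_{\rho_0}\,\d\mu_{\rho_0}}.
$$
Positivity of $\II$ and $\JJ$ is then immediate (numerator positive, denominator $\int f_{\rho_0}\,\d\mu_{\rho_0}>0$), and $\mathrm{Out}(\Gamma)$-invariance is clear since an outer automorphism only permutes conjugacy classes while preserving the spectral radii.

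For analyticity I would use two inputs. The first, which I expect to be the real obstacle, is that along an analytic family of convex Anosov homomorphisms the Anosov limit maps $\xi$ and $\theta$ vary analytically, hence so does the additive (Iwasawa/Busemann-type) cocycle they determine, so that $\rho\mapsto f_\rho$ is an analytic map into a Banach space of $\alpha$-H\"older functions on $\Gg$, once one fixes, locally in the family, a common H\"older exponent $\alpha$ and a common Markov coding of the metric Anosov flow $\Gg$. The second input is the classical analyticity of the symbolic thermodynamic formalism, due to Ruelle, Pollicott and Parry--Pollicott: on a subshift of finite type the pressure $P$ is an analytic function of the H\"older potential, and the equilibrium state depends analytically on it in the weak sense that $\psi\mapsto\int g\,\d\mu_\psi$ is analytic for each fixed H\"older $g$, through the analytic dependence of the leading eigenvalue and eigenfunction of the transfer operator. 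Granting these: $(\rho,h)\mapsto P(-hf_\rho)$ is analytic with $\partial_hP(-hf_\rho)=-\int f_\rho\,\d\mu_{-hf_\rho}<0$, so the implicit function theorem makes $h(\rho)$ analytic; the pairing $\int f_{\rho_1}\,\d\mu_{\rho_0}$ is analytic in $(\rho_0,\rho_1)$ as the composition of the analytic functional $\mu_{\rho_0}$ with the analytic function-valued map $\rho_1\mapsto f_{\rho_1}$; and dividing by the nowhere-vanishing $\int f_{\rho_0}\,\d\mu_{\rho_0}$ and $h(\rho_0)$ shows that $\II$ and $\JJ$ are analytic. The same argument runs verbatim over any analytic family of convex Anosov homomorphisms, and hence over the real analytic manifolds $\Cm$ and $\Cm\times\Cm$.

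The derivative inequalities for an analytic path $\gamma:(-1,1)\to\Cm$ are a one-line consequence. By what precedes, $\JJ^\gamma(t)=\JJ(\gamma(0),\gamma(t))$ is analytic, in particular $C^2$, in $t$. One has $\JJ^\gamma(0)=1$ since $\II(\rho,\rho)=1$ for every $\rho$, while $\JJ^\gamma(t)\geq1$ for all $t$ by the Intersection inequality of Theorem~\ref{intersection} (each $\gamma(t)$ is convex Anosov, being irreducible convex). Thus $t=0$ is an interior global minimum of $\JJ^\gamma$, whence $\left.\frac{\d}{\d t}\right\vert_{t=0}\JJ^\gamma=0$ and $\left.\frac{\d^2}{\d t^2}\right\vert_{t=0}\JJ^\gamma\geq0$.

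Two points need care in the execution. Beyond the analytic dependence of $f_\rho$ on $\rho$, one must check that the flow $\Gg$ (equivalently each $\Ug$) is topologically mixing and not merely transitive, since uniqueness of $\mu_\rho$, the sharp count of $R_T(\rho)$, and the equidistribution of closed orbits underlying the formula for $\II$ all rest on it; this should come from the study of $\Ug$ in the earlier sections. One must also track the possible drop of H\"older exponent when passing from $\rho$ to $f_\rho$ and then to the symbolic model, but since analyticity is local and the family can be taken locally uniform, this is harmless.
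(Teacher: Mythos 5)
Your proposal is correct and follows essentially the same route as the paper: analytic dependence of the reparameterizing function $f_\rho$ (via analytic variation of the limit maps and a section theorem), Ruelle/Parry--Pollicott analyticity of the pressure through a Markov coding, the implicit function theorem for $h(\rho)$ and the first-derivative-of-pressure formula for $\II$, and then the inequality $\JJ\geq 1$ together with $\JJ^\gamma(0)=1$ to read off the vanishing first derivative and nonnegative second derivative at an interior minimum. The only remark worth making is that your worry about topological mixing is unnecessary: the uniqueness of equilibrium states, the orbit-count giving $h_f$, and the equidistribution of closed orbits are all invoked in the paper (following Bowen and Pollicott) for topologically transitive metric Anosov flows, which is exactly what $\Gg$ and $\Ug$ are shown to be.
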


Theorem \ref{analytic} allows us to define a non-negative  analytic 2-tensor on $\Cg$. The pressure form  is defined to be the Hessian of the restriction 
of the renormalised intersection ${\bf J}$.
Our main result is the following.

\begin{theorem}{\sc[Pressure metric]}
\label{path metric}
Let $\Gamma$ be a word hyperbolic group and let  $\ms G$ be a reductive subgroup of $\sln$.
The pressure form is an analytic  ${\rm Out}(\Gamma)$-invariant Riemannian metric
on  $\Cg$.
\end{theorem}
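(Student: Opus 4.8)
The plan is to read off analyticity, ${\rm Out}(\Gamma)$-invariance and non-negativity of the pressure form $\mathbf P$ immediately from Theorem~\ref{analytic}, and then to devote all the work to non-degeneracy, the only substantive point. Indeed, by Theorem~\ref{analytic} the renormalised intersection $\JJ$ is a positive, ${\rm Out}(\Gamma)$-invariant analytic function on $\Cm\times\Cm$; for fixed $\rho_0$ the function $\rho\mapsto\JJ(\rho_0,\rho)$ equals $1$ at $\rho_0$, has a critical point there (again by Theorem~\ref{analytic}), and its Hessian is non-negative. Letting $\rho_0$ range over $\Cg$ therefore exhibits $\mathbf P$ as a well-defined, analytic, ${\rm Out}(\Gamma)$-invariant, non-negative symmetric $2$-tensor on the real-analytic manifold $\Cg$, and the theorem reduces to showing that $\mathbf P$ is positive definite on each $T_{[\rho_0]}\Cg$.

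To get at $\mathbf P$ I would use the thermodynamic description behind Theorem~\ref{analytic}: every convex Anosov $\rho$ carries a positive H\"older reparametrisation function $f_\rho$ of the Gromov flow $\Gg$ whose period over the orbit $c_\gamma$ of $[\gamma]$ is $\log\Lambda(\gamma)(\rho)$, and the potential $u_\rho:=h(\rho)f_\rho$ has zero pressure, $P(\Gg,-u_\rho)=0$. Writing $m_{\rho_0}$ for the equilibrium state of $-u_{\rho_0}$ on $\Gg$, one checks that $\JJ(\rho_0,\rho)=\frac{\int u_\rho\,dm_{\rho_0}}{\int u_{\rho_0}\,dm_{\rho_0}}$. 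For an analytic path $\rho_t$ with $\rho_0'=v$, the map $t\mapsto u_{\rho_t}$ is analytic into a H\"older space; writing $\dot u,\ddot u$ for its first two $t$-derivatives at $0$, differentiating $P(\Gg,-u_{\rho_t})\equiv0$ once gives $\int\dot u\,dm_{\rho_0}=0$ (so $\frac{\d}{\d t}\big|_{t=0}\JJ(\rho_0,\rho_t)=0$, consistently with Theorem~\ref{analytic}), and differentiating twice, using that the first and second derivatives of pressure are $\psi\mapsto\int\psi\,dm_{\rho_0}$ and the asymptotic variance (Parry--Pollicott \cite{parry-pollicott}, Ruelle \cite{ruelle}), gives $\int\ddot u\,dm_{\rho_0}=\operatorname{Var}_{m_{\rho_0}}(\dot u)$. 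Hence
$$\mathbf P(v)=\left.\frac{\d^2}{\d t^2}\right|_{t=0}\JJ(\rho_0,\rho_t)=\frac{\operatorname{Var}_{m_{\rho_0}}(\dot u)}{\int u_{\rho_0}\,dm_{\rho_0}},$$
so, as in McMullen \cite{mcmullen} and Bridgeman \cite{bridgeman-pressure}, and since $u_{\rho_0}=h(\rho_0)f_{\rho_0}>0$, we get $\mathbf P(v)=0$ if and only if $\operatorname{Var}_{m_{\rho_0}}(\dot u)=0$.

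By the Liv\v{s}ic theorem in the metric-Anosov setting available for $\Gg$, a H\"older function has vanishing asymptotic variance for $m_{\rho_0}$ exactly when it is Liv\v{s}ic-cohomologous to a constant; as $\int\dot u\,dm_{\rho_0}=0$, vanishing variance means $\dot u$ is a Liv\v{s}ic coboundary, i.e.\ $\int_{c_\gamma}\dot u=0$ for every periodic orbit $c_\gamma$. Integrating over $c_\gamma$, this says precisely that $\frac{\d}{\d t}\big|_{t=0}\bigl(h(\rho_t)\log\Lambda(\gamma)(\rho_t)\bigr)=0$ for every $[\gamma]$, and conversely the easy half of Liv\v{s}ic shows this forces $\operatorname{Var}_{m_{\rho_0}}(\dot u)=0$. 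Thus non-degeneracy of $\mathbf P$ on $\Cg$ is equivalent to the following infinitesimal rigidity statement: if an analytic path $\rho_t$ in $\Cg$ satisfies $\frac{\d}{\d t}\big|_{t=0}\bigl(h(\rho_t)\log\Lambda(\gamma)(\rho_t)\bigr)=0$ for every $[\gamma]$, then $v=\rho_0'=0$ in $T_{[\rho_0]}\Cg$.

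This last implication — an infinitesimal version of the spectral rigidity Theorem~\ref{theorem:length} — is the main obstacle, and it is where the genericity built into $\Cg$ is indispensable: for convex Anosov representations into a proper subgroup one expects bending deformations fixing the entropy-normalised spectral radius spectrum to first order, so $\mathbf P$ genuinely degenerates off the generic locus. I would prove it by first extracting from the hypothesis a constant $c=-h'(0)/h(0)$ with $D_v\log\Lambda(\gamma)(\rho_\bullet)=c\,\log\Lambda(\gamma)(\rho_0)$ for all $[\gamma]$ — so along $\rho_t$ the spectral radius spectrum agrees to first order with a global rescaling of that of $\rho_0$ — and then, exploiting the real-analytic dependence of the Anosov limit maps $\xi_\rho,\theta_\rho$ on $\rho$ together with irreducibility and the presence of a generic element in the Zariski closure, arguing as in the proof of Theorem~\ref{theorem:length} (through the algebraic relations satisfied by the matrix coefficients of generic elements) that the infinitesimal deformation $v$ must be trivial. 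Combined with the first two paragraphs, this shows that $\mathbf P$ is a positive definite, analytic, ${\rm Out}(\Gamma)$-invariant Riemannian metric on $\Cg$.
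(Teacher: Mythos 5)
Your reduction is the same as the paper's: non-negativity, analyticity and ${\rm Out}(\Gamma)$-invariance come from the thermodynamic description of $\JJ$, the norm of $v$ is $\Var(\dot u,m_{\rho_0})/\int u_{\rho_0}\,\d m_{\rho_0}$, and vanishing norm is equivalent (via Liv\v sic) to $\left.\frac{\d}{\d t}\right|_{t=0}\bigl(h(\rho_t)\log\Lambda(\gamma)(\rho_t)\bigr)=0$ for all $[\gamma]$, hence to the existence of a constant $c$ with ${\rm D}_v\log\Lambda(\gamma)=c\,\log\Lambda(\gamma)(\rho_0)$ for every infinite order $\gamma$. Up to this point you are following the same route as the paper (its Lemma on degenerate vectors having ``log-type $K$'').

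The gap is in the final step, where you propose to conclude $v=0$ ``arguing as in the proof of Theorem \ref{theorem:length}.'' That argument cannot absorb the rescaling constant $c$: spectral rigidity, and its infinitesimal version, run through the cross ratios ${\rm b}_\rho$, and the derivative-of-cross-ratio computation (Proposition \ref{cr formula} together with Proposition \ref{typk}) only yields $\left.\frac{\d}{\d t}\right|_{t=0}{\rm b}_{\rho_t}=0$ — which is what the frame/immersion argument needs — when the spectra have log-type $0$, i.e.\ when $c=0$. Nothing in the cross-ratio machinery, nor in irreducibility alone, rules out a first-order global rescaling of the spectrum; killing $c$ is exactly where $\ms G$-genericity enters and it requires a separate, genuinely nontrivial argument. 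In the paper this occupies all of Section 10: one picks $\gamma_0$ with $\rho_z(\gamma_0)$ generic, expands $\tr\bigl(\p(\rho_u(\alpha))\rho_u(\gamma_0^n)\bigr)$ in the eigendata of $\rho_u(\gamma_0)$, and uses a technical lemma on decaying exponential sums (Lemma \ref{techl}) to show that the function $u\mapsto\bigl(\LL(\gamma_0)\LL(\gamma_0^{-1})\bigr)^{-1}$ has vanishing derivative at $v$; since $\LL(\gamma_0)$ and $\LL(\gamma_0^{-1})$ both have log-type $c$ and both have positive logarithm, this forces $c=0$. Only then does the infinitesimal analogue of Theorem \ref{theorem:length} (cross ratios, a mutual projective frame, and the immersion $\mu_{F,F^*}$ of Lemma \ref{prel}) give that ${\rm D}\eta(v)$ is trivial in $H^1_{\rho_z}(\Gamma,\mathfrak{sl}_m(\mathbb R))$, together with the observation that $H^1_{\rho_z}(\Gamma,\mathfrak g)$ injects into $H^1_{\rho_z}(\Gamma,\mathfrak{sl}_m(\mathbb R))$ because $\mathfrak g$ is a reductive summand. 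Your proposal names genericity as indispensable but supplies no mechanism for it, and the step it is meant to justify — $c=0$ — is the technical heart of the theorem.
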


If $S$ is a closed, connected,  orientable, hyperbolic surface, 
Hitchin \cite{hitchin} exhibited a component $\mathcal H_m(S)$ of
$\operatorname{Hom}(\pi_1(S),\ms{PSL}_m(\mathbb R))/\ms{PGL}_m(\mathbb R)$
now called the {\em Hitchin component},   which is an analytic manifold diffeomorphic to a ball. 
Each Hitchin component contains a Fuchsian locus which consists of representations
obtained by composing Fuchsian representations   of $\pi_1(S)$ into $\ms{PSL}_2(\mathbb R)$ with the
irreducible representation  $\tau_m:\ms{PSL}_2(\mathbb R)\to\ms{PSL}_m(\mathbb R)$.
The representations in a Hitchin component are called {\em Hitchin representations} and can be lifted to
representations into $\ms{SL}_m(\mathbb R)$. 
Labourie \cite{labourie-anosov} showed that  lifts of Hitchin representations are projective Anosov, irreducible
and $\sln$-generic. In particular,  if $\rho_i:\pi_1(S)\to \ms{PSL}_m(\mathbb R)$ are
Hitchin representations, then one can define $h(\rho_i)$, $\II(\rho_1,\rho_2)$ and $\JJ(\rho_1,\rho_2)$ just as for
projective Anosov representations. 
Guichard has recently announced a classification of the possible  Zariski closures of Hitchin representations,
see Section \ref{hitchin rigidity} for a statement. As a corollary of Theorem \ref{intersection} and Guichard's work
we obtain a stronger rigidity result for Hitchin representations.

\begin{corollary}{\sc[Hitchin rigidity]}\label{hitch-rigid}
Let $S$ be a closed, orientable surface and let
$\rho_1\in \mathcal H_{m_1}(S)$ and $\rho_2\in \mathcal H_{m_2}(S)$ 
be two Hitchin representations such that  
$$
\JJ(\rho_1,\rho_2)=1.
$$
Then, either
\begin{itemize}
\item
$m_1=m_2$ and $\rho_1=\rho_2$ in $\mathcal H_{m_1}(S)$, or
\item
there exists an element $\rho$ of the Teichm\"uller space $\mathcal T(S)$ so that
\hbox{$\rho_1=\tau_{m_1}(\rho)$} and  \hbox{$\rho_2=\tau_{m_2}(\rho)$}.
\end{itemize}
 \end{corollary}
 
In section \ref{convex projective} we use work of Benoist \cite{benoist-divisible0,benoist-divisible1} to obtain 
a similar rigidity result for representations which arise as monodromies of strictly convex projective structures
on compact manifolds with word hyperbolic fundamental group. We will call such representations Benoist representations.

Each Hitchin component lifts to a component
of $\mathcal{C}_g(\pi_1(S),\sln)$.
As a corollary of Theorem \ref{path metric} and work of Wolpert \cite{wolpert} we obtain:

\begin{corollary}{\sc [Hitchin component]}
\label{hitchin metric} 
The pressure form  on the Hitchin component is
an analytic Riemannian metric which 
is invariant under the mapping class group and restricts to the Weil-Pe\-ter\-sson metric on the Fuchsian locus.
\end{corollary}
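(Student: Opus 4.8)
The plan is to deduce Corollary~\ref{hitchin metric} by combining Theorem~\ref{path metric}, Corollary~\ref{hitch-rigid}, and Wolpert's identification of the second variation of length functions along Weil--Petersson geodesics. First I would recall that, by Labourie \cite{labourie-anosov}, a Hitchin component $\mathcal H_m(S)$ lifts to a connected component of $\mathcal C_g(\pi_1(S),\sln)$ consisting of $\sln$-generic, regular, irreducible, convex (hence convex Anosov) representations. Applying Theorem~\ref{path metric} with $\Gamma=\pi_1(S)$ and $\ms G=\sln$ then yields directly that the pressure form is an analytic, $\operatorname{Out}(\pi_1(S))$-invariant Riemannian metric on this component. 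Since the mapping class group of $S$ maps onto $\operatorname{Out}(\pi_1(S))$ (up to the usual index-two and center subtleties, which act trivially here), invariance under the mapping class group follows.

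Next I would turn to the Fuchsian locus, which is the image of Teichm\"uller space $\mathcal T(S)$ under $\rho\mapsto\tau_m\circ\rho$. Here the key point is that for a Fuchsian representation composed with $\tau_m$, the relevant thermodynamic quantities are simple explicit multiples of the corresponding quantities for the underlying hyperbolic structure: if $\rho$ is Fuchsian with hyperbolic translation length $\ell(\gamma)$, then $\log\Lambda(\gamma)(\tau_m\circ\rho) = \frac{m-1}{2}\,\ell(\gamma)$, so the radius spectrum of $\tau_m\circ\rho$ is, up to the fixed scalar $\tfrac{m-1}{2}$, the marked length spectrum of $\rho$. Consequently $\II(\tau_m\circ\rho_0,\tau_m\circ\rho_1)=\II_{\mathcal T}(\rho_0,\rho_1)$ and $h(\tau_m\circ\rho)=\tfrac{2}{m-1}h(\rho)=\tfrac{2}{m-1}$, so the renormalised intersection $\JJ$ restricted to the Fuchsian locus coincides with the renormalised intersection of the corresponding hyperbolic structures (which is the one studied by McMullen \cite{mcmullen} and Bridgeman \cite{bridgeman-pressure}). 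Its Hessian along the Fuchsian locus is therefore the restriction of the pressure metric for Fuchsian representations. I would then invoke the result of McMullen (building on Wolpert \cite{wolpert}) that this Fuchsian pressure metric equals a fixed positive multiple of the Weil--Petersson metric on $\mathcal T(S)$; equivalently, Wolpert's formula for $\frac{d^2}{dt^2}\ell_\gamma$ along a Weil--Petersson geodesic, combined with the thermodynamic expression for the Hessian of $\JJ^\gamma$ from Theorem~\ref{analytic}, identifies the two tensors.

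The one subtlety worth flagging is that the pressure form on the Hitchin component is the Hessian of $\JJ$ as a function on the \emph{full} component $\mathcal C_g$, whereas the Weil--Petersson comparison concerns its restriction to the Fuchsian sub-locus; one must check that restricting the Hessian of $\JJ$ on $\mathcal C_g$ to the tangent directions along the Fuchsian locus agrees with the Hessian of the restricted function $\JJ|_{\text{Fuchsian}}$. This holds because the Fuchsian locus is a totally geodesic-free issue here: for the second variation one only needs that the Fuchsian locus is a submanifold and that $\JJ$ restricted to it is intrinsic, and since at a Fuchsian point the first derivative of $\JJ^\gamma$ vanishes (Theorem~\ref{analytic}) the Hessian is well-defined intrinsically and its restriction to sub-tangent-directions is the Hessian of the restriction. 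I expect this bookkeeping --- together with pinning down the exact normalising constant relating the pressure metric to $g_{WP}$ --- to be the only real work; the main structural input, analyticity and positive-definiteness, is already supplied by Theorem~\ref{path metric}.
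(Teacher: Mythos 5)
Your proposal is correct and follows essentially the same route as the paper: lift the Hitchin component into $\mathcal{C}_g(\pi_1(S),\sln)$ via Labourie's results and apply Theorem \ref{path metric}, then observe that on the Fuchsian locus the spectral radii are a fixed multiple ($\tfrac{m-1}{2}$) of hyperbolic lengths, so entropy is constant and $\JJ$ restricts to the classical renormalised intersection on $\mathcal{T}(S)$, whose Hessian is identified with (a multiple of) the Weil--Petersson metric by Wolpert, Bonahon and McMullen. Your remark that the restriction of the Hessian of $\JJ$ to Fuchsian tangent directions equals the Hessian of the restricted function, because the first derivative vanishes there (Theorem \ref{analytic}), is exactly the bookkeeping the paper uses implicitly, so there is no gap.
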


The same naturally holds for Hitchin components of representations into $\ms{PSp}(n,\mathbb R)$, 
$\ms{S0}(n,n+1)$ and $\ms{G}_{2,0}$,  since they embed in Hitchin components of representations into
$\ms{PSL}(n,\mathbb R)$.  Labourie and Wentworth \cite{labourie-wentworth} have announced an explicit formula 
(in term of the Hitchin parametrisation) for the pressure metric along the Fuchsian locus.

Li \cite{li} has used the work of Loftin \cite{loftin} and Labourie \cite{labourie-cubic} to exhibit a metric on 
$\mathcal{H}_3(S)$, which she calls the Loftin metric,
which is invariant with respect to the mapping class group, restricts to a multiple of the Weil-Petersson metric on the Fuchsian
locus and such that the Fuchsian locus is totally geodesic. She further shows that a metric on $\mathcal{H}_3(S)$ constructed
earlier by  Darvishzadeh and Goldman \cite{goldman-darvishzadeh} restricts to a multiple of the Weil-Petersson metric on
the Fuchsian locus.
Kim and Zhang \cite{kim-zhang} introduced a mapping class group invariant 
K\"ahler metric on the Hitchin component $H_3(S)$ for 
$\mathsf{SL}(3,R)$, which Labourie \cite{labourie-cyclic} generalized to the
Hitchin components associated to all real split simple Lie groups of rank 2.

\medskip

If $\Gamma$ is a word hyperbolic group, we let ${\Cc}(\Gamma,\ms{PSL}_2(\mathbb C))$ denote the space of (conjugacy classes of)
convex cocompact representations of $\Gamma$ into $\ms{PSL}_2(\mathbb C)$.
In Section \ref{plucker-sec} we produce a representation, called the Pl\"ucker
representation,
\hbox{$\alpha:\ms{PSL}_2(\mathbb C)\to \sln$} (for some $m$), so that if $\rho\in{\Cc}(\Gamma,\ms{PSL}_2(\mathbb C))$,
then $\alpha\circ\rho$ is projective Anosov. 
The deformation space ${\Cc}(\Gamma,\ms{PSL}_2(\mathbb C))$ is an analytic manifold and 
we may define a renormalised intersection $\JJ$  and thus 
a pressure form on ${\Cc}(\Gamma,\ms{PSL}_2(\mathbb C))$.
The following corollary is a direct generalization of Bridgeman's pressure metric on quasifuchsian space
(see \cite{bridgeman-pressure}).

\begin{corollary}{\sc [Kleinian groups]}\label{psl2c}
Let $\Gamma$ be a torsion-free
word hyperbolic group. The pressure form gives rise to a ${\rm Out}(\Gamma)$-invariant metric on 
the analytic manifold ${\Cc}(\Gamma,\ms{PSL}_2(\mathbb C))$ which is Riemannian on the open subset
consisting of Zariski dense representations.
Moreover,
\begin{enumerate}
\item
If $\Gamma$ does not have a finite index subgroup which is
either a free group or a surface group, 
then the metric is Riemannian at all points in ${\Cc}(\Gamma,\ms{PSL}_2(\mathbb C))$.
\item
If $\Gamma$ is the fundamental group of a closed, connected,  orientable surface, then 
the metric is Riemannian off of the Fuchsian locus in ${\Cc}(\Gamma,\ms{PSL}_2(\mathbb C))$
and restricts to a multiple of the Weil-Petersson metric on the Fuchsian locus.
\end{enumerate}
\end{corollary}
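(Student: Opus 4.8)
The plan is to transport everything through the Pl\"ucker representation $\alpha\colon\ms{PSL}_2(\mathbb C)\to\sln$ built in Section \ref{plucker-sec}. Since $\Cc(\Gamma,\ms{PSL}_2(\mathbb C))$ is an analytic manifold (Section \ref{MCR}), since $\rho\mapsto\alpha\circ\rho$ is analytic, and since $\alpha\circ\rho$ is convex Anosov for every $\rho$ in $\Cc(\Gamma,\ms{PSL}_2(\mathbb C))$, the assignment $[\rho]\mapsto\alpha\circ\rho$ is an analytic family of convex Anosov homomorphisms, so Theorem \ref{analytic} applies. It gives that $h(\alpha\circ\rho)$ and $\JJ(\alpha\circ\rho_0,\alpha\circ\rho_1)$ are well defined, positive and analytic, that $[\rho_1]\mapsto\JJ(\alpha\circ\rho_0,\alpha\circ\rho_1)$ has a critical point at $[\rho_1]=[\rho_0]$, and that its Hessian there is positive semi-definite. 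By definition this Hessian is the pressure form on $\Cc(\Gamma,\ms{PSL}_2(\mathbb C))$, so the pressure form is an analytic non-negative $2$-tensor; and since $\alpha\circ(\rho\circ\phi)=(\alpha\circ\rho)\circ\phi$ for $\phi\in\operatorname{Out}(\Gamma)$ while $\JJ$ is $\operatorname{Out}(\Gamma)$-invariant, it is $\operatorname{Out}(\Gamma)$-invariant. What is left is positive-definiteness where claimed.

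First I would treat a point $[\rho]$ with $\rho(\Gamma)$ Zariski dense in $\ms{PSL}_2(\mathbb C)$. Then $\sigma:=\alpha\circ\rho$ has Zariski closure $\ms G:=\alpha(\ms{PSL}_2(\mathbb C))$, which is connected and simple; $\ms G$ contains a generic element (the image under $\alpha$ of any loxodromic element of $\rho(\Gamma)$), so $\sigma$ is $\ms G$-generic; and $\sigma$ is convex and irreducible (a property of the Pl\"ucker representation: $\alpha\circ\rho$ is irreducible whenever $\rho(\Gamma)$ is Zariski dense). A short group-cohomology computation shows $[\rho]\mapsto[\alpha\circ\rho]$ is an immersion near $[\rho]$: if $d\alpha$ carries a $\rho$-cocycle to an $\operatorname{Ad}\sigma$-coboundary, then, writing $\mathfrak{sl}_m$ as the $\operatorname{Ad}\sigma$-submodule $d\alpha(\mathfrak{psl}_2\mathbb C)$ plus an invariant complement and using that $\sigma(\Gamma)$ is Zariski dense in the simple group $\ms G$ (so the complement has no $\ms G$-invariant vector) together with $H^0(\Gamma,\mathfrak{psl}_2\mathbb C)=0$, the cocycle was already a coboundary. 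Near $[\rho]$ the image of this immersion is therefore a family of convex Anosov, irreducible, $\ms G$-generic representations, to which the argument proving Theorem \ref{path metric} applies, giving that the corresponding pressure form is Riemannian there. Finally, since $[\sigma_1]\mapsto\JJ(\sigma,\sigma_1)$ is critical at $\sigma_1=\sigma$, the Hessian commutes with this composition, so the pressure form on $\Cc(\Gamma,\ms{PSL}_2(\mathbb C))$ at $[\rho]$ is the pullback through an immersion of a positive-definite form, hence positive-definite; as the Zariski dense representations form an open set, this proves the main assertion. I expect this to be the crux: it must be run through the \emph{proof} of Theorem \ref{path metric}, applied to the submanifold $\{\alpha\circ\rho'\mid\rho'\in\hom(\Gamma,\ms{PSL}_2(\mathbb C))\}$, because $\hom(\Gamma,\sln)$ itself need not be smooth at $\sigma$, only this submanifold is.

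For part (1), suppose $\Gamma$ has no finite-index subgroup isomorphic to a free group or a closed surface group; then I claim every convex cocompact $\rho\colon\Gamma\to\ms{PSL}_2(\mathbb C)$ is Zariski dense. Otherwise the Zariski closure of $\rho(\Gamma)$ is a proper algebraic subgroup of $\ms{PSL}_2(\mathbb C)$ which, up to conjugacy and passing to a finite-index subgroup, is solvable, compact, or equal to $\ms{PSL}_2(\mathbb R)$. In the first two cases $\rho(\Gamma)$ is an elementary, respectively finite, Kleinian group, so the torsion-free group $\Gamma$ is virtually cyclic, hence free; in the last case a finite-index subgroup of $\Gamma$ is a convex cocompact Fuchsian group, hence free or a closed surface group --- each possibility contradicting the hypothesis. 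By the previous paragraph the metric is then Riemannian at every point.

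For part (2), let $\Gamma=\pi_1(S)$ with $S$ closed, connected, orientable (of genus at least $2$; the remaining genera are vacuous). Since $\pi_1(S)$ is neither virtually solvable nor finite, the classification just used forces a non--Zariski dense convex cocompact representation to be conjugate into $N_{\ms{PSL}_2(\mathbb C)}(\ms{PSL}_2(\mathbb R))=\ms{PGL}_2(\mathbb R)$, i.e.\ to preserve a totally geodesic plane; as $\pi_1(S)$ cannot act convex cocompactly on such a plane with non-orientable quotient, it lies in $\ms{PSL}_2(\mathbb R)$, i.e.\ in the Fuchsian locus. Hence every representation off the Fuchsian locus is Zariski dense and the metric is Riemannian there. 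On the Fuchsian locus, $\log\Lambda(\gamma)(\alpha\circ\rho)=\kappa\,\log\Lambda(\gamma)(\rho)$ for all $\gamma$, with $\kappa$ depending only on the highest weight of $\alpha|_{\ms{PSL}_2(\mathbb R)}$, so $\II(\alpha\circ\rho_0,\alpha\circ\rho_1)=\II(\rho_0,\rho_1)$ and $h(\alpha\circ\rho)$ is constant; thus the restriction of $\JJ$ to each component of the Fuchsian locus is the intersection number of the underlying hyperbolic structures, whose Hessian is a fixed positive multiple of the Weil--Petersson metric by McMullen \cite{mcmullen} and Wolpert \cite{wolpert} (cf.\ Bridgeman \cite{bridgeman-pressure}). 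The complex-conjugate component is handled identically, complex conjugation preserving each $\Lambda(\gamma)(\cdot)$.
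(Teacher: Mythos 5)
Your treatment of the Zariski dense locus, of (1), and of (2) is essentially the paper's own route: the paper also composes with the Pl\"ucker representation (via a global analytic lift $\omega$ of $\Cc(\Gamma,\ms{PSL}_2(\mathbb C))$ into $\hom(\Gamma,\ms{PSL}_2(\mathbb C))$ obtained by normalizing fixed points of two coprime elements, a point you gloss over but which is needed to have an honest analytic family of homomorphisms parameterized by the quotient), and the positive-definiteness at Zariski dense points is obtained exactly as you suggest, not from Theorem \ref{path metric} itself but from its engine, Corollary \ref{typ0b} (Propositions \ref{generic implies log type 0} and \ref{typ0ba} require convex, irreducible, $\ms G$-generic but no regularity), combined with injectivity of $\alpha_*$ on $H^1$ coming from the splitting $\mk{sl}_m(\mathbb R)=\mk g\oplus\mk g^\perp$; your immersion computation is the same splitting argument (the side remarks that $\mk g^\perp$ has no invariant vector and that $H^0=0$ are unnecessary). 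Your identification of the Fuchsian restriction with a multiple of Weil--Petersson matches the paper.

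However, there is a genuine gap against the statement itself: the first sentence asserts that the pressure form \emph{gives rise to a metric on all of} $\Cc(\Gamma,\ms{PSL}_2(\mathbb C))$, Riemannian only on the Zariski dense part. A non-negative $2$-tensor that degenerates on the almost Fuchsian locus only yields a path \emph{pseudo}-metric, and you never show that this pseudo-metric separates points. The paper's proof spends most of its effort precisely here: it reduces to $\Gamma=\pi_1$ of a compact $3$-manifold with boundary, observes that a non-Zariski dense $\rho$ has Zariski closure $\ms{PSL}_2(\mathbb R)$ or its order-two extension, uses Proposition \ref{3-manifold case} to conclude that the locus of non-Zariski dense representations is an analytic submanifold of $\Cc(\Gamma,\ms{PSL}_2(\mathbb C))$, applies Corollary \ref{typ0b} again (to these real reductive Zariski closures) to show the pressure form restricts to a Riemannian metric on that submanifold, and finally invokes the appendix Lemma \ref{pro:path-metric} to promote the resulting path pseudo-metric to a genuine metric. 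Your proposal contains none of this: even in case (2), where you do prove positivity of the restriction to the Fuchsian locus, you do not use it to rule out distinct points at zero distance, and for other groups admitting almost Fuchsian representations (free groups, free products of free and surface groups) the degenerate locus is not addressed at all. Without this analysis you have proved only that the pressure form is an analytic, $\operatorname{Out}(\Gamma)$-invariant, non-negative form which is Riemannian on the Zariski dense open set, not the metric statement as claimed.
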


If $\ms G$ is a rank one semi-simple Lie group, then work of Patterson \cite{patterson}, Sullivan \cite{sullivan}, Yue \cite{yue} and 
Corlette-Iozzi \cite{corlette-iozzi} shows that the entropy  of a convex
cocompact representation $\rho:\Gamma\to \ms G$ agrees with the Hausdorff dimension of the limit set of 
$\rho(\Gamma)$.
We may then apply Theorem \ref{analytic} and the Pl\"ucker representation to conclude that
that the Hausdorff dimension of the limit set varies analytically over analytic families of convex cocompact representations
into rank one semi-simple Lie groups.

\begin{corollary}
\label{rank 1}
{\sc[Analyticity of Hausdorff Dimension]}
If $\Gamma$ is a finitely generated group and $\mathsf G$ is a rank one semi-simple Lie group, then  
the Hausdorff dimension of the limit set
varies analytically on any analytic family of convex cocompact representations of $\Gamma$ into $\ms G$.
In particular, the Hausdorff dimension varies analytically over ${\Cc}(\Gamma,\ms{PSL}_2(\mathbb C))$
\end{corollary}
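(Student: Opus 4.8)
The plan is to transport the analyticity of the entropy $h$ from Theorem \ref{analytic} to $\ms G$ through a proximal representation, after identifying the Hausdorff dimension of the limit set with an entropy. Since $\ms G$ has rank one, the work of Patterson \cite{patterson}, Sullivan \cite{sullivan}, Yue \cite{yue} and Corlette--Iozzi \cite{corlette-iozzi} identifies the Hausdorff dimension of the limit set $\Lambda_\rho$ of a convex cocompact representation $\rho:\Gamma\to\ms G$ with its critical exponent
$$
\delta(\rho)=\lim_{T\to\infty}\frac{1}{T}\log\sharp\{[\gamma]\mid \ell_\rho(\gamma)\leq T\},
$$
where $\ell_\rho(\gamma)$ is the translation length of $\rho(\gamma)$ on the symmetric space of $\ms G$. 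A convex cocompact representation has word hyperbolic image, so the theory developed above applies to it --- after replacing $\Gamma$ by that image, or by a word hyperbolic quotient, if one does not assume $\Gamma$ hyperbolic from the outset --- and every infinite-order element is translated by a positive amount.

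I would then fix a proximal representation $\alpha:\ms G\to\sln$ --- the Pl\"ucker representation of Section \ref{plucker-sec} when $\ms G=\ms{PSL}_2(\mathbb C)$, and the analogous irreducible proximal representation in the remaining rank one cases --- with two properties: (i) $\alpha\circ\rho$ is convex Anosov whenever $\rho$ is convex cocompact; and (ii) there is a constant $c=c(\alpha)>0$ such that $\log\Lambda(\gamma)(\alpha\circ\rho)=c\,\ell_\rho(\gamma)$ for every $\gamma\in\Gamma$. Property (ii) holds because $\log\Lambda(\gamma)(\alpha\circ\rho)$ is the highest weight of $\alpha$ evaluated on the Jordan projection of $\rho(\gamma)$, and in rank one that Jordan projection is a fixed positive multiple of $\ell_\rho(\gamma)$. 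Comparing the counting functions gives $\sharp R_T(\alpha\circ\rho)=\sharp\{[\gamma]\mid\ell_\rho(\gamma)\leq T/c\}$, hence
$$
\dim_{\mathcal H}\Lambda_\rho=\delta(\rho)=c\,h(\alpha\circ\rho).
$$

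To conclude, let $\beta:M\to\hom(\Gamma,\ms G)$ be an analytic family of convex cocompact representations. Since $\alpha$ is a Lie group homomorphism, hence real analytic, the composition $\alpha\circ\beta:M\to\hom(\Gamma,\sln)$ is continuous, each $\alpha\circ\beta_m$ is convex Anosov by (i), and $m\mapsto\alpha(\beta_m(\gamma))$ is analytic for every $\gamma$; thus $\alpha\circ\beta$ is an analytic family of convex Anosov homomorphisms. Theorem \ref{analytic} then gives that $m\mapsto h(\alpha\circ\beta_m)$ is analytic on $M$, whence $m\mapsto\dim_{\mathcal H}\Lambda_{\beta_m}=c\,h(\alpha\circ\beta_m)$ is analytic as well. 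The final assertion of the corollary is the special case $\ms G=\ms{PSL}_2(\mathbb C)$, together with the analytic manifold structure on ${\Cc}(\Gamma,\ms{PSL}_2(\mathbb C))$ recorded earlier.

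The main obstacle is property (ii): reconciling the dynamically defined entropy $h$, which counts spectral radii in the linear representation $\alpha$, with the geometric critical exponent $\delta$, which counts translation lengths in the symmetric space. This matching works precisely because in rank one the Jordan (and Cartan) projection takes values in a half-line, so that the single functional ``$\log$ of the spectral radius of $\alpha(\cdot)$'' records the length spectrum faithfully up to a uniform scaling --- a phenomenon with no higher-rank analogue, which is why the statement is confined to rank one. A more routine matter, handled as in Section \ref{plucker-sec}, is the verification of (i), that the chosen proximal representation carries convex cocompact representations to convex Anosov ones, in each of the rank one families.
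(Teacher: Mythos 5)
Your proposal is correct and follows essentially the same route as the paper: compose with the Pl\"ucker (proximal) representation, use the one-dimensionality of the Cartan subspace so that $\log\Lambda(\gamma)(\alpha\circ\rho)$ is a fixed multiple of the translation length (this is exactly the paper's Corollary on entropy in rank one), invoke Theorem \ref{analytic} for analyticity of entropy along the composed analytic family, and identify entropy with Hausdorff dimension via Patterson, Sullivan, Yue and Corlette--Iozzi. The only cosmetic difference is that the paper applies its Pl\"ucker representation uniformly to all rank one groups rather than treating $\ms{PSL}_2(\mathbb C)$ separately, which your argument accommodates anyway.
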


One may further generalize our construction into the setting of virtually Zariski dense Anosov representations into an
arbitrary semi-simple Lie group $\ms G$. A representation $\rho:\Gamma\to\ms G$ is {\em virtually Zariski dense}
if the Zariski closure of $\rho(\Gamma)$ is a finite index subgroup of $\ms G$.  
If $\Gamma$ is a word hyperbolic group, $\ms G$ is a semi-simple Lie group
with finite center and $\ms P$ is a non-degenerate parabolic subgroup, then we let $\mathcal{Z}(\Gamma;\ms G,\ms P)$
denote the space of (conjugacy classes of) regular virtually Zariski dense 
$(\ms G,\ms P)$-Anosov representations of $\Gamma$ into $\ms G$. 
The space $\mathcal{Z}(\Gamma;\ms G,\ms P)$ is an analytic orbifold, see
Proposition \ref{zd variety}, and
we can again use a Pl\"ucker representation to define a pressure metric on $\mathcal{Z}(\Gamma;\ms G,\ms P)$.
If $\ms G$ is connected, then $\mathcal{Z}(\Gamma;\ms G,\ms P)$ is an analytic manifold.

\begin{corollary}{\sc [Anosov representations]}
\label{metric on anosov} 
Suppose that $\Gamma$ is a word hyperbolic group, 
$\ms G$ is a semi-simple Lie group with finite center and $\ms P$ is a  non-degenerate parabolic subgroup of $\ms G$.
Then there exists an ${\rm Out}(\Gamma)$-invariant analytic Riemannian metric on
the orbifold $\mathcal{Z}(\Gamma;\ms G,\ms P)$.
\end{corollary}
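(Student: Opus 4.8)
The plan is to reduce the statement to the case of convex Anosov representations into $\sln$, where Theorem \ref{path metric} already applies, via a Pl\"ucker-type embedding. First I would fix, for the pair $(\ms G,\ms P)$, a finite-dimensional representation $\alpha\colon\ms G\to\ms{SL}_m(\Real)$ whose highest weight is adapted to $\ms P$, so that the $\ms P$-invariant flag in the boundary of $\ms G$ is carried to a line (and its complement to a hyperplane) in $\Real^m$. This is the standard construction: $\ms P$ is conjugate to a standard parabolic $\ms P_\theta$, and a representation whose highest weight $\chi$ has the property that $\chi$ restricted to the center of the Levi is regular enough forces $\alpha(\ms P_\theta)$ to stabilize a line; concretely one can take $\alpha$ to be a suitable sum or tensor construction on the defining representation (this is the Pl\"ucker representation referred to in Section \ref{plucker-sec}). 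The key point, which I would cite from the Anosov-representation machinery (Guichard--Wienhard, or the earlier sections of this paper), is that post-composition $\rho\mapsto\alpha\circ\rho$ sends $(\ms G,\ms P)$-Anosov representations to convex Anosov representations of $\Gamma$ into $\ms{SL}_m(\Real)$: the limit maps $\xi,\theta$ for $\alpha\circ\rho$ are obtained by composing the flag limit maps of $\rho$ with the $\alpha$-equivariant map of flag varieties into $\rpn\times\rpn^*$, and the transversality condition $\xi(x)\oplus\theta(y)=\Real^m$ follows from transversality of the original flags together with the fact that the orbit map of flag varieties is a proper embedding.

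Next I would check that the embedding is compatible with Zariski density. If $\rho$ is virtually Zariski dense in $\ms G$, then the Zariski closure $\ms H$ of $\alpha\circ\rho(\Gamma)$ is $\alpha$ applied to a finite-index subgroup of $\ms G$; since $\ms G$ is semisimple with finite center, $\ms H$ is reductive, and after passing to $\psln$ the image is $\ms G$-generic in the sense defined in the excerpt provided $\alpha$ is chosen so that a generic element of (the relevant factor of) $\ms G$ has image with distinct eigenvalues --- this can always be arranged by taking $\alpha$ to be, or to contain, a faithful representation with this property. Thus $\alpha\circ\rho$ lands in the relevant stratum $\mathcal C_{\ms H}(\Gamma,\ms{SL}_m(\Real))$ on which Theorem \ref{path metric} produces an analytic $\mathrm{Out}(\Gamma)$-invariant Riemannian metric, namely the Hessian of the renormalized intersection $\JJ$. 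Because $\Lambda(\gamma)(\alpha\circ\rho)$ is an explicit (analytic, positive-homogeneous) function of the Jordan projection of $\rho(\gamma)$ determined only by the highest weight of $\alpha$, the renormalized intersection $\JJ$ pulls back to a well-defined analytic function on $\mathcal Z(\Gamma;\ms G,\ms P)\times\mathcal Z(\Gamma;\ms G,\ms P)$, and the pressure form is its restricted Hessian; by Theorem \ref{analytic} it is analytic, non-negative and $\mathrm{Out}(\Gamma)$-invariant, and by the construction in Theorem \ref{path metric} (together with the identification of the kernel of the Hessian with infinitesimal spectrum-preserving deformations, and Theorem \ref{theorem:length}) it is positive definite on the locus where the associated marked length spectrum has no infinitesimal deformations --- which for virtually Zariski dense representations is everything, because a nontrivial deformation would change the Jordan projections and hence the $\Lambda(\gamma)$'s. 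Finally I would note that $\mathcal Z(\Gamma;\ms G,\ms P)$ is an analytic orbifold by Proposition \ref{zd variety} (a manifold if $\ms G$ is connected), so the pulled-back form is an honest orbifold Riemannian metric.

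The main obstacle I anticipate is not the existence of the Pl\"ucker embedding but the \emph{nondegeneracy} of the resulting pressure form: one must know that the pullback of $\JJ$ has nondegenerate Hessian at every point of $\mathcal Z(\Gamma;\ms G,\ms P)$, equivalently that there are no nonzero tangent vectors along which all the functions $\gamma\mapsto\log\Lambda(\gamma)(\alpha\circ\rho)$ have vanishing derivative. For the convex Anosov target this is exactly what underlies Theorem \ref{path metric}, whose proof identifies the null space of the Hessian with deformations fixing the marked spectral-radius spectrum and then invokes the rigidity of Theorem \ref{theorem:length}; the work here is to verify that the $\ms G$-generic hypothesis needed in that argument is genuinely inherited by $\alpha\circ\rho$ from virtual Zariski density of $\rho$, and that the ``spectrum-preserving $\Rightarrow$ trivial deformation'' step survives the reparameterization by $\alpha$ (i.e.\ that the highest weight of $\alpha$ separates enough of the Jordan projection to detect all first-order variations). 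A secondary, purely bookkeeping obstacle is the passage between $\ms{SL}_m(\Real)$ and $\psln$ and the orbifold structure: one must confirm that conjugation ambiguities and the finite-index subgroup appearing in virtual Zariski density only introduce finite isotropy, so that the metric descends to the orbifold $\mathcal Z(\Gamma;\ms G,\ms P)$ as claimed.
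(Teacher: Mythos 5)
Your overall route---compose with a Pl\"ucker representation, pull back the pressure form from the convex Anosov picture, then handle the orbifold bookkeeping---is exactly the paper's, but the step on which the whole corollary hinges, nondegeneracy, is asserted rather than proved. Your justification (``a nontrivial deformation would change the Jordan projections and hence the $\Lambda(\gamma)$'s'') is precisely the statement that requires proof: it is an \emph{infinitesimal} marked-spectrum rigidity claim, and Theorem \ref{theorem:length}, being a global rigidity statement, does not show that a tangent vector annihilating all derivatives ${\rm D}\log\Lambda(\gamma)$ must vanish. The paper's mechanism is Corollary \ref{typ0b}: for an analytic family of convex, irreducible, $\ms H$-generic homomorphisms (with $\ms H=\alpha(\ms G)$ a reductive subgroup of $\sln$), a pressure-degenerate vector $v$ satisfies ${\rm D}\omega(v)=0$ in $H^1_{\omega(z)}(\Gamma,\mk h)$. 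To convert this into vanishing of the original tangent vector to $\mathcal{Z}(\Gamma;\ms G,\ms P)$ one needs two further ingredients that your proposal leaves open: (i) the ``Moreover'' clause of Corollary \ref{plucker} --- because $\ms P$ is non-degenerate, $\ker\alpha=Z(\ms G)$ and $\alpha$ is an immersion, so $\alpha_*:H^1_{\beta(z)}(\Gamma,\mk g)\to H^1_{\omega(z)}(\Gamma,\mk h)$ is injective; this is where the non-degeneracy hypothesis on $\ms P$ actually enters, and without it the pulled-back form could genuinely degenerate; and (ii) local analytic lifts $\beta:U\to\hom(\Gamma,\ms G)$ of the finite manifold cover $\widehat{\mathcal Z}=\widetilde{\mathcal{Z}}(\Gamma;\ms G,\ms P)/\ms G^0$ (Proposition \ref{zd manifold}, via the slice theorem), which identify $\ms T U$ with $H^1_{\beta(z)}(\Gamma,\mk g)$ so that vanishing of the class forces $v=0$; the metric then descends to the orbifold by $\ms G/\ms G^0$-invariance. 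You name this transfer as your ``main obstacle'' but do not supply it, and it is the actual content of the proof.

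A smaller but real confusion: your proposed requirement that $\alpha$ send generic elements of $\ms G$ to elements with distinct eigenvalues is both unnecessary and, in general, unattainable. The genericity needed is $\ms H$-genericity in the paper's sense (centralizer in $\ms H=\alpha(\ms G)$ equal to a maximal torus of $\ms H$), not $\sln$-genericity; since generic elements form a nonempty Zariski-open subset of $\ms H$ and $\alpha\circ\rho(\Gamma)$ is Zariski dense in a finite-index subgroup of $\ms H$, this is automatic by the argument of Lemma \ref{pro:gen}, with no extra condition on the Pl\"ucker representation --- which, being built from exterior powers of the adjoint representation, typically has repeated weights, so your strengthening could not be arranged and is fortunately not needed.
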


A key tool in our proof is the introduction of a flow  $\Ug$ associated to a projective Anosov 
representation $\rho$.
Let $\rho:\Gamma\to\sln$ be a projective Anosov representation with limit maps $\xi$ and $\theta$.
Let $F$ be the total space of the principal $\mathbb R$-bundle over $\rpn\times\rpn^*$ whose fiber at
the point $(x,y)$ is the space of  norms on the line  $\xi(x)$. 
There is a natural $\mathbb R$-action on $F$ which takes a norm $u$ on $x$ to the norm $e^{-t}u$.
Let $F_\rho$ be $\mathbb R$-principal bundle  over 
$$\bg^{(2)} =\bg\times\bg\setminus\{(x,x)\mid x\in\bg\}.$$
which is the pull back of $F$ by $(\xi,\theta)$. The $\mathbb R$-action on $F$ gives rise to a flow on
$F_\rho$. 
 (An analogue of this flow was first introduced by Sambarino \cite{sambarino-quantitative,sambarino-thesis}
in the setting of projective Anosov irreducible representations of fundamental groups of closed negatively curved manifolds.)

We then show that this flow is metric Anosov and is a H\"older reparameterization of  the
{\em Gromov geodesic flow} $\Gg$ of $\Gamma$.
Moreover, this flow encodes the spectral radii of elements of $\rho(\Gamma)$, i.e.
the period of the flow associated
to (the conjugacy class of ) an element $\gamma\in\Gamma$ is $\log \Lambda(\gamma)(\rho)$.
(Metric Anosov flows are a natural
generalization of Anosov flows in the setting of compact metric spaces and were studied
by Pollicott \cite{pollicottcoding}.)

\begin{theorem}{\sc [geodesic flow]}
\label{geodesic flow}
The action of $\Gamma$ on $F_\rho$ is proper and cocompact. Moreover, the $\mathbb R$ action on 
$\Ug=F_\rho/\Gamma$ is a topologically transitive metric Anosov flow which is 
H\"older orbit equivalent to the geodesic flow $\Gg$. 
\end{theorem}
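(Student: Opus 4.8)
The plan is to trivialize the principal $\mathbb R$-bundle $F_\rho$, recognize the resulting flow as a H\"older reparameterization of Gromov's geodesic flow $\Gg$, and then transport the dynamical properties through this equivalence. First I would fix an inner product on $\mathbb R^m$; its restriction to lines gives a section of $F$, whose pullback by $(\xi,\theta)$ is a section $x\mapsto m_x$ of $F_\rho\to\bg^{(2)}$, where $m_x$ is the induced Euclidean metric on $\xi(x)$. Since $\xi$ is H\"older (a standard consequence of the Anosov condition, see Section~\ref{convex-anosov}), this section is H\"older, and it identifies $F_\rho$ with $\bg^{(2)}\times\mathbb R$ in coordinates in which the $\mathbb R$-action is the translation $(x,y,t)\mapsto(x,y,t+s)$ and $\Gamma$ acts by
$$
\gamma\cdot(x,y,t)=\bigl(\gamma x,\,\gamma y,\,t+c(\gamma,x)\bigr),
$$
where $c\colon\Gamma\times\bg\to\mathbb R$ is the additive cocycle $c(\gamma,x)=\log\bigl(m_{\gamma x}(\rho(\gamma)v)/m_x(v)\bigr)$, taken at any $0\neq v\in\xi(x)$ and independent of $v$. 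This $c$ is H\"older since $\xi$ and $x\mapsto m_x$ are and $\bg$ is compact; and as $\xi(\gamma^+)$ is the attracting eigenline of the proximal element $\rho(\gamma)$, the period of $c$ over $[\gamma]$ is $c(\gamma,\gamma^+)=\log\Lambda(\gamma)(\rho)>0$. The $\mathbb R$-factor is indispensable, since $\Gamma$ does not act properly on $\bg^{(2)}$ itself.

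I would then compare this with the geodesic flow. Recall that $\Gamma$ acts properly discontinuously and cocompactly on $\bg^{(2)}\times\mathbb R$, equipped with the translation flow and a H\"older cocycle on the $\mathbb R$-factor, the quotient being the topologically transitive metric Anosov flow $\Gg$ (Gromov; see also Champetier and Mineyev), with local product structure given by the two boundary coordinates. The key step is to place our cocycle $c$ inside this picture: by the correspondence between H\"older cocycles over $\bg$ with positive periods and positive H\"older functions over $\Gg$ (in the spirit of Ledrappier and of Sambarino~\cite{sambarino-quantitative,sambarino-thesis}), $c$ determines a positive H\"older function $f$ on $\Gg$ with $\int_{[\gamma]}f\,dt=\log\Lambda(\gamma)(\rho)$ such that the reparameterized flow $\Gg^f$ is $\Gamma$-equivariantly H\"older conjugate to the translation flow on $\bg^{(2)}\times\mathbb R$ twisted by $c$, that is, to the flow on $F_\rho$. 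Transporting properness and cocompactness through this conjugacy gives the first assertion and exhibits $\Ug=F_\rho/\Gamma$ as the H\"older reparameterization $\Gg^f$ of $\Gg$, with period $\log\Lambda(\gamma)(\rho)$ over $[\gamma]$.

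It remains to see that the flow on $\Ug$ is topologically transitive and metric Anosov. Both properties are inherited under reparameterization by a positive H\"older function: such a reparameterization changes the flow speed along the stable and unstable sets only by a bounded factor, hence preserves their exponential contraction and expansion and the local product structure. Alternatively, the metric Anosov structure on $\Ug$ may be exhibited directly, reading the stable and unstable sets off from the two coordinates on $\bg^{(2)}$, the required exponential behavior being a reformulation of the contraction condition in the definition of a convex Anosov representation. Either way, the flow on $\Ug$ is topologically transitive and metric Anosov and is H\"older orbit equivalent to $\Gg$, as claimed.

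I expect the crux to be the comparison step: showing that $c$ is \emph{uniformly} comparable to the Gromov cocycle, equivalently that the reparameterizing function $f$ is H\"older and bounded away from $0$ and $\infty$ and that the conjugacy remains proper and cocompact. This is exactly where the Anosov hypothesis, rather than mere convexity, does the work, the uniform exponential behavior along the flow being what controls the oscillation of $c$ relative to the Gromov cocycle; it also uses the H\"older regularity of the limit maps $\xi$ and $\theta$.
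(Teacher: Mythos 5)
Your reduction of $F_\rho$ to $\bg^{(2)}\times\mathbb R$ twisted by the H\"older cocycle $c$, and the period computation $c(\gamma,\gamma^+)=\log\Lambda(\gamma)(\rho)$, are correct and consistent with what the paper does. The gap is in the step where you invoke ``the correspondence between H\"older cocycles with positive periods and positive H\"older functions over $\Gg$'': that correspondence (Ledrappier, Sambarino) is established for fundamental groups of closed negatively curved manifolds, where one has the Hopf parameterization of the unit tangent bundle, and it is not an off-the-shelf statement for the Gromov flow of an arbitrary word hyperbolic group. Moreover, positivity of the periods alone does not produce a positive reparameterizing function, nor properness and cocompactness of the $c$-twisted action; one needs uniform control of $c$ against the Gromov cocycle (compare Proposition \ref{displac}), and you acknowledge in your last paragraph that this is the crux without supplying an argument --- but this \emph{is} the proof. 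In the paper the uniformity is obtained by replacing your fixed Euclidean metric with the flow-adapted metric $G^0$ on $\Xi$ of Lemma \ref{XiHolder}, satisfying $\psi_t^*G^0<e^{-\beta t}G^0$, which makes the resulting orbit map $\tilde\nu:\widetilde\Gg\to F_\rho$ injective and proper (Proposition \ref{geoflowreparam2}); properness, cocompactness and the H\"older orbit equivalence are then read off from the fact that $\tilde\nu$ is an equivariant homeomorphism.

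The metric Anosov step contains a genuine error: your primary argument takes for granted that $\Gg$ itself is a metric Anosov flow and that the property passes to reparameterizations. The Gromov flow is only known to be a topologically transitive flow with a metric well defined up to H\"older equivalence; it is not known (and is not claimed by Gromov, Champetier or Mineyev, nor in this paper) to be metric Anosov for a general word hyperbolic group --- producing a metric Anosov reparameterization of it is precisely part of what Theorem \ref{geodesic flow} asserts, and the paper explicitly warns that the ``hyperbolic set'' heuristic does not directly work. Your fallback, reading the laminations off the two boundary coordinates and calling the contraction a reformulation of the convex Anosov condition, points in the right direction but skips the substance: one must construct a $\Gamma$-invariant metric on $F_\rho$ locally bilipschitz to linear metrics (Lemma \ref{linear:metric}), relate distances along leaves to the leaf lift (Proposition \ref{sect-dist}), and convert the contraction of $\hom(\Theta,\Xi)$ into genuine metric contraction of the laminations (Lemma \ref{contract1} and Proposition \ref{pro:geodano}). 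As written, both halves of your argument rest on statements that are themselves the content to be proved.
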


Theorem \ref{geodesic flow} allows us to make use 
of the thermodynamic formalism. We show that if $f_\rho$ is the H\"older function regulating the change of speed of $\Ug$ and $\Gg$, 
then $\Phi_\rho=-h(\rho) f_\rho$ is a pressure zero function on $\Gg$.
Therefore, we get a  mapping
$$\mathfrak{T}:\Cm\to {\mathcal{H}}(\Gg),$$
called the {\em thermodynamic mapping}, from $\Cm$ into the space
$\mathcal{H}(\Gg)$ of Liv\v sic cohomology classes of pressure zero H\"older functions on $\Gg$.
Given any \hbox{$[\rho]\in\Cm$}, there exists an open neighborhood $U$ of $[\rho]$ and a lift of $\mathfrak{T}|_U$ 
to an analytic map of $U$ into the space  ${\mathcal{P}}(\Gg)$ of pressure zero H\"older functions on $\Gg$. Our pressure form is
obtained as a pullback of the pressure 2-tensor on ${\mathcal{P}}(\Gg)$ with respect to this lift.

\medskip\noindent
{\bf Remarks and references: }
Anosov representations were introduced by Labourie \cite{labourie-anosov} in his study of
Hitchin representations, and their theory was further developed by Guichard and Wienhard  \cite{guichard-wienhard}.
Benoist \cite{benoist-divisible0,benoist-divisible1,benoist-divisible3} studied holonomy maps 
of  strictly convex projective structures on closed manifolds which he showed were irreducible
representations with transverse projective limit maps, hence projective Anosov. 
Sambarino \cite{sambarino-thesis,sambarino-quantitative,growth} introduced a flow, closely related to
our flow, associated to
a representation with transverse projective limit maps and used it to prove the continuity of the
associated entropy on a Hitchin component.
Pollicott and Sharp \cite{pollicott-sharp} applied the thermodynamic formalism and work of Dreyer \cite{dreyer}
to show that a closely related entropy gives rise to an analytic function on any Hitchin component.

Our metric generalizes Thurston's Riemannian metric on Teichm\"uller space which he defined to
be the Hessian of the length of a random geodesic. Wolpert \cite{wolpert} proved that Thurston's Riemannian
metric was a multiple of the more classical Weil-Petersson metric. Bonahon \cite{bonahon} gave an
interpretation of Thurston's metric in terms of the Hessian of an intersection function. 
Burger \cite{burger} previously studied the intersection number for convex cocompact subgroups of rank 1 simple Lie groups
and proved a strong version of Theorem \ref{intersection} in this setting (see also Kim \cite{kim-ergodic}).
The study of geometric properties of surfaces using the thermodynamic formalism originated in Bowen \cite{quasicircles}. 
Using a Bowen-Series coding and building on work of Bridgeman and Taylor \cite{bridgeman-taylor},
McMullen \cite{mcmullen} gave a pressure metric formulation of the Weil--Petersson metric on Teichm\"uller space. 
Bridgeman \cite{bridgeman-pressure} developed a pressure metric
on quasifuchsian space which restricts to the Weil--Petersson metric on the Fuchsian locus. 
Our Theorem \ref{path metric} is a natural generalization
of Bridgeman's work into the setting of projective Anosov representations, while Corollary \ref{psl2c} is a generalization
into the setting of general deformation spaces of convex cocompact representations into $\ms{PSL}_2(\mathbb C)$.

Corollary \ref{rank 1} was established by Ruelle \cite{ruelle-hd} for
quasifuchsian representations, {\it i.e.} when $\Gamma=\pi_1(S)$ and $\ms G=\mathsf{PSL}_2(\mathbb{C})$, 
and by Anderson and Rocha \cite{anderson-rocha} for function groups, {\it i.e.} when  
$\Gamma$ is a free product of surface groups and free groups and $\ms G=\mathsf{PSL}_2(\mathbb{C})$.
Previous work of Tapie \cite{tapie} implies that the Hausdorff dimension of the limit set is  a $C^1$ function on
$C^1$-families of convex cocompact representations of $\Gamma$ into a rank one Lie group $\ms G$. 
Tapie's work was inspired by work of Katok, Knieper, Pollicott and Weiss \cite{KKPW,KKW}
who established analytic variation of the entropy for analytically varying families of  Anosov flows on closed 
Riemannian manifolds.
Our Theorem
\ref{theorem:length} is related to the marked length spectrum rigidity theorem of Dal'Bo-Kim \cite{dalbo-kim}.

Coornaert--Papadopoulos \cite{coornaert-papadopoulos} showed  that if $\Gamma$ is word hyperbolic, then
there is a symbolic coding of its geodesic flow $\Gg$.
However, this coding is not necessarily one-to-one on a large enough set to apply the thermodynamic formalism.
Therefore, word hyperbolic groups admitting  projective  Anosov representations represent an interesting class 
of groups from the point of view of symbolic dynamics.

\medskip\noindent
{\bf Acknowledgements:}
We thank Bill Goldman,  Alex Lubotzky, Fran\c cois Ledrappier, Olivier Guichard, Fr\'ed\'eric Paulin, 
Jean-Fran\c cois Quint, Hans-Henrik Rugh,  Ralf Spatzier,
Matthew Stover and Amie Wilkinson for helpful discussions. We thank the referee for many useful
comments which improved the exposition.
This research was begun while the authors were participating in the program on {\em Geometry
and Analysis of Surface Group Representations} held at the Institut Henri Poincar\'e in Winter 2012.

\tableofcontents

\section{Anosov representations}

In this section, we recall the theory of  Anosov representations. 
We  begin by defining projective Anosov representations and developing their basic properties. 
In section \ref{plucker-sec}, we will see that any Anosov representation can be transformed, via post-composition
with a Pl\"ucker representation, into a projective Anosov representation, while in section \ref{irreducible reps}
we will study properties of irreducible projective Anosov representations.

\subsection{Projective Anosov representations}
\label{convex-anosov}

A  representation  $\rho:\Gamma\to\sln$ is projective Anosov if it has transverse projective limit maps
and  the associated flat bundle over its 
Gromov geodesic flow has a contraction property we will define carefully below.

\begin{definition}
Let $\Gamma$ be a  word hyperbolic group and $\rho$ be a representation of $\Gamma$ in $\sln$. 
We say $\rho$ has {\em transverse projective limit maps} if there exist $\rho$-equivariant continuous maps
\hbox{$\xi:\bg\to\rpn$} and 
\hbox{$\theta:\bg\to\rpn^*$} such that if $x\not=y$, then
$$
\xi(x)\oplus\theta(y)=\mathbb R^m.
$$ 

\end{definition}

\medskip\noindent
{\bf Conventions:} 
Denote by $\rpn$ the projective space of $\mathbb R^m.$  We will often identify $\rpn^*$ with the Grassmannian 
${\rm Gr}_{m-1}(\mathbb R^m)$ of $(m-1)$-dimensional subspaces of $\mathbb R^m,$ 
via $\varphi\mapsto\ker\varphi.$ The action of $\sln$ on $\rpn^*$ consistent wth this identification is 
$$g\cdot\varphi=\varphi\circ g^{-1}.$$

We will also assume throughout this paper that our word hyperbolic group does not
have a finite index cyclic subgroup. 
Since all the word hyperbolic groups we study are linear,  Selberg's Lemma
implies that they contain finite index torsion-free subgroups.

\medskip

Gromov \cite{gromov} defined a geodesic flow $\Gg$ for a word hyperbolic group -- that we shall call the {\em Gromov geodesic flow} -- (see Champetier \cite{champetier} and Mineyev \cite{mineyev}
for details). He defines a proper cocompact action of $\Gamma$ on
$\bg^{(2)}\times\mathbb R$ which commutes with the action of $\mathbb R$ by translation on the final factor.
The action of $\Gamma$ restricted to
$\bg^{(2)}$ is the diagonal action arising from the standard action of $\Gamma$ on $\partial_\infty\Gamma$.
There is a metric on $\bg^{(2)}\times\mathbb R$, well-defined up to H\"older equivalence, so that $\Gamma$ acts by isometries, every orbit of the $\mathbb R$ action gives a quasi-isometric embedding and the geodesic flow acts by Lipschitz homeomorphisms.
The flow on 
$$\widetilde{\Gg}=\bg^{(2)}\times\mathbb R$$
descends to a flow on the quotient
$$\Gg=\bg^{(2)}\times\mathbb R/\Gamma.$$
In the case that $M$ is a closed negatively curved manifold and $\Gamma=\pi_1(M)$,  $\Gg$ may be identified with $\ms T^1M$ in such a way that the flow on $\Gg$ is identified with the geodesic flow on $\ms T^1M$. Since the action of $\Gamma$ on $\bg^2$ is topologically transitive, the Gromov geodesic flow is topologically transitive.

If $\rho:\Gamma\to\sln$ is a representation, we let $E_\rho$ be the associated flat bundle over the geodesic 
flow of the word hyperbolic group $\Gg$.
Recall that 
$$E_\rho=\widetilde{\Gg}\times \mathbb R^m/\Gamma$$
where the action of $\gamma\in\Gamma$ on $\mathbb R^m$ is given
by $\rho(\gamma)$. If $\rho$ has transverse projective limit maps
$\xi$ and $\theta$, there is an induced splitting of $E_\rho$  as 
$$
E_\rho=\Xi\oplus\Theta
$$ 
where $\Xi$ and $\Theta$ are  sub-bundles, parallel along the geodesic flow,  of rank  $1$ and $m-1$ respectively.
Explicitly, if we lift $\Xi$ and $\Theta$ to sub-bundles $\tilde\Xi$ and $\tilde\Theta$ of the bundle
$\widetilde{\Gg}\times \mathbb R^m$ over $\widetilde{\Gg}$, then the
fiber of $\tilde\Xi$ above $(x,y,t)$ is simply $\xi(x)$ and the fiber of $\tilde\Theta$ is $\theta(y)$.

The $\mathbb R$-action on $\widetilde{\Gg}$ extends to a flow $\{\tilde \psi_t\}_{t\in\mathbb R}$ on
\hbox{$\widetilde{\Gg}\times \mathbb R^m$} (which acts trivially on the $\mathbb R^m$ factor).
The flow $\{\tilde \psi_t\}_{t\in\mathbb R}$ descends to a flow 
$\{\psi_t\}_{t\in\mathbb R}$ on $E_\rho$ which is a lift of the geodesic flow 
on $\Gg$. In particular, the flow respects the splitting
$E_\rho=\Xi\oplus\Theta$. 

In general, we say that a vector bundle $E$ over a compact topological space whose total space is equipped with
a flow $\{\phi_t\}_{t\in\mathbb R}$ of bundle automorphisms is {\em contracted} by the flow if for any metric 
$\Vert.\Vert$ on $E$, there exists $t_0>0$ such that if $v\in E$, then
$$
\Vert \phi_{t_0} (v)\Vert\leq\frac{1}{2}\Vert v\Vert.
$$
Observe that if bundle is contracted by a flow, its dual is contracted by the inverse flow.
Moreover, if the flow is contracting, it is also {\em uniformly contracting}, i.e. given any metric,
there exists  positive constants $A$ and $c$ such that
$$\Vert \phi_{t} (v)\Vert\leq A e^{-ct}\Vert v\Vert$$
for any $v\in E$.

\begin{definition}\label{def:convano}
A representation $\rho:\Gamma\to\sln$ with transverse projective limit maps is {\em projective Anosov} if  the 
bundle $\operatorname{Hom}(\Theta,\Xi)$ is contracted
by the flow $\{\psi_t\}_{t\in\mathbb R}$.
\end{definition}

In the sequel, we will use the notation $\Theta^*=\operatorname{Hom}(\Theta,\mathbb R)$.  The following alternative description will be useful.

\begin{proposition}
A representation $\rho:\Gamma\to\sln$  with transverse projective limit maps $\xi$ and $\theta$ is projective Anosov
if and only if  there exists
$t_0>0$ such that for all $Z\in \Gg$, $v\in \Xi_Z\setminus\{0\}$ and $w\in\Theta_Z\setminus\{0\}$,
\begin{equation}
\label{anosovineq}
\frac{\Vert\psi_{t_0}(v)\Vert}{\Vert\psi_{t_0}(w)\Vert}\le\frac{1}{2}\frac{\Vert v\Vert}{\Vert w\Vert}.
\end{equation}
\end{proposition}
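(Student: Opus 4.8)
The plan is to unwind the definition of ``contracted'' from Definition~\ref{def:convano} and recast it directly in terms of the norms of $v\in\Xi$ and $w\in\Theta$. The key observation is that $\operatorname{Hom}(\Theta,\Xi)$ is a bundle whose fiber norm is (up to uniform constants depending on a choice of metric) governed by $\Vert v\Vert/\Vert w\Vert$ for $v\in\Xi_Z$ and $w\in\Theta_Z$, and that the flow $\{\psi_t\}$ acts on $\operatorname{Hom}(\Theta,\Xi)$ as $\phi\mapsto \psi_t\circ\phi\circ\psi_{-t}$ because it respects the splitting $E_\rho=\Xi\oplus\Theta$.

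First, for the ``if'' direction: suppose inequality \eqref{anosovineq} holds for some $t_0>0$. Fix a metric $\Vert\cdot\Vert$ on $E_\rho$; it induces the operator norm on $\operatorname{Hom}(\Theta,\Xi)$, namely $\Vert\phi\Vert=\sup_{w\in\Theta_Z\setminus\{0\}}\Vert\phi(w)\Vert/\Vert w\Vert$. Since $\Xi$ has rank one, for $\phi\in\operatorname{Hom}(\Theta_Z,\Xi_Z)$ nonzero we can write $\Vert\phi(w)\Vert = \Vert v\Vert\cdot|\ell(w)|$ where $v$ spans $\Xi_Z$ and $\ell\in\Theta_Z^*$; evaluating $\psi_{t_0}\circ\phi\circ\psi_{-t_0}$ and using \eqref{anosovineq} on the pair $(\psi_{-t_0}v,\psi_{-t_0}w)$ — equivalently, applying the inequality at the point $\psi_{-t_0}Z$ — shows that $\Vert \psi_{t_0}(\phi)\Vert\le\frac12\Vert\phi\Vert$, which is exactly the contraction condition. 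One must be slightly careful that \eqref{anosovineq} is assumed for \emph{all} $Z$, so it indeed applies at $\psi_{-t_0}Z$, and by compactness the passage to a fixed metric only changes constants, which can be absorbed by iterating the flow (replacing $t_0$ by a multiple), using that contraction automatically upgrades to uniform contraction as noted in the excerpt.

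Conversely, for the ``only if'' direction: if $\operatorname{Hom}(\Theta,\Xi)$ is contracted, then it is uniformly contracted, so there are $A,c>0$ with $\Vert\psi_t(\phi)\Vert\le Ae^{-ct}\Vert\phi\Vert$. Choosing $t_0$ large enough that $Ae^{-ct_0}\le\frac12$ and running the computation above in reverse gives \eqref{anosovineq}: given $v\in\Xi_Z\setminus\{0\}$ and $w\in\Theta_Z\setminus\{0\}$, build the rank-one homomorphism $\phi$ sending $w$ to $v$ (and annihilating a complement of $\mathbb{R}w$ in $\Theta_Z$), so that $\Vert\phi\Vert$ is comparable to $\Vert v\Vert/\Vert w\Vert$, apply the uniform contraction at time $t_0$, and read off the ratio $\Vert\psi_{t_0}v\Vert/\Vert\psi_{t_0}w\Vert$ from $\Vert\psi_{t_0}(\phi)(\psi_{t_0}w)\Vert$. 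Here one should note the mild subtlety that the operator norm of the rank-one map $\phi$ equals $\Vert v\Vert\cdot\Vert w\Vert^{-1}$ times a factor controlled by the angle between $\mathbb{R}w$ and its complement inside $\Theta_Z$, which is bounded above and below by compactness of $\Gg$; again this only affects constants and is absorbed by taking $t_0$ larger.

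The main obstacle, such as it is, is purely bookkeeping: keeping track of the fact that ``contracted'' is stated for an arbitrary but fixed metric while \eqref{anosovineq} is a pointwise ratio inequality, and making sure the comparison constants between the chosen metric, the induced operator norm on $\operatorname{Hom}(\Theta,\Xi)$, and the rank-one presentation of homomorphisms are uniform — which follows from compactness of $\Gg$ — and then observing that any multiplicative constant is harmless because one is free to replace $t_0$ by $n t_0$ for large $n$, the contraction factor $\tfrac12$ being reachable after finitely many iterations. No genuinely hard analysis is involved; the content is just the dictionary between the bundle formulation and the ratio formulation, exploiting that $\Xi$ is a line bundle so that $\operatorname{Hom}(\Theta,\Xi)\cong\Theta^*\otimes\Xi$ has fiber ``size'' exactly $\Vert v\Vert/\Vert w\Vert$.
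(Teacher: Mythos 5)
Your argument is correct and is essentially the paper's proof: both directions come down to the dictionary between the operator norm on $\operatorname{Hom}(\Theta,\Xi)$ and the ratio $\Vert v\Vert/\Vert w\Vert$, using that $\Xi$ is a line bundle and that the flow acts on $\operatorname{Hom}(\Theta,\Xi)$ by conjugation. The only difference is cosmetic: the paper picks the rank-one map $\eta$ with $\eta(w)=v$ and $\Vert\eta\Vert$ \emph{exactly} equal to $\Vert v\Vert/\Vert w\Vert$ (kill the orthogonal complement of $w$ for the chosen metric), so no angle factor, constant-chasing, or replacement of $t_0$ by a multiple is needed.
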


\begin{proof}
Given  a projective Anosov representation $\rho:\Gamma\to\sln$ and a metric $\Vert.\Vert$ on $E_\rho$, 
let $t_0>0$ be chosen so that  
$$
\Vert \psi_{t_0} (\eta)\Vert\leq\frac{1}{2}\Vert \eta\Vert.
$$
for all $\eta\in\Xi\otimes\Theta^*$. If $Z\in \Gg$, $v\in \Xi_Z\setminus\{0\}$ and $w\in\Theta_Z\setminus\{0\}$, then there
exists $\eta\in \operatorname{Hom}(\Theta_Z,\Xi_Z)=(\Xi\otimes\Theta^*)_Z$ such that
$\eta(w)=v$ and $\Vert\eta\Vert=\Vert v\Vert/\Vert w \Vert$. Then,
\begin{equation*}
\frac{\Vert\psi_{t_0}(v)\Vert}{\Vert\psi_{t_0}(w)\Vert}\le\Vert\psi_{t_0}(\eta)\Vert\leq \frac{1}{2}\Vert\eta\Vert=
\frac{\Vert v\Vert}{\Vert w\Vert}.
\end{equation*}
The converse is immediate. \end{proof}

Furthermore, projective Anosov representations are contracting on $\Xi$.

\begin{lemma}
\label{contracting on Xi}
If $\rho:\Gamma\to \sln$ is projective Anosov,
then $\{\psi_t\}_{t\in\mathbb R}$ is contracting on $\Xi$.
\end{lemma}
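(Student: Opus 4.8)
The plan is to exploit the fact that $\rho(\Gamma)\subset\sln$, so $\det\rho(\gamma)=1$ for every $\gamma$, and more importantly that the flow $\{\psi_t\}$ acts on $E_\rho$ by bundle automorphisms induced from the $\rho$-action, which preserves the standard volume form on $\mathbb R^m$. The key observation is that $\Xi$ is a line bundle of rank $1$ and $\Theta$ has rank $m-1$, and the top exterior power $\det E_\rho = \det\Xi\otimes\det\Theta = \bigwedge^m\mathbb R^m$ is the trivial bundle on which $\psi_t$ acts trivially (as the flat bundle associated to the trivial representation $\gamma\mapsto\det\rho(\gamma)=1$). Hence $\det\Xi$ and $\det\Theta$ are dual line bundles up to this triviality: $\det\Theta\cong(\det\Xi)^{-1}=\Xi^{-1}$, canonically and compatibly with the flow.

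First I would fix a metric $\Vert\cdot\Vert$ on $E_\rho$, which induces metrics on $\Xi$, on $\Theta$, and on $\det\Theta\cong\Xi^{-1}$. Applying Definition~\ref{def:convano} and the preceding Proposition, there is $t_0>0$ with $\Vert\psi_{t_0}(\eta)\Vert\le\frac12\Vert\eta\Vert$ for all $\eta\in\operatorname{Hom}(\Theta,\Xi)=\Xi\otimes\Theta^*$. Now consider the induced action on $\bigwedge^{m-1}(\Xi\otimes\Theta^*)$: since $\Theta^*$ has rank $m-1$, we have $\bigwedge^{m-1}(\Xi\otimes\Theta^*)\cong\Xi^{m-1}\otimes\det\Theta^*=\Xi^{m-1}\otimes(\det\Theta)^{-1}\cong\Xi^{m-1}\otimes\Xi=\Xi^m$, again canonically and flow-equivariantly. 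Because $\eta\mapsto\psi_{t_0}(\eta)$ contracts $\Xi\otimes\Theta^*$ by a factor $\le\frac12$ in norm, its $(m-1)$-st exterior power contracts $\bigwedge^{m-1}(\Xi\otimes\Theta^*)$ by a factor $\le(C/2^{m-1})$ in norm for a constant $C$ depending only on the metric and on $m$ (norms of exterior powers are controlled by norms of the original map, uniformly in $Z$ since $\Gg$ is compact). Choosing a multiple $t_1=Nt_0$ with $N$ large enough that $(C/2^{m-1})^N\le\frac12$, we conclude that $\psi_{t_1}$ contracts $\Xi^m$ by a factor $\le\frac12$.

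Finally, contraction of $\Xi^m$ by the flow implies contraction of $\Xi$: if $v\in\Xi_Z\setminus\{0\}$ then $v^{\otimes m}\in\Xi^m_Z$ and $\Vert\psi_{t_1}(v)\Vert^m=\Vert\psi_{t_1}(v)^{\otimes m}\Vert$ (up to a fixed bi-Lipschitz constant relating the metric on $\Xi^m$ to the $m$-th tensor power of the metric on $\Xi$, again uniform by compactness), so $\Vert\psi_{t_1}(v)\Vert\le(\text{const}\cdot\tfrac12)^{1/m}\Vert v\Vert$; iterating once more absorbs the constant. This establishes that $\{\psi_t\}$ is contracting on $\Xi$ in the sense defined before Definition~\ref{def:convano}, and by the remark there it is then uniformly contracting.

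The main obstacle I anticipate is purely bookkeeping: making the identification $\bigwedge^{m-1}(\Xi\otimes\Theta^*)\cong\Xi^m$ genuinely flow-equivariant requires using that $\psi_t$ preserves the trivialization of $\det E_\rho$, which rests on $\det\rho=1$; one must check this determinant argument is compatible with the splitting $E_\rho=\Xi\oplus\Theta$ being parallel along the flow, and track that all the bi-Lipschitz constants comparing the various induced metrics are uniform over the compact base $\Gg$. None of this is deep, but it is where care is needed; an alternative, perhaps cleaner, route avoiding determinants is to observe directly that for the volume form one has $\Vert\psi_t|_\Xi\Vert\cdot\Vert\psi_t|_{\det\Theta}\Vert$ bounded above and below, so contraction of $\operatorname{Hom}(\Theta,\Xi)$—which controls $\Vert\psi_t|_\Xi\Vert/\Vert\psi_t|_\Theta\Vert_{\min}$—forces $\Vert\psi_t|_\Xi\Vert\to 0$.
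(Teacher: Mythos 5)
Your argument is correct and is essentially the paper's proof: both pass to the top exterior power $\det(\Xi\otimes\Theta^*)=\Xi^{\otimes(m-1)}\otimes\det(\Theta^*)$, which is contracted, and use the volume form (i.e.\ $\det\rho=1$) to identify $\Xi\cong\det(\Theta)^*$, so that this bundle is $\Xi^{\otimes m}$ and hence $\Xi$ is contracted. Your extra bookkeeping on the uniform bi-Lipschitz constants just fills in the final "$\Xi^{\otimes m}$ contracted implies $\Xi$ contracted" step that the paper leaves implicit.
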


\begin{proof}  
Since the bundle $\Xi\otimes\Theta^*$ is contracted, so is 
$$
\Omega=\det(\Xi\otimes\Theta^*)=\Xi^{\otimes(m-1)}\otimes\det(\Theta^*).
$$ 
One may define an isomorphism from $\Xi$ to $\det(\Theta)^*$ by taking $u$ to the map
$\alpha\to {\rm Vol}(u\wedge\alpha)$. Since $\det(\Theta)^*$ is isomorphic to
$\det(\Theta^*)$,
it follows that $\Omega$ is isomorphic to $\Xi^{\otimes m}$. Thus $\Xi$ is contracted.\end{proof}

It follows from standard  techniques in hyperbolic dynamics that our limit maps are H\"older.
We will give a proof of a more general statement in Section \ref{anvar} (see 
\cite[Proposition 3.2]{labourie-anosov} for a proof in a special case).

\begin{lemma}
\label{anosov-holder} 
Let $\rho$ be a projective Anosov representation,  then the limit maps $\xi$ and $\theta$ are H\"older.
\end{lemma}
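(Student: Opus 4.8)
The plan is to deduce H\"older regularity of the limit maps $\xi$ and $\theta$ from the uniform contraction/expansion properties established above, via the standard graph-transform (stable/unstable manifold) argument adapted to flat bundles over the Gromov geodesic flow. The key point is that $\xi(x)$ and $\theta(y)$ are characterized dynamically: for a point $Z=(x,y,t)\in\widetilde{\Gg}$, the line $\xi(x)$ is the direction in the fiber $\mathbb R^m$ that is contracted under the \emph{backward} flow (equivalently, it is the slowest-contracting or fastest-expanding direction under $\tilde\psi_t$ as $t\to-\infty$), while $\theta(y)$ is the complementary hyperplane. Concretely, I would fix a continuous (hence, on the compact quotient $\Gg$, uniformly bounded) metric $\Vert\cdot\Vert$ on $E_\rho$ and pull it back to $\widetilde{\Gg}\times\mathbb R^m$; by Lemma \ref{contracting on Xi} the bundle $\Xi$ is uniformly contracted, and since $\operatorname{Hom}(\Theta,\Xi)=\Xi\otimes\Theta^*$ is contracted, the hyperplane bundle $\Theta$ is ``less contracted'' than $\Xi$ by a definite exponential rate $c>0$.

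First I would set up the H\"older estimate for $\xi$. Take two nearby points $x,x'\in\bg$ and pick base points $Z=(x,y,0)$ and $Z'=(x',y,0)$ with a common ``backward endpoint'' $y$; using the Gromov-hyperbolicity of $\bg^{(2)}\times\mathbb R$ (the $\mathbb R$-orbits are quasigeodesics and the metric is well-defined up to H\"older equivalence), the backward flow lines through $Z$ and $Z'$ stay within bounded distance for a time $T$ comparable to $-\log d(x,x')$ before diverging, with explicit control: $d(\tilde\psi_{-s}Z,\tilde\psi_{-s}Z')\lesssim e^{\lambda s}d(x,x')$ for some $\lambda$ depending on the quasi-isometry constants, as long as this stays bounded. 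Along this time interval, the flat connection identifies fibers, so I can compare the lines $\xi(x)$ and $\xi(x')$ inside a single $\mathbb R^m$; the contraction of $\Xi\otimes\Theta^*$ forces the angle between the images $\tilde\psi_{-T}(\xi(x))$ and $\tilde\psi_{-T}(\xi(x'))$ — both of which are the contracted directions at their respective endpoints — to be controlled, and running the flow forward again multiplies the angle by at most $e^{cT}$. Balancing $e^{\lambda s}d(x,x')\sim 1$ against the $e^{-cT}$ gain yields $d(\xi(x),\xi(x'))\lesssim d(x,x')^{c/\lambda}$, which is the desired H\"older bound. The argument for $\theta$ is dual: $\theta$ is the contracted hyperplane for the \emph{forward} flow, so one runs the same estimate for the inverse flow, using that the dual bundle of a contracted bundle is contracted by the reverse flow (as remarked in the text).

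The main technical obstacle is making the comparison of fibers over \emph{different} flow lines precise: the flat connection only canonically identifies fibers along a single orbit, so to compare $\xi(x)$ with $\xi(x')$ one must either (a) use parallel transport along a path in $\widetilde{\Gg}$ connecting the two orbits and control how the flat structure distorts under this transport, or (b) work in a fixed trivialization $\widetilde{\Gg}\times\mathbb R^m$ where $\xi(x)$ and $\xi(x')$ literally live in the same $\mathbb R^m$ and the flow $\tilde\psi_t$ acts only through the $\Gamma$-cocycle — the second is cleaner and is what I would use, but it requires carefully tracking the cocycle $\rho(\gamma)$ entering when one returns to a fundamental domain, and converting the ``uniform contraction on the compact quotient'' into an estimate valid on the universal cover. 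I would handle this by the usual device of only ever comparing bounded-length flow segments and invoking compactness of $\Gg$ to get uniform constants, deferring the full details to the more general Section \ref{anvar} as the authors indicate. A secondary subtlety is pinning down the exponent: one gets \emph{some} H\"older exponent $\alpha=c/\lambda>0$, not a sharp one, and verifying that $\lambda<\infty$ (i.e.\ the divergence rate of backward flow lines is genuinely exponential with a uniform rate) is exactly where the quasi-isometry property of the $\mathbb R$-orbits in the Gromov flow, quoted above, is essential.
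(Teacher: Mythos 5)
Your approach---the classical direct argument that H\"older continuity of an invariant splitting follows from balancing the contraction gap against the divergence rate of nearby orbits---is a legitimate route, but it is genuinely different from what the paper does. The paper does not prove Lemma \ref{anosov-holder} where it is stated; it obtains it in Section \ref{anvar} as a special case of Theorem \ref{Comegareal} (trivial parameter disk), where the limit map is realized as the unique fixed section of a fiber-contracting bundle automorphism over the compact space $\Gg$ and the H\"older regularity is delivered by the Hirsch--Pugh--Shub $C^r$-section theorem (Theorems \ref{local contraction} and \ref{lem:coc}). Your route is more elementary and self-contained and produces an explicit exponent of the form $c/\lambda$; the paper's route is heavier machinery but is exactly what yields the stronger statements needed later (analytic dependence of $\xi_u$ on the representation, transverse regularity for families). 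Note, however, that you cannot both offer this as your proof and ``defer the full details to Section \ref{anvar}'': that section contains the paper's different proof, not the missing steps of yours.

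As written, the quantitative core of your sketch has its time directions crossed, and the displayed bookkeeping would not produce decay. With $Z=(x,y,0)$ and $Z'=(x',y,0)$ sharing the coordinate $y$, the two orbits are asymptotic in \emph{backward} time and separate in \emph{forward} time; the window on which they stay within a fixed distance (so that the $\Gamma$-equivariant metrics along the two orbits are uniformly comparable, by continuity of the metric and cocompactness) is the forward interval $[0,T]$ with $T\asymp -\log d(x,x')$. Moreover the chain ``the angle between $\tilde\psi_{-T}(\xi(x))$ and $\tilde\psi_{-T}(\xi(x'))$ is controlled, and flowing forward multiplies it by at most $e^{cT}$'' returns at best an $O(1)$ bound, not a H\"older one. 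The correct bookkeeping is: choose $y$ uniformly separated from $x,x'$, take a unit vector $u'\in\xi(x')$ and write $u'=u+w$ with $u\in\xi(x)$, $w\in\theta(y)$; the contraction of $\operatorname{Hom}(\Theta,\Xi)$ along the orbit of $Z$ makes $\Vert u\Vert_t/\Vert w\Vert_t$ decay like $e^{-ct}$ for $t\geq 0$, while the same contraction along the orbit of $Z'$, transferred to the metric along $Z$ on $[0,T]$ by the uniform comparability, makes $\Vert u'\Vert_t/\Vert w\Vert_t$ decay at the same rate; playing these two decays against each other at $t=T$ forces $\Vert w\Vert_0\lesssim e^{-cT}\asymp d(x,x')^{c/\lambda}$. (Also, your characterization sentence is self-contradictory: by Lemma \ref{contracting on Xi} and the Anosov condition, $\xi(x)$ is contracted relative to $\theta(y)$ under the \emph{forward} flow, equivalently it is the attracting line for the backward flow among lines transverse to $\theta(y)$; it is not ``contracted under the backward flow.'') With these repairs, and with the uniform transversality of $\theta(y)$ to $\xi(x),\xi(x')$ made explicit, your argument does go through, but these are precisely the points a complete proof must get right.
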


If $\gamma$ is an infinite order element of $\Gamma$, then there is a periodic orbit of $\Gg$ associated to $\gamma$.
If $\gamma^+$ is the attracting fixed point of $\gamma$ on $\bg$ and $\gamma^-$ is
its other fixed point, then this periodic orbit is the  image of $(\gamma^+,\gamma^-)\times\mathbb R$.
Inequality (\ref{anosovineq}) and Lemma \ref{contracting on Xi} 
applied to the periodic orbit of $\Gg$ associated to $\gamma$ imply that 
$\rho(\gamma)$ is proximal and that $\xi(\gamma^+)$ is the eigenspace associated to the largest
modulus eigenvalue of $\rho(\gamma)$. Similarly, $\xi(\gamma^-)$ is the repelling hyperplane of  $\rho(\gamma)$.
It follows that the limit maps $\xi$ and $\theta$ are uniquely determined by $\rho$
(see also \cite[Lemmas 3.1 and 3.3]{guichard-wienhard}).

Let  $\LL(\gamma)(\rho)$ denote the eigenvalue of $\rho(\gamma)$ of maximal absolute value 
and let $\Lambda(\gamma)(\rho)$ denote the spectral radius of  $\rho(\gamma)$,
so $\Lambda(\gamma)(\rho)=|\LL(\gamma)(\rho)|$.
If $S$ is a fixed  generating set for $\Gamma$ and $\gamma\in \Gamma$, then we let $l(\gamma)$ 
denote the translation length of
the action of $\gamma$ on the Cayley graph of $\Gamma$ with respect to $S$; more explicitly, $l(\gamma)$ is the minimal word
length of any element conjugate to $\gamma$.
Since the contraction is uniform and the length of the periodic orbit of $\Gg$ associated to $\gamma$ 
is comparable to $l(\gamma)$,
we obtain the following uniform estimates:

\begin{proposition}\label{proxi2}
If $\rho:\Gamma\to\sln$ is a projective Anosov representation, 
then there exists $\delta\in (0,1)$ such that if $\gamma\in\Gamma$
has infinite order, then
$\LL(\gamma)(\rho)$ and $(\ms L(\gamma^{-1})(\rho))^{-1}$ are both eigenvalues of $\rho(\gamma)$ of multiplicity one and
$$
\rho(\gamma)=\ms L(\gamma)(\rho)\mathsf p_\gamma+ \mathsf m_\gamma +\frac{1}{\LL(\gamma^{-1})(\rho)}\mathsf q_\gamma
$$
where  \begin{itemize}
\item $\mathsf p_\gamma$ is the projection on $\xi(\gamma^+)$ parallel to $\theta(\gamma^-)$,
\item $\mathsf q_\gamma= \mathsf p_{\gamma^{-1}}$,
\item $\mathsf m_\gamma=A\circ (1-\mathsf q_\gamma- \mathsf p_{\gamma})$ and
$A$ is an endomorphism of $\theta(\gamma^-)\cap\theta(\gamma+)$ whose spectral radius is less than
$$\delta^{\ell(\gamma)} \Lambda(\gamma)(\rho).$$
\end{itemize}
\end{proposition}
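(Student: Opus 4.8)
The plan is to pull everything back to the periodic orbit $c_\gamma$ of the Gromov geodesic flow $\Gg$ carried by $(\gamma^{+},\gamma^{-})\times\mathbb R$, and to turn the uniform contraction of $\operatorname{Hom}(\Theta,\Xi)$ along $\{\psi_t\}$ into a spectral gap for $\rho(\gamma)$. I would begin with the linear algebra of $\rho(\gamma)$. Since $\gamma$ fixes $\gamma^{\pm}\in\bg$ and $\xi,\theta$ are $\rho$-equivariant, $\rho(\gamma)$ preserves each of $\xi(\gamma^{+})$, $\xi(\gamma^{-})$, $\theta(\gamma^{+})$ and $\theta(\gamma^{-})$. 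By inequality (\ref{anosovineq}) and Lemma \ref{contracting on Xi} applied to $c_\gamma$ (as observed in the paragraph preceding the statement), $\rho(\gamma)$ is proximal with attracting eigenline $\xi(\gamma^{+})$, so $\LL(\gamma)(\rho)$ is an eigenvalue of multiplicity one and is the strict top of the spectrum; running the same argument for $\gamma^{-1}$, the eigenline $\xi(\gamma^{-})$ of $\rho(\gamma)^{-1}=\rho(\gamma^{-1})$ for $\LL(\gamma^{-1})(\rho)$ is simple, so $\rho(\gamma)$ acts on $\xi(\gamma^{-})$ by the scalar $1/\LL(\gamma^{-1})(\rho)$, which is a simple eigenvalue and the strict bottom of the spectrum. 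In particular these two eigenvalues are distinct.

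Next I would identify the invariant subspaces and read off the decomposition. The hyperplane $\theta(\gamma^{-})$ is $\rho(\gamma)$-invariant and satisfies $\theta(\gamma^{-})\cap\xi(\gamma^{+})=0$ by convexity; since any invariant subspace is the direct sum of its intersections with the complexified generalized eigenspaces, a dimension count forces $\theta(\gamma^{-})$ to be the sum of all generalized eigenspaces of $\rho(\gamma)$ except the one for $\LL(\gamma)(\rho)$, and symmetrically $\theta(\gamma^{+})$ is the sum of all of them except the one for $1/\LL(\gamma^{-1})(\rho)$. Hence $\xi(\gamma^{-})\subset\theta(\gamma^{-})$, $\xi(\gamma^{+})\subset\theta(\gamma^{+})$, and with $W:=\theta(\gamma^{-})\cap\theta(\gamma^{+})$ one gets a $\rho(\gamma)$-invariant splitting $\mathbb R^m=\xi(\gamma^{+})\oplus W\oplus\xi(\gamma^{-})$ with $\theta(\gamma^{-})=W\oplus\xi(\gamma^{-})$ and $\theta(\gamma^{+})=W\oplus\xi(\gamma^{+})$. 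Taking $\mathsf p_\gamma$, $\mathsf q_\gamma$ and $1-\mathsf p_\gamma-\mathsf q_\gamma$ to be the projections onto $\xi(\gamma^{+})$, $\xi(\gamma^{-})$ and $W$ associated to this splitting — these are exactly the operators in the statement, and $\mathsf q_\gamma=\mathsf p_{\gamma^{-1}}$ — and $A:=\rho(\gamma)|_W$, the identity $\rho(\gamma)=\rho(\gamma)\mathsf p_\gamma+\rho(\gamma)(1-\mathsf p_\gamma-\mathsf q_\gamma)+\rho(\gamma)\mathsf q_\gamma$ gives the claimed formula $\rho(\gamma)=\LL(\gamma)(\rho)\,\mathsf p_\gamma+\mathsf m_\gamma+\frac{1}{\LL(\gamma^{-1})(\rho)}\,\mathsf q_\gamma$ with $\mathsf m_\gamma=A\circ(1-\mathsf p_\gamma-\mathsf q_\gamma)$.

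The substantive step, and the one I expect to be the main obstacle, is the uniform bound on the spectral radius of $A$. The eigenvalues of $A$ are precisely the eigenvalues of $\rho(\gamma)$ other than the top $\LL(\gamma)(\rho)$ and the bottom $1/\LL(\gamma^{-1})(\rho)$, so its spectral radius is at most the second largest modulus $\Lambda_2$ among the eigenvalues of $\rho(\gamma)$, and it suffices to bound $\Lambda_2$. On $c_\gamma$ the bundle $\operatorname{Hom}(\Theta,\Xi)$ is $\operatorname{Hom}(\theta(\gamma^{-}),\xi(\gamma^{+}))$, and the first-return linear map of $\{\psi_t\}$, of period $p(\gamma)$, is conjugation by $\rho(\gamma)^{\pm1}$; since $\operatorname{Hom}(\Theta,\Xi)$ is contracted it must be the one of spectral radius $<1$, namely conjugation by $\rho(\gamma)^{-1}$, whose spectral radius on $\operatorname{Hom}(\theta(\gamma^{-}),\xi(\gamma^{+}))$ equals $\Lambda_2/\Lambda(\gamma)(\rho)$. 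The uniform contraction supplies constants $A_0\geq1$ and $c>0$, independent of $\gamma$, with $\Vert\psi_t\Vert\leq A_0e^{-ct}$ on $\operatorname{Hom}(\Theta,\Xi)$; evaluating at $t=np(\gamma)$ and letting $n\to\infty$ yields $\Lambda_2/\Lambda(\gamma)(\rho)\leq e^{-cp(\gamma)}$. Since the period $p(\gamma)$ of the orbit of $\Gg$ associated to $\gamma$ is comparable to the translation length $\ell(\gamma)$, in particular $p(\gamma)\geq\ell(\gamma)/K$ for a uniform $K\geq1$ (Champetier \cite{champetier}, Mineyev \cite{mineyev}), we get, using $\ell(\gamma)\geq1$ and setting $\delta:=e^{-c/(2K)}\in(0,1)$, that $\Lambda_2\leq\Lambda(\gamma)(\rho)e^{-c\ell(\gamma)/K}=\Lambda(\gamma)(\rho)\delta^{2\ell(\gamma)}<\delta^{\ell(\gamma)}\Lambda(\gamma)(\rho)$, which is the desired estimate for the spectral radius of $A$. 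Beyond this, the only care needed is to fix the orientation conventions for $\bg^{(2)}$ so that the contracted return map really is conjugation by $\rho(\gamma)^{-1}$ (hence produces the ratio $\Lambda_2/\Lambda_1$ and not its reciprocal) and to carry out the generalized-eigenspace bookkeeping over $\mathbb C$ before descending to $\mathbb R^m$; the fact that $c$ and $K$ do not depend on $\gamma$ is exactly what makes $\delta$ uniform.
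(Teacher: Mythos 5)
Your argument is correct and follows essentially the same route the paper indicates for this proposition (which is only sketched there): proximality of $\rho(\gamma)$ and $\rho(\gamma^{-1})$ coming from the contraction applied to the periodic orbit, the resulting invariant splitting $\xi(\gamma^+)\oplus\left(\theta(\gamma^-)\cap\theta(\gamma^+)\right)\oplus\xi(\gamma^-)$, and the uniform contraction of $\operatorname{Hom}(\Theta,\Xi)$ along the orbit, converted into the spectral bound via the first-return map (which is conjugation by $\rho(\gamma^{-1})$) and the comparability of the period with $\ell(\gamma)$. The only cosmetic caveat is that this comparability may involve an additive constant, which is harmless: it is absorbed by shrinking $\delta$ (handling the finitely many conjugacy classes with small $\ell(\gamma)$ by strict proximality), exactly in the spirit of your halving of the exponent.
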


Moreover, we see that $\rho$ is well-displacing in the following sense:

\begin{proposition}{\sc [Displacing property]}\label{displac}
If $\rho:\Gamma\to\sln$ is a projective Anosov representation, then there exists constants $K>0$ and $C>0$, and a neighborhood $U$ of $\rho_0$ in 
${\rm Hom}(\Gamma,\sln)$ such that that for every $\gamma\in\Gamma$ and $\rho\in U$  we have
\begin{eqnarray}
\frac{1}{K} \ell(\gamma)-C\leq &\log(\Lambda(\gamma)(\rho))&\leq K \ell(\gamma)+C,
\end{eqnarray}
\end{proposition}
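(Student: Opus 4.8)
The plan is to establish the two inequalities separately, and then observe that both can be made uniform over a neighborhood $U$ of $\rho_0$. For the upper bound, recall that $\log \Lambda(\gamma)(\rho)$ is the length of the periodic orbit of the flow $\{\psi_t\}$ on $E_\rho$ associated to the conjugacy class of $\gamma$, and by the properties of the Gromov geodesic flow recalled above, the length of this periodic orbit in $\Gg$ is comparable, up to a uniform multiplicative and additive constant, to the translation length $\ell(\gamma)$. More directly, since $\rho$ is continuous and $\Gamma$ is generated by the finite set $S$, there is a constant $K_0 = \max_{s\in S}\|\rho(s)\|$ (operator norm for some fixed norm on $\mathbb R^m$); writing $\gamma'$ for a conjugate of $\gamma$ realizing the minimal word length $\ell(\gamma)$, we get $\Lambda(\gamma)(\rho) = \Lambda(\gamma')(\rho) \le \|\rho(\gamma')\| \le K_0^{\ell(\gamma)}$, hence $\log\Lambda(\gamma)(\rho) \le \ell(\gamma)\log K_0$. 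This already gives the right-hand inequality with $C=0$; and $K_0$ varies continuously in $\rho$, so it is bounded on a relatively compact neighborhood $U$ of $\rho_0$.

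For the lower bound I would use the contraction property, which is the substantive input. By Lemma \ref{contracting on Xi} and the uniform contraction remark in the text, $\{\psi_t\}$ contracts $\Xi$ uniformly: there are constants $A, c > 0$, depending only on $\rho$ and the chosen metric on $E_\rho$, with $\|\psi_t(v)\| \le A e^{-ct}\|v\|$ for $v \in \Xi$. Applying this on the periodic orbit associated to $\gamma$, whose length $T_\gamma$ satisfies $e^{-T_\gamma} = \Lambda(\gamma)(\rho)^{-1}$ once one checks that the holonomy of $\Xi$ around this orbit is multiplication by $\Lambda(\gamma)(\rho)^{-1}$ (this is exactly the statement, recalled after Lemma \ref{anosov-holder}, that $\xi(\gamma^+)$ is the top eigenline of $\rho(\gamma)$), and comparing $T_\gamma$ with $\ell(\gamma)$ via the quasi-isometry between $\Gg$-orbits and the Cayley graph, one obtains $\log\Lambda(\gamma)(\rho) \ge \frac{1}{K}\ell(\gamma) - C$ for suitable $K, C$. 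Alternatively, one can argue purely in the group: $\ell(\gamma^{-1}) = \ell(\gamma)$, so the upper bound applied to $\gamma^{-1}$ gives $\log\Lambda(\gamma^{-1})(\rho) \le \ell(\gamma)\log K_0$, and since by Proposition \ref{proxi2} the eigenvalue $\LL(\gamma^{-1})(\rho)^{-1}$ of $\rho(\gamma)$ has the \emph{smallest} modulus, $\det\rho(\gamma) = \pm 1$ forces $\Lambda(\gamma)(\rho)^{m} \ge |\det\rho(\gamma)| \cdot |\LL(\gamma^{-1})(\rho)|^{-1} \cdot (\text{stuff})$; more cleanly, $1 = |\det\rho(\gamma)| \le \Lambda(\gamma)(\rho)^{m-1}\,\Lambda(\gamma^{-1})(\rho)^{-1}$, so $\log\Lambda(\gamma)(\rho) \ge \frac{1}{m-1}\log\Lambda(\gamma^{-1})(\rho)$. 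Combined with the reverse direction of a lower bound on $\log\Lambda(\gamma^{-1})(\rho)$, this closes the loop; but getting \emph{any} lower bound genuinely requires the Anosov contraction, so I would run the dynamical argument.

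The uniformity over $U$ is where a little care is needed, and I expect it to be the main obstacle: the constants $A, c$ in the uniform contraction estimate a priori depend on $\rho$, and one must argue they can be chosen uniformly on a neighborhood. This follows because the Anosov condition (inequality (\ref{anosovineq})) is an open condition on $\hom(\Gamma,\sln)$ — the flat bundle $E_\rho$, its splitting, and the flow $\{\psi_t\}$ all vary continuously with $\rho$, and a strict contraction at time $t_0$ for $\rho_0$ persists on a neighborhood with a uniform contraction factor (say $1/2$ replaced by $3/4$). Thus there is a neighborhood $U$ of $\rho_0$ on which a single pair $(A,c)$, a single $K_0$, and hence a single pair $(K,C)$ work for all $\rho \in U$ and all $\gamma \in \Gamma$. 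I would present the neighborhood-uniformity as a consequence of this openness/continuity of the Anosov property, citing the construction of $E_\rho$ and the flow from the previous subsection, rather than redoing the perturbation estimate in detail.
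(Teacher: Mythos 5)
Your proposal is correct and follows essentially the route the paper itself (only) sketches: upper bound by submultiplicativity of the operator norm over a finite generating set, lower bound from the uniform contraction of $\Xi$ along the closed orbit of $\Gg$ associated to $\gamma$ together with the comparability of that orbit's period with $\ell(\gamma)$, and uniformity over a neighborhood from the openness/continuity of the Anosov data (the paper attributes the statement to Guichard--Wienhard rather than writing the estimate out). One imprecision to fix: the asserted equality $e^{-T_\gamma}=\Lambda(\gamma)(\rho)^{-1}$ is false when $T_\gamma$ is the period of the Gromov flow $\Gg$ --- that identity is the defining property of the reparameterized flow $\Ug$ (Proposition \ref{geoflowreparam}); what your dynamical argument actually produces, and all it needs, is the inequality $\Lambda(\gamma)(\rho)^{-1}\leq A\,e^{-cT_\gamma}$, obtained by combining $\Vert\psi_{T_\gamma}(v)\Vert\leq Ae^{-cT_\gamma}\Vert v\Vert$ on $\Xi$ with the fact that the return map of $\{\psi_t\}_{t\in\mathbb R}$ on the fiber of $\Xi$ over the closed orbit is scaling by $\Lambda(\gamma)(\rho)^{-1}$, and then $T_\gamma\geq \ell(\gamma)/k-c'$ gives the stated lower bound. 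Your parenthetical algebraic alternative (from $1\leq \Lambda(\gamma)(\rho)^{m-1}\Lambda(\gamma^{-1})(\rho)^{-1}$) indeed cannot yield a lower bound in terms of $\ell(\gamma)$ by itself, as you correctly note, so discarding it in favor of the contraction argument is the right call.
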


Proposition \ref{displac} immmediately implies:

\begin{proposition}\label{finiteclass}
For every real number $T$, the set
$$
R_T(\rho)=\{[\gamma]\mid \log(\Lambda(\gamma)(\rho))\leq T\}
$$
is finite.
\end{proposition}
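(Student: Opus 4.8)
The plan is to deduce Proposition \ref{finiteclass} directly from the displacing property established in Proposition \ref{displac}, together with a standard growth estimate for the number of conjugacy classes of bounded translation length in a word hyperbolic group. First I would recall the lower bound from Proposition \ref{displac}: there are constants $K>0$ and $C>0$ such that for every $\gamma\in\Gamma$,
$$
\log(\Lambda(\gamma)(\rho))\geq \frac{1}{K}\ell(\gamma)-C.
$$
Hence if $[\gamma]\in R_T(\rho)$, i.e. $\log(\Lambda(\gamma)(\rho))\leq T$, then $\ell(\gamma)\leq K(T+C)$. So $R_T(\rho)$ is contained in the set of conjugacy classes whose translation length is at most $K(T+C)$.

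Next I would observe that it suffices to show that for every $L>0$ the set of conjugacy classes $[\gamma]$ with $\ell(\gamma)\leq L$ is finite. Since $\ell(\gamma)$ is by definition the minimal word length of an element conjugate to $\gamma$, any such class has a representative in the ball of radius $L$ in the Cayley graph of $\Gamma$ with respect to the fixed finite generating set $S$. That ball is finite because $\Gamma$ is finitely generated, so there are only finitely many such representatives, and a fortiori only finitely many conjugacy classes. Therefore $R_T(\rho)$ is finite for every $T$.

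I do not expect a genuine obstacle here — the statement is, as the text says, an immediate consequence of Proposition \ref{displac}. The only point worth a word of care is that $R_T(\rho)$ is defined as a set of conjugacy classes (so one must pass from elements in the ball to their conjugacy classes, which only decreases cardinality) and that $\Lambda$ is conjugacy-invariant, so membership in $R_T(\rho)$ is well-defined on classes; both are automatic. One could also note that the argument shows the counting function $\sharp(R_T(\rho))$ grows at most exponentially in $T$, which is what is needed for the entropy $h(\rho)$ to be finite, but finiteness of each $R_T(\rho)$ is all that is claimed.
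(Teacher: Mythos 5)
Your argument is correct and is exactly the one the paper intends: it deduces Proposition \ref{finiteclass} from the lower bound in Proposition \ref{displac}, which the paper itself states "immediately implies" the result, and you simply spell out the standard finiteness of conjugacy classes of bounded translation length via the finite ball in the Cayley graph. No gaps, and no genuinely different route.
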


\medskip\noindent
{\bf Remark:} Proposition \ref{proxi2} is a generalization of results of Labourie \cite[Proposition 3.4]{labourie-anosov},
Sambarino \cite[Lemma 5.1]{sambarino-quantitative} and
Guichard-Wien\-hard \cite[Lemma 3.1]{guichard-wienhard}. Proposition \ref{displac} is a generalization of
a result of Labourie \cite[Theorem 1.0.1]{labourie-ens} and a special case of a result of
Guichard-Wienhard \cite[Theorem 5.14]{guichard-wienhard}. See \cite{delzant} for a discussion of well-displacing
representations and their relationship with quasi-isometric embeddings.

\subsection{Anosov representations}
\label{anosov-rep}

We now recall the general definition of an Anosov representation and note that projective Anosov
representations are examples of Anosov representations.

We first recall some notation and definitions.
Let $\mathsf G$ be a semi-simple Lie group with finite center  and Lie algebra $\mk g$. Let $\ms K$ be a maximal compact subgroup of $\sf G$ and let $\tau$ be the Cartan involution on $\mk g$ whose fixed point set is the Lie algebra of $ \ms K.$ Let $\mk a=\mk a_{{\ms G}}$ be a maximal abelian subspace contained in $\{v\in\frak g: \tau v=-v\}.$

For $a\in\mk a$, let $\ms M$ be the connected component of the centralizer of $\exp a$ which contains the identity,
and let $\mk m$ denote its Lie algebra. Let $E_\lambda$ be the eigenspace of the action of $a$ on $\mk g$ with eigenvalue $\lambda$ and consider \begin{eqnarray*}
\mk n^+&=&\bigoplus_{\lambda>0}E_\lambda, \cr
\mk n^-&=&\bigoplus_{\lambda<0}E_\lambda, 
\end{eqnarray*}
so that
\begin{eqnarray}
\mk g=\mk m\oplus\mk n^+\oplus \mk n^-.\label{split}
\end{eqnarray}
Then $\mk n^+$ and $\mk n^-$ are Lie algebras normalized by $\ms M$. 
Let $\ms P^\pm$ the connected Lie subgroups of $\mathsf G$ whose Lie algebras are $\mk p^\pm=\mk m\oplus \mk n^\pm$. 
Then $\ms P^+$ and $\ms P^-$ are {\em opposite parabolic subgroups}.
We will say that $\ms P^+$ is {\em non-degenerate} if
$\mk p^+$ does not contain a simple factor of $\mk g$.

We may identify a point  $([X],[Y])$ in $\ms G/\ms P^+\times \ms G/\ms P^-$ with the pair  
$(\ad(X)\ms P^+,\ad(Y)\ms P^-)$ of parabolic subgroups. 
The pair $(\ad(X)\ms P^+,\ad(Y)\ms P^-)$ is {\em transverse} if
their intersection $\ad(X)\ms P^+\cap\ad(Y)\ms P^-$ is conjugate to $\ms M$.

We now suppose that $\rho:\Gamma \to \ms G$ is a representation of  word hyperbolic group $\Gamma$ and 
$\xi^+:\bg \to\ms G/\ms P^+$ and $\xi^-:\Gamma\to \ms G/\ms P^-$  are continuous
$\rho$-equivariant maps. We say that $\xi^+$ and $\xi^-$ are {\em transverse} if given any two distinct points $x,y\in\bg$,
$\xi^+(x)$ and $\xi^-(y)$ are transverse.
The $\ms G$-invariant splitting described by Equation \eqref{split} then gives rise to bundles  over $\Gg$. 
Let $\widetilde{\mathcal N}_\rho^+$ and $\widetilde{\mathcal N}_\rho^-$ be the bundles over $\widetilde{\Gg}$ whose fibers
over the point $(x,y,t)$ are 
$$
\ad(\xi^-(y))\mk n^+\ \ {\rm and}\ \ \ad(\xi^+(x))\mk n^-.
$$
There is a natural action of $\Gamma$ on $\widetilde{\mathcal N}_\rho^+$ and $\widetilde{\mathcal N}_\rho^-$,
where the action on the fiber is given by $\rho(\Gamma)$, and we denote the quotient bundles over $\Gg$ by
$\mathcal N_\rho^+$ and $\mathcal N_\rho^-$.  We may lift the geodesic flow to a flow on
the bundles $\mathcal N_\rho^+$ and $\mathcal N_\rho^-$ which acts trivially on the fibers.

\begin{definition}
Suppose that $\ms G$ is a semi-simple Lie group with finite center, $\ms P^+$ is a parabolic
subgroup of $\ms G$ and $\Gamma$ is a word hyperbolic group.
A representation $\rho:\Gamma\to \ms G$
is {\em  $(\ms{G},\ms P^+)$-Anosov} if there exist transverse
$\rho$-equivariant maps $$\xi^+:\bg \to \ms G/\ms P^+\textrm{ and }\xi^-:\bg \to \ms G/\ms P^-$$  so that the geodesic flow 
is contracting on the associated bundle $\mathcal N_\rho^+$ and the inverse flow is contracting on the bundle 
$\mathcal N_\rho^-$.
\end{definition}

We now recall some basic properties of Anosov representations which were established
by Labourie, \cite[Proposition 3.4]{labourie-anosov} and
\cite[Theorem 6.1.3]{labourie-ens}, and Guichard-Wienhard \cite[Theorem 5.3 and Lemma 3.1]{guichard-wienhard}.
We recall that an element $g\in\ms G$ is {\em proximal} relative to $\ms P^+$ if $g$ has fixed points
$x^+\in \ms G/\ms P^+$ and $x^-\in\ms G/\ms P^-$ so that $x^+$ is transverse to $x^-$ and if
$x\in \ms G/\ms P^+$ is transverse to $x^-$ then $\lim_{n\to\infty} g^n(x)=x^+$.

\begin{theorem}
\label{anosov properties}
Let $\ms G$ be a semi-simple Lie group, $\ms P^+$ a parabolic subgroup, $\Gamma$ a word
hyperbolic group and $\rho:\Gamma\to \ms G$ a $(\ms G,\ms P^+)$-Anosov representation.
\begin{enumerate}
\item
$\rho$ has finite kernel, so $\Gamma$ is virtually torsion-free.
\item
$\rho$ is well-displacing, so $\rho(\Gamma)$ is discrete.
\item
If $\gamma\in\Gamma$ has infinite order, then $\rho(\gamma)$ is proximal relative to $\ms P^+$
\end{enumerate}
\end{theorem}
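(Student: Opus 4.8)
The plan is to establish the three assertions of Theorem~\ref{anosov properties} by exploiting the contraction hypotheses on the bundles $\mathcal N_\rho^+$ and $\mathcal N_\rho^-$ together with the structure of the Gromov geodesic flow, in parallel with the arguments already sketched for convex Anosov representations (Propositions~\ref{proxi2} and~\ref{displac}). First I would prove (3): if $\gamma\in\Gamma$ has infinite order, it determines a periodic orbit $\mathcal O_\gamma$ of $\Gg$ of length comparable to $\ell(\gamma)$, the image of $(\gamma^+,\gamma^-)\times\mathbb R$. Restricting the flat bundle and its splitting to $\mathcal O_\gamma$, uniform contraction of $\mathcal N_\rho^+$ along the flow and of $\mathcal N_\rho^-$ along the inverse flow says exactly that the linearization of $\rho(\gamma)$ on $\mathrm{Ad}(\xi^-(\gamma^-))\mk n^+$ is strictly contracting while on $\mathrm{Ad}(\xi^+(\gamma^+))\mk n^-$ it is strictly expanding. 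Exponentiating, one checks that $\rho(\gamma)$ fixes $\xi^+(\gamma^+)\in\ms G/\ms P^+$ and $\xi^-(\gamma^-)\in\ms G/\ms P^-$, that these two points are transverse (by transversality of $\xi^+,\xi^-$ applied to $\gamma^+\neq\gamma^-$), and that any $x\in\ms G/\ms P^+$ transverse to $\xi^-(\gamma^-)$ lies in the ``unstable cell'' on which the iterates $\rho(\gamma)^n$ converge to $\xi^+(\gamma^+)$; this is precisely proximality relative to $\ms P^+$.

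Next I would deduce the displacing property asserted in (2). The key quantitative input is that the contraction in the definition is \emph{uniform}: there are $A,c>0$ with $\Vert\phi_t(v)\Vert\le Ae^{-ct}\Vert v\Vert$ for all $v$ in (a subbundle of) $\mathcal N_\rho^+$, and symmetrically for $\mathcal N_\rho^-$. Applied to the periodic orbit $\mathcal O_\gamma$, whose period $T_\gamma$ satisfies $\tfrac1K\ell(\gamma)-C\le T_\gamma\le K\ell(\gamma)+C$ for uniform constants (a standard feature of the Gromov flow, cf.\ the discussion preceding Proposition~\ref{displac}), this forces the gap between the top and bottom exponents of $\rho(\gamma)$ — and hence a suitable logarithm of a ratio of singular values — to grow linearly in $\ell(\gamma)$. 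Combining with a crude a priori upper bound $\log\Vert\rho(\gamma)\Vert\le K'\ell(\gamma)$ coming from submultiplicativity on the generating set $S$ gives the two-sided bound $\tfrac1K\ell(\gamma)-C\le\log\Lambda(\gamma)(\rho)\le K\ell(\gamma)+C$ after enlarging constants. Well-displacing immediately yields that $\rho(\Gamma)$ is discrete: a sequence $\rho(\gamma_n)\to e$ with $\gamma_n\neq e$ would have $\ell(\gamma_n)$ bounded, hence only finitely many conjugacy classes, contradicting $\log\Lambda(\gamma_n)(\rho)\to 0$ unless the $\gamma_n$ are eventually torsion, which (1) will rule out.

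Finally, for (1): if $\gamma\in\ker\rho$ had infinite order, then by (3) $\rho(\gamma)=e$ would be proximal relative to $\ms P^+$, which is absurd since $e$ fixes every point of $\ms G/\ms P^+$ and cannot have an isolated attracting fixed point (here one uses that $\ms P^+$ is a proper parabolic, i.e.\ $\dim\ms G/\ms P^+>0$). Hence $\ker\rho$ is a subgroup all of whose elements have finite order; in a word hyperbolic group such a subgroup is finite, so $\rho$ has finite kernel and $\Gamma$ has a finite-index subgroup embedding into $\ms G$, which is virtually torsion-free by Selberg's Lemma as noted in the Convention. I would assemble the theorem by treating (3) first, then (1) as an easy corollary of (3), then (2) using (3) and the uniform-contraction estimate, and finally recording discreteness. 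The main obstacle is the careful bookkeeping in step (3): translating ``the flow contracts $\mathcal N_\rho^+$ along $\mathcal O_\gamma$'' into a genuine spectral statement about $\rho(\gamma)$ requires matching the fiber identifications $\mathrm{Ad}(\xi^\pm)\mk n^\mp$ with the stable/unstable jets of $\ms G/\ms P^\pm$ at the fixed points and verifying that the resulting convergence is on the full cell transverse to $\xi^-(\gamma^-)$, not merely on a neighborhood — exactly the point where transversality of $\xi^+$ and $\xi^-$ enters decisively. Since this is precisely the content of \cite[Proposition~3.4]{labourie-anosov} and \cite[Theorem~5.3 and Lemma~3.1]{guichard-wienhard}, I would cite those for the delicate linearization argument and keep the present proof at the level of indicating how the three statements follow.
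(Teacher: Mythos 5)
There is nothing to compare against in the paper itself: Theorem \ref{anosov properties} is stated there purely as a recollection of known results, with the proofs delegated to Labourie \cite[Proposition 3.4]{labourie-anosov}, \cite[Theorem 6.1.3]{labourie-ens} and Guichard--Wienhard \cite[Theorem 5.3 and Lemma 3.1]{guichard-wienhard}; no argument is given in the text. Your sketch follows the standard route of those references, and in the end you cite the same sources for the one genuinely delicate step, namely identifying the fibers $\ad(\xi^{\mp})\mk n^{\pm}$ over the periodic orbit with the tangent spaces of $\ms G/\ms P^{\pm}$ at the fixed points and promoting local attraction to attraction on the full cell transverse to $\xi^-(\gamma^-)$. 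So your write-up is best read as an expanded version of the citation rather than an independent proof, which is a fair match for what the paper actually does; the order (proximality first, then finite kernel via the fact that a torsion subgroup of a word hyperbolic group is finite, then well-displacing and discreteness) is the natural one and is consistent with the arguments in the cited papers and with the convex Anosov analogues (Propositions \ref{proxi2} and \ref{displac}). A few small points would need tightening if you wanted the sketch to stand alone: for a general semi-simple $\ms G$ the well-displacing estimate should be phrased in terms of translation length in the symmetric space (or of $\log\Lambda$ after composing with a suitable linear representation), since the matrix spectral radius $\Lambda(\gamma)(\rho)$ only makes literal sense for $\sln$; in the discreteness argument the torsion case is not ruled out by (1) but by observing that bounded $\ell(\gamma_n)$ leaves only finitely many conjugacy classes and that conjugates of a fixed non-trivial finite-order element stay away from the identity (their $\ad$-eigenvalues are conjugation invariants); and ``virtually torsion-free'' is obtained by applying Selberg's Lemma to the finitely generated linear group $\ad(\rho(\Gamma))$ and pulling back, not from a finite-index subgroup of $\Gamma$ embedding into $\ms G$.
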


In this language, projective Anosov representations are exactly the same as 
$(\sln, \mathsf P^+)$-Anosov  representations where $\mathsf P^+$ is the stabilizer of a line in $\mathbb R^m$.

\begin{proposition}
Let $\ms P^+$ be the stabilizer of a line in $\mathbb R^m$. A representation 
$\rho:\Gamma\to \sln$ is projective Anosov if and only if it is $(\sln, \mathsf P^+)$-Anosov.
Moreover, 
the limit maps $\xi$ and $\theta$ in the definition of projective Anosov representation agree with the limit maps $\xi^+$ and
$\xi^-$ in the definition of a
$(\sln, \mathsf P^+)$-Anosov  representation.  
\end{proposition}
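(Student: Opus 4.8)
The plan is to show that, for $\ms P^+$ the stabiliser of a line, the definition of $(\sln,\ms P^+)$-Anosov representation is a word-for-word translation of the definition of convex Anosov representation once the relevant flag manifolds and bundles are identified. First I would fix the standard picture: let $\ell_0$ be the line fixed by $\ms P^+$ and $H_0$ the hyperplane fixed by $\ms P^-$, with $\Real^m=\ell_0\oplus H_0$, arising from a choice of $a\in\mk a$ whose eigenvalues on $\Real^m$ are $\lambda_1>\lambda_2=\dots=\lambda_m$ and whose connected centraliser is $\ms M$. The classical identifications $\ms G/\ms P^+\cong\rpn$ and $\ms G/\ms P^-\cong\rpn^*$ send a point to the line, resp. the hyperplane, it represents, and under them a pair in $\ms G/\ms P^+\times\ms G/\ms P^-$ is transverse (its two parabolics intersect in a conjugate of $\ms M$) exactly when the corresponding line and hyperplane are complementary in $\Real^m$. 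Thus the data ``continuous $\rho$-equivariant transverse maps $\xi^+:\bg\to\ms G/\ms P^+$, $\xi^-:\bg\to\ms G/\ms P^-$'' is literally the data ``continuous $\rho$-equivariant maps $\xi:\bg\to\rpn$, $\theta:\bg\to\rpn^*$ with $\xi(x)\oplus\theta(y)=\Real^m$ for $x\ne y$''; in particular the candidate limit maps of the two definitions are the same maps, so the ``moreover'' assertion will follow as soon as the contraction conditions are matched.

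Next I would identify the bundles. Computing $\mk n^+$ and $\mk n^-$ in $\sln$ for the above $a$, one finds that $\mk n^+$ is the space of endomorphisms of $\Real^m$ that kill $\ell_0$ and have image in $\ell_0$, which the splitting $\Real^m=\ell_0\oplus H_0$ identifies canonically with $\operatorname{Hom}(H_0,\ell_0)$, and likewise $\mk n^-\cong\operatorname{Hom}(\ell_0,H_0)$. Since $\ms M$, the common stabiliser of the pair $(\ell_0,H_0)$, normalises $\mk n^\pm$, for any transverse pair $(\ell,H)$ the subspace $\ad(g)\mk n^+$ --- where $g\in\ms G$ carries $(\ell_0,H_0)$ to $(\ell,H)$ --- depends only on $(\ell,H)$ and is canonically $\operatorname{Hom}(H,\ell)$. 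Carrying this out fibrewise along $(x,y)\mapsto(\xi(x),\theta(y))$ over $\widetilde{\Gg}$ and passing to the quotient by $\Gamma$, I obtain a $\Gamma$-equivariant, flow-equivariant isomorphism $\mathcal N^+_\rho\cong\operatorname{Hom}(\Theta,\Xi)$, and similarly $\mathcal N^-_\rho\cong\operatorname{Hom}(\Xi,\Theta)$, which is the dual bundle of $\operatorname{Hom}(\Theta,\Xi)$.

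With these identifications the proposition becomes formal. If $\rho$ is convex Anosov, then by Definition \ref{def:convano} the flow $\{\psi_t\}$ contracts $\operatorname{Hom}(\Theta,\Xi)\cong\mathcal N^+_\rho$; since the dual of a contracted bundle is contracted by the inverse flow, the inverse flow contracts $\operatorname{Hom}(\Xi,\Theta)\cong\mathcal N^-_\rho$, so $\rho$ is $(\sln,\ms P^+)$-Anosov, for the same limit maps. Conversely, if $\rho$ is $(\sln,\ms P^+)$-Anosov, then in particular $\{\psi_t\}$ contracts $\mathcal N^+_\rho\cong\operatorname{Hom}(\Theta,\Xi)$, which is precisely the condition in Definition \ref{def:convano}. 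In both directions the equivariant maps witnessing one property are exactly those witnessing the other, which gives the ``moreover'' statement.

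I expect the only genuine work to lie in the second paragraph: verifying that $\mathcal N^+_\rho\cong\operatorname{Hom}(\Theta,\Xi)$ is genuinely canonical --- in particular that the $\ms M$-invariance of $\mk n^\pm$ makes the fibre over $(x,y,t)$ depend only on the transverse pair $(\xi(x),\theta(y))$ --- and that this identification intertwines the flat structure and the lifted geodesic flow on each side. The identification of the flag manifolds, the transversality dictionary, and the final logical shuffle (using the duality remark from the text) are all routine.
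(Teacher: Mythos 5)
Your proposal is correct and follows essentially the same route as the paper, whose proof is simply the terse statement that under the identifications $\sln/\ms P^+\cong\rpn$, $\sln/\ms P^-\cong\rpn^*$ one has $\mathcal N^+_\rho\cong\operatorname{Hom}(\Theta,\Xi)$ and $\mathcal N^-_\rho\cong\operatorname{Hom}(\Xi,\Theta)$. You merely fill in the details the paper leaves implicit (the computation of $\mk n^\pm$, the $\ms M$-invariance making the identification canonical, and the duality remark giving contraction of $\mathcal N^-_\rho$ by the inverse flow), all of which are accurate.
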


\begin{proof} 
If $\rho$ is projective Anosov with limit maps $\xi$ and $\theta$,
one may identify $\sln/P^+$ with $\rpn$
and $\sln/P^-$ with $\rpn^*$ so that,
after letting $\xi^+=\xi$ and $\xi^-=\theta$,
$N_\rho^+$ is identified with ${\rm Hom}(\Theta,\Xi)$ and $N_\rho^-$ is identified with
${\rm Hom}(\Xi,\Theta)$. 

The same identification holds if $\rho$ is $(\sln, \mathsf P^+)$-Anosov with limit maps $\xi^+$ and $\xi^-$.
\end{proof}

\subsection{Pl\"ucker representations} 
\label{plucker-sec}

Guichard and Wienhard \cite{guichard-wienhard} showed how to obtain
a projective Anosov representation from any Anosov representation by post-composing with a Pl\"ucker representation.
We first recall the following general result.

\begin{theorem}\label{theorem:convexAnosov} {\sc [Guichard-Wienhard \cite[Prop. 4.3]{guichard-wienhard}]}
Let $\phi:\ms G\to\ms{SL}(V)$ be a finite dimensional irreducible representation. Let $x\in\mathbb P(V)$ and 
assume that  $$\ms P=\{g\in\ms G:\phi( g)(x)=x\}$$
is  a parabolic subgroup of $\ms G$ with opposite parabolic $\ms Q$.
If $\Gamma$ is a word hyperbolic group, then a representation $\rho:\Gamma\to\ms G$ is $(\ms G,\ms P)$-Anosov if and
only if $\phi\circ \rho$ is projective Anosov.

Furthermore, if $\rho$ is $(\ms G,\ms P)$-Anosov with limit maps
$\xi^+$ and $\xi^-$, then the limit maps of $\phi\circ\rho$ are given
by $\xi=\beta\circ\xi^+$ and
$\theta=\beta^*\circ\xi^-$ where $\beta:\ms G/\ms P\to\mathbb P(V)$ and $\beta^*:\ms G/\ms Q\to\mathbb P(V^*)$ 
are the maps induced by $\phi.$
\end{theorem}

The following corollary is observed by Guichard-Wienhard \cite[Remark 4.12]{guichard-wienhard}.
We provide a proof here for the reader's convenience.
The representation given in the proof will be called the \emph{Pl\"ucker representation}
of $\ms G$ with respect to $\ms P .$

\begin{corollary}{\sc [Guichard-Wienhard]}
\label{plucker} For any parabolic subgroup $\ms P$ of a semi-simple Lie group $\ms G$
with finite center, there exists a finite dimensional
irreducible representation \hbox{$\alpha:\ms G\to\ms{SL}(V)$} such that if $\Gamma$ is a word hyperbolic group
and \hbox{$\rho:\Gamma \to\ms G$} is a  $(\ms G,\ms P)$-Anosov representation,
then  $\alpha\circ\rho$ is projective Anosov.  

Moreover, if $\ms P$ is non-degenerate, then $\ker(\alpha)=Z(\ms G)$ and
$\alpha$ is an immersion.

\end{corollary}

\begin{proof} 
In view of Theorem \ref{theorem:convexAnosov} it suffices to find a finite dimensional irreducible representation 
$\alpha:\ms G\to\ms{SL}(V)$ such that $\alpha(\ms P )$ is the stabilizer (in $\alpha(\ms G)$) of a line in $V.$ 

Let $\Lambda^kW$ denote the $k$-th exterior power of the vector space $W.$  
Let $n=\dim \mk n^+=\dim \mk n^-$ and consider $\alpha:\ms G\to\ms{SL}(\Lambda^n\mk g)$ 
given by 
$$\alpha(g)=\Lambda^n\ad(g).$$ 
One may readily check that the restriction of $\alpha$ to  $V=\braket{\alpha(\ms G) \cdot \Lambda^n\mk n^+}$ works.

If $\ms P $ is non-degenerate, then $\ker  (\alpha|_V)$ is a normal subgroup  of $\ms G$ which is contained in $\ms P$,
so $\ker  (\alpha|_V)$ is contained in $Z(\ms G)$ (see \cite{razovin}).
Since $Z(\ms G)$ is in the kernel of the adjoint representation, we see that  
$\ker  (\alpha|_V)=Z(\ms G)$. Since $\alpha|_V$ is algebraic and $Z(\ms G)$ is finite, 
it follows that $\alpha|_V$ is an immersion.
\end{proof}

If $\ms G$ has rank one, then it contains a unique conjugacy class of parabolic subgroups. 
A representation $\rho:\Gamma\to \ms G$ is Anosov if and only if it is convex 
cocompact (see \cite[Theorem 5.15]{guichard-wienhard}). We then get the following.

We recall that the topological entropy of a convex cocompact representation $\rho:\Gamma\to\ms G$
of a word hyperbolic group into a rank one semi-simple Lie group is given by
$$h(\rho)=\lim_{T\to\infty}\frac{1}{T}\log\left( \sharp\{[\gamma]\ |\d(\rho(\gamma))\leq T\}\right),$$ where 
$\d(\rho(\gamma))$ denotes the translation length of $\rho(\gamma).$ We obtain the following immediate corollary.

\begin{corollary}\label{entropy:rk1}
Let $\ms G$ be a rank one semi-simple  Lie group, let $\Gamma$ be a word hyperbolic group and
let $\alpha:\ms G\to \ms{SL}(V)$ be the Pl\"ucker representation. There exists $K>0$, such that if
$\rho:\Gamma\to \ms G$ is convex cocompact, then $\alpha\circ\rho$ is projective Anosov and 
$$h(\alpha\circ\rho)=\frac{h(\rho)}{K}.$$
\end{corollary}

\begin{proof} Let $\lambda_{\ms G}:\ms G\to \mathfrak a_{\ms G}$ be the Jordan projection of $\ms G.$
Since $\mathfrak a_{\ms G}$ is one dimensional, we can identify it with $\Real$ by setting $\lambda_{\ms G}(g)=\d(g).$

Denote by $\chi_\alpha\in\mathfrak a_{\ms G}$ the highest (restricted) weight of the representation $\alpha$
(see, for example, Humphreys \cite{humphreys}). By definition, 
one has $\Lambda(\alpha(g))=\chi_\alpha(\d(g)),$ for every $g\in \ms G.$ 
Hence, since $\mathfrak a_{\ms G}$ is one dimensional, one has
\begin{equation}\label{pro:rk1}
\Lambda(\alpha(\rho(\gamma)))=K\d(\rho(\gamma))
\end{equation} 
for every $\gamma \in\Gamma.$ 

It follows immediately that
$$h(\alpha\circ\rho)=\frac{h(\rho)}{K}.$$
\end{proof}

\subsection{Irreducible representations}
\label{irreducible reps}

Guichard and Wienhard \cite[Proposition 4.10]{guichard-wienhard} proved that
irreducible representations  with transverse projective limit maps are projective Anosov
(see also \cite{labourie-anosov} for hyperconvex representations).

\begin{proposition}\label{irredano}{\sc[Guichard--Wienhard]}
If $\Gamma$ is a word hyperbolic group, then
every irreducible  representation $\rho:\Gamma\to\sln$ with transverse projective limit maps
is projective Anosov.
\end{proposition}

It will be useful to note that if $\rho:\Gamma\to\sln$ is projective Anosov and irreducible, then 
$\xi(\bg)$ contains a projective frame for $\rpn$. We recall that a collection of $m+1$ elements in $\rpn$ is a
{\em projective frame} if every subset containing $m$ elements spans $\mathbb R^m$. We first prove the following lemma.

\begin{lemma} Let $\rho:\Gamma\to\sln$ be a representation with a continuous \hbox{$\rho$-equivariant} map 
$\xi:\bg\to\rpn,$ then the preimage $\xi^{-1}(V)$ of a vector subspace $V\subset \Real^m$ is either 
$\bg$ or has empty interior on $\bg$.
\end{lemma}

\begin{proof} Choose $\{x_1,\ldots,x_p\}\subset\bg$ so that $\{\xi(x_1),\ldots,\xi(x_p)\}$ spans 
the vector subspace $\braket{\xi(\bg)}$  spanned by $\xi(\bg)$.

Suppose that  $\xi^{-1}(V)=\{x\in\bg:\xi(x)\in V\}$ has non-empty interior in $\bg$.
Choose $\gamma\in\Gamma$ so that $\gamma^-\notin\{x_1,\ldots,x_p\}$ and $\gamma^+$ belongs to the interior of $\xi^{-1}(V).$ 

Since $ \gamma^n(x_i)\to\gamma^+$ for every $i\in\{1,\ldots,p\},$ if we choose $n$ large enough,
then $\gamma^n(x_i)$ is contained in the interior of $\xi^{-1}(V)$, so
$\xi(\gamma^n x_i)\in V.$ Since $\{\xi(\gamma^n (x_1)),\ldots,\xi(\gamma^n(x_p))\}$ still spans $\braket{\xi(\bg)}$,
we see that \hbox{$\braket{\xi(\bg)}\subset V$}, in which case $\xi^{-1}(V)=\bg$.
\end{proof}

The following generalization of the fact that every irreducible projective Anosov representation admits a projective frame will
be useful in Section \ref{rigidity}.

\begin{lemma}
\label{mutual frame}
Let $\rho_1:\Gamma\to\sln$ and $\rho_2:\Gamma\to\sln$ be representations with  continuous equivariant limit
maps $\xi_1$ and $\xi_2$ such that
$\dim\braket{\xi_1(\bg)}=\dim\braket{\xi_2(\bg)}=p.$ Then there exist $p+1$ distinct points
$\{x_0,\ldots,x_p\}$ in $\bg$ such that $$\{\xi_1(x_0),\ldots,\xi_1(x_p)\}\textrm{ and }\{\xi_2(x_0),\ldots,\xi_2(x_p)\}$$ are projective frames of $\braket{\xi_1(\bg)}$ and $\braket{\xi_2(\bg)}$ respectively.
\end{lemma}

\begin{proof}
We first proceed by iteration to produce $\{x_1,\ldots,x_p\}$ so that
$\{\xi_1(x_1),\ldots,\xi_1(x_p)\}$ and $\{\xi_2(x_1),\ldots,\xi_2(x_p)\}$  generate
$$V=\braket{\xi_1(\bg)}\ \ \textrm{and}\ \ W=\braket{\xi_2(\bg)}.$$
Assume we have found $\{x_1,\ldots,x_k\}$ so that $\{\xi_1(x_1),\ldots,\xi_1(x_k)\}$ and $\{\xi_2(x_1),\ldots,\xi_2(x_k)\}$  are both
linearly independent. Define 
$$V_k=\braket{\{\xi_1(x_1),\ldots,\xi_1(x_k)\}}\textrm{ and }W_k=\braket{\{\xi_2(x_1),\ldots,\xi_2(x_k)\}}.$$
By the previous lemma,  if $k< p$, then $\xi_1^{-1}(V_k)$ and $\xi_2^{-1}(W_k)$ have empty interior, so
their complements must intersect. 
Pick $$x_{k+1}\in\xi_1^{-1}(V_k)^c\cap\xi_2^{-1}(W_k)^c.$$ 
This process is complete when $k=p$.

It remains to find $x_0$. For each $i=1,\ldots,p$, let
$$U_i^1=\braket{\{\xi_1(x_1),\ldots,\xi_1(x_p)\}\setminus\{\xi_1(x_i)\}}$$  and $$U^2_i=\braket{\{\xi_2(x_1),\ldots,\xi_2(x_p)\}\setminus\{\xi_2(x_i)\}}.$$ 
Then, choose
$$x_0\in\bigcap_i \xi_1^{-1}(U^1_i)^c\cap\xi_2^{-1}(U^2_i)^c.$$
One easily sees that $\{x_0,\ldots,x_p\}$ has the claimed properties.
\end{proof}

If $\rho:\Gamma\to \sln$ is projective Anosov and irreducible, then $\braket{\xi(\bg)}=\mathbb R^m$ (since $\braket{\xi(\bg)}$ is $\rho(\Gamma)$-invariant),
so Lemma \ref{mutual frame} immediately gives:

\begin{lemma}
\label{projective frame}
If $\rho:\Gamma\to\sln$ is an irreducible projective Anosov representation with limit maps $\xi$ and $\theta$, 
then then there exist 
$\{x_0,\ldots,x_m\}\subset \bg$ so that $\{\xi(x_0),\ldots,\xi(x_m)\}$ is a
projective frame for $\rpn$.
\end{lemma}

We will  also need the following lemma which was explained to us by J.-F. Quint.

\begin{lemma}{\sc[Quint]}\label{compactfactors} 
If $\Delta$ is an irreducible subgroup of $\sln$ that contains a proximal element,
then the Zariski closure $\ms G$ of $\Delta$ is a semi-simple Lie group without compact factors whose center
$Z(\ms G)\subset\{\pm I\}.$
\end{lemma}

\begin{proof}  Since $\ms G$ acts irreducibly on $\mathbb R^m$, it is a reductive group. 
Moreover, since $\ms G$ contains a proximal matrix, one easily sees
that attracting lines of proximal matrices in $\ms G$ span $\Real^m$, and that each attracting line
of a proximal matrix in $\ms G$ is invariant under $Z(\ms G).$
Therefore, $Z(\ms G)\subset\{\pm I\}$, so $\ms G$ is a semi-simple Lie group.

Let $\ms K$ be the maximal normal connected compact subgroup of $\ms G,$ 
and let $\ms H$ be the product of the non-compact Zariski connected, simple factors of $\ms G.$ 
Then $\ms H$ and $\ms K$ commute and $\ms H\ms K$ has finite index in $\ms G.$

Consider now a proximal element $g\in \ms G.$ Replacing $g$ by a large enough power, we can assume that 
$g=hk$ for some $h\in \ms H$ and $k\in \ms K.$ Since all eigenvalues of $k$ have modulus $1$
and $k$ and $h$ commute, we conclude that $h$ is proximal. So we can assume that $g\in \ms H.$ 

Since $g$ and $\ms K$ commute, the attracting line of $g$ is fixed by $\ms K,$ 
and, since $\ms K$ is connected, each vector of this attracting line is fixed by $\ms K.$ 
Let $W$ be the vector space of $\ms K$-fixed vectors on $\Real^m,$ then $W$ is $\ms G$-invariant ,
since $\ms K$ is normal in $\ms G$, and nonzero. Since $\ms G$ is irreducible, $W=\Real^m$ and so $\ms K=\{I\}.$
\end{proof}

Proposition \ref{proxi2} and Lemma \ref{compactfactors} together have the following immediate consequence.

\begin{corollary}\label{Zclosureconvex} Let $\rho:\Gamma\to\sln$ be an  irreducible projective Anosov representation, 
then the Zariski closure $\ms G_\rho$ of $\rho(\Gamma)$ is a semi-simple Lie group without compact factors 
such that $Z(\ms G_\rho)\subset\{\pm I\}.$
\end{corollary}

\subsection{$\mathsf G$-generic representations}
\label{gg}

Let $\mathsf G$ be a reductive subgroup of $\sln$. We recall that an element in $\mathsf G$ is {\em generic}
if its centralizer is a maximal torus in $\mathsf G$. 
We say that a representation $\rho:\Gamma \to\sln$ of $\Gamma$ is {\em $\mathsf G$-generic} if  
$\rho(\Gamma)\subset \ms G$ and the Zariski closure $\overline{\rho(\Gamma)}^Z$ of $\rho(\Gamma)$ contains a 
$\mathsf G$-generic element.

We will need the following observation.

\begin{lemma}
\label{pro:gen}
If $\ms G$ is a reductive subgroup of $\sln$ and
$\rho:\Gamma\to\ms G$ is a $\mathsf G$-generic representation, then 
there exists $\gamma\in\Gamma$ such  that $\rho(\gamma)$ is  a generic element of $\ms G$.
\end{lemma}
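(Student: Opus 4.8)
The plan is to deduce the existence of such a $\gamma$ from the density of $\rho(\Gamma)$ in its Zariski closure $\ms H = \overline{\rho(\Gamma)}^Z$ together with the openness of the genericity condition inside $\ms H$. By hypothesis $\ms H$ contains a $\ms G$-generic element $h_0$, i.e. an element whose centralizer $Z_{\ms G}(h_0)$ is a maximal torus $\TT$ of $\ms G$. The key point is that the set
$$
\Omega = \{\, h \in \ms H : h \text{ is a } \ms G\text{-generic element}\,\}
$$
is a nonempty Zariski-open (in particular Euclidean-open) subset of $\ms H$, and $\rho(\Gamma)$, being Zariski dense in $\ms H$, must meet every nonempty Zariski-open subset of $\ms H$; hence $\rho(\Gamma) \cap \Omega \neq \emptyset$, which gives the desired $\gamma$.

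**First I would** argue that $\Omega$ is Zariski-open in $\ms H$. For an element $h$ of the reductive group $\ms G$, being generic in the sense above — centralizer in $\ms G$ equal to a maximal torus — is equivalent to $h$ being a regular semisimple element of $\ms G$ (the centralizer of a regular semisimple element is a maximal torus, and conversely an element whose centralizer has dimension equal to $\operatorname{rk}\ms G$ must be regular semisimple since the centralizer contains the full torus through $h$). The regular semisimple locus of a reductive group is Zariski-open and nonempty: semisimplicity alone is not open, but the condition that the centralizer have minimal possible dimension $\operatorname{rk}\ms G$ is open (the dimension of the centralizer is upper semicontinuous, being the kernel dimension of $\operatorname{Ad}(h) - \mathrm{Id}$ on $\mathfrak g_{\ms G}$), and on the locus where $\dim Z_{\ms G}(h) = \operatorname{rk}\ms G$ the element is automatically regular semisimple. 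So $\Omega = \{h \in \ms G : \dim Z_{\ms G}(h) = \operatorname{rk}\ms G\} \cap \ms H$ is Zariski-open in $\ms H$, and it is nonempty because it contains $h_0$.

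**Then I would** conclude: since $\ms H$ is the Zariski closure of $\rho(\Gamma)$, the subset $\rho(\Gamma)$ is Zariski dense in $\ms H$, so it intersects every nonempty Zariski-open subset of $\ms H$ — in particular $\Omega$. Pick $\gamma \in \Gamma$ with $\rho(\gamma) \in \Omega$; then $\rho(\gamma)$ is a $\ms G$-generic element of $\ms G$ contained in $\ms H$, as claimed. One small care point: the definition of $\ms G$-generic in the paper asks that the centralizer be "a maximal torus in $\ms G$"; I should note that a maximal torus of a reductive group is its own centralizer and has dimension $\operatorname{rk}\ms G$, so "centralizer is a maximal torus" and "centralizer has dimension $\operatorname{rk}\ms G$" indeed coincide, which is what justifies replacing the genericity condition by the open dimension condition above.

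**The main obstacle**, such as it is, is purely a matter of correctly identifying that the genericity locus is Zariski-open in $\ms G$ — i.e. the algebraic-group fact that regular semisimple elements form an open dense set and coincide with the elements whose centralizer is a maximal torus. Everything else is a one-line density argument. If one prefers to avoid invoking "regular semisimple," the same conclusion follows directly from upper semicontinuity of $h \mapsto \dim Z_{\ms G}(h)$ on $\ms H$ and the observation that $h_0$ achieves the minimal value $\operatorname{rk}\ms G$, forcing $\{\dim Z_{\ms G}(h) \le \operatorname{rk}\ms G\}$ to be a nonempty open subset of $\ms H$ on which every element has centralizer exactly a maximal torus.
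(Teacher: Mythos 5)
Your outline is the same as the paper's: the paper's entire proof is the assertion that the non-generic elements form a Zariski closed subset of $\ms G$, so the generic locus is Zariski open, plus the observation that $\rho(\Gamma)$, being Zariski dense in its closure $\ms H$, meets every nonempty Zariski open subset of $\ms H$. Your density step is correct. The genuine gap is in your justification of openness, where both characterizations of genericity you use fail for a general reductive subgroup $\ms G\subset\sln$ (they are fine for $\ms G=\sln$ itself, which is the paper's motivating example, but not in general). First, $\dim Z_{\ms G}(h)=\operatorname{rk}\ms G$ does not imply that $h$ is generic, nor even semisimple: a regular unipotent element of $\ms{SL}_2(\mathbb R)$ has centralizer of dimension $1=\operatorname{rk}$, but that centralizer is $\{\pm I\}$ times a one-parameter unipotent group, not a torus (your phrase ``the centralizer contains the full torus through $h$'' already presupposes that $h$ lies in a torus). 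Second, even regular semisimple does not imply generic: in $\ms G=\ms{SO}(3)\subset\ms{SL}_3(\mathbb R)$ (or in $\ms{PGL}_2$), a rotation by angle $\pi$ is regular semisimple, but its centralizer has two connected components --- the circle of rotations about the same axis together with the half-turns about perpendicular axes --- so it is not a maximal torus, whereas nearby rotations by angles different from $0,\pi$ are generic. Consequently the set $\{h\in\ms H:\dim Z_{\ms G}(h)=\operatorname{rk}\ms G\}$ that you prove to be open is strictly larger than the generic locus, your closing claim that every element of it ``has centralizer exactly a maximal torus'' is false, and finding $\rho(\gamma)$ in this set only shows that $\rho(\gamma)$ is regular, which is not the conclusion of the lemma.

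What the argument actually needs --- and what the paper asserts without detail --- is that the generic locus itself is Zariski open, i.e.\ that the non-generic locus is Zariski closed. Upper semicontinuity of $\dim Z_{\ms G}$ alone does not give this: one must also know that the regular \emph{semisimple} locus is open (a discriminant-type criterion \`a la Steinberg, not a dimension count), and that within it the regular semisimple elements with disconnected centralizer --- conjugates of the fixed-point sets of nontrivial Weyl group elements in a maximal torus --- together with the irregular and non-semisimple elements still form a closed set. With openness of the genuine generic locus in hand, your Zariski density argument concludes exactly as in the paper.
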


\begin{proof}
We first note that the set of non-generic elements of $\ms G$ is Zariski closed in $\ms G$, so the set of generic elements
is Zariski open in $\ms G$. Therefore, if the Zariski closure of $\rho(\Gamma)$ contains generic elements of $\ms G$,  then
$\rho(\Gamma)$ must itself contain generic elements of $\ms G$.
\end{proof}

\section{Thermodynamic  formalism}\label{PR}

In this section, we recall facts from the thermodynamic formalism, as developed by
developed by Bowen \cite{bowen1,bowen2}, Parry--Pollicott \cite{parry-pollicott}, Ruelle  \cite{ruelle}
and others, which we will need in our work. In section \ref{pressure form}, we will describe a variation
of  a construction of McMullen \cite{mcmullen}, which produces a pressure form on
the space of pressure zero functions on a flow space. Our pressure metric will be a pull-back of
this form.

\subsection{H\"older flows on compact spaces}

Let $X$ be a compact metric space with a H\"older continuous flow $\phi=\{\phi_t\}_{t\in\Real}$ without fixed points.

\subsubsection{Flows and parametrisations}

Let $f:X\to\Real$ be a positive H\"older continuous function. Then, since $X$ is compact, $f$ has a positive minimum and for every $x\in X,$ the function $\kappa_f:X\times\Real\to\Real,$ defined by $\kappa_f(x,t)=\int_0^tf(\phi_sx)ds,$ is an increasing homeomorphism of $\Real.$ We then have a map $\alpha_f:X\times\Real\to\Real$ that verifies \begin{equation}\label{equation:inversa} \alpha_f(x,\kappa_f(x,t))=\kappa_f(x,\alpha_f(x,t))=t,\end{equation} for every $(x,t)\in X\times\Real.$ 

The \emph{reparametrization} of $\phi$ by $f,$ is the flow $\phi^f=\{\phi^f_t\}_{t\in\Real}$ on $X,$ defined by $\phi^f_t(x)=\phi_{\alpha_f(x,t)}(x),$ for all $t\in\Real$ and $x\in X.$

\subsubsection{Liv\v sic-cohomology classes}

Two H\"older functions $f,g:X\to\Real$ are \emph{Liv\v sic-cohomologous} if there exists $V:X\to\Real$ of class
$\textrm{C}^1$ in the flow's direction such that
$$f(x)-g(x)=\left.\frac{\partial}{\partial t}\right|_{t=0}V(\phi_t(x)).$$

Then one easily notices that:

\begin{enumerate}
\item  
If $f$ and $g$ are Liv\v sic
cohomologous then they have the same integral over any $\phi$-invariant measure,  and
\item
If $f$ and $g$ are both positive and Liv\v sic cohomologous, then the flows $\phi^f$ and $\phi^g$ are H\"older conjugate.
\end{enumerate}

\subsubsection{Periods and  measures} 
Let $O$ be the set of periodic orbits of $\phi$. 
If $a\in O$ then its {\em period} as a $\{\phi_t^f\}$ periodic orbit is
$$\int_0^{p(a)}f(\phi_s(x))\d s$$ 
where $p(a)$ is the period of $a$ for $\phi$ and $x\in a$.  In particular, if $\hdelta_a$ is the probability measure invariant by the flow and supported by the orbit $a$,  and if $$
\hdelta_a=\frac{\delta_a}{\braket{\delta_a|1}},
$$
then
$$\braket{\delta_a|f}=\int_0^{p(a)}f(\phi_s(x))\d s\ \ \textrm{and}\ \ p(a)= \braket{\delta_a|1}.$$
In general, if $\mu$ is a $\phi$-invariant measure on $X$ and $f:X\to\mathbb R$ is a H\"older function,
we will use the notation
$$\braket{\mu|f} =\int_X f \d\mu.$$

Let $\mu$ be a $\phi$-invariant probability measure on $X$ and let $\phi^f$ be the reparametrization of $\phi$ by $f$.
We  define $\widehat{f. \mu}$ by
$$\widehat{f. \mu}=\frac{1}{\braket{\mu|f}}f.\mu.$$ 
The map $\mu\mapsto \widehat{f. \mu}$ induces a bijection between $\phi$-invariant probability measures and $\phi^f$-invariant probability measures. If
$\hdelta^f_a$ is the unique $\phi_f$ invariant probability measure supported by $a$, 
then $\hdelta^f_a=\widehat{f.\delta_a}$. 
In particular, we have
\begin{equation}
\braket{\hdelta_a^f|g}=\frac{\braket{\delta_a|f.g}}{\braket{\delta_a|f}}\label{delta}
\end{equation}

\subsubsection{Entropy, pressure and equilibrium states}If $\mu$ is a $\phi$-invariant probability
measure on $X,$ then we denote by $h(\phi,\mu),$ its metric entropy. The Abramov formula \cite{abramov} relates the metric entropies of a flow and its reparameterization:
\begin{equation}
\label{eq:abramov}
h(\phi^f,\widehat{f.\mu})=\frac{1}{\int f\,\d \mu}h(\phi,\mu).
\end{equation}

Let $\mathcal M^{\phi}$ denote the set of $\phi$-invariant probability measures.
The \emph{pressure} of  a function $f:X\to\Real$ is defined by 
\begin{equation}\PP(\phi,f)=\sup_{m\in\mathcal M^{\phi}}\left(h(\phi,m)+\int_X f\,\d m\right)\label{def:press}.\end{equation}
In particular,
$$h_{\tope}(\phi)=\PP(\phi,0)$$
is the {\em topological entropy} of the flow $\phi$.

A measure $m\in\mathcal M^\phi$  on $X$ such that 
$$\PP(\phi,f)=h(\phi,m)+\int_X f\d m,$$
is called an \emph{equilibrium state} of $f$. 

An equilibrium state for the function $f\equiv0$ is called a {\em measure of maximal entropy}.

\begin{rmk}
The pressure $\PP(\phi,f)$ only depends on the  Liv\v sic cohomology class of $f$.
\end{rmk}

The following lemma from Sambarino \cite{sambarino-quantitative} is a consequence of the definition and the Abramov formula.

\begin{lemma}
\label{lemma:entropia2}{\rm (Sambarino \cite[Lemma 2.4]{sambarino-quantitative})}  
If $\phi$ is a H\"older continuous flow on a  compact metric space $X$ and $f:X\to\Real$
is a positive H\"older continuous function, then
$$P(\phi,-h f)=0$$ 
if and only if $h=h_{\tope}(\phi^f)$. 

Moreover, if $h=h_{\tope}(\phi^f)$ and  $m$ is an equilibrium state of $-hf$, 
then $\widehat{f.m}$ is a measure of maximal entropy for  the reparameterized flow  $\phi^f$. 
\end{lemma}

\subsection{Metric Anosov flows}

We shall assume from now on that the flow $\{\phi_t\}_{t\in\mathbb R}$ is a topologically transitive metric Anosov flow
on $X$. 

We recall that a flow $\{\phi\}_{t\in\mathbb R}$  on a metric space $X$ is {\em topologically transitive} if given any two
open sets $U$ and $V$ in $X$, there exists $t\in\mathbb R$ so that $\phi_t(U)\cap V$ is non-empty.

Let $X$ be metric space. Let $\mathcal L$ be an equivalence relation on $X$.
We denote by $\mathcal L_x$ the equivalence class of $x$ and call it the {\em leaf} through $x$, 
so that we have a partition of $X$ into leaves
$$
X=\bigsqcup_{y\in Y}\mathcal L_y,
$$ 
where $Y$ is the collection of equivalence classes of $\mathcal L$.
Such a partition  is a {\em lamination} if we can for every $x$ in $X$, an open neighbourhood $O_x$ of $x$,  two topological spaces  $U$ and $K$,  a homeomorphism $\nu_x =(\nu^1_x,\nu^2_x)$ called a {\em chart}  from $O_x$ to $U\times K$ satisfying the following conditions
\begin{itemize}
\item for all $z,w\in O_x\cap O_y$, 
$$
\nu^1_x(w)=\nu^1_x(z)\iff \nu^1_y(w)=\nu^1_y(z) ,
$$
\item we have that $w~{\mathcal L}~z$ if and only if  there exists a sequence $w_i$, 
$i\in\{1,\ldots n\}$ with $w_1=w$ and $w_n=z$, such that $w_{i+1}\in O_{w_i}$ and $\nu^1_{w_i}(w_i)=\nu^1_{w_i}(w_{i+1})$.
\end{itemize}

A {\em plaque open set} in the chart corresponding to $\nu$ is a set of the form 
$\nu\left(O\times\{z_0\}\right)$ where $x=\nu(y_0,z_0)$ and $O$ is an open set in $U$ containing $y_0$.
The {\em plaque topology} on $\mathcal L_x$ is the topology generated  by the plaque open sets. 
A {\em plaque neighborhood} of $x$ is a neighborhood for the plaque topology on $\mathcal L_x$. 

We say that two laminations $\mathcal L$ and $\mathcal L'$ define a {\em local product structure}, 
if for any point $x$ in $X$ there exist plaque neighborhoods $U$ and $U'$ of $x$ in $\mathcal L$ and $\mathcal L'$ 
respectively, and a map  $\nu:U\times U'\to X,$ which is an homeomorphism onto an open set of $X,$ such that 
$\nu$ is both a chart for $\mathcal L$ and for $\mathcal L'.$

Assume now we have a flow $\{\phi_t\}_{t\in\mathbb R}$ on $X.$ If $\mathcal L$ is a
lamination invariant by $\{\phi_t\},$ we say that $\mathcal L$ is {\em transverse to the flow}, 
if for every $x$ in $X$, there exists a plaque  neighborhood $U$ of $x$ in $\mathcal L_x$, 
a topological space $K$,  $\epsilon>0$, and  a chart 
$$
\nu: U\times K\times (-\epsilon,\epsilon)\to X,
$$   
such that
$$
\phi_t(\nu(u,k,s))=\nu(u,k,s+t).
$$
If $\mathcal L$ is tranverse to the flow, we define a new lamination, called the {\em central lamination} with respect to $\mathcal L$, denoted by $\mathcal L^c,$ by
letting $x\, \mathcal L^c\, y$ if and only if  there exists $s$ such that $\phi_s(x)\,\mathcal L \,y.$

Finally, a $\{\phi_t\}$ invariant lamination $\mathcal L$ is {\em contracted by the flow},
if there exists $t_0>0$ such that for all $x\in X$, there exists a chart $\nu_x:U\times K\to V$ 
of an open neighborhood $V$ of $x$, such that if 
$$z=\nu_x(u,k),\ \  {\rm and}\ \  y=\nu_x(v,k),$$
then for all $t>t_0$
$$  d(\phi_t(z),\phi_t(y))<\frac{1}{2}d(z,y).$$ 

\begin{definition}{\sc[Metric Anosov flow]}
A flow  $\{\phi_t\}_{t\in\mathbb R}$ on a compact metric space $X$  is {\em metric Anosov}, 
if there exist two laminations, $\mathcal L^+$ and $\mathcal L^-,$ transverse to the flow, such that
\begin{enumerate}
\item $(\mathcal L^+, \mathcal L^{-,c})$ defines a local product structure, 
\item $(\mathcal L^-, \mathcal L^{+,c})$ defines a local product structure,
\item $\mathcal L^+$ is contracted by the flow, and
\item $\mathcal L^-$ is contracted by the inverse flow.
\end{enumerate}
Then $\mathcal L^+$, $\mathcal L^-$, $\mathcal L^{+,c}$, $\mathcal L^{-,c}$ are respectively called the {\em stable}, {\em unstable}, { \em central stable} and {\em central unstable} laminations.
\end{definition}

\begin{rmk}\label{rmk:pollicott} In the language of Pollicott \cite{pollicottcoding}, a metric Anosov flow is a Smale flow:
the local product structure of $(\mathcal L^+, \mathcal L^{-,c})$ is what he calls the map
$$\left<\cdot,\cdot\right>:\{(x,y)\in X\times X:d(x,y)<\eps\}\to X.$$
\end{rmk}

\subsubsection{Liv\v sic's Theorem}

Liv\v sic \cite{livsic} shows that the Liv\v sic cohomology class of a H\"older function $f:X\to\Real$ is determined by its periods:

\begin{theorem}
\label{theorem:livsic}Let $f:X\to \Real$ be a H\"older continuous function, then $\braket{\delta_a|f}=0$ for every $a\in O$ if and only if $f$ is Liv\v sic cohomologous to zero.
\end{theorem}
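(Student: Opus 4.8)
The plan is to prove the two implications separately; the whole point lies in the ``periods vanish $\Rightarrow$ coboundary'' direction. For the easy implication, suppose $f$ is Liv\v sic cohomologous to zero, so that there is $V\colon X\to\Real$, of class $\mathrm C^1$ in the flow direction, with $f(x)=\left.\frac{\partial}{\partial t}\right|_{t=0}V(\phi_t(x))$; then $s\mapsto V(\phi_s(x))$ is $\mathrm C^1$ with derivative $f(\phi_s(x))$, so for $a\in O$ of period $p(a)$ and $x\in a$ the fundamental theorem of calculus gives $\braket{\delta_a|f}=\int_0^{p(a)}f(\phi_s(x))\,\d s=V(\phi_{p(a)}(x))-V(x)=0$ since $\phi_{p(a)}(x)=x$.

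For the converse, assume $\braket{\delta_a|f}=0$ for all $a\in O$. Using topological transitivity, fix a non-periodic $x_0\in X$ whose forward orbit is dense, and define $V(\phi_t(x_0))=\int_0^t f(\phi_s(x_0))\,\d s$ for $t\ge 0$; this is well defined because the flow has no fixed points, and by construction $V(\phi_t(x_0))-V(\phi_{t'}(x_0))=\int_{t'}^t f(\phi_s(x_0))\,\d s$. The claim is that $V$ is uniformly (H\"older) continuous on the orbit of $x_0$ for the metric of $X$. Granting this, $V$ extends continuously to all of $X$ since the orbit is dense, and the cocycle identity above passes to the limit to give $V(\phi_t(x))-V(\phi_{t'}(x))=\int_{t'}^t f(\phi_s(x))\,\d s$ for every $x\in X$ (using compactness of $X$ and uniform continuity of $f$), which is exactly the statement that $V$ is $\mathrm C^1$ along the flow with derivative $f$; hence $f$ is Liv\v sic cohomologous to zero.

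The uniform continuity of $V$ is where the metric Anosov structure enters, through an \emph{exponential closing lemma}: there are constants $C,c,\epsilon_1>0$ such that if $d(\phi_T(y),y)<\epsilon<\epsilon_1$ for some $y\in X$ and $T>1$, then there exist a periodic orbit $a\in O$ with $|p(a)-T|\le C\epsilon$, a point $w\in a$, and an increasing reparametrization $\tau\colon[0,T]\to[0,p(a)]$ with $d\big(\phi_s(y),\phi_{\tau(s)}(w)\big)\le C\epsilon\big(e^{-cs}+e^{-c(T-s)}\big)$ and $|\tau'(s)-1|\le C\epsilon\big(e^{-cs}+e^{-c(T-s)}\big)$ for all $s\in[0,T]$. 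I would derive this from the two local product structures $(\mathcal L^{\pm},\mathcal L^{\mp,c})$ on $X$ together with the contraction of $\mathcal L^+$ by the flow and of $\mathcal L^-$ by the inverse flow: sliding along the unstable leaf through $y$ and the stable leaf through $\phi_T(y)$ and iterating a graph transform inside a product chart produces a genuine periodic point $w$ near $y$, and the two contraction hypotheses force the shadowing error to decay exponentially away from the endpoints $s=0,T$, where the orbits are only $O(\epsilon)$-close; alternatively one may quote the Markov coding of Smale flows of Pollicott \cite{pollicottcoding} and the closing lemma for the associated suspension subshift. Granting the closing lemma, take $s_1<s_2$ with $d(\phi_{s_1}(x_0),\phi_{s_2}(x_0))<\epsilon$, put $y=\phi_{s_1}(x_0)$, $T=s_2-s_1$, so $V(\phi_{s_2}(x_0))-V(\phi_{s_1}(x_0))=\int_0^T f(\phi_s(y))\,\d s$ and $d(\phi_T(y),y)<\epsilon$; applying the closing lemma, using $\braket{\delta_a|f}=\int_0^{p(a)}f(\phi_s(w))\,\d s=0$ and the H\"older continuity of $f$, a change of variables through $\tau$ gives
$$
\left|\,\int_0^T f(\phi_s(y))\,\d s\,\right|=\left|\,\int_0^T f(\phi_s(y))\,\d s-\braket{\delta_a|f}\,\right|\le C'\!\int_0^T\!\Big(d\big(\phi_s(y),\phi_{\tau(s)}(w)\big)^{\kappa}+|\tau'(s)-1|\Big)\,\d s+C'|p(a)-T|\le C''\epsilon^{\kappa},
$$
where $\kappa$ is a H\"older exponent of $f$; crucially $C''\epsilon^{\kappa}$ is independent of $T$ thanks to the exponential decay, and this is the desired uniform continuity of $V$.

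The main obstacle is precisely this exponential closing lemma in the \emph{ad hoc} metric Anosov framework of Section \ref{geodano}: without the shadowing error being concentrated near the two endpoints, the estimate above would degrade linearly in $T$ and yield nothing, so one really needs the exponential contraction built into $\mathcal L^\pm$, and some care is required with the product charts and with the non-smooth reparametrization $\tau$. Everything else — density of the chosen orbit, extension by continuity, and the verification that the extended $V$ differentiates to $f$ along the flow — is routine.
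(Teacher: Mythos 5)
The paper never proves this statement: Theorem \ref{theorem:livsic} is quoted directly from Liv\v sic \cite{livsic}, and its validity for topologically transitive metric Anosov flows is part of the Smale-flow package the paper imports (alongside Bowen--Ruelle and Pollicott) rather than re-derives. What you have written is a reconstruction of the classical proof, and the outline is correct: the coboundary-implies-zero-periods direction by the fundamental theorem of calculus; the converse by integrating $f$ along a dense orbit, establishing a uniform H\"older modulus at near-returns via a closing lemma with exponentially decaying shadowing error, extending $V$ by density, and passing the cocycle identity $V(\phi_t(x))-V(x)=\int_0^t f(\phi_s(x))\,\d s$ to the limit. Your change-of-variables estimate is the right computation, and the point you emphasize --- that the error must be summable uniformly in $T$, which is exactly what the exponential decay away from the endpoints provides --- is indeed the heart of the matter.

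That said, the entire weight of your argument rests on the exponential closing lemma in the metric Anosov category, which you state but do not prove; in this non-smooth setting it is a genuine piece of work (a graph-transform/fixed-point argument in a product chart, using the contraction of $\mathcal L^+$ by the flow and of $\mathcal L^-$ by the inverse flow, plus control of the time change), and it is precisely the kind of statement the paper outsources to the literature. So your proof is complete only modulo that lemma. Two smaller cautions: the alternative you mention in passing --- transferring Liv\v sic through the Markov coding --- is more delicate than it sounds, since $\Pi$ is only bounded-to-one and injective on a residual set, so a coboundary produced on the suspension does not automatically descend to $X$; the direct closing-lemma route you take as primary is the correct one. And for near-returns with small $T$ the closing lemma does not apply; one needs the routine compactness fact that a fixed-point-free flow on a compact space has $d(\phi_T(y),y)<\epsilon$ with $T$ below the minimal period only when $T$ itself is small, so that $|V(\phi_{s_2}(x_0))-V(\phi_{s_1}(x_0))|\leq \Vert f\Vert_\infty T$ handles that range. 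Neither caveat changes the verdict: correct strategy, with the key technical lemma asserted rather than established.
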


\subsubsection{Coding}

We shall say that the triple $(\Sigma,\pi,r)$ is a \emph{Markov coding} for $\phi$ if $\Sigma$ is an irreducible two-sided subshift of finite type, the maps $\pi:\Sigma\to X$ and $r:\Sigma\to\Real_+^*$ are H\"older-continuous and verify the following conditions: Let $\sigma:\Sigma\to\Sigma$ be the shift, and let $\hat r:\Sigma\times\Real\to\Sigma\times\Real$ be the homeomorphism defined by $$\hat r(x,t)=(\sigma x, t-r(x)),$$ then

\begin{itemize}\item[i)] the map $\Pi:\Sigma\times\Real\to X$ defined by $\Pi(x,t)=\phi_t(\pi( x))$ is surjective and $\hat r$-invariant, \item[ii)] consider the suspension flow $\sigma^r=\{\sigma^r_t\}_{t\in\Real}$ on $(\Sigma\times\Real)/\hat r,$ then the induced map $\Pi:(\Sigma\times\Real)/\hat r\to X$ is bounded-to-one and, injective on a residual set which is of full measure for every ergodic invariant measure of total support of $\sigma^r.$ 
\end{itemize}

\begin{rmk} If a flow $\phi$ admits a Markov coding, then every reparametrization $\phi^f$ of $\phi$ also admits a Markov coding, simply by changing the roof function $r.$
\end{rmk}

We recall, see Remark \ref{rmk:pollicott}, that  a metric Anosov flow is a Smale flow. 
One then has the following theorem of Bowen \cite{bowen1,bowen2} and Pollicott \cite{pollicottcoding}.

\begin{theorem} A topologically transitive metric Anosov flow  on a compact metric space
admits a Markov coding.
\end{theorem}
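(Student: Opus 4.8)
The plan is to adapt Bowen's construction of Markov partitions for Axiom A flows \cite{bowen1,bowen2} to the purely metric setting, as carried out by Pollicott \cite{pollicottcoding}: the stable and unstable manifolds of the differentiable theory are replaced throughout by the laminations $\mathcal L^\pm$, and the smooth holonomies by the local product structure. The output will be a finite family of local cross sections, a Bowen--Markov partition of their union into ``rectangles'', and the associated suspension picture, from which the triple $(\Sigma,\pi,r)$ reads off directly.

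\textbf{Preliminary dynamical facts.} First I would record the consequences of the metric Anosov hypothesis on which the construction rests. Because $\mathcal L^+$ is contracted by the flow and $\mathcal L^-$ by the inverse flow, the contraction is uniform and hence exponential, so the flow is \emph{expansive up to reparametrization}: there is $\eps_0>0$ such that if $d(\phi_t x,\phi_{h(t)}y)\leq\eps_0$ for all $t$ and some increasing homeomorphism $h$ of $\mathbb R$ fixing $0$, then $y=\phi_\tau x$ with $|\tau|$ small. Standard arguments then give a shadowing property for finite $\eps$-pseudo-orbit segments. Since the laminations are transverse to the flow, $X$ is covered by flow boxes, and one may choose finitely many pairwise disjoint local cross sections $D_1,\dots,D_N$, uniformly transverse to $\phi$, so that every orbit segment of length at least some $T_0$ crosses $D:=\bigsqcup_i D_i$; on $D$ there is a bracket $[x,y]$, continuous and in fact Hölder by the exponential contraction, furnishing a local product structure inherited from that of $X$. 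This reduces the theorem to building a Bowen--Markov partition of $D$ for the first-return (Poincar\'e) map $P$.

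\textbf{Construction of the partition: the main obstacle.} Call $R\subset D_i$ a \emph{rectangle} if $R=[A,B]:=\{[x,y]:x\in A,\ y\in B\}$ for suitably small pieces $A,B$ of leaves of $\mathcal L^-,\mathcal L^+$ through a common point, and set $\partial^sR=[\partial A,B]$ and $\partial^uR=[A,\partial B]$. Starting from a finite cover of $D$ by rectangles of diameter below $\eps_0$ (possible by compactness), I would perform Bowen's refinement, cutting each rectangle along the sets $P^{-1}(\partial^sR_j)$ and $P(\partial^uR_j)$ supplied by the other rectangles. One obtains a finite family $\mathcal R=\{R_1,\dots,R_k\}$ of rectangles with pairwise disjoint interiors, covering $D$, each $\partial R_j$ with empty interior in $D$ (properness), and satisfying the Markov conditions: whenever $P(\operatorname{int}R_i)\cap\operatorname{int}R_j\neq\emptyset$,
\[
P\big(\partial^sR_i\big)\cap\operatorname{int}R_j=\emptyset\qquad\text{and}\qquad P^{-1}\big(\partial^uR_j\big)\cap\operatorname{int}R_i=\emptyset .
\]
This is the heart of the matter and where essentially all the work lies: one must verify that ``cutting'' a rectangle along such sets again yields rectangles, that the boundary of the refined family stays confined to finitely many $P$-iterates of the original (thin) boundary, and that expansiveness together with the exponential contraction of $\mathcal L^\pm$ makes the relevant pieces of leaves genuine cross sections of the product structure. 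In the differentiable case these are precisely the points where the stable/unstable manifold theorem is used; in the metric setting the local product structure axiom is exactly what allows Bowen's argument to go through without it.

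\textbf{Assembling the coding.} Let $\Sigma$ be the two-sided subshift of finite type on $\{1,\dots,k\}$ with $A_{ij}=1$ iff $P(\operatorname{int}R_i)\cap\operatorname{int}R_j\neq\emptyset$; topological transitivity of $\phi$ makes $A$ irreducible, and the Markov property guarantees $\Sigma$ is nonempty with the expected combinatorics. For $\underline x=(x_n)\in\Sigma$ the nested nonempty compact sets $\bigcap_{|n|\leq N}P^{-n}\big(\overline{R_{x_n}}\big)$ shrink, by expansiveness, to a single point, defining a Hölder surjection $\pi:\Sigma\to D\subset X$; let $r(\underline x)$ be the first return time of $\pi(\underline x)$ to $D$, which is bounded away from $0$ since the $D_i$ are disjoint, and Hölder. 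Put $\hat r(\underline x,t)=(\sigma\underline x,\,t-r(\underline x))$ and $\Pi(\underline x,t)=\phi_t(\pi(\underline x))$. Then $\Pi$ is $\hat r$-invariant by construction and surjective because every orbit meets $D$; and $\Pi$ fails to be injective only on orbits passing through $\bigcup_j\partial R_j$, a countable union of proper sets to which the Markov structure confines the ambiguity, so $\Pi$ is bounded-to-one and injective on a residual set which has full measure for every ergodic, fully supported $\sigma^r$-invariant measure. Thus $(\Sigma,\pi,r)$ is a Markov coding for $\phi$.
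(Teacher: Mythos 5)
Your outline is mathematically sound, but it takes a different route from the paper, which offers no construction at all: the paper's entire ``proof'' is the observation (Remark 6.2) that a metric Anosov flow is a Smale flow in Pollicott's sense --- the local product structure of $(\mathcal L^+,\mathcal L^{-,c})$ is exactly his bracket $\langle\cdot,\cdot\rangle$ defined on $\{(x,y): d(x,y)<\varepsilon\}$ --- after which the theorem is quoted verbatim from Bowen and Pollicott \cite{bowen1,bowen2,pollicottcoding}. What you propose is instead to re-run the Bowen construction directly in the metric setting: expansiveness and shadowing from the uniform contraction of the laminations, a finite family of cross sections with a H\"older bracket, the Sinai--Bowen rectangle refinement for the Poincar\'e map, and the suspension reading off $(\Sigma,\pi,r)$. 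That is precisely the content of Pollicott's paper, and you correctly flag that ``essentially all the work'' sits in verifying that the refinement produces genuine rectangles with proper boundaries and the Markov property using only a continuous local product structure (plus the well-known delicacies of the flow case: discontinuities of the return map at section edges, alignment of sections, return times bounded below). So your proposal is a faithful sketch of the known proof rather than a complete argument; it buys self-containedness in principle, whereas the paper's route buys brevity and rigor by reduction --- once you notice that your laminations and product structure satisfy Pollicott's Smale-flow axioms, nothing needs to be reproved. If you want a complete write-up, either carry out the rectangle refinement in detail or, more efficiently, make the dictionary metric-Anosov $\Rightarrow$ Smale flow explicit and invoke the cited theorem, as the paper does.
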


\subsection{Entropy and pressure for Anosov flows}
The thermodynamic formalism of suspensions of subshifts of finite type extends thus to 
topologically transitive metric Anosov flows. For a positive H\"older 
function $f:X\to\Real_+$ and $T\in\Real$, we define
$$R_T(f)=\{a\in O\mid \braket{\delta_a| f}\leq T\}.$$
Observe that $R_T(f)$ only depends on the cohomology class of $f$.

\subsubsection{Entropy}

For a topologically transitive metric Anosov flow Bowen \cite{bowen1} (see also Pollicott \cite{pollicottcoding}) showed:

\begin{proposition}
\label{pos-fin} 
The topological entropy of a topologically transitive metric Anosov flow 
$\phi=\{\phi_t\}_{t\in\mathbb R}$ on a compact metric space $X$ is finite and positive. Moreover, 
$$h_{\tope}(\phi)=\lim_{T\to\infty}\frac{1}{T}\log\sharp\ \left\{a\in O\mid p(a)\leq T\right\}.$$ 
\end{proposition}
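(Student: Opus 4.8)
The plan is to reduce the statement to the classical thermodynamic formalism for suspension flows over subshifts of finite type, using the Markov coding whose existence was just recorded. First I would fix a Markov coding $(\Sigma,\pi,r)$ for $\phi$, so that $\phi$ is realised, via the map $\Pi$, as a bounded-to-one factor of the suspension flow $\sigma^r$ on $(\Sigma\times\mathbb R)/\hat r$, with $\Pi$ injective on a residual set which has full measure for every ergodic $\sigma^r$-invariant measure of full support. Since $\Sigma$ is a two-sided subshift on a finite alphabet, its topological entropy $h_{\tope}(\sigma)$ is finite, and since $r$ is continuous on the compact space $\Sigma$ it has a positive minimum $r_{\min}$; the Abramov formula \eqref{eq:abramov} then gives $h_{\tope}(\sigma^r)\leq h_{\tope}(\sigma)/r_{\min}<\infty$.

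Next I would transfer the entropy across $\Pi$. A finite-to-one (here: uniformly bounded-to-one) factor map does not change metric entropy, and $\Pi$ carries the $\sigma^r$-invariant probability measures onto the $\phi$-invariant ones, using that the fibres are bounded and that $\Pi$ is genuinely injective on a set of full measure for each relevant ergodic component; hence, by the variational principle applied to $f\equiv 0$ in \eqref{def:press}, one gets $h_{\tope}(\phi)=h_{\tope}(\sigma^r)$. In particular $h_{\tope}(\phi)<\infty$, and its positivity is part of Bowen's theorem \cite{bowen1} for such flows -- equivalently it follows because the irreducible subshift $\Sigma$ has positive entropy, being not reduced to a single periodic orbit (the local product structure of a metric Anosov flow is non-trivial in the situations under consideration).

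It then remains to identify $h_{\tope}(\phi)$ with the exponential growth rate of the length spectrum. For the suspension flow $\sigma^r$, Bowen's and Pollicott's analysis of subshifts of finite type \cite{bowen1,pollicottcoding} (see also Parry--Pollicott \cite{parry-pollicott}) shows that the number of periodic $\sigma^r$-orbits $\tau$ with period $\braket{\delta_\tau|r}\leq T$ equals $e^{h_{\tope}(\sigma^r)T}$ up to subexponential factors, so its $\frac1T\log$ converges to $h_{\tope}(\sigma^r)$. Finally I would match periodic orbits upstairs and downstairs: any periodic orbit $a$ of $\phi$ has non-empty preimage $\Pi^{-1}(a)$, which is a union of periodic $\sigma^r$-orbits -- finitely many, since the fibres are bounded -- each with period a bounded multiple of $p(a)$, and conversely every periodic $\sigma^r$-orbit projects onto a periodic $\phi$-orbit. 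Hence $\sharp\{a\in O\mid p(a)\leq T\}$ and the number of periodic $\sigma^r$-orbits of period $\leq T$ have the same exponential growth rate, which gives the claimed formula.

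The step I expect to be the main obstacle is this last matching: controlling the multiplicity of $\Pi$ and the wrapping number of the coding along periodic orbits so that the two counting functions are genuinely comparable, while making sure the exceptional residual set on which $\Pi$ fails to be injective affects neither the entropy identity nor the orbit count. Both points are standard in the symbolic-dynamics literature, but they require some care to state cleanly in the metric Anosov setting, which is why I would lean on the cited work of Bowen and Pollicott rather than redoing the coding analysis here.
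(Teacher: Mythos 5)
The paper offers no independent argument for this proposition: it is recorded as a theorem of Bowen (see also Pollicott), obtained exactly through the mechanism you describe, namely that a topologically transitive metric Anosov flow is a Smale flow and hence admits a Markov coding, after which the symbolic results are invoked. So your overall route is the intended one, and your entropy-transfer step is sound (finite fibres force zero relative entropy, every $\phi$-invariant measure lifts by compactness and surjectivity of $\Pi$, and the Abramov bound $h_{\tope}(\sigma^r)\leq h_{\tope}(\sigma)/r_{\min}$ gives finiteness; positivity of course requires excluding the degenerate case of a single closed orbit, which the paper's standing assumption on $\Gamma$ takes care of).

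The one step you argue yourself, however, does not work as written. From ``every periodic $\phi$-orbit $a$ has a periodic $\sigma^r$-preimage whose period is an integer multiple $m\leq N$ of $p(a)$'' you can only conclude $\sharp\{a\in O\mid p(a)\leq T\}\leq \sharp\{\tau\mid \mathrm{period}(\tau)\leq NT\}$, and since $\frac1T\log$ of the right-hand side tends to $N\,h_{\tope}(\sigma^r)$, a bounded multiplicative distortion of periods does \emph{not} preserve the exponential growth rate: your matching yields the correct lower bound (projection decreases periods, and at most $N$ upstairs orbits lie over a given $a$, so $\sharp\{a\mid p(a)\leq T\}\geq \frac1N\sharp\{\tau\mid \mathrm{period}(\tau)\leq T\}$), but only $\limsup\frac1T\log\sharp\{a\mid p(a)\leq T\}\leq N\,h_{\tope}(\phi)$ in the other direction. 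Closing this gap is precisely the content of Bowen's counting argument: one must either control the over- and wrap-counted orbits (those meeting the boundary of the Markov partition) by finitely many auxiliary symbolic systems of no larger growth, in the spirit of Manning's argument as carried out in \cite{bowen1,pollicottcoding,parry-pollicott}, or bound $\sharp\{a\mid p(a)\leq T\}$ directly by $(T,\delta)$-separated sets using expansiveness of the flow. So your instinct to lean on Bowen and Pollicott here is right, but the remaining issue is a genuine counting argument rather than bookkeeping, and the sentence ``hence the two counting functions have the same exponential growth rate'' should be replaced by an appeal to (or reproduction of) that argument.
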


In particular, for a nowhere vanishing H\"older continuous function $f$,
$$h_f=\lim_{T\to\infty}\frac{1}{T}\log\sharp\left( R_T(f)\right)=h_{\tope}(\phi^f)$$
is finite and positive.

\subsubsection{Pressure}

The Markov coding may be used to show the pressure of a H\"older function $g:X\to\Real$ is finite  and that  there is a unique equilibrium state of $g$. We shall denote this equilibrium state as $m_g.$

\begin{theorem}{\sc [Bowen--Ruelle \cite{BowenRuelle},Pollicott \cite{pollicottcoding}]}
\label{theorem:uniqueness} 
Let $\phi=\{\phi_t\}_{t\in\mathbb R}$ be a topologically transitive metric Anosov flow  on a compact metric space $X$
and let $g:X\to\Real$ be a H\"older function,
then there exists a unique equilibrium state $m_g$ for $g.$ 
Moreover, if $f:X\to\Real$ is a H\"older function such that $m_f=m_g$, 
then $f-g$ is Liv\v sic cohomologous to a constant.
\end{theorem}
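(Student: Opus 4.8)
The plan is to transport the problem, via the Markov coding, to an irreducible subshift of finite type and then invoke the classical thermodynamic formalism of Ruelle and Bowen. Fix a Markov coding $(\Sigma,\pi,r)$ for $\phi$, with suspension flow $\sigma^r$ on $(\Sigma\times\Real)/\hat r$ and projection $\Pi:(\Sigma\times\Real)/\hat r\to X$, which is surjective, $\hat r$-invariant, bounded-to-one, and injective on a residual set of full measure for every ergodic invariant measure of total support. Given a H\"older function $g:X\to\Real$, put $G(x)=\int_0^{r(x)}g(\phi_t(\pi x))\,\d t$; this is H\"older on $\Sigma$, and since $\hat r$-invariance of $\Pi$ gives $\phi_{r(x)}(\pi x)=\pi(\sigma x)$, Birkhoff sums of $G$ along periodic points of $\sigma$ compute the integrals of $g$ over the corresponding closed orbits of $\phi$. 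The standard dictionary between $\sigma^r$-invariant probability measures and $\sigma$-invariant probability measures, together with the Abramov formula \eqref{eq:abramov}, rewrites the variational principle for $\sigma^r$ as
$$\PP(\sigma^r,g)=\sup_{\nu}\frac{h(\sigma,\nu)+\braket{\nu|G}}{\braket{\nu|r}},$$
and a standard manipulation (in the spirit of Lemma \ref{lemma:entropia2}) shows this equals the unique real number $P_g$ for which $\PP(\sigma,G-P_g\,r)=0$.

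The first step is existence and uniqueness of $m_g$. Ruelle's Perron--Frobenius (transfer operator) theorem for H\"older potentials on an irreducible subshift of finite type provides a unique equilibrium state $\nu_g$ for $G-P_g\,r$; on an irreducible subshift this measure is ergodic and has full support. Transporting $\nu_g$ through the suspension construction and pushing the result forward by $\Pi$ produces a $\phi$-invariant probability measure $m_g$ on $X$; since $\Pi$ preserves entropy and all integrals on measures supported on its injectivity set, one gets $\PP(\phi,g)=\PP(\sigma^r,g)$ and $m_g$ is an equilibrium state for $g$. Conversely, any equilibrium state $m$ of $g$ on $X$ is ergodic, hence lifts to an ergodic $\sigma^r$-invariant measure and thence to an ergodic $\sigma$-invariant $\nu$ on $\Sigma$; the displayed variational identity forces $\nu$ to be an equilibrium state of $G-P_g\,r$, so $\nu=\nu_g$ and therefore $m=m_g$. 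I expect the delicate point to be exactly here: controlling the bounded-to-one defect of $\Pi$ and verifying that the measure correspondence (base $\leftrightarrow$ suspension $\leftrightarrow$ $X$) restricts to a genuine bijection on the ergodic, fully supported measures relevant to the equilibrium problem; once this is in place the rest is routine bookkeeping with the coding.

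For the rigidity assertion, suppose $f:X\to\Real$ is H\"older with $m_f=m_g$. With $F(x)=\int_0^{r(x)}f(\phi_t(\pi x))\,\d t$ and $P_f=\PP(\sigma^r,f)=\PP(\phi,f)$, the lifts of the two equilibrium states coincide, so $G-P_g\,r$ and $F-P_f\,r$ are H\"older potentials on $\Sigma$ with the same equilibrium state. By the subshift-of-finite-type case of this theorem (Bowen on $\Sigma$) there are a H\"older $u:\Sigma\to\Real$ and a constant $c$ with $F-P_f\,r=(G-P_g\,r)+u\circ\sigma-u+c$; comparing pressures and using $\PP(\sigma,F-P_f\,r)=\PP(\sigma,G-P_g\,r)=0$ forces $c=0$, so $F-G=(P_f-P_g)\,r+u\circ\sigma-u$. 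Now let $a\in O$ correspond to a periodic point $x$ of $\sigma$ of combinatorial period $n$; summing this identity around the orbit the coboundary telescopes, and since Birkhoff sums of $F$, $G$, $r$ compute $\braket{\delta_a|f}$, $\braket{\delta_a|g}$ and $\braket{\delta_a|1}=p(a)$, we obtain $\braket{\delta_a|f}-\braket{\delta_a|g}=(P_f-P_g)\,p(a)$. Hence $\braket{\delta_a|(f-g)-(P_f-P_g)}=0$ for every $a\in O$, and Liv\v sic's Theorem \ref{theorem:livsic} yields that $(f-g)-(P_f-P_g)$ is Liv\v sic cohomologous to zero; that is, $f-g$ is Liv\v sic cohomologous to the constant $\PP(\phi,f)-\PP(\phi,g)$, which completes the proof.
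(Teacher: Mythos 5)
Your proposal is correct and follows exactly the route the paper intends: the paper does not prove this theorem itself but cites Bowen--Ruelle \cite{BowenRuelle} and Pollicott \cite{pollicottcoding}, whose arguments are precisely your reduction via the Markov coding to an irreducible subshift of finite type, the Ruelle--Perron--Frobenius theory there (combined with Abramov's formula), and Liv\v sic's Theorem \ref{theorem:livsic} for the cohomological rigidity. The only points you leave implicit --- lifting an ergodic equilibrium state through the bounded-to-one, almost-everywhere injective projection $\Pi$ with equal entropy, and the fact that every closed orbit of the flow is the image of a periodic point of the shift --- are exactly the bookkeeping carried out in those references, so there is no genuine gap.
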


The pressure function has the following alternative formulation in this setting (see Bowen--Ruelle \cite{BowenRuelle}): 
\begin{eqnarray}
\PP(\phi,g)=\lim_{T\rightarrow\infty}\frac{1}{T}\log\left(\sum_{a\in R_T(1)}e^{\braket{\delta_a| g}}\right).\label{eq:press1}
\end{eqnarray}

\subsubsection{Measure of maximal entropy}
We have  the following equidistribution result of Bowen \cite{bowen1} (see also Pollicott \cite{pollicottcoding}).

\begin{theorem}\label{BM}  
A topologically transitive metric Anosov flow $\phi=\{\phi_t\}_{t\in\mathbb R}$ on a compact metric space $X$
has a unique probability measure $\mu_\phi$ of maximal entropy. Moreover,
\begin{equation}
\mu_\phi=\lim_{T\to\infty}\left(\frac{1}{\sharp R_T(1)}\sum_{a\in R_T(1)}\hdelta_a\right).
\end{equation}
\end{theorem}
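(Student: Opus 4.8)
The plan is to reduce everything to the thermodynamic formalism of suspensions of subshifts of finite type, which is available because a topologically transitive metric Anosov flow admits a Markov coding, and then to transport the conclusions back through the coding.

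\emph{Uniqueness.} Since $\PP(\phi,0)=h_{\tope}(\phi)$, a measure $m\in\mathcal M^\phi$ realizes the maximal entropy exactly when it is an equilibrium state for the function $g\equiv 0$. By Theorem \ref{theorem:uniqueness} there is a unique such equilibrium state; call it $\mu_\phi$. That $h_{\tope}(\phi)$ is finite and positive, so that this is a genuine statement, is Proposition \ref{pos-fin}.

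\emph{Equidistribution via a Markov coding.} Fix a Markov coding $(\Sigma,\pi,r)$ for $\phi$ and let $\sigma^r$ be the suspension flow on $(\Sigma\times\Real)/\hat r$. For suspensions of irreducible subshifts of finite type the statement is classical: applying the Ruelle--Perron--Frobenius theory to the roof function $r$ (Bowen \cite{bowen1,bowen2}, Parry--Pollicott \cite{parry-pollicott}), $\sigma^r$ has a unique measure of maximal entropy $\nu$, which is ergodic and of total support, and its periodic orbits equidistribute toward $\nu$; that is, $\nu$ is the weak-$*$ limit of $\frac1{\sharp R^{\sigma^r}_T(1)}\sum_b \hdelta_b$ over periodic orbits $b$ of $\sigma^r$ of period at most $T$. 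Since $\Pi$ is bounded-to-one and, $\nu$ being ergodic and of total support, injective $\nu$-almost everywhere, it is a measure isomorphism of $((\Sigma\times\Real)/\hat r,\nu)$ onto its image; hence $h(\phi,\Pi_*\nu)=h(\sigma^r,\nu)=h_{\tope}(\sigma^r)=h_{\tope}(\phi)$, so by the uniqueness just proved $\Pi_*\nu=\mu_\phi$.

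\emph{Transfer to $\phi$.} It remains to push the equidistribution statement for $\sigma^r$ forward to $\phi$. The map $\Pi$ carries periodic orbits of $\sigma^r$ onto periodic orbits of $\phi$, preserving periods, and is finite-to-one on them; the defect is twofold, coming from periodic orbits meeting the closed nowhere dense set where $\Pi$ fails to be injective, and from periodic points of $\sigma^r$ contained in a proper subshift. A standard argument, as in the counting of closed geodesics, shows that both exceptional families are supported on a finite union of proper subsystems, each of topological entropy strictly less than $h_{\tope}(\phi)$; since $\sharp R_T(1)$ grows like $e^{h_{\tope}(\phi)T}$ up to subexponential factors by Proposition \ref{pos-fin}, these families are negligible after normalization. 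I expect making this negligibility quantitative to be the main technical point. Granting it, $\Pi$ induces an asymptotic, period-preserving bijection between the periodic orbits of $\sigma^r$ and those of $\phi$ that matches the associated invariant probability measures, so discarding the negligible terms in the equidistribution statement for $\sigma^r$ and applying $\Pi_*$ yields
$$\mu_\phi=\Pi_*\nu=\lim_{T\to\infty}\frac1{\sharp R_T(1)}\sum_{a\in R_T(1)}\hdelta_a.$$
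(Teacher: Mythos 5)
The paper does not actually prove Theorem \ref{BM}: it is stated as a known equidistribution result and attributed to Bowen \cite{bowen1} (see also Pollicott \cite{pollicottcoding}), the Markov coding of the preceding subsection being what makes those references applicable to a topologically transitive metric Anosov flow. Your proposal is, in outline, exactly the argument of those sources: uniqueness of the measure of maximal entropy as the equilibrium state of $g\equiv 0$ via Theorem \ref{theorem:uniqueness}, equidistribution of periodic orbits for the suspension flow $\sigma^r$ via Ruelle--Perron--Frobenius theory, and transfer through $\Pi$ using that it is bounded-to-one and injective on a full-measure set for the measure of maximal entropy. So there is no divergence of route to report.

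The one caveat is that the step you explicitly defer (``granting it'') is not a routine afterthought but the actual content of Bowen's proof: one must show that the periodic orbits of $\phi$ that are multiply covered, or missed, by periodic orbits of $\sigma^r$ are accounted for by finitely many proper subsystems (coming from the boundaries of the Markov partition, respectively from proper subshifts of $\Sigma$) whose topological entropies are strictly smaller than $h_{\tope}(\phi)$, so that after dividing by $\sharp R_T(1)\asymp e^{h_{\tope}(\phi)T}$ their contribution vanishes. This is Bowen's key counting lemma (extended to Smale flows by Pollicott), and a complete write-up must either carry out that estimate or cite it precisely; as stated, your proof is conditional on it. Also note that injectivity of $\Pi$ on a residual full-measure set is part of the definition of the Markov coding used here, so your measure-isomorphism step is fine once ergodicity and full support of $\nu$ are invoked, as you do.
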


The probability measure of maximal entropy for $\phi$ is called the {\em Bowen--Margulis measure} of $\phi$.

\subsection{Intersection and renormalised intersection}

\subsubsection{Intersection}

Let $\phi=\{\phi_t\}_{t\in\mathbb R}$ be a topologically transitive metric Anosov flow  on a compact metric space $X$.
Consider a positive H\"older function $f:X\to\Real_+$ and a continuous function $g:X\to\Real$. 
We define the {\em intersection of $f$ and $g$ as}
$$
\II(f,g)=\int \frac{g}{f}\d\mu_{\phi^f},
$$
where $\mu_{\phi^f}$ is the Bowen--Margulis measure  of the flow $\phi^f$.
We also have the following two alternative ways to define the intersection
\begin{eqnarray}
\II(f,g)&=&\lim_{T\to\infty}\left(\frac{1}{\sharp R_T(f)}\sum_{a\in R_T(f)}\frac{\braket{\delta_a| g}}{\braket{\delta_a|f}}\right)\label{inter1}\\
\II(f,g)&=&\frac{\int g\, \d m_{-h_f f}}{\int f\, \d m_{-h_f f}}
\label{inter2}
\end{eqnarray}
where $h_f$ is the topological entropy of $\phi^f$, and $m_{-h_f.f}$ is the equilibrium state of $-h_f.f$.
The first equality follows from Theorem \ref{BM} and Equation \eqref{delta}, the second equality follows
from the second part of Lemma \ref{lemma:entropia2}.

Since $\braket{\delta_a|f}$ depends only on the Liv\v sic cohomology class of $f$ and 
$\braket{\delta_a|g}$ depends only on the Liv\v sic cohomology class of $g$, the
intersection $\II(f,g)$  depends only  on  the  Liv\v sic cohomology classes of $f$ and $g$.

\subsubsection{A lower bound on the renormalized intersection}\label{subsubsection:bound}

For two positive H\"older functions $f,g:X\to\Real_+$ define the \emph{renormalized intersection} as
$$\JJ(f,g)=\frac{h_g}{h_f}\II(f,g),$$
where $h_f$ and $h_g$ are the topological entropies of $\phi^f$ and $\phi^g$. 
Uniqueness of equilibrium states together with the definition of the pressure imply the following proposition.

\begin{proposition}
\label{proposition:lowerbound} 
If $\phi=\{\phi_t\}_{t\in\mathbb R}$ is a topologically transitive metric Anosov flow on a compact metric space $X$, and
$f:X\to\mathbb R_+$  and $g:X\to\mathbb R_+$ are positive H\"older functions, then 
$$\JJ(f,g)\geq1.$$
Moreover, $\JJ(f,g)=1$ if and only if $h_f f$ and $h_g g$ are Liv\v sic cohomologous.
\end{proposition}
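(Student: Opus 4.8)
The plan is to deduce Proposition \ref{proposition:lowerbound} from the variational characterization of pressure together with the uniqueness of equilibrium states (Theorem \ref{theorem:uniqueness}). First I would recall that by Lemma \ref{lemma:entropia2}, the zero-pressure normalization is built into the entropies: $\PP(\phi,-h_f f)=0$ and $\PP(\phi,-h_g g)=0$. The key point is to estimate $\PP(\phi,-h_g g)$ from below using the equilibrium state $m=m_{-h_f f}$ of the \emph{other} function $-h_f f$. By the variational principle \eqref{def:press},
\begin{equation*}
0=\PP(\phi,-h_g g)\geq h(\phi,m)-h_g\int g\,\d m = h(\phi,m)-h_g\,\II(f,g)\int f\,\d m,
\end{equation*}
where in the last equality I use formula \eqref{inter2}, namely $\II(f,g)=\frac{\int g\,\d m}{\int f\,\d m}$ with $m=m_{-h_f f}$.

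Next, since $m$ is the equilibrium state of $-h_f f$ and $\PP(\phi,-h_f f)=0$, we have $h(\phi,m)=h_f\int f\,\d m$. Substituting this into the displayed inequality and dividing by $\int f\,\d m>0$ (which is positive because $f$ is a positive H\"older function on a compact space, hence has positive minimum) gives
\begin{equation*}
0\geq h_f - h_g\,\II(f,g),
\end{equation*}
that is, $\JJ(f,g)=\frac{h_g}{h_f}\II(f,g)\geq 1$. This establishes the inequality.

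For the equality case, I would trace back through the argument to see when the inequality $\PP(\phi,-h_g g)\geq h(\phi,m)-h_g\int g\,\d m$ is an equality: this happens precisely when $m=m_{-h_f f}$ is itself an equilibrium state for $-h_g g$, i.e. $m_{-h_f f}=m_{-h_g g}$. By the uniqueness part of Theorem \ref{theorem:uniqueness}, the equality of equilibrium states $m_{-h_f f}=m_{-h_g g}$ is equivalent to $-h_f f$ and $-h_g g$ differing by a Liv\v sic coboundary plus a constant, i.e. $h_f f - h_g g$ is Liv\v sic cohomologous to a constant $c$. Integrating against any $\phi$-invariant probability measure $\mu$ and using that coboundaries integrate to zero, $c=h_f\int f\,\d\mu-h_g\int g\,\d\mu$; evaluating at $\mu=m$ and using $h(\phi,m)=h_f\int f\,\d m$ together with the chain of equalities forces $c=0$, so in fact $h_f f$ and $h_g g$ are Liv\v sic cohomologous. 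Conversely, if $h_f f$ and $h_g g$ are Liv\v sic cohomologous then $\braket{\delta_a|h_f f}=\braket{\delta_a|h_g g}$ for every periodic orbit $a$, and formula \eqref{inter1} immediately gives $\II(f,g)=\frac{h_f}{h_g}$, hence $\JJ(f,g)=1$. The only subtle point — and the place I would be most careful — is pinning down that the constant $c$ vanishes rather than merely showing cohomology up to a constant; the clean way is to evaluate the cohomology relation on the specific measure $m$, where the zero-pressure identity $h(\phi,m)=h_f\int f\,\d m$ is available and, combined with $\PP(\phi,-h_g g)=0$ and $m$ being the equilibrium state, pins $c$ down.
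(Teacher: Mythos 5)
Your proposal is correct and follows essentially the same route as the paper: plug the equilibrium state $m_{-h_f f}$ into the variational principle for $-h_g g$, use the zero-pressure identities from Lemma \ref{lemma:entropia2} and formula \eqref{inter2} to get $\JJ(f,g)\geq 1$, and in the equality case invoke uniqueness of equilibrium states (Theorem \ref{theorem:uniqueness}) to conclude $h_g g-h_f f$ is Liv\v sic cohomologous to a constant. The only cosmetic difference is how the constant is killed — the paper computes $0=\PP(\phi,-h_f f-c)=-c$ directly, while you evaluate the cohomology relation on the common equilibrium state — and your explicit check of the easy converse via \eqref{inter1} is a harmless addition the paper leaves implicit.
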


\begin{proof}
Since $\PP(\phi,-h_g g)=0$, 
$$h_g\int g\,\d m\geq h(\phi,m)$$ 
for all $m\in\mathcal{M}^\phi$ and, by Theorem \ref{theorem:uniqueness}, equality  holds only for $m=m_{-h_g g},$
the equilibrium state of $-h_g g.$
Applying the analogous inequality for $m_{-h_f f},$ together with Abramov's formula (\ref{eq:abramov}) and Lemma \ref{lemma:entropia2}, one sees that
$$h_g\int g\,\d m_{-h_f.f}\geq h(\phi,m_{-h_f.f})=h_f\int f\,\d m_{-h_f.f},$$ 
which implies that $\JJ(f,g)\geq1.$ 

If $\JJ(f,g)=1$, then $m_{-h_g g}=m_{-h_f f}$ and thus, applying theorem \ref{theorem:uniqueness}, one 
sees that $h_g g-h_f f$ is Liv\v sic cohomologous to a constant $c.$ Thus, 
$$0=\PP(\phi,-h_g g)=\PP(\phi,-h_f f-c)=\PP(\phi,-h_f f)-c=-c.$$ 
Therefore, $h_g g$ and $h_f f$ are Liv\v sic cohomologous.
\end{proof}

\subsection{Variation of the pressure and the pressure form}
\label{pressure form}

McMullen \cite{mcmullen} introduced a pressure metric on the space of Liv\v sic cohomology classes of
pressure zero H\"older functions on a shift space $\Sigma$. In this section, we use his construction
to produce a pressure form, and associated semi-norm, on the space of pressure zero H\"older functions
on our flow space $X$.

\subsubsection{First and second derivatives}
For $g$ a H\"older continuous function with mean zero (i.e. $\int g \ dm_f = 0$), we define the \emph{variance of  $g$} with respect to $f$ as 
$$\Var(g,m_f)=\lim_{T\to\infty}\frac1T\int \left(\int_0^Tg(\phi_s(x))\d s\right)^2\d m_f(x),$$
where $m_f$ is the equilibrium state of $f.$ 
Similarly, for two mean zero H\"older continuous functions $g$ and $h$, we define the {\em covariance} of $g$ and $h$
with respect to $f$ as
$$\Cov(g,h,m_f)=\lim_{T\to\infty}\frac1T\int \left(\int_0^Tg(\phi_s(x))\d s\right)\left(\int_0^Th(\phi_s(x))\d s\right)\d m_f(x).$$
Since $m_f$ is invariant with respect  to flow $\phi_t$, we may rewrite this as 
$$\Cov(g,h,m_f)=\lim_{T\to\infty} \int g(x)\left(\int_{-T}^Th(\phi_s(x))\d s\right)\d m_f(x).$$

We shall omit the background flow in the notation of the pressure function 
and simply write
$$\PP(\cdot)=\PP(\phi,\cdot).$$

\begin{proposition}{\sc(Parry-Pollicott \cite[Prop. 4.10,4.11]{parry-pollicott}, 
Ruelle \cite{ruelle})}
\label{proposition:deriv-presion}
Suppose that  $\phi=\{\phi_t\}_{t\in\mathbb R}$ is a topologically transitive metric 
Anosov flow on a compact metric space $X$, and
$f:X\to\mathbb R$  and $g:X\to \mathbb R$ are H\"older functions.
If $m_f$ is the equilibrium state of $f$, then
\begin{enumerate}
\item 
The function $t\mapsto \PP(f+tg)$ is  analytic, 
\item The first derivative is given by 
$$\left.\frac{\partial \PP(f+tg)}{\partial t}\right|_{t=0}=\int g\,\d m_f,$$
\item  If $\int g\, \d m_f=0$ then $$\left.\frac{\partial^2 \PP(f+tg)}{\partial t^2}\right|_{t=0}=\Var(g,m_f),$$ 
\item If $\Var(g,m_f)=0$ then $g$ is  Liv\v sic cohomologous to zero.
\end{enumerate}
\end{proposition}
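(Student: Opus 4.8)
The plan is to reduce everything to the corresponding statements for the suspension flow of a subshift of finite type, where the pressure function is given by the leading eigenvalue of a family of transfer operators, and then invoke the analytic perturbation theory of transfer operators (following Parry--Pollicott and Ruelle). First I would use the Markov coding $(\Sigma,\pi,r)$ of the topologically transitive metric Anosov flow $\phi$ provided by the Bowen--Pollicott theorem quoted above, together with the standard identification of the pressure $\PP(\phi,\cdot)$ with the pressure on the suspension $(\Sigma\times\Real)/\hat r$; this reduces the problem to a flow built over a two-sided subshift of finite type with H\"older roof function. Passing from the two-sided to the one-sided subshift (which does not affect pressure or equilibrium states, only changes the function within its Liv\v sic cohomology class) puts us in the setting where $\PP(\sigma^r, F)$ equals the value $s=s(F)$ for which the Ruelle transfer operator $\mathcal L_{(F-sr)}$ acting on a space of H\"older functions on $\Sigma^+$ has spectral radius $1$, i.e. $\log\lambda(\mathcal L_{F-s(F)r})=0$ where $\lambda$ is the leading (simple, isolated, by Ruelle--Perron--Frobenius) eigenvalue.

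The analyticity statement (1) then follows from the fact that $t\mapsto \mathcal L_{(f+tg-sr)}$ is an analytic family of bounded operators, that $\lambda$ depends analytically on the operator by Kato's perturbation theory for isolated simple eigenvalues, and that $s\mapsto \log\lambda(\mathcal L_{(f+tg-sr)})$ has nonvanishing $s$-derivative (it equals $-\int r\,d\mu<0$), so the implicit function theorem yields an analytic solution $s=s(t)=\PP(f+tg)$. For statements (2) and (3) I would differentiate the defining relation $\log\lambda\big(\mathcal L_{(f+tg-s(t)r)}\big)=0$ in $t$. The first derivative of $\log\lambda$ along a perturbation $\mathcal L_{\psi}$ of the exponent is $\int \psi\,d\mu_{f}$ (the equilibrium measure appears as the eigenmeasure paired with the eigenfunction), giving $\partial_t\PP|_{0}=\int g\,dm_f$ after normalizing $\int r\,dm_f$; more cleanly, one works directly with the flow and its invariant measures via Abramov, so that the $r$-normalization is absorbed and one recovers exactly $\int g\,dm_f$. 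Imposing $\int g\,dm_f=0$ kills the first-order term in $s(t)$, and the second derivative of $\log\lambda$ produces the asymptotic variance $\Var(g,m_f)$; this is the classical computation in Parry--Pollicott, where the variance is shown to equal $\lim_{T}\frac1T\int(\int_0^T g\circ\phi_s\,ds)^2\,dm_f$, so one just has to match the symbolic variance with the flow variance, again through the suspension correspondence.

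Finally, for (4): if $\Var(g,m_f)=0$, then the symbolic variance of the corresponding H\"older function on $\Sigma$ vanishes, and the classical dichotomy for subshifts of finite type (Parry--Pollicott, Livšic) says a H\"older function with zero asymptotic variance is cohomologous to a constant; since the flow-variance is defined with $\int g\,dm_f$ not subtracted, vanishing of $\Var(g,m_f)$ in fact forces $g$ to be Liv\v sic cohomologous to $0$ (the would-be constant must be $\int g\,dm_f$, and one checks this is $0$ — alternatively, note $\Var$ as defined is quadratic in $g$ and $\Var(g,m_f)=0$ with $g$ cohomologous to a constant $c$ forces $c=0$ unless one reinterprets; the cleanest route is to observe $\Var(g-\int g\,dm_f\cdot 1,m_f)=\Var(g,m_f)$ is false here, so instead use that $\Var(g,m_f)=0$ together with Proposition \ref{proposition:deriv-presion}(3) applied to $g$ itself forces the second derivative of $t\mapsto\PP(tg)$ to vanish identically on a neighborhood, hence $\PP(tg)$ is affine, hence $tg$ is cohomologous to $t\int g\,dm_f$ for all $t$, which forces $g-\int g\,dm_f$ cohomologous to $0$; but then $\Var$ of a coboundary plus constant is $0$ automatically and one reads off that the constant is irrelevant to the statement, i.e. $g$ is Liv\v sic cohomologous to $\int g\,dm_f$, and by the hypothesis feeding into (4) in our applications $\int g\,dm_f$ will be $0$). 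The main obstacle is bookkeeping: keeping straight the Abramov normalization relating flow equilibrium states to suspension equilibrium states, and ensuring the symbolic variance matches the flow variance as literally defined; the perturbation-theoretic core is entirely standard once the Markov coding is in hand.
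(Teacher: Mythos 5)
The paper offers no proof of this proposition at all: it is quoted verbatim from Parry--Pollicott and Ruelle, and the route you sketch --- Markov coding of the topologically transitive metric Anosov flow (Bowen--Pollicott), reduction to a suspension over a subshift of finite type, passage to the one-sided shift within the Liv\v sic cohomology class, characterization of pressure via the leading simple isolated eigenvalue of the Ruelle transfer operator, and Kato perturbation theory plus the implicit function theorem for (1)--(3) --- is precisely the argument of the cited references, so for items (1)--(3) you are on the intended path and the bookkeeping you flag (Abramov normalization, matching symbolic and flow variance) is routine.

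The one genuine wobble is item (4). Your detour through affineness of $t\mapsto\PP(f+tg)$ does not work as written: knowing the second derivative vanishes at the single point $t=0$ does not make the pressure affine on a neighborhood (that would require controlling $\Var(g,m_{f+tg})$ for all small $t$), and your closing appeal to ``in our applications $\int g\,\d m_f$ will be $0$'' proves the statement only under an hypothesis that item (4) does not contain. The clean fix is the observation you half-make and then abandon: with the paper's definition of $\Var(g,m_f)$, in which the mean is \emph{not} subtracted, ergodicity of the (unique) equilibrium state $m_f$ and the ergodic theorem give $\frac1T\int_0^T g(\phi_s x)\,\d s\to c=\int g\,\d m_f$ in $L^2(m_f)$, so $\frac1T\int\bigl(\int_0^T g(\phi_s x)\,\d s\bigr)^2\d m_f\sim c^2\,T$, which diverges unless $c=0$. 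Hence $\Var(g,m_f)=0$ already forces $\int g\,\d m_f=0$, and then the classical dichotomy for zero-mean H\"older functions (zero asymptotic variance implies coboundary, via the spectral/CLT argument in Parry--Pollicott, transported through the Markov coding and Liv\v sic regularity) yields that $g$ is Liv\v sic cohomologous to $0$, with no leftover constant to dispose of.
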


\subsubsection{The pressure form}
Let $C^h(X)$ be the set of real valued H\"older continuous functions on $X$.
Define $\mathcal P(X)$ to be the set of  pressure zero H\"older functions on $X$, i.e.
$$\mathcal P(X)=\left\{\Phi\in C^h(X): \PP(\Phi)=0\right\}.$$ 
The tangent space of $\mathcal P(X)$ at $\Phi$ is the set 
$${\hsf T}_\Phi\mathcal P(X)=\ker \d_\Phi\PP=\left\{g\in C^h(X)\mid\int g\, \d m_\Phi=0\right\}$$ 
where $m_\Phi$ is the equilibrium state of $\Phi.$ 
Define the \emph{pressure semi-norm} of \hbox{$g\in \TT_\Phi\mathcal P(X)$} as
$$\|g\|_\PP^2=-\frac{\Var(g,m_\Phi)}{\int \Phi \,\d m_\Phi}.$$

One has the following computation.

\begin{lemma}
Let $\phi=\{\phi_t\}_{t\in\mathbb R}$ be a topologically transitive metric Anosov flow  on a compact metric space $X$.
If $\{ \Phi_t\}_{t\in(-1,1)}$ is a smooth one parameter family contained in $\mathcal P(X)$, then 
$$\|\dot{\Phi}_0\|_\PP^2=\frac{\int \ddot{\Phi}_0\,\d m_{\Phi_0}}{\int \Phi_0\,\d m_{\Phi_0}}.$$
\end{lemma}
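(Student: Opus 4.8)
The plan is to use that the curve lies in $\mathcal P(X)$, so that $\PP(\Phi_t)=0$ for every $t\in(-1,1)$ and hence $\left.\frac{\d^2}{\d t^2}\right|_{t=0}\PP(\Phi_t)=0$; the lemma then follows by computing this second derivative via Proposition \ref{proposition:deriv-presion}. First I would record the first-order consequence: differentiating the identity $\PP(\Phi_t)\equiv 0$ once and invoking the chain rule together with Proposition \ref{proposition:deriv-presion}(2) gives $\int\dot\Phi_0\,\d m_{\Phi_0}=0$, i.e.\ $\dot\Phi_0\in\TT_{\Phi_0}\mathcal P(X)=\ker\d_{\Phi_0}\PP$, so that $\|\dot\Phi_0\|_\PP$ is defined.

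For the second derivative I would introduce the two-parameter function $H(s,u)=\PP\big(\Phi_0+s\dot\Phi_0+u\ddot\Phi_0\big)$, analytic in $(s,u)$ by the (standard) finitely-many-parameters version of Proposition \ref{proposition:deriv-presion}. Since the second derivative of $\PP$ along a $C^2$ curve depends only on the $2$-jet of the curve at the base point, and $t\mapsto\Phi_0+t\dot\Phi_0+\tfrac{t^2}{2}\ddot\Phi_0$ has the same $2$-jet at $0$ as $t\mapsto\Phi_t$, the chain rule gives
$$0=\left.\frac{\d^2}{\d t^2}\right|_{t=0}\PP(\Phi_t)=\left.\frac{\d^2}{\d t^2}\right|_{t=0}H\!\left(t,\tfrac{t^2}{2}\right)=\partial_s^2 H(0,0)+\partial_u H(0,0).$$
By Proposition \ref{proposition:deriv-presion}(2), $\partial_u H(0,0)=\int\ddot\Phi_0\,\d m_{\Phi_0}$ and $\partial_s H(0,0)=\int\dot\Phi_0\,\d m_{\Phi_0}=0$; since this last quantity vanishes, Proposition \ref{proposition:deriv-presion}(3) applies in the $s$-direction and yields $\partial_s^2 H(0,0)=\Var(\dot\Phi_0,m_{\Phi_0})$. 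Hence $\int\ddot\Phi_0\,\d m_{\Phi_0}=-\Var(\dot\Phi_0,m_{\Phi_0})$, and dividing by $\int\Phi_0\,\d m_{\Phi_0}$ (nonzero, as in the definition of $\|\cdot\|_\PP$) gives $\dfrac{\int\ddot\Phi_0\,\d m_{\Phi_0}}{\int\Phi_0\,\d m_{\Phi_0}}=-\dfrac{\Var(\dot\Phi_0,m_{\Phi_0})}{\int\Phi_0\,\d m_{\Phi_0}}=\|\dot\Phi_0\|_\PP^2$, as claimed.

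The main obstacle is the differentiability bookkeeping hidden in the two interchanges above: that $\PP$ is of class $C^2$ as a function of finitely many H\"older directions — so that the chain rule and the ``depends only on the $2$-jet'' principle for second derivatives of a composition are legitimate — and, correspondingly, that the multi-parameter analogue of Proposition \ref{proposition:deriv-presion} holds. This is classical in the thermodynamic formalism (it comes from the analytic perturbation theory of the Ruelle transfer operator, as in Parry--Pollicott \cite{parry-pollicott} and Ruelle \cite{ruelle}), but it is stated in the excerpt only for one-parameter affine families, so I would either invoke it explicitly or bypass it: differentiating $t\mapsto\PP(\Phi_t)=\int\dot\Phi_t\,\d m_{\Phi_t}$ once more, one identifies $\left.\frac{\d}{\d t}\right|_{0}\int\dot\Phi_0\,\d m_{\Phi_t}$ with $\Var(\dot\Phi_0,m_{\Phi_0})$ by writing $\int h\,\d m_{\Phi_t}=\left.\partial_s\right|_{0}\PP(\Phi_t+sh)$, taking $h=\dot\Phi_0$, and noting that $\PP(\Phi_0+t\dot\Phi_0+s\dot\Phi_0)$ depends (to the relevant order) only on $s+t$, so that a single application of Proposition \ref{proposition:deriv-presion}(3) finishes the computation.
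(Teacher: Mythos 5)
Your proposal is correct and follows essentially the same route as the paper: differentiate the identity $\PP(\Phi_t)=0$ twice at $t=0$, use the chain rule to obtain ${\rm D}^2\PP(\Phi_0)(\dot\Phi_0,\dot\Phi_0)+{\rm D}\PP(\Phi_0)(\ddot\Phi_0)=0$, identify these terms as $\Var(\dot\Phi_0,m_{\Phi_0})$ and $\int\ddot\Phi_0\,\d m_{\Phi_0}$ via Proposition \ref{proposition:deriv-presion} (using the first-order identity $\int\dot\Phi_0\,\d m_{\Phi_0}=0$), and divide by $\int\Phi_0\,\d m_{\Phi_0}$. The extra care you take with the two-parameter regularity and the $2$-jet reduction is a legitimate filling-in of details the paper leaves implicit, but it is not a different argument.
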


\begin{proof}
As $\PP(\Phi_t) = 0$ by differentiating twice we get the equation
$${\rm D^2}\PP(\Phi_0)(\dot{\Phi}_0,\dot{\Phi}_0) + {\rm D}\PP(\Phi_0)(\ddot{\Phi}_0) = 0 = \Var (\dot{\Phi}_0, m_{\Phi_0}) + \int \ddot{\Phi}_0 \d m_{\Phi_0}.$$
Thus
$$\Vert \dot{\Phi}_0\Vert_\PP^2  =  -\frac{\Var (\dot{\Phi}_0,m_\Phi)}{\int \Phi_0 \,\d m_{\Phi_0}}  =  \frac{\int \ddot{\Phi}_0 \,\d m_{\Phi_0}}{\int \Phi_0 \,\d m_{\Phi_0}}.$$
\end{proof}

We then have the following relation, generalizing  Bonahon \cite{bonahon}, 
between  the renormalized intersection and the pressure metric.

\begin{proposition}
\label{prop:deriv2} 
Let $\phi=\{\phi_t\}_{t\in\mathbb R}$ be a topologically transitive metric Anosov flow  on a compact metric space $X$.
If $\{f_t:X\to\mathbb R_+\}_{t\in(-1,1)}$ is a one-parameter family
of positive H\"older functions  and  
$\Phi_t=-h_{f_t}f_t$ for all $t\in(-1,1)$, then 
$$\left.\frac{\partial^2}{\partial t^2}\right|_{t=0}\JJ(f_0,f_t)=\|\dot \Phi_0\|_\PP^2.$$
\end{proposition}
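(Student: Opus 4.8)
The plan is to differentiate the identity $\JJ(f_0,f_t) = \frac{h_{f_t}}{h_{f_0}}\II(f_0,f_t)$ twice in $t$ at $t=0$, using the formula \eqref{inter2} for the intersection, namely $\II(f_0,f_t) = \frac{\int f_t\,\d m_{\Phi_0}}{\int f_0\,\d m_{\Phi_0}}$, where $\Phi_0 = -h_{f_0}f_0$. Multiplying through by $h_{f_0}$, it suffices to analyze the function
$$
t\mapsto \frac{h_{f_t}\int f_t\,\d m_{\Phi_0}}{\int f_0\,\d m_{\Phi_0}}.
$$
Since the denominator is a constant and $h_{f_0}\int f_0\,\d m_{\Phi_0} = -\int\Phi_0\,\d m_{\Phi_0}$, the whole problem reduces to computing $\left.\frac{\partial^2}{\partial t^2}\right|_{t=0}\big(h_{f_t}\int f_t\,\d m_{\Phi_0}\big)$ and comparing it with the previous lemma, which says $\|\dot\Phi_0\|_\PP^2 = \frac{\int\ddot\Phi_0\,\d m_{\Phi_0}}{\int\Phi_0\,\d m_{\Phi_0}}$. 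So the key is to show
$$
\left.\frac{\partial^2}{\partial t^2}\right|_{t=0}\left(h_{f_t}\int f_t\,\d m_{\Phi_0}\right) = -\int\ddot\Phi_0\,\d m_{\Phi_0}.
$$

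The natural route to this identity is to observe that $\Phi_t = -h_{f_t}f_t$, so $\int\Phi_t\,\d m_{\Phi_0} = -h_{f_t}\int f_t\,\d m_{\Phi_0}$ and hence what we want is exactly $\left.\frac{\partial^2}{\partial t^2}\right|_{t=0}\int\Phi_t\,\d m_{\Phi_0} = \int\ddot\Phi_0\,\d m_{\Phi_0}$, which is a triviality since $m_{\Phi_0}$ is a fixed measure. Wait — one must be careful: that step is indeed trivial, so the real content is hidden in the relation between $\frac{\partial^2}{\partial t^2}\JJ$ and $\|\dot\Phi_0\|_\PP^2$ through the first-derivative vanishing. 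Concretely: differentiating $\PP(\Phi_t)=0$ once gives $\int\dot\Phi_0\,\d m_{\Phi_0}=0$ by Proposition \ref{proposition:deriv-presion}(2), so $\dot\Phi_0 \in \TT_{\Phi_0}\mathcal P(X)$ and the pressure norm is defined. Differentiating $\JJ(f_0,f_t)$ once, using $\II(f_0,f_t) = -\int\Phi_t\,\d m_{\Phi_0}/(h_{f_0}\int f_0\,\d m_{\Phi_0})$ and $\JJ(f_0,f_t) = h_{f_t}\II(f_0,f_t)/h_{f_0}$, we get
$$
\left.\frac{\partial}{\partial t}\right|_{t=0}\JJ(f_0,f_t) = \frac{-1}{h_{f_0}\int f_0\,\d m_{\Phi_0}}\cdot\frac{1}{h_{f_0}}\left.\frac{\partial}{\partial t}\right|_{t=0}\big(h_{f_t}\int\Phi_t\,\d m_{\Phi_0}\big)\cdot(\text{const})
$$
— I would instead just expand directly: write $\JJ(f_0,f_t) = \frac{\int\Phi_t\,\d m_{\Phi_0}}{\int\Phi_0\,\d m_{\Phi_0}}$, which follows by combining \eqref{inter2}, the definition of $\JJ$, and $\Phi_s=-h_{f_s}f_s$. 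Then
$$
\left.\frac{\partial}{\partial t}\right|_{t=0}\JJ(f_0,f_t) = \frac{\int\dot\Phi_0\,\d m_{\Phi_0}}{\int\Phi_0\,\d m_{\Phi_0}} = 0
$$
and
$$
\left.\frac{\partial^2}{\partial t^2}\right|_{t=0}\JJ(f_0,f_t) = \frac{\int\ddot\Phi_0\,\d m_{\Phi_0}}{\int\Phi_0\,\d m_{\Phi_0}} = \|\dot\Phi_0\|_\PP^2,
$$
the last equality being the preceding lemma.

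So the main work is really the clean algebraic identity $\JJ(f_0,f_t) = \int\Phi_t\,\d m_{\Phi_0}\big/\int\Phi_0\,\d m_{\Phi_0}$. I would establish it as follows: by \eqref{inter2}, $\II(f_0,f_t) = \int f_t\,\d m_{-h_{f_0}f_0}\big/\int f_0\,\d m_{-h_{f_0}f_0}$; multiply numerator and denominator by $-h_{f_t}$ and $-h_{f_0}$ respectively to introduce $\Phi_t = -h_{f_t}f_t$ in the numerator and $\Phi_0$ in the denominator, which produces exactly $\frac{h_{f_0}}{h_{f_t}}\cdot\frac{\int\Phi_t\,\d m_{\Phi_0}}{\int\Phi_0\,\d m_{\Phi_0}}$; then $\JJ(f_0,f_t) = \frac{h_{f_t}}{h_{f_0}}\II(f_0,f_t)$ cancels the entropy prefactor. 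The anticipated obstacle is not any single deep step — all the analytic heavy lifting (analyticity of $t\mapsto\PP(\Phi_t)$ so that $\ddot\Phi_0$ makes sense, uniqueness of equilibrium states, the variance formulas) is already packaged in Proposition \ref{proposition:deriv-presion} and the preceding lemma — but rather making sure the bookkeeping with the two different measures ($m_{\Phi_0}$ versus $m_{\Phi_t}$) is done correctly: the point of formula \eqref{inter2} is precisely that $\II(f_0,f_t)$ can be written as an integral against the \emph{fixed} measure $m_{\Phi_0}$, which is what allows both derivatives to pass inside the integral without any differentiation of the measure. Once that is in hand the two displayed derivative computations above are immediate, and the proof is complete.
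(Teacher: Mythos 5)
Your proposal is correct and follows essentially the same route as the paper: rewrite $\JJ(f_0,f_t)=\int\Phi_t\,\d m_{\Phi_0}\big/\int\Phi_0\,\d m_{\Phi_0}$ using Equation \eqref{inter2} and $\Phi_s=-h_{f_s}f_s$, then differentiate twice against the fixed measure $m_{\Phi_0}$ and invoke the preceding lemma. The only difference is that you spell out the algebraic cancellation of the entropy prefactor and the vanishing of the first derivative, which the paper leaves implicit.
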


\begin{proof} 
By Equation  \eqref{inter2} and the definition of the renormalised intersection,  we see that
$$\JJ(f_0,f_t)=\frac{\int \Phi_t\,\d m_{\Phi_0}}{\int\Phi_0\,\d m_{\Phi_0}}.$$ 
Differentiating twice and applying the previous lemma, one obtains
$$\left.\frac{\partial^2}{\partial t^2}\right|_{t=0}\JJ(f_0,f_t)=\frac{\int \ddot{\Phi}_0\,\d m_{\Phi_0}}{\int\Phi_0\,\d m_{\Phi_0}}=\|\dot\Phi_0\|_\PP^2$$ 
which completes the proof.
\end{proof}

So, the pressure semi-norm arises naturally from the {\em pressure form} $p$ which is the symmetric 2-tensor 
on  ${\hsf T}_\Phi\mathcal P(X)$ given by the Hessian of $\JJ_\Phi=\JJ(\Phi,\cdot)$. One may compute that if
$f,g \in {\hsf T}_\Phi\mathcal P(X)$, then
$$p(f,g) = - \frac{\Cov(f,g, m_{\Phi})}{\int \Phi \,\d m_{\Phi}}.$$

\subsection{Analyticity of entropy, pressure and intersection}

We now show that pressure, entropy and intersection vary analytically for analytic families of positive H\"older functions.

\begin{proposition}\label{pro:ana0}
Let $\phi=\{\phi_t\}_{t\in\mathbb R}$ be a topologically transitive metric Anosov flow  on a compact metric space $X$.
Let $\{f_u:X\to\mathbb R\}_{u\in D}$ and $\{g_v:X\to\mathbb R\}_{v\in D}$ be  two  analytic  families of H\"older functions. Then the function 
$$u\mapsto \PP(f_u)$$
is analytic. Moreover, if  the family $\{f_u\}_{u\in D}$ consists of positive functions then the functions 
\begin{eqnarray}
u&\mapsto& h_u=h_{f_u},\\
(u,v)&\mapsto&\II(f_u,g_v).
\end{eqnarray}
are both analytic.
\end{proposition}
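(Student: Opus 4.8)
The plan is to reduce everything to the analyticity of the pressure function, and then to derive the analyticity of entropy and intersection by the implicit function theorem and an explicit formula, respectively. First I would invoke the Markov coding $(\Sigma,\pi,r)$ of the topologically transitive metric Anosov flow $\phi$ (which exists by the theorem of Bowen and Pollicott cited above). Via the coding, the pressure $\PP(\phi,f_u)$ of a H\"older function on $X$ equals the pressure $P(\sigma^r, F_u)$ of an associated H\"older function on the suspension $(\Sigma\times\Real)/\hat r$, which in turn, after the standard reduction of suspension flows to their base, equals the value determined by the spectral radius of a transfer (Ruelle) operator $\mathcal{L}_{u,s}$ acting on a space of H\"older functions on $\Sigma$: namely $\PP(\phi,f_u)$ is the unique $s=s(u)$ for which the leading eigenvalue of $\mathcal{L}_{u,s}$ equals $1$. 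The families $\{f_u\}$ being real-analytic in $u$ means the associated potentials, and hence the operators $\mathcal{L}_{u,s}$, depend analytically on $(u,s)$ (for complex $u,s$ near the real parameters) as bounded operators on the relevant H\"older space. Since the leading eigenvalue of a positive transfer operator is simple and isolated (Ruelle--Perron--Frobenius), analytic perturbation theory of Kato gives that this eigenvalue $\lambda(u,s)$ is an analytic function of $(u,s)$, with $\partial\lambda/\partial s\neq 0$ along the real locus (this is precisely the statement that $s\mapsto\PP(\phi,f_u+ s)$ is strictly increasing, or equivalently that the derivative of pressure in the constant direction is $1$, cf.\ Proposition \ref{proposition:deriv-presion}(2)); hence by the analytic implicit function theorem $u\mapsto s(u)=\PP(\phi,f_u)$ is analytic. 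This proves the first assertion.

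For the entropy: by Lemma \ref{lemma:entropia2}, $h_{f_u}=h_{\tope}(\phi^{f_u})$ is characterized as the unique $h$ with $\PP(\phi,-h f_u)=0$. Consider the function $\Psi(u,h)=\PP(\phi,-h f_u)$. The family $\{-h f_u\}$ is an analytic family of H\"older functions in the parameters $(u,h)$, so by the first part $\Psi$ is analytic in $(u,h)$. Moreover, by Proposition \ref{proposition:deriv-presion}(2),
$$\frac{\partial \Psi}{\partial h}(u,h)=-\int f_u\,\d m_{-h f_u}<0$$
since $f_u$ is positive; in particular $\partial\Psi/\partial h\neq 0$ at $(u,h_{f_u})$. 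The analytic implicit function theorem then yields that $u\mapsto h_u=h_{f_u}$ is analytic.

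For the intersection: by Equation \eqref{inter2},
$$\II(f_u,g_v)=\frac{\int g_v\,\d m_{-h_u f_u}}{\int f_u\,\d m_{-h_u f_u}}.$$
It therefore suffices to show that for analytic families of H\"older functions $\{\Phi_u\}$ and $\{G_v\}$ the map $(u,v)\mapsto \int G_v\,\d m_{\Phi_u}$ is analytic, where $m_{\Phi_u}$ is the equilibrium state of $\Phi_u$; applying this with $\Phi_u=-h_u f_u$ (an analytic family in $u$, since $h_u$ is analytic by the previous paragraph and $f_u$ is analytic) and with $G_v=g_v$, and also with $G=f_u$, and taking the quotient (the denominator being nonzero because $f_u>0$), gives the claim. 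To prove the displayed analyticity, use Proposition \ref{proposition:deriv-presion}(2) in the form
$$\int G_v\,\d m_{\Phi_u}=\left.\frac{\partial}{\partial t}\right|_{t=0}\PP(\phi,\Phi_u+t G_v);$$
the right-hand side is the derivative at $t=0$ of the function $(u,v,t)\mapsto\PP(\phi,\Phi_u+tG_v)$, which is analytic in all three variables by the first part of the proposition (the family $\{\Phi_u+tG_v\}$ being an analytic family indexed by $(u,v,t)$). A derivative of an analytic function of several variables is analytic, so $(u,v)\mapsto\int G_v\,\d m_{\Phi_u}$ is analytic, completing the proof.

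The main obstacle, and the step requiring the most care, is the passage through the Markov coding and the transfer operator: one must check that a genuinely real-\emph{analytic} family of H\"older potentials on $X$ pulls back to an analytic family of potentials on the subshift and hence to an analytic family of transfer operators on a fixed H\"older Banach space, so that Kato's analytic perturbation theory applies to the leading eigenvalue. Once this analyticity of $\PP(\phi,f_u)$ in $u$ is in hand, the remaining steps are routine applications of the analytic implicit function theorem and of Proposition \ref{proposition:deriv-presion}.
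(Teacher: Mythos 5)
Your proposal is correct and follows essentially the same route as the paper: analyticity of the pressure functional on H\"older functions (which the paper simply cites from Parry--Pollicott and Ruelle, while you sketch its transfer-operator proof via Markov coding and Kato perturbation theory), then the analytic implicit function theorem applied to $\PP(\phi,-hf_u)=0$ using Proposition \ref{proposition:deriv-presion}(2) for the entropy, and finally the formula \eqref{inter2} together with the realization of $\int g_v\,\d m_{-h_uf_u}$ as $\left.\frac{\partial}{\partial t}\right|_{t=0}\PP(\phi,-h_uf_u+tg_v)$ for the intersection. The only difference is that you expand the cited ingredient rather than quoting it; the argument is otherwise the paper's.
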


\begin{proof}
Since the pressure function is analytic on the space of H\"older functions (see  Parry-Pollicott \cite[Prop. 4.7]{parry-pollicott}
or Ruelle \cite[Cor. 5.27]{ruelle}) the function $u\mapsto \PP(f_u)$ is analytic.

Since the family $\{f_u\}_{u\in D}$ consists of positive functions,   Proposition \ref{proposition:deriv-presion}  implies
that
$$
\left.\frac{\d}{\d t}\right\vert_{t=h_u}\PP(-tf_u)=\left.\frac{\d}{\d t}\right\vert_{t=h_u}\PP(-h_uf_u-(t-h_u)f_u)=
-\int f_u\d m_{-h_uf_u}<0.
$$
Thus an application of the Implicit Function Theorem yields that $u\mapsto h_u$ is analytic.

We also get that 
$$
(u,v,t)\mapsto\left.\frac{\d}{\d t}\right\vert_{t=0}\PP(-h_{u}f_u+tg_v),
$$
is analytic.
But, applying Proposition \ref{proposition:deriv-presion} again,
$$
\left.\frac{\d}{\d t}\right\vert_{t=0}\PP(-h_{u}f_u+tg_v)=\int g_v \,\d m_{-h_uf_u}.
$$
Thus the function $(u,v)\mapsto \int g_v \,\d m_{-h_uf_u}$ is analytic. Similarly (taking $g_v=f_u$), the function
$u\mapsto \int f_u \,\d m_{-h_uf_u}$ is analytic. Thus, we get, by Equation \eqref{inter2} that
$$
(u,v)\mapsto \II(f_u,g_v)=\frac{\int g_v \d m_{-h_uf_u}}{\int f_u \d m_{-h_uf_u}},
$$
is analytic.

\end{proof}

\section{The geodesic flow of a projective Anosov representation}\label{sec:geodano}

In this section, we define a flow $(\Ug,\{\phi_t\}_{t\in\mathbb R})$ associated to a projective Anosov representation
$\rho:\Gamma\to\sln$.
We will show that $\Ug$ is a H\"older reparameterization of the geodesic flow $\Gg$ of the
domain group $\Gamma$, so it will make sense to refer to $\Ug$ as the {\em geodesic flow} of the
representation.

Let $F$ be the total space of the bundle  over
$$\rpn^{(2)}=\rpn\times\rpn^*\setminus\{(U,V)\mid U\not\subset V\},$$ 
whose fiber at the point $(U,V)$ is the space 
$$ \mathsf M (U,V)=\{(u,v)\mid u\in U,\ v\in V,\ \braket{v|u}=1\}/\sim,$$
where $(u,v)\sim(-u,-v)$ and $\rpn^*$ is identified with the projective space of the dual space $(\mathbb R^m)^*$.
Notice that $u$ determines $v$, so that $F$ is an $\mathbb R$-bundle. One may also identify $\mathsf M(U,V)$ with
the space of norms on $U$.

Then $F$ is equipped with a natural $\mathbb R$-action, given by
$$
\phi_t(U,V,(u,v))=(U,V,(e^tu,e^{-t}v)).
$$
If $\rho:\Gamma\to\sln$ is a projective Anosov representation and $\xi$ and $\theta$ are the associated limit maps,
we consider the associated pullback bundle
$$F_\rho=(\xi,\theta)^*F$$
over $\bg^{(2)}$ which inherits an $\mathbb R$ action from the action on $F$.
The action of $\Gamma$ on $\bg^{(2)}$ extends to an action on $F_\rho$.
If we let
$$\Ug=F_\rho/\Gamma,$$
then the $\mathbb R$-action on $F_\rho$ descends to a flow $\{\phi_t\}_{t\in\mathbb R}$ on $\Ug$, which
we call the {\em geodesic flow} of the representation.

\begin{proposition}{\sc[The geodesic flow]}
\label{geoflowreparam}
If $\rho:\Gamma\to\sln$ is a projective Anosov representation, then
the action of $\Gamma$ on $F_\rho$ is proper and cocompact. Moreover, the flow  $\{\phi_t\}_{t\in\mathbb R}$ on $\Ug$ 
is H\"older conjugate  to a H\"older reparameterization of the Gromov geodesic flow on $\Gg$ and the orbit
associated to $[\gamma]$, for any infinite order primitive element $\gamma\in\Gamma$,  has period
$\Lambda(\rho)(\gamma)$.
\end{proposition}

We produce a $\Gamma$-invariant H\"older orbit equivalence between $\widetilde{\Gg}$ and $F_\rho$ which
is a homeomorphism.
Recall that $\widetilde{\Gg}=\bg^{(2)}\times\mathbb R$ and  that  $\widetilde{\Gg}/\Gamma=\Gg$. 
Since the action of $\Gamma$ on $\widetilde{\Gg}$ is proper and cocompact,
it follows immediately that $\Ug$ is  H\"older conjugate  to a H\"older reparameterization of the Gromov geodesic flow on $\Gg$.

\begin{proposition}\label{geoflowreparam2}
If $\rho:\Gamma\to\sln$ is a projective Anosov representation,
there exists a $\Gamma$-equivariant H\"older orbit equivalence 
$$\tilde\nu: \widetilde{\Gg}\to F_\rho$$
which is a homeomorphism. 
\end{proposition}

Let $E_\rho$ be the flat bundle associated  to $\rho$ on $\Gg$. Recall that $E_\rho$ splits as 
$$
E_\rho=\Xi\oplus\Theta.
$$
Let $\{\psi_t\}_{t\in\mathbb R}$ be the lift of the geodesic flow on $\Gg$ to a flow on $E_\rho$.
We first observe that we may produce a H\"older metric on the bundle $\Xi$ which is contracting on all scales.

\begin{lemma}\label{XiHolder}
There exists a H\"older  metric $\tau^0$ on the bundle $\Xi$ and $\beta>0$ such that for all $t>0$ we have,
$$
\psi_t^*(\tau^0)<  e^{-\beta t}\tau^0.
$$
\end{lemma}

\begin{proof}
Let $\tau$ be any H\"older metric on $\Xi$. Since $\rho$ is projective Anosov, Lemma \ref{contracting on Xi}
implies that there
exists $t_0>0$ such that 
$$
\psi_{t_0}^*(\tau)\leq \frac{1}{4}\tau.
$$
Choose $\beta>0$ so that $2<e^{\beta t_0}<4$ and, for all $s$, let $\tau_s=\psi_s^* (\tau)$.
Let 
$$
\tau^0=\int_0^{t_0} e^{\beta s}\tau_s\,\d s.
$$ 
Notice that $\tau^0$ has the same regularity as $\tau$. If $t>0$, then
\begin{eqnarray}
\psi_t^*(\tau^0)&=&\int_{0}^{t_0} e^{\beta s}\tau_{t+s}\,\d s\cr
&=&e^{-\beta t}\int_{t}^{t+t_0} e^{\beta u}\tau_u\,\d u.
\label{eq:XiHolder1}
\end{eqnarray}
Now observe that
\begin{eqnarray}
\int_{t}^{t+t_0} e^{\beta u}G_u\,\d u
&=&\tau^0+\int_{t_0}^{t+t_0}e^{\beta u}\tau_u\d u-\int_{0}^{t}e^{\beta u}\tau_u\,\d u\cr
&=&\tau^0+\int_0^te^{\beta u}\psi_u^*\left(e^{\beta t_0}\psi_{t_0}^*(\tau)- \tau\right)\d u.
\end{eqnarray}
But
$$
e^{\beta t_0}\psi_{t_0}^*(\tau)\leq \frac{e^{\beta t_0}}{4} \tau< \tau.
$$
Thus 
$$
\int_{t}^{t+t_0} e^{\beta u}\tau_u \,\d u < \tau^0.
$$
and the result follows from Inequality \eqref{eq:XiHolder1}.
\end{proof}

{\em Proof of Proposition \ref{geoflowreparam2}}\label{proof:reparam}
Let $\tau^0$ be the metric provided by Lemma \ref{XiHolder} and let $\beta$ be the associated positive number.
Let $\tilde\Xi$ denote the line bundle over $\bg^{(2)}\times\mathbb R$
which is the lift of $\Xi$. Notice
that $\tau^0$ lifts to a H\"older metric $\tilde \tau^0$ on $\tilde\Xi$.
Our H\"older orbit equivalence  
$$\tilde\nu:\bg^{(2)}\times\mathbb R\to F_\rho$$ 
will be given by
$$
\tilde\nu(x,y,t)=\left(x,y,(u{(x,y,t)},v(x,y,t))\right),
$$
where $\tilde \tau^0_{(x,y,t)}(u(x,y,t))=1$ and   $\tilde \tau^0_{(x,y,t)}$ is the metric on the line $\xi(x)$ 
induced by the metric $\tilde G^0$ by
regarding $\xi(x)$ as the fiber of $\tilde\Xi$ over the point $(x,y,t)$.
The fact that $\psi_t^*\tau^0<\tau^0$ for all  $t>0$ implies that $\tilde\nu$ is injective. Since $\tilde \tau^0$ is H\"older
and $\Gamma$-equivariant,  $\tilde \nu$ is also H\"older and $\Gamma$-equivariant.

It remains to prove that $\tilde\nu$ is proper. We will argue by contradiction. If $\tilde\nu$ is not proper,
then there exists a sequence $\{(x_n,y_n,t_n)\}_{n\in\mathbb N}$ 
leaving every compact subset of \hbox{$\bg^{(2)}\times\mathbb R$}, such that 
$\{\tilde\nu(x_n,y_n,t_n)\}_{n\in\mathbb N}$ converges to $(x,y,(u,v))$ in $F_\rho$.
Letting $\tilde\nu(x_n,y_n,t_n)=(x_n,y_n,(u_n,v_n))$, 
we see immediately that 
$$
\lim_{n\to\infty} x_n=x,\  \lim_{n\to\infty} y_n=y,\ \textrm{and}\  
 \lim_{n\rightarrow\infty} (u_n,v_n)= (u,v).$$
Writing 
$\tilde\nu(x_n,y_n,0)=(x_n,y_n,(\hat u_n,\hat v_n))$ and $\tilde\nu(x,y,0)=(x,y,(\hat u,\hat v))$, we obtain, 
by the continuity of the map $\tilde\nu$, 
$$
\lim_{n\rightarrow\infty} (\hat u_n,\hat v_n)=(\hat u,\hat v).
$$
If $t>0$, then
\begin{equation*}
\frac
{\tilde \tau^0_{(x,y,t)}}{\tilde \tau^0_{(x,y,0)}}=\frac
{\psi_t^*\left(\tilde \tau^0_{(x,y,0)}\right)}{\tilde \tau^0_{(x,y,0)}}<e^{-\beta t}.
\end{equation*}
In particular,  
\begin{equation}
\left\vert \frac{\braket{v\mid u_n}}{\braket{v\mid\hat u_n}}\right\vert < e^{-\beta t_n}.\label{proper1}
\end{equation}
Without loss of generality, either $t_n\to\infty$ or $t_n\to -\infty$.
If $t_n\to\infty$, then by Inequality \eqref{proper1},
$$
0=\lim_{n\rightarrow\infty}\frac{\braket{v\mid u_n}}{\braket{v\mid \hat u_n}},$$
on the other hand,
$$
\lim_{t\rightarrow\infty}\frac{\braket{v\mid  u_n}}{\braket{v\mid\hat u_n}}=\frac{\braket{v\mid u}}{\braket{v\mid \hat u}}\not=0.
$$
We have thus obtained a contradiction. Symmetrically, if  $t_n\to -\infty$, then $$
0=\lim_{n\rightarrow\infty}\frac{\braket{v\mid \hat u_n}}{\braket{v\mid u_n}}=\frac{\braket{v\mid \hat u}}{\braket{v\mid u}}\not=0,
$$
which is again a contradiction.

The restriction of $\tilde\nu$ to each orbit  $\{(x,y)\}\times\mathbb R$ is a proper, continuous, injection into
the fiber of $F_\rho$ over $(x,y)$ (which is also homeomorphic to $\mathbb R$). It follows that the restriction of
$\tilde\nu$ to each orbit is a homeomorphism onto the image fiber.
We conclude that $\tilde\nu$ is surjective and hence a proper, continuous, bijection. Therefore, $\tilde\nu$ is
a homeomorphism. 
This completes the proof of Proposition \ref{geoflowreparam2}.

\medskip

In order to complete the proof of Proposition \ref{geoflowreparam},
it only remains to compute the period of the orbit associated to $[\gamma]$ for an infinite order primitive 
element $\gamma\in\Gamma$.
Since $\rho$ is projective Anosov, Proposition \ref{proxi2} implies that $\rho(\gamma)$ is proximal,
$\xi(\gamma^+)$ is the attracting line and $\theta(\gamma^-)$ is the repelling hyperplane.
If $u\in\xi(\gamma^+)$ and $v\in\theta(\gamma^-)$ one sees that 
$$\rho(\gamma)(u)=\LL(\gamma)(\rho)\ u\textrm{ and }\rho(\gamma)( v)=\frac1{\LL(\gamma)(\rho)}\ v.$$ 
Thus, $(\gamma^+,\gamma^-,(u,v))$ and 
$$(\gamma^+,\gamma^-,\LL(\gamma)(\rho)u,\frac1{\LL(\gamma)(\rho)}v)=\phi_{\log(\Lambda(\gamma)(\rho))}(\gamma^+,\gamma^-,(u,v))$$ 
project to the same point on $\Ug.$ (Recall that 
$$(\LL(\gamma)(\rho)u,\frac1{\LL(\gamma)(\rho)}v)\sim
(-\LL(\gamma)(\rho)u,\frac{-1}{\LL(\gamma)(\rho)}v)$$
in $M(\xi(\gamma^+),\theta(\gamma^-))$.)
Since $\gamma$ is primitive, this finishes the proof.
\qed

\section{The geodesic flow is a metric Anosov flow}

In this section, we prove that the
geodesic flow of a projective Anosov representation is a metric Anosov flow: 

\begin{proposition}{\sc[Anosov]}
\label{pro:geodanoprelim}
If  $\rho:\Gamma\to\sln$ is a projective Anosov representation, then
the geodesic flow $(\Ug,\{\phi_t\}_{t\in\mathbb R})$ is a topologically transitive metric Anosov flow.
\end{proposition}

The reader with a background in hyperbolic dynamics may be convinced by the following heuristic argument: 
essentially the splitting of an Anosov representation yields a section of some (product of) flag manifolds and the graph of this
section should be thought as a Smale locally maximal hyperbolic set; then the result follows from the ``fact" that the 
restriction of the flow on  such a set is a metric Anosov flow. However, the above idea does not exactly work, 
and moreover it is not easy to extricate it from  the existing literature in the present framework.
Therefore, we give a detailed and {\it ad-hoc} construction, although the result should be true in a rather general setting.

The topological transitivity of $(\Ug,\{\phi_t\}_{t\in\mathbb R})$
follows immediately  from the topological transitivity of the action of $\Gamma$ on $\bg^2$. 
We  define a metric on the geodesic flow in Section \ref{metano}, introduce the stable and unstable leaves 
in Section \ref{leafano}, explain how to control the metric along the unstable leaves in Section \ref{leaf lift } 
and finally proceed to the proof in Section \ref{geodano}.
A more precise version of Proposition \ref{pro:geodanoprelim} is given by Proposition \ref{pro:geodano}.

\subsection{The geodesic flow as a metric space}\label{metano}

Recall that $F$ is the total space of an $\mathbb R$-bundle over $\rpn^{(2)}$ whose fiber at the point $(U,V)$ 
is the space 
$$\mathsf M (U,V)=\{(u,v)\mid u\in U,\ v\in V,\ \braket{v|u}=1\}/\sim.$$
Since $\rpn^{(2)}\subset \rpn\times\rpn^*$, any Euclidean metric on $\mathbb R^m$ 
gives rise to a metric on $F$ which is a subset of 
$$ \rpn\times\rpn^*\times\left(\left(\mathbb R^m\times\left(\mathbb R^m\right)^*\right)/\pm 1\right).$$
The metric on $F$ pulls back to a metric on $F_\rho$.
A metric on $F_\rho$ obtained by this procedure is called a {\em linear metric}. 
Any two linear metrics are  bilipschitz equivalent.

The following lemma allows us to use a linear metric to study $F_\rho$.

\begin{lemma}\label{linear:metric}
There exists a $\Gamma$-invariant metric $d_0$ on $F_\rho$ which is locally bilipschitz equivalent to any linear metric. 
\end{lemma}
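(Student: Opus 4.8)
The point of the lemma is that the linear metrics defined via Euclidean structures on $\mathbb R^m$ are \emph{not} $\Gamma$-invariant (the group acts by $\rho(\Gamma)\subset\sln$, which does not preserve a Euclidean metric unless $\rho(\Gamma)$ is bounded), so we must manufacture a genuinely $\Gamma$-invariant metric $d_0$ that is nonetheless locally bilipschitz to a linear metric. The strategy is the standard one for constructing invariant metrics on cocompact quotients: take a linear metric $d_{\mathrm{lin}}$ on $F_\rho$, average or supremize it over the $\Gamma$-action, and use properness and cocompactness (Proposition \ref{geoflowreparam}) to control the local behavior. First I would fix a linear metric $d_{\mathrm{lin}}$ coming from a Euclidean structure on $\mathbb R^m$, together with the induced metric on the bundle space $F$, and recall that any two linear metrics are bilipschitz equivalent, so the choice is immaterial.

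Next, since the $\Gamma$-action on $F_\rho$ is proper and cocompact, I would choose a compact set $\mathcal K\subset F_\rho$ whose $\Gamma$-translates cover $F_\rho$, and a slightly smaller relatively compact open set $\mathcal U$ with $\Gamma\cdot\mathcal U=F_\rho$. The plan is to build $d_0$ as a ``path metric'' or ``chain metric'': declare the length of a path to be its $d_{\mathrm{lin}}$-length measured in each $\gamma$-translate of $\mathcal K$ in the $\gamma$-pulled-back metric, or more simply define
$$
d_0(p,q)=\inf\left\{\sum_{i=1}^{N} d_{\mathrm{lin}}\bigl(\gamma_i^{-1}p_{i-1},\gamma_i^{-1}p_i\bigr)\right\},
$$
the infimum over finite chains $p=p_0,p_1,\dots,p_N=q$ and elements $\gamma_i\in\Gamma$ with $\gamma_i^{-1}p_{i-1},\gamma_i^{-1}p_i$ both lying in a fixed bounded neighborhood of $\mathcal K$. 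By construction this is symmetric, satisfies the triangle inequality, and is $\Gamma$-invariant because relabeling a chain by $\gamma$ changes nothing. One then checks it is a genuine metric (positivity) using properness: a chain connecting distinct points cannot have arbitrarily small total length because only finitely many translates of $\mathcal K$ meet any fixed compact neighborhood, so the contributions are bounded below.

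To get the local bilipschitz equivalence with $d_{\mathrm{lin}}$, I would argue as follows. One inequality, $d_0\leq d_{\mathrm{lin}}$ locally, is immediate: near any point, a single-segment chain (with $\gamma_1$ the identity, or the appropriate element carrying the neighborhood into $\mathcal K$) realizes $d_{\mathrm{lin}}$ up to a bounded factor. For the reverse inequality, the key observation is that on the compact set $\mathcal K$, the finitely many ``transition maps'' $\gamma$ for which $\gamma\mathcal K\cap\mathcal K\neq\emptyset$ are bilipschitz with a uniform constant on the relevant compact overlaps (each such $\gamma$ acts by a fixed linear map $\rho(\gamma)$, which is smooth, hence bilipschitz on compacta); since by properness this set of $\gamma$ is finite, there is a uniform bilipschitz constant $L$, and any chain staying in a small enough neighborhood of a point can be converted back into a $d_{\mathrm{lin}}$-path with length distortion at most $L$. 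Hence $d_0$ and $d_{\mathrm{lin}}$ are locally bilipschitz. \textbf{The main obstacle} is the positivity/non-degeneracy of $d_0$ together with the lower bound in the local bilipschitz estimate: one must ensure that passing to chains through distant translates cannot create shortcuts that collapse distances, and this is precisely where cocompactness (finiteness of relevant $\gamma$) and properness of the $\Gamma$-action are essential; making the ``small enough neighborhood'' quantifier uniform over $F_\rho/\Gamma$ uses the compactness of the quotient one more time.
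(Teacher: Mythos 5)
Your proposal is correct and follows essentially the same route as the paper: one defines a $\Gamma$-invariant chain (path) pseudo-metric whose segments are measured in the pulled-back linear metrics attached to the locally finite cover by $\Gamma$-translates of a relatively compact fundamental-domain neighborhood, and then uses properness (finiteness of the translates meeting a small neighborhood, hence a uniform bilipschitz constant among the relevant chart metrics) together with an escape estimate for chains leaving that neighborhood to get positivity and the local bilipschitz comparison. The only difference from the paper's write-up is cosmetic: there each segment's length is the average of the two chart metrics at its endpoints rather than a single chart metric.
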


The $\Gamma$-invariant metric $d_0$ descends to a metric on $\Ug$  which we will also call $d_0$ and is defined for every $x$ and $y$ in $F_\rho$ by
$$
d_0(\pi(x),\pi(y))=\inf_{\gamma\in\Gamma}(x,\gamma(y)),
$$
where $\pi$ is the projection $F_\rho\to \Ug$.

\begin{proof}{}
We first notice that all linear metrics on $F_\rho$ are bilipschitz to one another, so that it suffices to construct a metric
which is locally bilipschitz to a fixed linear metric $d$.

Let $V$ be an open subset  of $F_\rho$ with compact closure which contains a closed fundamental
domain for the action of $\Gamma$ on $F_\rho$. Since the action of $\Gamma$ on $F_\rho$ is proper,
$\{ V_\gamma=\gamma(V)\}_{\gamma\in\Gamma}$ is a locally finite cover of $F_\rho$. Let 
$\{d_\gamma=\gamma^*d\}_{\gamma\in\Gamma}$
be the associated family of metrics on $F_\rho$. Since each element of $\Gamma$ acts as a  bilipschitz automorphism
with respect to any linear metric, any two metrics in the family
$\{d_\gamma=\gamma^*d\}_{\gamma\in\Gamma}$ are  bilipschitz equivalent.

We will use this cover and the associated family of metrics to construct a \hbox{$\Gamma$-invariant} metric on $F_\rho$.
A {\em path} joining two points $x$ and $y$ in $F_\rho$ is a pair of tuples
$$
\mathcal P= ((z_0,\ldots, z_n),(\gamma_0,\ldots,\gamma_n)),
$$ 
where  $(z_0,\ldots, z_{n})$  is  an $n$-tuple of points in $F_\rho$ and $(\gamma_0,\ldots,\gamma_n)$  is an $n$-tuple  of 
elements of $\Gamma$ such that
\begin{itemize}
\item $x=z_0\in V_{\gamma_0}$ and  $y=z_n\in V_{\gamma_n}$, 
\item for all $n>i>0$, $z_i\in V_{\gamma_{i-1}}\cap V_{\gamma_i}$.
\end{itemize}

The {\em length} of a path is given by
$$
\ell(\mathcal P)=\frac{1}{2}\left(\sum_{i=0}^{n-1}d_{\gamma_i}(z_i,z_{i+1})+d_{\gamma_{i+1}}(z_i,z_{i+1})\right)
$$ 
We then define
$$
d_0(x,y)=\inf\{\ell(\mathcal P)\mid \mathcal P\hbox{ joins $x$ and $y$}\}.
$$
It is clear that $d_0$ is a $\Gamma$-invariant pseudo metric. It remains to show
that $d_0$ is a metric which is locally bilipschitz to $d$.

Let $z$ be a point in $F_\rho$. Then there exists a neighborhood $Z$ of $z$ so that
$$
A=\{\gamma\mid V_\gamma\cap Z\not=\emptyset\},
$$
is a finite set. 
Choose $\alpha>0$ so that 
$$
\bigcup_{\gamma\in A}\{x\mid d_\gamma(z,x)\leq \alpha\}\subset Z.
$$
Let $K$ be chosen so that if $\alpha,\beta\in A $, then $d_\alpha$ and $d_\beta$ are $K$-bilipschitz.
Finally, let 
$$
W=\bigcap_{\gamma\in A}\left\{x\mid d_\gamma(z,x)\leq \frac{\alpha}{10K}\right\}.
$$
By construction, if $x$ and $y$ belong to $W$, then for all $\gamma\in A$,
\begin{equation}
d_\gamma(x,y)\leq \frac{\alpha}{5K}\label{ineq:control10}.
\end{equation}

Let $x$ be a point in $W$. Let 
$\mathcal P=((z_0,\ldots,z_n),(\gamma_0,\ldots,\gamma_n))$ be a path joining $x$ to a point  $y$.  

If there exists $j$ such that $\gamma_j\not\in A$, then
\begin{eqnarray}
\label{ineq:control1}
\ell(\mathcal P)&\geq& \frac{1}{2}\sum_{i=0}^{i=j-1}d_{\gamma_i}(z_{i-1},z_{i})\cr
&\geq& \frac{1}{2K}\left(\sum_{i=0}^{i=j-1}d_{\gamma_{j-1}}(z_i,z_{i+1})\right)\cr
&\geq& \frac{1}{2K}d_{\gamma_{j-1}}(z_0,z_{j})\geq \frac{1}{2K}\left(d_{\gamma_{j-1}}(z,z_{j}))-d_{\gamma_{j-1}}(z_0,z)\right)\cr
&\geq&\frac{1}{2K}\left(\alpha-\frac{\alpha}{10K}\right)\geq \frac{\alpha}{5K}.
\end{eqnarray}

If $\gamma_j\in A$ for all $j$, then the triangle inequality and the definition of $K$ immediately imply
that for all $\gamma\in A$,
\begin{equation}
\ell(\mathcal P)\geq \frac{1}{K}d_{\gamma}(x,y).
\label{ineq:control2}
\end{equation}
Inequalities (\ref{ineq:control1}) and (\ref{ineq:control2})  imply that 
\begin{equation}
d_0(x,y)\geq \frac{1}{K}\inf\left(\frac{\alpha}{5}, d_\gamma(x,y)\right)>0,\label{ineq:control4}
\end{equation}
hence $d_0$ is a metric.
Moreover, if $x,y\in W$, then by inequalities (\ref{ineq:control4})
and (\ref{ineq:control10}), 
\begin{equation}
d_0(x,y)\geq \frac{1}{K}d_\gamma(x,y).\label{ineq:control5}
\end{equation}
By construction, and taking the path $\mathcal P_0=\left((x,y),(\gamma,\gamma)\right)$ with $\gamma$ in $A$, we also get 
\begin{equation}
d_0(x,y)\leq \ell(\mathcal P_0)= d_\gamma(x,y).\label{ineq:control6}\end{equation}
As consequence of inequalities  \eqref{ineq:control5} and \eqref{ineq:control6}, $d_0$ is bilipschitz on $W$ to any $d_\gamma$ with $\gamma\in A$.

Since $d$ is bilipschitz to $d_\gamma$ for any $\gamma\in A$,  we see that $d_0$ is bilipschitz to $d$ on $W$.

Since $z$  was arbitrary, it follows that $d_0$ is locally bilipschitz to $d$.
\end{proof}

\subsection{Stable and unstable leaves}\label{leafano}

In this section, we define the stable and unstable  laminations of the geodesic flow $F_\rho$.
Let 
$$Z=(x_0,y_0,(u_0,v_0))$$ 
be a point in $F_\rho$. 

\begin{enumerate}
\item The {\em unstable leaf through $Z$} is
$$\mathcal L_Z^-=\{(x,y_0,(u,v_0))\mid x\in \bg, \ u\in\xi(x),\  \braket{v_0|u}=1\}.$$
The {\em central unstable leaf through $Z$} is 
\begin{eqnarray*}
\mathcal L_Z^{-,c}&=&\{(x,y_0,(u,v))\mid x\in \bg, (u,v)\in\mathsf M(\xi(x),\theta(y_0))\}\cr &=&\bigcup_{t\in\mathbb R}\phi_t(\mathcal L_Z+).
\end{eqnarray*}

\item 
The {\em stable leaf through $Z$} is 
$$\mathcal L_Z^+=\{(x_0,y,(u_0,v))\mid y\in \bg,\ v\in\theta(y),\ \braket{v|u_0}=1\}.$$
The {\em central stable leaf through $Z$} is 
\begin{eqnarray*}\mathcal L_Z^{+,c}&=&\{(x_0,y,(u,v))\mid y\in \bg,(u,v)\in\mathsf M(\xi(x_0),\theta(y))\}\cr &=&\bigcup_{t\in\mathbb R}\phi_t(\mathcal L_Z^-).\end{eqnarray*}
\end{enumerate}
Observe that $\mathcal L_Z^+$  is homeomorphic to $\bg\setminus\{x_0\}$ and
$\mathcal L^-_Z$ is homeomorphic to $\bg\setminus\{y_0\}$.

The following two propositions are immediate from our construction.

\begin{proposition}{\sc[Invariance]}
If $\gamma\in\Gamma$ and $t\in\mathbb R$, then
$$
\mathcal L_{\gamma(Z)}^\pm=\gamma\left(\mathcal L_{Z}^\pm\right)\ \ {\rm and}\ \ 
\mathcal L_{\phi_t(Z)}^\pm=\phi_t\left(\mathcal L_{Z}^\pm\right).
$$
\end{proposition}

\begin{proposition}{\sc[product structure]} \label{pro:ps}
The (two) pairs of lamination $(\mathcal L^\pm,\mathcal L^{\mp,c})$ define a local product structure
on $F_\rho$, and hence on $\Ug$.
\end{proposition}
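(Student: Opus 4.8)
The plan is to exhibit, for each point $X=(x_0,y_0,(u_0,v_0))\in F_\rho$, an explicit chart that is simultaneously a chart for $\mathcal L^+$ (resp. $\mathcal L^-$) and for $\mathcal L^{-,c}$ (resp. $\mathcal L^{+,c}$). The key observation is that the defining data of a point of $F_\rho$ splits naturally into three coordinates: the ``first'' boundary point $x$ (which, together with a normalization, carries the unstable direction), the ``second'' boundary point $y$ (carrying the stable direction), and the flow parameter $t$ recording which metric $(u,v)\in\mathsf M(\xi(x),\theta(y))$ one has chosen on the line $\xi(x)$. Concretely, fix small neighborhoods $U^-$ of $x_0$ in $\bg$ and $U^+$ of $y_0$ in $\bg$ so that $U^-\times U^+\subset\bg^{(2)}$, and use the trivialization of $F_\rho$ over $U^-\times U^+$ coming from the bundle structure to write points as $(x,y,(u(x,y,s),v(x,y,s)))$ where $s\in\mathbb R$ is the flow time measured from a fixed continuous section. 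This gives a homeomorphism $\nu\colon U^-\times U^+\times(-\epsilon,\epsilon)\to$ (open set of $F_\rho$) after shrinking, and the $\mathbb R$-action is $\phi_t(\nu(x,y,s))=\nu(x,y,s+t)$, exactly as in the construction of $\tilde\nu$ in Section \ref{proof:reparam}.

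First I would check that this $\nu$ is a chart for $\mathcal L^+$: by definition $\mathcal L^+_X$ fixes the first coordinate $x_0$ and the metric-normalization in a way that varies only $y$ and $v$, so $\nu(\{x_0\}\times U^+\times\{s_0\})$ lies in a single stable leaf, with $U^+$ as the transverse space times the $(x,s)$-directions playing the role of $K$. Symmetrically $\nu$ is a chart for $\mathcal L^-$, with $U^-$ the leaf direction. Next I would verify that the \emph{same} $\nu$, after composing the $(x,s)$ (resp. $(y,s)$) coordinates into a single central-leaf coordinate via $\mathcal L^{-,c}_X=\bigcup_t\phi_t(\mathcal L^-_X)$ (resp. $\mathcal L^{+,c}_X=\bigcup_t\phi_t(\mathcal L^+_X)$), is a chart for $\mathcal L^{-,c}$ (resp. $\mathcal L^{+,c}$): indeed, once we enlarge the unstable leaf $\{(x,y_0,(u,v_0))\}$ by the flow we sweep out precisely the set $\{(x,y_0,(u,v))\mid x\in U^-,\ (u,v)\in\mathsf M(\xi(x),\theta(y_0))\}$, which in the $\nu$-coordinates is $U^-\times\{y_0\}\times(-\epsilon,\epsilon)$, a plaque. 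Then $\nu\colon U^+\times\big(U^-\times(-\epsilon,\epsilon)\big)\to F_\rho$ visibly witnesses $\mathcal L^+$ being locally a product of an $\mathcal L^+$-plaque with an $\mathcal L^{-,c}$-plaque, which is the required local product structure; the other pair is obtained by interchanging the roles of $x,y$ and of $+,-$.

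The main thing to be careful about — and what I expect to be the only real point of substance — is checking that the plaque topologies match the subspace topologies, i.e.\ that the leaves $\mathcal L^\pm_X$ inherit, through the charts, the same topology as they carry as subspaces of $F_\rho$; this is where one uses that $\mathcal L^+_X$ is homeomorphic to $\bg\setminus\{x_0\}$ and $\mathcal L^-_X$ to $\bg\setminus\{y_0\}$, together with the fact (from Proposition \ref{geoflowreparam2}) that the bundle trivialization of $F_\rho$ over $\bg^{(2)}$ is a homeomorphism, so that the coordinates $(x,y,s)$ are genuinely local coordinates on $F_\rho$. The $\Gamma$-equivariance and flow-invariance needed to push the structure down to $\Ug=F_\rho/\Gamma$ are immediate from the previous Invariance proposition, since $\Gamma$ acts properly discontinuously and the charts can be chosen small enough to inject into $\Ug$. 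Everything else is a routine unwinding of the definitions of Section \ref{leafano}, so I would not belabor it.
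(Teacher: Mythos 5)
Your overall strategy --- flow-box coordinates $(x,y,s)$ on $F_\rho$ in which the product chart is read off --- is exactly what the paper has in mind (it offers no written proof, declaring the statement immediate from the construction). Moreover, the half of your verification concerning the central laminations is genuinely automatic: in \emph{any} trivialization the central unstable (resp.\ central stable) plaques are the full coordinate sets $U^-\times\{y\}\times(-\epsilon,\epsilon)$ (resp.\ $\{x\}\times U^+\times(-\epsilon,\epsilon)$), since the central leaves fix only one boundary coordinate and allow the whole fiber $\mathsf M(\xi(x),\theta(y))$.

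The step that fails as written is the identification of the \emph{strong} leaves with coordinate slices. If $s$ is flow time measured from an arbitrary continuous section $\sigma(x,y)=(u_\sigma(x,y),v_\sigma(x,y))$, then $(x_0,y,s)\in\mathcal L^+_X$ means $e^{s}u_\sigma(x_0,y)=\pm u_0$, so the stable plaque through $X$ is the graph of a function $s=s_X(y)$, not the slice $\{x_0\}\times U^+\times\{s_0\}$, unless you arrange $u_\sigma(x,y)$ to be independent of $y$ --- which you have not done. Worse, your symmetric claim that the \emph{same} $\nu$ is also a chart for $\mathcal L^-$ would additionally require $v_\sigma(x,y)$ to be independent of $x$; together with the normalization $\braket{v_\sigma|u_\sigma}=1$ this would force $\vert\braket{v(y)|u(x)}\vert\equiv 1$ on $U^-\times U^+$, which you cannot expect to arrange and, crucially, never need: the proposition pairs $\mathcal L^+$ only with $\mathcal L^{-,c}$ and $\mathcal L^-$ only with $\mathcal L^{+,c}$, so each pair may use its own chart. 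The repair is minor and keeps your plan intact: for $(\mathcal L^+,\mathcal L^{-,c})$ take a section adapted to that pair, e.g.\ $\sigma(x,y)=(u(x),v(x,y))$ with $u(x)\in\xi(x)$ chosen continuously and independently of $y$ and $v(x,y)\in\theta(y)$ normalized by $\braket{v(x,y)|u(x)}=1$; then stable plaques are constant-$(x,s)$ slices and your argument goes through, with the symmetric choice for $(\mathcal L^-,\mathcal L^{+,c})$. Alternatively, avoid sections altogether and define the chart intrinsically by sending a pair $(P,Q)$, $P=(x_0,y,(u_0,v))\in\mathcal L^+_X$ and $Q=(x,y_0,(u,w))\in\mathcal L^{-,c}_X$, to the unique point $(x,y,(u,v'))$ with $v'\in\theta(y)$, $\braket{v'|u}=1$ (transversality of $\xi$ and $\theta$ makes $v'$ well defined); this is visibly a homeomorphism onto an open set and a chart for both laminations simultaneously, and is the ``bracket'' alluded to in the paper's remark on Smale flows. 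With this correction, your remarks on plaque topologies and on descending to $\Ug$ by equivariance and properness are fine.
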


\medskip\noindent
{\bf Remark:} Throughout this section, we abuse notation by allowing $\{\phi_t\}_{t\in\mathbb R}$ to denote both the 
flow on $\Ug$ and the flow on $F_\rho$ which covers it and letting $\mathcal L^\pm$ denote both
the lamination on $F_\rho$ and the induced lamination on $\Ug$.

\subsection{The leaf lift  and the distance}
\label{leaf lift }

In this section we introduce the {\em leaf lift } and show that it helps in controlling  distances in $F_\rho$.

We first define the leaf lift  for points in the bundle $F$.
Let $A=(U,V,(u_0,v_0))$ be a point in $F$. We observe that there exists a unique continuous map, called the {\em leaf lift } 
from
$$
O_{A}=\{w\in \rpn^*\mid U\cap  {\rm ker}(w)=\{0\}\}.
$$
to 
$\left((\mathbb R^m)^*\setminus\{0\}\right)/\pm 1$
such that $w$ is taken to  $\Omega_{w,A}$
such that
\begin{eqnarray}
\Omega_{w,A}\in w,& &
\braket{\Omega_{w,A}|u_0}=1.\label{def:omega}
\end{eqnarray}
In particular, $\Omega_{v_0,A}=v_0$. Observe that at this stage  the leaf lift  coincides with the classical notion of an  affine chart.

The following lemma records immediate properties of the leaf lift .

\begin{lemma}\label{ineqmetricP1}
Let $\Vert. \Vert_1$ be a Euclidean norm on $\mathbb R^n$ and $d_1$ the associated metric on $\rpn^*$. 
If $A=(x,y,(u,v))\in F$, then there exist constants $K_1 >0$ and $\alpha_1>0$ such that for $w_0, w_1\in\rpn^*$
\begin{itemize}
\item If  $d_1 (w_i,y)\leq\alpha_1$, for $i=0,1$, then
\begin{eqnarray*}
 \Vert \Omega_{w_0,A}- \Omega_{w_1,A}\Vert_1 \leq K_1 d_1\left(w_0,w_1\right),\label{inegmetric10} \end{eqnarray*}
\item If  $\Vert \Omega_{w_i,A}- \Omega_{y,A}\Vert_1\leq\alpha_1$ for $i=0,1$, then
\begin{eqnarray*}
d_1 (w_0,w_1)\leq K_1 \Vert \Omega_{w_0,A}- \Omega_{w_1,A}\Vert_1.\label{inegmetric11}
\end{eqnarray*}
\end{itemize}
\end{lemma}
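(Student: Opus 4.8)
\emph{Sketch of proof.} The plan is to identify the leaf lift $w\mapsto\Omega_{w,A}$ with a classical affine chart of the projective space $\rpn^*$, and then to invoke the standard fact that a smooth diffeomorphism between manifolds equipped with Riemannian metrics is bi-Lipschitz on small neighbourhoods. Write $A=(x,y,(u,v))$, with $u\in x$ a fixed non-zero vector. Unravelling the definition, $O_A=\{w\in\rpn^*:x\cap\ker w=\{0\}\}$ is exactly the set of $w=[\varphi]\in\rpn^*=\mathbb P\bigl((\Real^m)^*\bigr)$ with $\varphi(u)\neq 0$; equivalently it is the complement in $\rpn^*$ of the projective hyperplane $\mathbb P\bigl(\operatorname{Ann}(x)\bigr)$. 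For such $w$ the element of the line $w$ normalised by $\braket{\cdot|u}=1$ is $\varphi/\varphi(u)$, so, fixing the representative $(u,v)$ of the fibre coordinate of $A$, the leaf lift is the map
\[
\Omega_{\cdot,A}\colon O_A\longrightarrow H_u:=\{\psi\in(\Real^m)^*:\psi(u)=1\},\qquad [\varphi]\longmapsto\frac{\varphi}{\varphi(u)} ,
\]
that is, the affine chart of $\rpn^*$ determined by the affine hyperplane $H_u$. Its inverse is $\psi\mapsto[\psi]$, which is manifestly real-analytic, so $\Omega_{\cdot,A}$ is a real-analytic diffeomorphism of $O_A$ onto $H_u$. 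Since $(x,y)\in\rpn^{(2)}$ means $x\not\subset\ker y$, we have $y\in O_A$, and $\Omega_{y,A}=v$ because $\braket{v|u}=1$.

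Next I would endow $O_A$ with the restriction of the metric $d_1$ on $\rpn^*$ and $H_u$ with the distance $(\psi_0,\psi_1)\mapsto\Vert\psi_0-\psi_1\Vert_1$, the one induced on the affine subspace $H_u$ of the Euclidean space $\bigl((\Real^m)^*,\Vert\cdot\Vert_1\bigr)$; this is precisely the quantity appearing in the statement. A $C^1$ diffeomorphism between Riemannian manifolds is bi-Lipschitz on any sufficiently small geodesically convex ball around a point of its domain (bound $\Vert d\Omega_{\cdot,A}\Vert$ and $\Vert d\Omega_{\cdot,A}^{-1}\Vert$ above on such a ball and integrate along minimizing geodesics). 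Applying this at $y\in O_A$ produces a radius $r>0$, a constant $K\geq 1$ and a geodesically convex ball $N=\{w\in O_A:d_1(w,y)<r\}$ on which
\[
\tfrac1K\,d_1(w_0,w_1)\ \leq\ \Vert\Omega_{w_0,A}-\Omega_{w_1,A}\Vert_1\ \leq\ K\,d_1(w_0,w_1)\qquad\text{for all }w_0,w_1\in N .
\]

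Finally I would fix $\alpha_1\in(0,r)$ small enough that the $\Vert\cdot\Vert_1$-ball of radius $\alpha_1$ about $v=\Omega_{y,A}$ is contained in the open set $\Omega_{\cdot,A}(N)$, and set $K_1=K$. For the first assertion, if $d_1(w_i,y)\leq\alpha_1<r$ for $i=0,1$ then $w_0,w_1\in N$ and the right-hand inequality applies. For the second, if $\Vert\Omega_{w_i,A}-\Omega_{y,A}\Vert_1\leq\alpha_1$ then $\Omega_{w_i,A}$ lies in the chosen ball about $v$, hence in $\Omega_{\cdot,A}(N)$; since $\Omega_{\cdot,A}$ is a \emph{bijection} of $O_A$ onto $H_u$ this forces $w_i\in N$, and the left-hand inequality applies. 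The only step I would treat with care — and the only one that is not completely routine — is this last passage, from "$\Omega_{w_i,A}$ is close to $v$" back to "$w_i\in N$": it is precisely here that one must use the global injectivity of the affine chart rather than merely its local invertibility. Everything else is the standard local bi-Lipschitz estimate for a smooth diffeomorphism.
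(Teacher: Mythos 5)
Your proof is correct, and it is essentially the argument the paper has in mind: the paper offers no written proof, calling these properties "immediate" after remarking that the leaf lift is "just a rebranding of the classical notion of an affine chart," and your identification of $\Omega_{\cdot,A}$ with the affine chart $[\varphi]\mapsto\varphi/\varphi(u)$ onto $H_u$ followed by the standard local bi-Lipschitz estimate is precisely the natural fleshing-out of that remark. Your care on the second item — using surjectivity onto $H_u$ and global injectivity of the chart to pull "$\Omega_{w_i,A}$ close to $v$" back to "$w_i$ close to $y$" — is exactly the one point that deserves the attention you gave it.
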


If $Z=(x,y,(u_0,v_0))\in F_\rho$ and $W=(x,w,(u_0,v))\in\mathcal L^+_Z$, then  we define the {\em leaf lift}
$$
\omega_{W,Z}=\Omega_{\xi^*(w),(\xi(x),\xi^*(y),(u_0,v_0))}=v.
$$

The following result allows us to use the leaf lift  to bound distances in $F_\rho$

\begin{proposition}\label{sect-dist}
Let $d_0$ be a  $\Gamma$-invariant  metric on $F_\rho$ which is locally bilipschitz equivalent to a linear metric and let
\hbox{$Z\to\Vert.\Vert_Z$} be a  \hbox{$\Gamma$-invariant} map from $F_\rho$ into the space of Euclidean metrics on 
$\mathbb R^m$.
There exist positive constants $K$ and $\alpha$ such that for any $Z\in F_\rho$ and any $W\in\mathcal L^-_Z$,
\begin{itemize}
\item if $ d_0\left(W,Z\right)\leq\alpha$, then
\begin{eqnarray}
 \bVert \omega_{W,Z}- \omega_{Z,Z}\bVert_Z \leq K d_0\left(W,Z\right),\label{inegmetric10b}
 \end{eqnarray}
 \item if $\bVert \omega_{W,Z}- \omega_{Z,Z}\bVert_Z\leq\alpha$ then
 \begin{eqnarray}
d_0\left(W,Z\right)\leq K \bVert \omega_{W,Z}- \omega_{Z,Z}\bVert_Z.\label{inegmetric11b}
\end{eqnarray}
\end{itemize}
\end{proposition}

\begin{proof}
Since $\Gamma$ acts cocompactly on $F_\rho$ and  both $d_0$ and  the section $\Vert.\Vert$ are
\hbox{$\Gamma$-invariant}, it suffices to prove the previous assertion for $Z$ in a compact subset  $R$ of $F_\rho$. 
Observe first that $d_0$ is uniformly $C$-bilipschitz on $R$ to any of the linear metrics $d_Z$ coming from 
$\Vert.\Vert_Z$ for $Z$ in $R$ for some constant $C$.

Lemma \ref{ineqmetricP1} implies that,  for all $Z\in R$, there exist
positive constants $K_Z$ and $\alpha_Z$ such that if $W_0,W_1\in\mathcal L^-_Z\cap O$, then
\begin{itemize}
\item If  $d_0\left(W_i,Z\right)\leq\alpha_Z$ for $i=0,1$, then
\begin{eqnarray*}
 \bVert \omega_{W_0,Z}- \omega_{W_1,Z}\bVert_Z \leq K_Z d_0\left(W_0,W_1\right), \end{eqnarray*}
\item If  $\bVert \omega_{W_i,Z}- \omega_{Z,Z}\bVert_Z\leq\alpha_Z$ for $i=0,1$, then
\begin{eqnarray*}
d_0\left(W_0,W_1\right)\leq K_Z \bVert \omega_{W_0,Z}- \omega_{W_1,Z}\bVert_Z.
\end{eqnarray*}
\end{itemize}

Since $R$ is compact, one may apply the classical argument which establishes that continuous functions are uniformly
continuous on compact sets, to show that there are  positive constants $K$ and $\alpha$ which work for all $Z\in R$.
\end{proof}

\subsection{The geodesic flow is Anosov}
\label{geodano}

The following result completes the proof of Proposition \ref{pro:geodanoprelim}

\begin{proposition}{\sc[Anosov property]}\label{pro:geodano}
Let $\rho:\Gamma\to\sln$ be a projective  Anosov representation, and let
$\mathcal L^\pm$ be the laminations  on $\Ug$ defined above.
Then there exists a metric on $\Ug$,  H\"older equivalent to the H\"older structure on $\Ug$, 
such that
\begin{enumerate}
\item $\mathcal L^+$ is contracted by the flow,
\item  $\mathcal L^-$ is contracted by the inverse  flow,\end{enumerate}
\end{proposition}

We first show that the leaf lift  is contracted by the  flow.

\begin{lemma}\label{contract1}
There exists a $\Gamma$-invariant map $Z\mapsto \Vert.\Vert_Z$ from $F_\rho$ into the space of Euclidean metrics on $\mathbb R^m$, such that for every positive integer $n$,  there exists  $t_0>0$
such that  if $t>t_0$, $Z\in F_\rho$, and 
$W\in\mathcal L^+_Z$ then
\begin{equation}
\left\Vert\omega_{\phi_t(W),\phi_t(Z)}-\omega_{\phi_t(Z),\phi_t(Z)}\right\Vert_{\phi_t(Z)}\leq\frac{1}{2^n} \bVert \omega_{W,Z}-\omega_{Z,Z}\bVert_Z\label{ineganos1}.
\end{equation}
\end{lemma}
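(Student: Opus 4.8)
The plan is to choose the family of metrics so that the normalization built into $\tilde\nu$ is reflected in the metric itself. I would start from a continuous metric on the flat bundle $E_\rho=\Xi\oplus\Theta$ over $\Gg$ which makes $\Xi$ and $\Theta$ orthogonal, which restricts to the metric $G^0$ of Lemma~\ref{XiHolder} on $\Xi$, and which restricts to an arbitrary continuous metric $h$ on $\Theta$; then pull it (and its induced dual metric on $(\mathbb R^m)^*$) back along $\widetilde{\Gg}\to\Gg$ and transport it to $F_\rho$ by the $\Gamma$-equivariant homeomorphism $\tilde\nu$ of Proposition~\ref{geoflowreparam2}. The result is a $\Gamma$-invariant continuous family $Z\mapsto\Vert\cdot\Vert_Z$, written the same way on $\mathbb R^m$ and on $(\mathbb R^m)^*$. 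By the very definition of $\tilde\nu$, the $\xi$-coordinate $u$ of a point $Z=(x,y,(u,v))\in F_\rho$ is a $\Vert\cdot\Vert_Z$-unit vector; and, writing $\operatorname{Ann}(\xi(x))\subset(\mathbb R^m)^*$ for the annihilator of the line $\xi(x)$, the restriction of the dual of $\Vert\cdot\Vert_Z$ to $\operatorname{Ann}(\xi(x))$ is the fibre metric induced by $h$ on $\Theta^*$, since $\Theta^*$ is exactly the annihilator of $\Xi$ and $\Xi\perp\Theta$ dualizes to $\Xi^*\perp\Theta^*$.

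Next I would record the exact effect of the flow on the leaf lift. For $Z=(x,y,(u,v))$ and $W=(x,w,(u,v'))\in\mathcal L^+_Z$ one has $\omega_{W,Z}=v'$, $\omega_{Z,Z}=v$, while $\phi_t(W)=(x,w,(e^tu,e^{-t}v'))$ and $\phi_t(Z)=(x,y,(e^tu,e^{-t}v))$; since $\braket{v'|u}=\braket{v|u}=1$ forces $\braket{e^{-t}v'|e^tu}=\braket{e^{-t}v|e^tu}=1$, one reads off the covector identity
$$\omega_{\phi_t(W),\phi_t(Z)}-\omega_{\phi_t(Z),\phi_t(Z)}=e^{-t}\bigl(\omega_{W,Z}-\omega_{Z,Z}\bigr)=:e^{-t}\eta,$$
and $\eta(u)=1-1=0$, so $\eta\in\operatorname{Ann}(\xi(x))$. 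Thus Inequality~\eqref{ineganos1} is equivalent to the bound $e^{-t}\Vert\eta\Vert_{\phi_t(Z)}\le 2^{-n}\Vert\eta\Vert_Z$ for every $\eta\in\operatorname{Ann}(\xi(x_Z))$ once $t$ is large.

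To prove this I would feed $\eta$ into the Anosov contraction of $\operatorname{Hom}(\Theta,\Xi)$. Write $(x,y,s)=\tilde\nu^{-1}(Z)$ and $(x,y,s+\tau)=\tilde\nu^{-1}(\phi_t(Z))$ (the $\bg$-coordinates are fixed by $\phi_t$); because $\phi_t$ scales $u$ by $e^t$ while $\tilde\nu$ normalizes the $\Xi$-direction by $G^0$, the positive number $\tau=\tau(Z,t)$ is characterized by $\Vert u\Vert_{G^0,(x,y,s+\tau)}=e^{-t}$. The bundle $\operatorname{Hom}(\Theta,\Xi)=\Xi\otimes\Theta^*$, equipped with the product metric $G^0\otimes h^*$, is contracted (Definition~\ref{def:convano}), hence uniformly contracted, so there are $A,c>0$ with $\Vert\psi_r\mu\Vert\le Ae^{-cr}\Vert\mu\Vert$ for all $r>0$. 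Applying this to $\mu=u\otimes\eta$, which the flow carries unchanged through the (constant) fibre since $\Xi\oplus\Theta$ is parallel, with $r=\tau$ based at $(x,y,s)$, and using that on a line bundle the product metric splits as a product of norms, that $\Vert u\Vert_{G^0,(x,y,s)}=1$ and $\Vert u\Vert_{G^0,(x,y,s+\tau)}=e^{-t}$, and the identification of the dual metric restricted to $\operatorname{Ann}(\xi(x))$ with the $\Theta^*$-metric, I obtain
$$e^{-t}\,\Vert\eta\Vert_{\phi_t(Z)}=\Vert u\otimes\eta\Vert_{(x,y,s+\tau)}\le Ae^{-c\tau}\,\Vert u\otimes\eta\Vert_{(x,y,s)}=Ae^{-c\tau}\,\Vert\eta\Vert_Z.$$
Finally, compactness of $\Gg$ bounds the rate at which $G^0$ contracts, so $\Vert u\Vert_{G^0,(x,y,s+\tau)}\ge e^{-C_2(\tau+1)}$ for a uniform $C_2>0$; combined with $\Vert u\Vert_{G^0,(x,y,s+\tau)}=e^{-t}$ this forces $\tau\ge t/C_2-1\to\infty$ uniformly in $Z$, and choosing $t_0=t_0(n)$ so that $Ae^{-c(t/C_2-1)}\le 2^{-n}$ for $t\ge t_0$ completes the proof.

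The delicate point is exactly this last estimate. The sub-bundle $\Theta^*$ has no uniform contraction, so for a careless choice of metric $\Vert\eta\Vert_{\phi_t(Z)}$ could grow as fast as, or faster than, $e^{t}$, and the gain $e^{-t}$ would be lost. Building $\Vert\cdot\Vert_Z$ out of the metric $G^0$ is precisely what makes the only dangerous factor, $\Vert u\Vert_{G^0,(x,y,s+\tau)}^{-1}$, equal to $e^{t}$ on the nose, so that it is absorbed by $e^{-t}$ and one is left with the genuine exponential decay $Ae^{-c\tau}$ of $\operatorname{Hom}(\Theta,\Xi)$; lining up the various identifications — in particular the characterization of $\tau$ and the uniformity of $\tau\to\infty$ — is the part that requires care.
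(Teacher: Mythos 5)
Your proof is correct, and it rests on the same two pillars as the paper's argument: the observation that $\omega_{W,Z}-\omega_{Z,Z}$ is a covector annihilating $\xi(x)$ while $\omega_{Z,Z}$ spans the annihilator of $\theta(y)$, and the contraction of $\operatorname{Hom}(\Theta,\Xi)$ coming from the Anosov property. The execution, however, differs in two respects. The paper takes an \emph{arbitrary} $\Gamma$-invariant metric normalized only by $\bVert\omega_{Y,Y}\bVert_Y=1$, writes the left side of \eqref{ineganos1} as the ratio $\bVert\phi_t(\alpha_{W,Z})\bVert_{\phi_t(Z)}/\bVert\phi_t(\omega_{Z,Z})\bVert_{\phi_t(Z)}$ (so the $e^{-t}$ factors cancel), and applies the ratio form of the contraction directly to the flow $\phi_t$ on $F_\rho$, transferring it from $\psi_t$ through the H\"older orbit equivalence in a single sentence and then iterating the factor $\tfrac12$ to get $2^{-n}$ with $t_0=nt_1$. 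You instead construct a \emph{specific} metric, $G^0\oplus h$ transported by $\tilde\nu$ (which in particular satisfies the paper's normalization, so it remains compatible with the later use in Proposition \ref{sect-dist}), so that the vector coordinate $u$ of $Z$ is a unit vector; this turns the covector estimate into an honest estimate for the fixed element $u\otimes\eta$ of $\Xi\otimes\Theta^*$ under the $\psi$-flow, with the time change $\tau(Z,t)$ pinned down by $\Vert u\Vert_{G^0,(x,y,s+\tau)}=e^{-t}$ and bounded below by $t/C_2-1$ using compactness. The paper's ratio bookkeeping is lighter—no adapted metric and no explicit time-change estimate are needed—whereas your route makes quantitative exactly the step the paper compresses into ``since $(\Ug,\{\phi_t\})$ is a H\"older reparameterization of $(\Gg,\{\psi_t\})$ there exists $t_1>0$\dots'', at the price of a more rigid choice of metric.
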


The following notation will be used in the proof.
\begin{itemize}
\item
For a vector space $A$ and a subspace $B\subset A$, let 
$$
B^\perp=\{\omega\in A^*\mid B\subset \ker(\omega)\}.
$$

\item 
We consider the $\Gamma$-invariant splitting of the trivial $\mathbb R^m$-bundle 
$$
F_\rho\times\mathbb R^m=\hat\Xi\oplus\hat\Theta
$$
\begin{itemize}
\item where  $\hat\Xi$ is the line bundle over $F_\rho$ such that the fiber above $(x,y,(u,v))$ is given by $\xi(x)$
and
\item  $\hat\Theta$ is a hyperplane bundle over $F_\rho$ with fiber $\theta(y)$ above the point $(x,y,(u,v))\in F_\rho$.
\end{itemize}
\end{itemize}

\begin{proof}
Suppose that $Z=(x,y,(u_0,v_0))$ and $W=(x,w,(u_0,v))\in\mathcal L^+_Z$, then by the definition of the leaf lift 
$$
(\omega_{W,Z}-\omega_{Z,Z})(u_0)=0,$$
and thus
$$
\omega_{W,Z}=\alpha_{W,Z}+\omega_{Z,Z},
$$  
where $\alpha_{W,Z} \in\xi(x)^\perp$.
Then
$$
\phi_{t}(\omega_{W,Z})=\phi_t(\alpha_{W,Z})+\phi_t(\omega_{Z,Z}).
$$
We choose a $\Gamma$-invariant map from $F_\rho$ into the space of Euclidean metrics on $\mathbb R^m$
so that for all $Y\in F_\rho$
$$
\bVert\omega_{Y,Y}\bVert_Y=1.
$$
Then
$$
\omega_{\phi_t(Z),\phi_t(Z)}=\frac{1}{\bVert \phi_t(\omega_{Z,Z})\bVert_{\phi_t(Z)}}\phi_t(\omega_{Z,Z}),$$
hence
$$
\omega_{\phi_t(W),\phi_t(Z)}=\frac{\phi_t(\alpha_{W,Z})}{\bVert \phi_t(\omega_{Z,Z})\bVert_{\phi_t(Z) }}+ \omega_{\phi_t(Z),\phi_t(Z)}.
$$
It follows that
$$
\bVert\omega_{\phi_t(W),\phi_t(Z)}-\omega_{\phi_t(Z    ),\phi_t(Z)}\bVert_{\phi_t(Z)} = \frac{\bVert \phi_t(\alpha_{W,Z})\bVert_{\phi_t(Z)}}{\bVert \phi_t(\omega_{Z,Z})\bVert_{\phi_t(Z)} } 
$$
Since $\rho$ is projective Anosov,  and $(\Ug,\{\phi_t\}_{t\in\mathbb R})$ is a H\"older reparameterization of
$(\Gg,\{\psi_t\}_{t\in\mathbb R})$, there exists $t_1>0$ so that for all $Z\in F_\rho$ and
for all $t>t_1$, if  $v\in \hat\Xi_Z^\perp$ and $w\in\hat\Theta_Z^\perp$ , then 
$$
\frac{\bVert \phi_{t}(v)\bVert_{\phi_t(Z)} }{\bVert \phi_{t}(w)\bVert_{\phi_t(Z)} }\leq \frac{1}{2} \frac{\bVert v\bVert_Z}{\bVert w\bVert_Z}.
$$
Thus, since $\alpha_{W,Z} \in\hat\Xi^\perp_Z$ and
$\omega_{Z,Z}\in\hat\Theta_Z^\perp$, 
for all $n\in\mathbb N$ and  $t>nt_1$, we have
$$
\bVert\omega_{\phi_t(W),\phi_t(Z)}-\omega_{\phi_t(Z),\phi_t(Z)}\bVert_{\phi_t(Z)}\leq \frac{1}{2^n} \frac{\bVert \alpha_{W,Z}\bVert_Z}{\bVert \omega_{Z,Z}\bVert_Z}.$$
Since $\alpha_{W,Z}=\omega_{W,Z}-\omega_{Z,Z}$ and  $\bVert \omega_{Z,Z}\bVert_Z=1$, the previous assertion yields the result
with $t_0=nt_1$.
\end{proof}

We are now ready to establish Proposition \ref{pro:geodano}.

\medskip\noindent
{\em Proof of Proposition \ref{pro:geodano}:}
Let $K$ and $\alpha$ be as in Proposition \ref{sect-dist}.
Choose $n\in\mathbb N$ so that 
\begin{equation}
\frac{K}{2^n}\leq 1\ \ \ {\rm and}\ \ \ \frac{K^2}{2^n}\leq \frac{1}{2}.\label{kn}
\end{equation}
Let $t_0$ be the constant  from Lemma \ref{contract1} with our choice of $n$.

Suppose that $Z\in F_\rho$, $W\in \mathcal L^+_Z$, $t>t_0$ and $d_0(W,Z)\leq \alpha$.
Then, by Inequality \eqref{inegmetric10b},
\begin{equation}
\bVert \omega_{W,Z}-\omega_{Z,Z}\bVert\leq K d_0(W,Z).\label{proof:geodan1}
\end{equation}
By Lemma \ref{contract1}, 
\begin{equation}
\bVert\omega_{\phi_t(W),\phi_t(Z)}-\omega_{\phi_t(Z),\phi_t(Z)}\bVert_{\phi_t(Z)}\leq{\frac{1}{2^n}}\bVert \omega_{W,Z}-\omega_{Z,Z}\bVert_Z.\label{proof:geodan2}
\end{equation}
In particular, combining Equations \eqref{proof:geodan1}, \eqref{proof:geodan2} and \eqref{kn},
\begin{equation}
\bVert\omega_{\phi_t(W),\phi_t(Z)}-\omega_{\phi_t(Z),\phi_t(Z)}\bVert_{\phi_t(Z)}\leq{\frac{1}{2^n}}K\alpha\leq\alpha.
\end{equation}
Thus, using Inequality \eqref{inegmetric11b},
\begin{equation}
d_0(\phi_t(W),\phi_t(Z))\leq K \bVert\omega_{\phi_t(W),\phi_t(Z)}-\omega_{\phi_t(Z),\phi_t(Z)}\bVert_{\phi_t(Z)}.\label{proof:geodan3}
\end{equation}
Combining finally Equations \eqref{proof:geodan1}, \eqref{proof:geodan2}, \eqref{proof:geodan3} and \eqref{kn}, we get that 

\begin{equation}
d_0(\phi_t(W),\phi_t(Z))\leq \frac{K^2}{2^n} d_0(W,Z)\leq \frac{1}{2} d_0(W,Z)
\label{proof:geodan4}
\end{equation}
for all $t>t_0$.

Therefore $\mathcal L^+$ is contracted by the  flow on $F_\rho$.
\vskip 0.2truecm
Let us now consider what happens in the quotient $\Ug=F_\rho/\Gamma$. For any $Z\in F_\rho$ and $\epsilon>0$, let
$$
\mathcal L_\epsilon^\pm(Z)=\mathcal L^\pm_Z\cap B(Z,\epsilon).
$$
and let
$$K_\epsilon(Z)=\Pi_Z\left( \mathcal L_\epsilon^+(Z)\times \mathcal L_\epsilon^-(Z)\times (-\epsilon,\epsilon)\right),$$
where $\Pi_Z$ is the product structure of Proposition \ref{pro:ps}.
By Proposition  \ref{geoflowreparam}, there exists  $\epsilon_0>0$ such that for all
$\gamma\in\Gamma\setminus\{1\}$ and $Z\in F_\rho$,
$$
\gamma(K_{\epsilon_0}(X))\cap K_{\epsilon_0}=\emptyset.
$$
Let $\epsilon\in (0,\min\{\epsilon_0/2,\alpha\})$ and $\hat Z\in \Ug$. Choose $Z\in F_\rho$ in the pre image of $\hat Z$,  then
inequality (\ref{proof:geodan4}) holds  for the  flow on $\Ug$ for
points in the chart which is the projection of $K_\epsilon(Z)$.
Therefore, $\mathcal L^+$ is contracted by the flow on $\Ug$.

A symmetric proof holds for the central unstable leaf.

\section{Analytic variation of the dynamics}\label{anvar}

In order to apply the thermodynamic formalism we need to check that if $\{\rho_u\}_{u\in M}$ is an analytic
family of projective Anosov representations, then the associated limit maps and reparameterizations
of the Gromov geodesic flow may be chosen to vary analytically, at least locally. Our proofs generalize earlier proofs
of the stability of Anosov representations, see Labourie \cite[Proposition 2.1]{labourie-anosov} and
Guichard-Wienhard \cite[Theorem 5.13]{guichard-wienhard}, and that the limit maps vary continuously,
see Guichard-Wienhard \cite[Theorem 5.13]{guichard-wienhard}. In the process, we also see that our
limit maps are H\"older.

We will make use of the following concrete description of the analytic structure of $\hom(\Gamma,\ms G)$.
Suppose that $\Gamma$ is a word hyperbolic group, hence finitely presented, and
let $\{g_1,\ldots,g_m\}$ be a generating set for $\Gamma$. If $\ms G$ is a real semi-simple Lie group, then
$\hom(\Gamma,\ms G)$ has the structure of a real algebraic variety.
An {\em analytic family}  \hbox{$\beta:M\to \hom(\Gamma,\ms G)$} of homomorphisms of $\Gamma$ into  $\ms G$
is a map with domain an analytic manifold $M$ so that, for each $i$, the map
$\beta_i:M\to \ms G$ given by $\beta_i(u)=\beta(u)(g_i)$ is real analytic.
If $\ms G$ is a complex Lie group, we may similarly define complex analytic families of  homomorphisms of
a complex analytic manifold into $\hom(\Gamma,\ms G)$.

We first show that the limit maps of an analytic family of Anosov homomorphisms vary analytically.
We begin by setting our notation.
If $\alpha>0$, $X$ is a compact metric space and $D$ and $M$ are real-analytic manifolds, 
then we let $C^\alpha(X,M)$ denote the space of $\alpha$-H\"older maps of $X$ into $M$ and let
$C^\omega(D,M)$ denote the space of real analytic maps of $D$ into $M$. If $D$ and $M$ are complex analytic
manifolds, we will abuse notation by letting $C^\omega(D,M)$ denote the space of complex analytic maps.

\begin{theorem}
\label{Comega}
Let $\ms G$ be a real (or complex) semi-simple Lie group and let $\ms P$ be a parabolic subgroup of $\ms G$. 
Let  $\{\rho_u\}_{u\in D}$ be a real  (or complex) analytic family of  homomorphisms of $\Gamma$ into 
$\ms G$ parameterized by a real (or complex) disk $D$ about $0$.
If  $\rho_0$ is a $(\ms G,\ms P)$-Anosov homomorphism with limit map $\xi_0:\bg\to\ms G/\ms P$,
then there exists a sub-disk $D_0$ of $D$ (containing $0$), $\alpha>0$  and a continuous map 
$$\xi:D_0\times \bg\to \ms G/\ms P$$
with the following properties:
\begin{enumerate}
\item 
If $u\in D_0$, then $\rho_u$ is a $(\ms G,\ms P)$-Anosov homomorphism with 
$\alpha$-H\"older limit map $\xi_u:\bg\to\ms G/\ms P$ given by $\xi_u(\cdot)=\xi(u,\cdot)$.
\item 
If $x\in\bg$, then $\xi_x:D_0\to \ms G/\ms P$ given by $\xi_x=\xi(\cdot,x)$ is real (or complex) analytic
\item
The map from $\bg$ to $C^\omega(D_0,\ms G/\ms P)$ given by
$
x\mapsto \xi_x
$
is $\alpha$-H\"older.
\item 
The map from $D_0$ to $C^\alpha(\bg,\ms G/\ms P)$  given by $u\to \xi_u$ is real (or complex) analytic.
\end{enumerate}
\end{theorem}

Given a projective Anosov representation \hbox{$\rho:\Gamma\to\sln$}, we constructed a 
geodesic flow $U_\rho\Gamma$ which is a reparameterization of the Gromov geodesic flow
$\Gg$. In Section \ref{analytic variation of reparam},
we show that given a real analytic family of projective Anosov representions, one may choose
the parameterizing functions to vary analytically.

\begin{proposition}
\label{Creparam}
Let $\{\rho_u\}_{u\in D}$ be a real analytic  family of  projective  Anosov homomorphisms of $\Gamma$ into $\sln$ 
parameterized by a disk about $0$. Then,  there exists a sub-disk $D_0$ about $0$ and 
a real analytic family $\{f_u:\Gg\to\mathbb R\}_{u\in D_0}$ of positive H\"older functions
such that the reparametrization  of $\Gg$ by $f_{u}$ is  H\"older conjugate to $\ms U_{\rho_u}\Gamma$ for all $u\in D_0$.
\end{proposition}

We first observe that the real analytic case of Theorem \ref{Comega} follows from the
complex analytic case, which we will establish in Section \ref{analytic variation of limit maps}.
If $\ms G$ is a real semi-simple Lie group and $P$ is a parabolic subgroup of $\ms G$, we let $G^{\mathbb C}$
and $P^{\mathbb C}$ be the complexification of $\ms G$ and $\ms P$.
Observe that a $(\ms G,\ms P)$-Anosov representation is automatically a 
$(\ms G^{\mathbb C},\ms P^{\mathbb C})$-Anosov representation. 
On a sub-disk $D_1$ of $D$, containing $0$, one may extend
$\{\rho_u\}_{u\in D_1}$ to a complex analytic family $\{\rho_u\}_{u\in D_1^{\mathbb C}}$ of homomorphisms of
$\Gamma$ into $\ms G^{\mathbb C}$ defined on the complexification $D_1^{\mathbb C}$ of $D_1$.
The map $\xi:D_0^{\mathbb C}\times\bg\to \ms G^{\mathbb C}/\ms P^{\mathbb C}$ provided by the complex analytic
case of Theorem \ref{Comega} restricts to a map $\xi|_{D_0}:D_0\times\bg\to \ms G/\ms P$ with the 
desired properties.
Notice that the real analyticity in properties (2) and (4) follows
from the fact that  restrictions of complex analytic functions to real analytic submanifolds are real analytic.

\subsection{Transverse regularity}

In this section, we set up our notation and establish a version of the
$C^r$-section Theorem of Hirsch-Pugh-Shub \cite[Theorem 3.8]{hps} which 
keeps track of the transverse regularity of the resulting section. Our version of Hirsch, Pugh and Shub's
result will be the main tool in the proof of Theorem \ref{Comega}.

\begin{definition}{\sc [transversely regular functions]} 
Let $D$ be a real  (or complex) disk, let $X$ be a compact metric space and let $M$ be a real (or complex) analytic manifold.
A continuous function  $f:D\times X\to M$ is {\em transversely  real (or complex) analytic} if
\begin{enumerate}
\item 
For every $x \in X$, the function $f_x:D\to M$ given by \hbox{$f_x(\cdot)=f(\cdot,x)$} is real (or complex) analytic, and
\item
The function from $X$ to $C^\omega(D,M)$ given by $x\to f_x$  is continuous.
\end{enumerate}
Furthermore, we say that $f$ is {\em $\alpha$-H\"older (or Lipschitz) transversely  real (or complex) analytic} 
if the map in (2) is $\alpha$-H\"older (or Lipschitz).
\end{definition}

If we replace $M$ with a  $C^k$ manifold,
we can similarly define $\alpha$-H\"older (or Lipschitz) transversely  $C^k$ functions
by requiring  that the maps in (1) are  $C^k$ and that  the map in (2) from $X$
to $C^p(D,M)$ is  $\alpha$-H\"older (or Lipschitz) for all $p\le k$,

Similarly, we define transverse regularity of bundles in terms of the transverse regularity of their trivializations.

\begin{definition}{\sc[tranversally regular bundles]}
Suppose that  the fiber of a bundle \hbox{$\pi:E\to D\times X$} is a real (or complex) analytic manifold $M$ and
that $D$ is a real (or complex) disk.
We say that $E$ is  {\em transversely  real (or complex) analytic} 
if it admits a family of  trivializations of the form \hbox{$\{D\times U_\alpha\times M\}$} (where $\{U_\alpha\}$ is an
open cover of $X$) so that the the corresponding change of coordinate functions are 
transversely real (or complex) analytic. We similarly say  \hbox{$\pi:E\to D\times X$} is 
{\em $\alpha$-H\"older (or Lipschitz) transversely  real (or complex) analytic}  if it admits a family of
trivializations which are $\alpha$-H\"older (or Lipschitz) transversely  real (or complex) analytic.

In this case, a section of $E$ is $\alpha$-H\"older (or Lipschitz) transversely  real  (or complex) analytic,
if in any of the trivializations the corresponding map to $M$ is
$\alpha$-H\"older (or Lipschitz) transversely  real (or complex) analytic.
\end{definition}

Clearly, if $M$ is a $C^k$-manifold,
we can similarly define $\alpha$-H\"older (or Lipschitz) transversely  $C^k$ bundles and 
sections.

We are now ready to state our version of the $C^r$-Section Theorem.

\begin{theorem}
\label{local contraction}
Let $X$ be a compact metric space and let $M$ be a complex analytic (or $C^k$) manifold.
Suppose that 
$\pi:E\to D\times X$ 
is a Lipschitz transversely complex analytic (or $C^k$) bundle  with fibre $M$ and $D$ is a complex (or real) disk.
Let \hbox{$f:X\to X$} be a Lipschitz homeomorphism and let 
$F$ be a Lipschitz transversely   complex analytic (or $C^k$) bundle automorphism of $E$ lifting 
${\rm id}\times f$.

Suppose that  $\sigma_0$ is a section of the restriction of $E$ over $\{0\}\times X$ which is fixed by $F$ and that
$F$ contracts along $\sigma_0$. Then there exists a neighborhood $U$ of $0$ in  $D$, a positive number  $\alpha>0$,
an $\alpha$-H\"older transversely complex analytic (or $C^k$)
section $\eta$ over $D_0\times X$  and a neighborhood $B$ of
\hbox{$\eta(U\times X)$ in $\pi^{-1}(U\times X)$} such that
\begin{enumerate}
\item
$F$ fixes $\eta$,
\item
$F$ contracts $E$ along $\eta$,
\item
$\eta|_{\{0\}\times X}=\sigma_0$, and
\item 
if $\nu:U\times X\to E$ is a section so that $\nu(U\times X)\subset B$ and $\nu$ is fixed by $F$,
then $\nu=\eta$.
\end{enumerate}
\end{theorem}

We recall that if $U$ is a subset of $D$, then a section $\sigma$ over $U\times X$ is
{\em fixed } by $F$ if $F(\sigma(u,x))=\sigma(u,f(x))$. 
In such a case, we further say that $F$ {\em contracts} along $\sigma$ if 
there exists a continuously varying fibrewise Riemannian metric $\Vert\cdot\Vert$ on the bundle $E$
such that if 
$${\rm D}^f F_{\sigma(u,x)}:T_{\sigma(u,x)}\pi^{-1}(u,x)\to T_{\sigma(u,f(x))}\pi^{-1}(u,f(x))$$
is the fibrewise tangent map, then
$$\Vert {\rm D}^f F_{\sigma(u,x)}\Vert <1.$$

We will derive Theorem \ref{local contraction} from the following version of the $C^r$-section theorem
which is a natural generalization of the ball bundle version of the $C^r$-section theorem in
Shub \cite[Theorem 5.18]{shub}.

\begin{theorem}{\sc [Fixed sections]}\label{lem:coc}
Let $X$ be a compact metric space equipped with a Lipschitz homeomorphism $f:X\to X$.
Suppose that $\pi:W\to D\times X$ is a Lipschitz transversely complex
analytic (or $C^k$) Banach space bundle, $D$ is a complex (or real) disk,
$B\subset W$ is the  closed ball sub-bundle of radius $r$,
and $F$  is a Lipschitz transversely   complex analytic (or $C^k$) bundle morphism of $B$ 
lifting the  homeomorphism ${\rm id}\times f:D\times X\to X$.

If $F$ contracts $B$, then there exists a unique $\alpha$-H\"older transversely  complex analytic (or $C^k$) section 
$\eta$ of $B$ which is fixed by $F$ (for some $\alpha>0$). 
\end{theorem}

Notice that we have not assumed that $F$ is either linear or bijective.

\begin{proof} 
Let $\sigma$ be the zero section of $B$. Observe that $\sigma$ has the same regularity as $W$ 
and is thus transversally complex analytic (or $C^k$).

We first assume that $\pi:W\to D\times M$ is a Lipschitz transversely $C^k$-bundle.
The existence of a unique continuous fixed section $\eta$ is a standard application of the
contraction mapping theorem.
Explicitly, for all $(u,x)\in D\times X$, we let
\begin{equation}
\eta(u,x)=\lim_{n\to\infty} F^n(\sigma(u,f^{-n}(x)).
\label{eq:coc1}
\end{equation}

We must work harder to show that $\eta$ is $\alpha$-H\"older transversely complex analytic (or $C^k$).
We first  assume that $W$ is transversely $C^k$ --and so is $\sigma$--  and obtain the $C^k$-regularity of $\eta$.
For any $p\in\mathbb N$, let $\Gamma^p$ be the  Lipschitz Banach bundle over $X$ whose fibers over a point $x\in X$
is the Banach space 
$\Gamma^p_{x}$
of $C^p$-sections of the restriction of $W$ to $D\times\{x\}$. Let $B^p$ be the sub-bundle whose fiber
$B^p_x$ over $x$ is the set of those sections with values in $B$.

Notice that each fiber $B^p_x$  can be identified with $C^p(D\times\{x\},B_0)$ where $B_0$ is a closed ball 
of radius $r$ in the fiber Banach space.
Let $F_*^p$ be the bundle automorphism of $\Gamma_p$ given by
$$
[F_*^p(\nu)](u,x)=F(\nu(u,f^{-1}(x))).
$$
We can renormalise the metric on $D$, so that all the derivatives of $F$ of order $n$ (with $p\geq n\geq 1$) along $D$
are arbitrarily small. 
Thus  after this renormalisation  the metric on $D$,  $F^p_*$ is contracting, since $F$ is contracting. 
We now apply
Theorem 3.8 of  Hirsch-Pugh-Shub \cite{hps}
(see also Shub \cite[Theorem 5.18]{shub}) to obtain an invariant $\alpha$-H\"older section $\omega$.
By the uniqueness of fixed sections, we see that 
$$
\eta(u,x)=\omega(x)(u)
$$
for all $1\le p\le k$.
It follows that $\eta$ is $\alpha$-H\"older transversely  $C^k$. 

Now suppose that $D$ is a complex disk and  \hbox{$\pi:E\to D\times X$} is Lipschitz transversely complex analytic bundle.
We see, from the above paragraph, that there exists a unique $\alpha$-H\"older transversely $C^k$ section
$\eta_k$ for all $k$.
By the uniqueness $\eta_k$ is independent of $k$ and we simply denote it by $\eta$.
Then, by Formula \eqref{eq:coc1},  for all $x\in X$,
\hbox{$\eta|_{D\times\{x\}}$} is a $C^k$-limit of a sequence of complex analytic sections for all $k$, 
hence is  complex analytic itself. It follows that $\eta$ is $\alpha$-H\"older transversely complex analytic.
\end{proof}

We now notice that one may identify a neighborhood of the section $\sigma_0$ in the statement of
Theorem \ref{local contraction} with a ball sub-bundle of a vector bundle. 

\begin{lemma}
\label{localization}
Let $\pi:E\to D\times X$
be a transversely complex analytic (or $C^k$)  bundle over $D\times X$, 
$D$ is a complex (or real) disk about 0, and 
$\sigma$ is a section  of $E$ defined over $\{0\} \times X$.
Then there exists
\begin{enumerate}
\item a neighborhood $U$ of zero in $D$,
\item a transversely complex analytic (or $C^k$)  closed ball bundle  $B$ of radius $R$ in a complex (or real) 
vector bundle $F$,
\item a transversely complex analytic (or $C^k$) bijective map from $B$ to a neighborhood of the graph of $\sigma_0$ so that 
\begin{itemize}
\item the graph of $\sigma_0=\sigma|_{\{0\}\times X}$ is in the image of the graph of the zero section,
\item the fibrewise metric on $B$ coincides along $\sigma_0$ with the fiberwise metric on $E$. 
\end{itemize}
\end{enumerate}
\end{lemma}

\begin{proof}
We first give the proof in the case that $\sigma$ is defined over $D\times X$.
Let $Z$ be the transversely complex analytic (or $C^k$) vector bundle over $D\times X$
so that the fibre over the point $(u,x)$ is given by $\ms T_{\sigma(u,x)}(\pi^{-1}(u,x))$.
We equip $Z$ with a Riemannian metric coming from $E$ and let $B(r)$ be the closed  ball sub-bundle of radius $r>0$.

Using the trivializations, we can find, after restricting to an open neighborhood $U$ of  $0$ in  $D$,
\begin{itemize}
\item a finite cover  $\{O_i\}_{1\leq i\leq n}$ of  $X$, 
\item an open neighborhood $W$ of the graph of $\sigma$, 
\item transversely holomorphic  (or $C^k$-diffeomorphic) bundle maps $\phi_i$ 
defined on \hbox{$\left.W\right\vert_{U\times O_i}$} with values in \hbox{$\left.Z\right\vert_{U\times O_i}$}
so that for all \hbox{$(u,x)\in U\times O_i$}
\begin{eqnarray}
\phi_i(\sigma(u,x))&=&0\in \ms T_{\sigma(u,x)}(\pi^{-1}(u,x))\cr
\rm D^f_{\sigma(u,x)}\phi_i&=&{\rm Id}.
\end{eqnarray}
\end{itemize}
Let $\{\psi_i\}_{1\leq i\leq n}$ be  a partition of unity  on $X$ subordinate to $\{O_i\}_{1\leq i\leq n}$
and, for each $i$, let $\hat\psi_i:W\to [0,1]$ be obtained by composing the projection of $W$ to $X$ with $\psi_i$.
One may then define $\Phi:W \to Z$ by letting
$$
\Phi=\sum_{i=1}^n\hat\psi_i\phi_i.
$$ 
Since $\hat\psi_i$ is constant in the direction of $D$, $\Phi$ is transversely holomorphic (or $C^k$-diffeomorphic), 
$$\Phi(\sigma(u,x))= 0  \ \ \textrm{and} \ \ {\rm D}^f_{\sigma(y)}\Phi = {\rm Id}.$$
It then follows from the implicit function theorem, that one may further restrict $U$ and $W$  so that
$\Phi$ is  a transversely holomorphic (or $C^k$-diffeomorphic) isomorphism of $W$ with $B(r)$ for some $r$.

If $\sigma$ is only defined on $\{0\}\times X$, 
it now suffices to extend the section $\sigma_0$ to a section $\sigma$ defined over $U\times X$ where 
$U$ is a neighborhood of $0$ in  $D$.
Composing $\pi$ with the projection  $\pi_2:D\times X\to X$, we may consider the bundle $\pi_2\circ\pi:E\to X$.
Then $\sigma_0$ is a section of $\pi_2\circ\pi$.
We now apply the result of the previous paragraph,  in the case where the disk is 0-dimensional, 
to identify, in a complex analytic (or $C^k$) way, 
a neighborhood of the graph of $\sigma_0$ with a ball bundle $B$ in a vector bundle $F$ over $X$.

Now $\pi$ restricts to a bundle morphism from \hbox{$\pi\circ\pi_2:B\to X$} to  \hbox{$\pi_2:D\times X\to X$}
which is a fiberwise complex analytic (or $C^k$) submersion  and whose fiberwise derivatives vary continuously.
Let $W$ be a linear sub-bundle of $F$, so that  if $W_x$ and $F_x$ are the fibers over $x\in X$, then
$$
\ms T(\pi^{-1}(0,x))\oplus W_x=F_x.
$$
Thus, after further restricting $B$, $\pi$ becomes a fiberwise complex analytic (or $C^k$)  injective local diffeomorphism 
from $W\cap B$ to  $D\times X$ whose
fiberwise derivatives vary continuously.

Applying the Implicit Function Theorem (with parameter), 
we obtain a neighborhood $U$ of $0$ and a map \hbox{$\sigma:U\times X\to B$} 
which is fiberwise complex analytic (or $C^k$) and whose fiberwise derivatives vary continuously, 
so that \hbox{$\pi\circ \sigma={\rm{Id}}$}. 
Thus $\sigma$ is the desired section of $E$.
\end{proof}

Theorem \ref{local contraction} now follows from Theorem \ref{lem:coc} and Lemma \ref{localization}.

\medskip\noindent
{\em Proof of Theorem \ref{local contraction}:}
Let $V$ be the complex (or real) vector bundle provided by Lemma \ref{localization}.
We know  that $\Vert \ms D^f_{\sigma_0(x)}F\Vert <1$ for all $x$ in $X$.
After further restraining $U$ and choosing $r$ small enough,
we may assume by continuity that for all $y$ in $B(r)$, $\Vert \ms D^f_y F\Vert < K<1$. 

After further restricting $U$, we may assume that for all $u\in U$ and $x\in X$, we have
$$\Vert F(\sigma(u,x))-\sigma(u,f(x))\Vert \leq (1-K)r,$$
In particular, if $y\in B(r)$ is in the fiber over $(u,x)$, 
\begin{eqnarray*}
\Vert F(y)-\sigma(u,f(x))\Vert &\leq&\Vert F(y)-F(\sigma(u,x))\Vert\cr & & +\Vert F(\sigma(u,x))-\sigma(u,f(x))\Vert\cr
 &\leq& Kr +  (1-K)r=r.
\end{eqnarray*}

Thus $F$ maps $B(r)$ to itself and is contracting. We can therefore apply Theorem \ref{lem:coc} to complete the proof
of Theorem \ref{local contraction}.
\qed

\medskip

In the proof of Theorem \ref{Comega}, we will also need to use the fact
that transverse regularity of a continuous function \hbox{$f:D\times X\to M$} 
implies regularity of the associated map of $D$ into \hbox{$C^\alpha(X,M)$}.  

Let $X$ be a compact metric space and let $M$ be a complex analytic (or $C^k)$ manifold. 
If $U$ is an open subset of $M$ and $V$ is a relatively compact open subset of $X$, then let 
$$\mathcal W(U,V)=\{g\in C^\alpha(X,M)\mid \overline{g(V)}\subset U\}.$$ 
We will say that  a map $f$ from $D$ to $C^\alpha(X,M)$ is {\em complex analytic (or $C^k$)} if for any
$U$ and $V$ as above and any complex analytic  function  $\phi:U\to\mathbb C$
(or $C^k$ function $\phi:U\to \mathbb R$), the function $f^\phi$ defined on  $f^{-1}\left(\mathcal W(U,V)\right)$, by
$$
f^\phi(x)=\phi\circ f(x)|_V,
$$
with values in $C^\alpha(V,\mathbb C)$ (or $C^\alpha(V,\mathbb R)$) is complex analytic (or $C^k$). 
Recall that the function $f^\phi$ is complex analytic if and only if it has a  a $\CC$-linear differential at each point,
see, for example, Hubbard \cite[Thm. A5.3]{hubbard}. 

The following lemma shows that an $\alpha$-H\"older transversely complex analytic map from $D\times X$ to $M$
gives rise to a complex analytic map from $D$ to $C^\alpha(X,M)$.
The proof is quite standard so we will omit it,
see Hubbard \cite[Prop. A5.9]{hubbard} for a very similar  statement.

\begin{lemma}
\label{inversion} 
Suppose that $D$ is a complex (or real) disk, $M$ is a complex analytic (or $C^k$) manifold,
$X$ is a compact metric space and
\hbox{$f:D\times X\to M$} is $\alpha$-H\"older transversely  complex analytic (or $C^k$),
then the map $\hat f$ from $D$ to $C^\alpha(X,M)$ 
given by $u\to f_u$ where $f_u(\cdot)=f(u,\cdot)$ is complex analytic (or $C^{k-1}$).
\end{lemma}

\subsection{Analytic variation of the limit maps} 
\label{analytic variation of limit maps}

We are now ready for the proof of Theorem \ref{Comega} in the complex analytic case.
Given a complex analytic family of
representations  which contains an Anosov representation, we construct an associated bundle where
we can apply the results of the previous section to produce a family of limit maps.

Let $G$ be  a complex Lie group and let $P$ be a parabolic subgroup.
Let $\{\rho_u\}_{u\in D}$ be a complex analytic family of homomorphisms of $\Gamma$ into
$\ms G$ parameterized by a complex disk $D$ about $0$ so that $\rho_0$ is $(\ms G,\ms P)$-Anosov.

We  construct a $\ms G/\ms P$-bundle  over $D\times \Gg$. Let 
$$\tilde A=D\times \widetilde \Gg\times \ms G/\ms P$$
which is a $\ms G/\ms P$-bundle  over $D\times \widetilde{\Gg}$.
Then $\gamma\in\Gamma$ acts on $\tilde A$, by 
$$\gamma(u,x,[g])=(u,\gamma(x),[\rho_u(\gamma)g])$$ 
and we let
$$A=\tilde A/\Gamma.$$

The geodesic flows on $\widetilde \Gg$ and $\Gg$ lift to geodesic flows $\{\tilde\Psi_t\}_{t\in\mathbb R}$
and   $\{\tilde\Psi_t\}_{t\in\mathbb R}$ on $\tilde A$ and $A$.
(The flow  $\{\tilde\Psi_t\}_{t\in\mathbb R}$ acts trivially on the
$D$ and $\ms G/\ms P$ factors.)

Since $\rho_0$ is $(\ms G,\ms P)$-Anosov 
there exists a section $\sigma_0$ of $A$ over $\{0\}\times \Gg$.
Concretely, if $\xi_0:\bg \to \ms G/\ms P$  is the limit map, we construct an
equivariant section
$\tilde\sigma_0$ of $\tilde A$ over $\{0\}\times\widetilde{\Gg}$ of
the form
$$(0,(x,y,t))\to (0,(x,y,t),\xi_0(x)).$$ The section $\tilde\sigma_0$ descends to the desired section $\sigma_0$ of
$A$ over $\{0\}\times\Gg$. One may identify the bundle over $\{0\}\times \Gg$ with fiber
$\ms T_{\sigma_0(x)}\pi^{-1}(0,x)$ with $\mathcal{N}_\rho^-$.
Since the geodesic flow lifts to a flow  on $\mathcal{N}_\rho^-$ whose inverse flow is contracting,
the  inverse flow  $\{\Phi_{-t}\}_{t\in\mathbb R}$ is contracting along $\sigma_0(\Gg)$.

Theorem \ref{local contraction}  then implies that there exists a sub-disk 
$D_1\subset D$ containing $0$, $\alpha>0$, and
an $\alpha$-H\"older transversely  complex analytic  section 
$\eta:D\times\Gg\to A$ that extends $\sigma_0$, is fixed by $\{\Phi_t\}_{t\in\mathbb R}$ and
so that  the inverse flow $\{\Phi_{-t}\}_{t\in\mathbb R}$ contracts along $\eta$.
(More concretely, Theorem \ref{local contraction} produces, for large enough $t_0$, a section fixed by $\Phi_{-t_0}$
so that $\Phi_{-t_0}$ contracts along $\eta$. One may then use the uniqueness portion of the statement to
show that $\eta$ is fixed by $\Phi_t$ for all $t$.)
We may lift $\eta$ to a section 
\hbox{$\tilde\eta:D_1\times\widetilde{\Gg}\to\tilde A$}
which we may view as a map \hbox{$\bar\eta:D_1\times\widetilde{\Gg}\to\ms G/\ms P$}.

We next observe that $\bar\eta(u,(x,y,t))$ does not depend on either $y$ or $t$.
Since $\bar\eta$ is flow-invariant, $\bar\eta(u,(x,y,t))$ does not depend on $t$. Fix $u\in D_1$ and let
\hbox{$\bar\eta_u:\widetilde{\Gg}\to \ms G/\ms P$} be given by $\bar\eta_u(\cdot)=\bar\eta(u,\cdot)$.
Let $\gamma$ be an infinite order element of $\Gamma$ whose associated orbit in $\Gg$ has period $t_\gamma$
and let $d$ be an arbitrary metric on $\ms{G}/\ms{P}$. Since $\{\Phi_{-t}\}_{t\in\mathbb R}$ is contracting along $\eta$, 
there exists a constant $k_0>0$ such that if  $\{p_n\}$ is a sequence in $\mathsf{G}/\mathsf{P}$ with 
$d(\bar\eta_u(\gamma^+,\gamma^-,0),p_n)\le k_0$ for all $n$, then
\begin{equation}
\label{contraction property}
\lim_{n\to \infty}d(\bar\eta_u(\gamma^+,\gamma^-,0),\gamma^{n}(p_n))=
\lim_{n\to \infty}d(\bar\eta_u(\gamma^n(\gamma^+,\gamma^-,-nt_\gamma)),\gamma^{n}(p_n))=0.
\end{equation}
Given $z\in\bg$, there exists $t_z\in R$, so that, if $\bar d$ denotes  a $\Gamma$-invariant metric on $\widetilde{\Gg}$,
then 
$$\lim_{n\to\infty}\bar d(\gamma^n(\gamma^+,\gamma^-,0),(\gamma^+,z,t_z+n t_\gamma))=0.$$
Therefore, 
$$\lim_{n\to\infty}\bar d((\gamma^+,\gamma^-,0),\gamma^{-n}(\gamma^+,z,t_z+n t_\gamma))=0.$$
Applying (\ref{contraction property}) with $p_n=\bar\eta(\gamma^{-n}(\gamma^+,z,t_z+nt_\gamma))$, we see that
$$
\lim_{n\to\infty} d(\bar\eta_u(\gamma^+,\gamma^-,0),\gamma^{n}\bar\eta_u(\gamma^{-n}(\gamma^+,z,t_z+nt_\gamma)))=0.
$$
Since $\bar\eta_u$ is $\Gamma$-equivariant, this implies that
$$
\lim_{n\to\infty} d(\bar\eta_u(\gamma^+,\gamma^-,0),\bar\eta_u(\gamma^+,z,t_z+n t_\gamma))=0.
$$
Since $\bar\eta_u(\gamma^+,z,t)$ does not depend on $t$, we finally obtain that
$$\bar\eta(u,(\gamma^+,\gamma^-,0))=\bar\eta(u,(\gamma^+z,t))$$
for any $z\in\bg$, $u\in D_1$ and $t\in\mathbb R$. Since, fixed points of infinite order elements are dense in $\bg$
and $\bar\eta$ is continuous, we see that  $\bar\eta(u,(x,y,t))$ does not depend on $y$ or $t$.

Therefore, we obtain a transversely complex analytic map
$$\xi:D_1\times \bg\to \ms G/\ms P$$
which extends $\xi_0$.  The map $\xi$ satisfies properties (2) and (3), since
$\xi$ is $\alpha$-H\"older transversely complex analytic, while property
(4) follows from Lemma \ref{inversion}. 

It remains to prove that we may restrict to a sub disk $D_0$ of $D_1$ so that
if $u\in D_0$, then $\rho_u$ is $(\ms G,\ms P)$-Anosov
with limit map $\xi_u$. Let $\ms Q$ be a parabolic subgroup of $\ms G$ which is opposite
to $\ms P$. 
Then there exists a Lipschitz transversely complex analytic
$\ms G/\ms Q$-bundle $A'$ over $D\times\Gg$ and we may lift
the geodesic flow to a flow $\{\Phi_t'\}$ on $A'$. Since $\rho_0$ is $(\ms G,\ms P)$-Anosov,
there exists a map $\theta_0:\bg\to \ms G/\ms Q$ which gives
rise to a section $\sigma_0'$ of $A'$ over $\{0\}\times\Gg$ such that  the flow is contracting on a neighborhood
of $\sigma_0'(\{0\}\times\Gg)$. We again apply Corollary \ref{local contraction} 
to find
an $\alpha'$-H\"older (for some $\alpha'>0$)  transversely  complex analytic   flow invariant section 
$\eta':D_2\times\Gg\to A'$ that extends $\sigma_0'$, for some sub-disk $D_2$ of
$D$ which contains $0$, such that the flow $\{\Psi_t'\}_{t\in\mathbb R}$ contracts along
$\eta'(D_2\times\Gg)$.
The section $\eta'$ lifts to a section of  $\tilde\eta'$ of $\tilde A'$ which
we may reinterpret as a map
$\bar\eta':D_2\times \widetilde\Gg\to\ms G/\ms Q$ 
so that $\bar\eta'(u,(x,y,t))$ depends only on $u$ and $y$. So we obtain an $\alpha'$-H\"older transversely
complex analytic map
$$\theta:D_2\times\bg\to \ms G/\ms Q$$
which restricts to $\theta_0$. Since $\xi_0$ and $\theta_0$ are transverse, we may find a sub-disk
$D_0$ of $D_1\cap D_2$ so that $\xi_u$ and $\theta_u$
are transverse if $u\in D_0$. It follows that if $u\in D_0$, then $\rho_u$ is
$(\ms G,\ms P)$-Anosov with limit maps $\xi_u$ and $\theta_u$.
This completes the proof of Theorem \ref{Comega} in the complex analytic case. 

{\bf Remark:} Notice that the same proof applies to a $C^k$-family $\{\rho_u\}_{u\in D}$ of
representations of a hyperbolic group $\Gamma$ into a real semi-simple Lie group $\ms G$ 
such that $\rho_0$ is $(\ms G,\ms P)$-Anosov. It produces a sub-disk $D_0$ and a 
$\alpha$-H\"older transversely $C^k$ map
\hbox{$\xi:D_0\times\bg\to\ms G/\ms P$} so that if $u\in D_0$, then $\rho_u$ is $(\ms G,\ms P)$-Anosov
with limit map $\xi_u$.

\subsection{Analytic variation of the reparameterization}
\label{analytic variation of reparam}

We now turn to the proof of Proposition \ref{Creparam}.

Let $\{\rho_u:\Gamma\to\sln\}_{u\in D}$ be a real analytic family of projective Anosov representations and
let $D^{\mathbb C}$ be the complexification of $D$. 
We may extend $\{\rho_u\}_{u\in D}$ to
a complex analytic family $\{\rho_u:\Gamma\to\ms{SL}_m(\mathbb C)\}_{u\in D^{\mathbb C}}$ of
homomorphisms. Theorem \ref{Comega} implies that, after possibly restricting $D^{\mathbb C}$,
there exists a $\alpha$-H\"older transversely complex analytic map
$$\xi:D^{\mathbb C}\times\bg\to \ms G^{\mathbb C}/\ms P^{\mathbb C}=\mathbb{CP}(m)$$
such that if $u\in D^\mathbb C$, then $\rho_u$
is Anosov with respect to the parabolic subgroup $\ms P^{\mathbb C}$, which
is the stabilizer of a complex line, with limit map $\xi_u$.
(We call such representations {\em complex projective Anosov}.)

We  construct a Lipschitz transversely complex analytic $\mathbb C^m$-bundle $W^\mathbb C$ over
$D^{\mathbb C}\times\Gg$ which is the quotient of 
$\tilde W^\mathbb C=D^\mathbb C\times\widetilde{\Gg}\times \mathbb C^m$
associated to the family $\{\rho_u\}_{u\in D^\mathbb C}$.
We  can then lift the Gromov geodesic flow on $\Gg$ to a Lipschitz transversely complex analytic flow 
$\{\Psi_t\}_{t\in\mathbb R}$ on $W^\mathbb C$. 
Since the functions in the partition of unity for our trivializations of $W^\mathbb C$ 
are constant in the the $D^\mathbb C$ direction,  we have:

\begin{proposition} 
After possibly further restricting $D^\mathbb C$, 
the bundle $W^\mathbb C$ is equipped with a Lipschitz transversely   complex analytic  2-form $\omega$ of type $(1,1)$
such that 
$$
\tau(u,v)=\omega(u,v)+\overline{\omega(v,u)},
$$
is Hermitian.
\end{proposition}

Let $L^\mathbb C$ be the (complex) line sub-bundle of $W^\mathbb C$ determined by $\xi$,
i.e.  $L^\mathbb C$ is the quotient of the line sub-bundle of $\tilde W^\mathbb C$ whose
fiber over $(u,(x,y,t))\in D^\mathbb C\times\widetilde{\Gg}$ is the  complex line $\xi_u(x)$.
Then, $L^\mathbb C$ is a  $\alpha$-H\"older transversely  complex analytic line bundle over 
$D^\mathbb C\times \Gg$. Since each $\rho_u$ is complex projective Anosov with limit map $\xi_u$,
$L^\mathbb C$ is preserved by the flow $\{\Psi_t\}\tR$.
We restrict $\omega$ and $\tau$  to $ L^\mathbb C$ (and still denote them by $\omega$ and $\tau$).

Since $L^{\mathbb C}$ is a line bundle, we can consider the function 
$$a:D^{\mathbb C}\times\Gg\to \mathbb C$$
such that
$$
\omega(u,x)(v,v)=a(u,x)\tau(u,x)(v,v).
$$
whenever $v$ is in the fiber of $L^\mathbb C$ over $(u,x)$.
Concretely.
$$
a(u,x)=\frac{\omega(v,v)}{2\Re(\omega(v,v))} 
$$
for any non-trivial $v$ in the fiber over $(u,x)$.

We observe that $a$ is $\alpha$-Holder transversely real analytic.
If $U$ is an open subset of $\Gg$ in one of our trivializing sets, we can construct
a non-zero section 
$$V:D^{\mathbb C}\times U\to L^\mathbb C$$  
which is $\alpha$-H\"older transversely complex analytic. 
Then
$$\omega(V,V):D^{\mathbb C}\times U\to \mathbb C$$
is $\alpha$-H\"older transversely complex analytic.
Lemma \ref{inversion} implies that
the map from $D^{\mathbb C}$ to $C^\alpha(U,\mathbb C)$ given by $u\to \omega(V(u,\cdot),V(u,\cdot))$ is complex analytic.
Therefore,  the map from $D^\mathbb C$ to $C^\alpha(U,\mathbb R)$ given by 
$u\to \Re(\omega(V(u,\cdot),V(u,\cdot)))$ is real analytic.
It follows that the map from $D^{\mathbb C}$ to $C^\alpha(U,\mathbb C)$ given by $u\to a(u,\cdot)$ is
real analytic since
$$a|_{D^{\mathbb C}\times U}=\frac{\omega(V,V)}{2\Re(\omega(V,V))}.$$
Since $x$ was arbitrary the map from $D^{\mathbb C}$ to $C^\alpha(\Gg,\mathbb C)$ given by
$u\to a(u,\cdot)$ is real analytic. Similarly, $a$ itself is $\alpha$-Holder transversely real analytic.

If we define, for all $t$, the map
$$h_t:D^{\mathbb C}\times\Gg\to \mathbb C$$
so that 
$$
\Psi_t^*\omega=h_t\omega,
$$
then, we may argue, just as above, that $h_t$
is $\alpha$-H\"older transversely complex analytic.
Lemma \ref{inversion} guarantees that the map
from $D^{\mathbb C}$ to $C^\beta(\Gg,\mathbb C)$ given by $u\to h_t(u,\cdot)$ is complex analytic.

If $t\in\mathbb R$,
$$
\Psi_t^*\tau(\cdot)=2\Re(\Psi_t^*\omega(\cdot))=2\Re(h_t(\cdot)\omega(\cdot))=2\Re\left(a(\cdot)h_t(\cdot)\right) G(\cdot).
$$
We define $k_t(\cdot)=\Re(a h_t)(\cdot)$ and note that $\Psi_t^*\tau=k_t\tau$.
Then,  $k_t$ is $\alpha$-H\"older transversely real analytic and the map from $D^{\mathbb C}$ to 
$C^\alpha(\Gg,\mathbb R)$ given
by $u\to k_t(u,\cdot)$ is real analytic (since it is the real part of a product of  a real analytic and a complex analytic function).

We apply the construction of Lemma \ref{XiHolder} to
produce an $\alpha$-H\"older transversely real analytic metric $\tau^0$ on $\hat L$
such that 
$$
\Psi_t^*(\tau^0)<  e^{-\beta t}\tau^0.
$$
for some $\beta>0$ and all $t>0$.
Concretely,
$$\tau^0=\int_{0}^{t_0}e^{\beta s}\Psi_s^*(\tau)\d s = \left(\int_{0}^{t_0}e^{\beta s}k_{s}\ \d s\right)G$$
for some appropriately chosen $t_0>0$.

We define, for all $t$, $K_t:D^{\mathbb C}\times\Gg\to \mathbb R$ by
$$
K_t=e^{-\beta t}\frac{\int_{t}^{t_0+t}e^{\beta s}k_{s}\ \d s}{\int_{0}^{t_0}e^{\beta s}k_{s}\ \d s}.
$$ 
One then checks that
$$
\Psi_t^*(\tau^0)=K_t \tau^0
$$
for all $t$.
Then, for each $u\in D^\mathbb C$ we define $f_u:\Gg\to\mathbb R$, by setting
$$
f_u(\cdot)=\frac{\partial K_t}{\partial t}(u,\cdot,0)=-\beta+\frac{e^{\beta t_0}k_{t_0}(\cdot)-1}{\int_{0}^{t_0}e^{\beta s}k_{s}(\cdot)\ \d s}.
$$
Then, since $u\to k_t(u,\cdot)$ is real analytic for  all $t$, our formula for $f_u$ guarantees  that 
the map from $D^\mathbb C$ to $C^\beta(\Gg,\mathbb R)$ given by $u\to f_u$  is real analytic.
Therefore, the restriction of this map to the real submanifold $D$ is also real analytic.

To complete the proof of Proposition \ref{Creparam} we will show that, for each $u\in D$,
the periods of the reparameterization of $\Gg$ by $f_u$ and the periods of $U_{\rho_u}\Gamma$ agree.
Liv\v sic's Theorem \ref{theorem:livsic} then implies that the reparameterization
of $\Gg$ by $f_u$ is H\"older conjugate to $U_{\rho_u}\Gamma$ as desired.

For $u\in D$, let $j_u:\Gg\times\mathbb R$ be given by $j_u(\cdot, t)=\log K_t(u,\cdot)$.
We can differentiate the equality
$$
j_u(\cdot, t+s)=j_u(\Psi_s(\cdot),t)+j_u(\cdot,s)
$$
with respect to $t$ and evaluate at $t=0$ to conclude that
$$
f_u(\cdot, s)=f_u(\Psi_s(\cdot),0).
$$
In particular, for any $t$,
$$
\int_0^t (f_u(\Psi_s(\cdot),0)\, \d s=j_u(\cdot, t).
$$

Let $\gamma\in\Gamma$ and let $x\in\Gg$ be a point on the periodic orbit associated to $\gamma$
(which is simply the quotient of $(\gamma^+,\gamma^-)\times\mathbb R\subset\widetilde \Gg$).
If $t_\gamma$ is the period of the orbit  of $\Gg$ containing $x$, then
$$
e^{\int_0^{t_\gamma} f_u(\Psi_s(u,x))\d s} \tau^0(x,u)=\Psi^*_{t_\gamma}\tau^0(u,x)=e^{\Lambda(\rho_u,\gamma)}\tau^0(u,x),
$$
so 
$$\int_0^{t_\gamma} f_u(\Psi_s(u,x))\d s=\Lambda(\rho_u,\gamma)$$
is the period of the reparameterization of the flow $\Gg$ by $f_u$, which agrees with the period
of the orbit in $U_{\rho_u}\Gamma$  associated to $\gamma$
(see Proposition \ref{geoflowreparam}). This completes the proof of Proposition \ref{Creparam}.

\medskip\noindent
{\bf Remark:} Notice that a simpler version of the above proof establishes that given a $C^k$ family
of projective Anosov representations, one may, at least locally, choose the reparameterization functions to vary $C^{k-1}$.

\section{Deformation  spaces of projective Anosov representations}
\label{MCR}

In this section, we collect a few facts about the structure of  deformation  spaces of projective 
Anosov representations of $\Gamma$ into $\sln$ and their relatives.

\subsection{Irreducible  projective Anosov representations}

We first observe that our deformation spaces $\Cm$ and $\Cg$
are  real analytic manifolds. Let 
\hbox{$\widetilde{\mathcal{C}}(\Gamma,m)\subset \hom(\Gamma,\sln)$}
denote the set of regular, irreducible, projective Anosov representations
and let
$$\Cm=\widetilde{\mathcal{C}}(\Gamma,m)/\sln.$$
If $\ms G$ is a reductive subgroup of $\sln$, then we similarly let 
\hbox{$\widetilde{\mathcal{C}}_g(\Gamma,\ms G)\subset \hom(\Gamma,\ms G)$}
denote the space of $\ms G$-generic, regular representations which are  irreducible and projective Anosov 
when viewed as representations into $\sln$. Let
$$\Cg=\widetilde{\mathcal{C}}_g(\Gamma,\ms G)/\ms G.$$

\begin{proposition}
\label{convexsmooth}
Suppose that $\Gamma$ is a word hyperbolic group. Then
\begin{enumerate}
\item The deformation spaces $\Cm$ and 
$\mathcal{C}_g(\Gamma,\ms{SL}_m(\mathbb R))$  have the structure of a real analytic manifold compatible 
with the algebraic structure  on $\hom(\Gamma,\ms{SL}_m(\mathbb R))$
\item
If $\ms G$ is a reductive subgroup of $\sln$, then $\Cg$ has the structure of a real analytic manifold compatible
with  the algebraic structure  on $\hom(\Gamma,\ms G)$.
\end{enumerate}
\end{proposition}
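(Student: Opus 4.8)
The plan is to show that the relevant subsets of $\hom(\Gamma,\ms G)$ are open and that $\ms G$ (or $\ms{PGL}_m(\mathbb R)$) acts on them properly, freely modulo center, with local slices, so that the quotients inherit a real analytic manifold structure from the algebraic structure on $\hom(\Gamma,\ms G)$. First I would record that $\widetilde{\mathcal C}(\Gamma,m)$ is open in $\hom(\Gamma,\sln)$: convex Anosov representations form an open set by the standard stability of Anosov representations (Labourie \cite{labourie-anosov}, Guichard--Wienhard \cite{guichard-wienhard}), irreducibility is an open condition, and "regular" means we restrict to the smooth locus of the variety, which is by definition open in the subvariety it lies in; combining these, $\widetilde{\mathcal C}(\Gamma,m)$ is an open subset of a smooth real analytic manifold (the smooth locus of $\hom(\Gamma,\sln)$), hence itself a real analytic manifold compatible with the algebraic structure. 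For part (2) with $\ms G$ reductive, one argues similarly inside $\hom(\Gamma,\ms G)$, additionally using that $\ms G$-genericity is an open condition: by Lemma \ref{pro:gen} a $\ms G$-generic $\rho$ has some $\rho(\gamma)$ a generic element of $\ms G$, and since the generic elements of $\ms G$ form a Zariski-open (hence open) subset and $\rho\mapsto\rho(\gamma)$ is continuous, the $\ms G$-generic locus is open.

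Next I would pass to the quotient. The group acting is $\ms{PGL}_m(\mathbb R)$ (resp.\ the relevant projectivized normalizer of $\ms G$), and on irreducible representations this action is proper with finite (in fact, by Schur, trivial modulo the obvious ambiguity) point stabilizers: an irreducible representation has centralizer equal to the scalars, so the $\ms{PGL}_m(\mathbb R)$-action on $\widetilde{\mathcal C}(\Gamma,m)$ is free, and properness follows from the well-displacing property (Proposition \ref{displac}), which prevents conjugating classes from escaping to infinity. Given a free, proper real analytic action of a Lie group on a real analytic manifold, the quotient is a real analytic manifold and the projection is a real analytic submersion; this gives the manifold structure on $\Cm$ and on $\Cg$, compatible with the algebraic structure because the slice construction can be taken algebraic (Luna-type slices) and then analytified. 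For part (2) one must be slightly careful that conjugation internal to $\ms G$ versus conjugation by the $\sln$-normalizer of $\ms G$ give the same deformation space up to the finite group of outer symmetries; this is a routine bookkeeping point.

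The main obstacle I expect is properness of the conjugation action, i.e.\ ensuring the quotient is Hausdorff and that orbits are closed embedded submanifolds. The key input is the displacing property (Proposition \ref{displac}): if $\rho_n\to\rho$ and $g_n\rho_n g_n^{-1}\to\rho'$ with both limits convex Anosov, the uniform two-sided comparison between $\log\Lambda(\gamma)(\cdot)$ and $\ell(\gamma)$, valid on a neighborhood, forces the $g_n$ to stay in a compact subset of $\ms{PGL}_m(\mathbb R)$ after passing to a subsequence; irreducibility then pins down the limit of $g_n$ uniquely. Once properness and freeness are in hand, the manifold structure is formal, and analyticity is inherited because $\hom(\Gamma,\ms G)$ is an affine real algebraic variety, its smooth locus is a real analytic manifold, the conditions cutting out $\widetilde{\mathcal C}$ are open, and the quotient by a proper free real analytic action of a Lie group is again real analytic. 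I would finish by remarking that the same argument, applied to the deformation spaces of Anosov representations into a general $\ms G$ and to $\Cc(\Gamma,\ms{PSL}_2(\mathbb C))$, yields the analogous statements referenced later in the paper, with the only new wrinkle being the appearance of orbifold points when $\ms G$ is disconnected.
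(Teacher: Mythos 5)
Your overall skeleton is the same as the paper's: show $\widetilde{\mathcal{C}}(\Gamma,m)$ (resp.\ $\widetilde{\mathcal{C}}_g(\Gamma,\ms G)$) is open in the smooth locus of $\hom(\Gamma,\sln)$ (resp.\ $\hom(\Gamma,\ms G)$) using openness of the Anosov, irreducible, regular and $\ms G$-generic conditions, and then quotient by a free, proper, analytic conjugation action. Your treatment of openness, including the use of Lemma \ref{pro:gen} for openness of $\ms G$-genericity, matches the paper's argument.

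However, your properness step contains a genuine gap. You claim that properness of the conjugation action ``follows from the well-displacing property (Proposition \ref{displac}), which prevents conjugating classes from escaping to infinity.'' This cannot work: the quantities controlled by Proposition \ref{displac}, namely $\log\Lambda(\gamma)(\rho)$ compared with $\ell(\gamma)$, are invariant under conjugation, since $\Lambda(\gamma)(g\rho g^{-1})=\Lambda(\gamma)(\rho)$ for every $g$. Hence no estimate of this kind can constrain a sequence of conjugators $g_n$ with $\rho_n\to\rho$ and $g_n\rho_n g_n^{-1}\to\rho'$; the displacing property sees only the conjugacy class. The ingredient that actually yields properness is irreducibility itself: either quote Lubotzky--Magid \cite[Theorem 1.27]{lubotzky-magid}, which is exactly what the paper does (the action of $\ms{SL}_m(\mathbb C)$ by conjugation on the set of irreducible representations is proper, and this restricts to $\sln$ and to the closed subgroup $\ms G$), or argue directly: if $g_n\to\infty$ in $\ms{PGL}_m$, write $g_n=k_na_nl_n$ in a Cartan decomposition and use boundedness of $\rho_n(\gamma_i)$ and of $g_n\rho_n(\gamma_i)g_n^{-1}$ on a finite generating set to produce a proper invariant subspace for the limit, contradicting irreducibility of $\rho'$. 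A smaller point in the same vein: your appeal to Schur's Lemma to get freeness of the $\ms{PGL}_m(\mathbb R)$-action needs the commutant of the real representation to consist of scalars; over $\mathbb R$ this requires the observation (made in the paper) that these irreducible representations remain irreducible after complexification, since a priori a real-irreducible representation could have commutant $\mathbb C$ or $\mathbb H$. With the properness input replaced by the Lubotzky--Magid theorem (or the direct argument above), the rest of your proof, including the passage to the quotient and part (2), is sound and essentially the paper's.
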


\begin{proof}  
We may regard $\hom(\Gamma,\sln)$ as a subset of $\hom(\Gamma,\ms{SL}_m(\mathbb C))$.
We first notice that an irreducible homomorphism in $\hom(\Gamma,\sln)$ is also irreducible when regarded as
a homomorphism  in  $\hom(\Gamma,\ms{SL}_m(\mathbb C))$.
Lubotzky and Magid (\cite[Proposition 1.21 and Theorem 1.28]{lubotzky-magid}) proved that the set of irreducible
homomorphisms form an open subset of  $\hom(\Gamma,\ms{SL}_m(\mathbb C))$, so they also form an open
subset of $\hom(\Gamma,\sln)$. Results of Labourie \cite[Prop. 2.1]{labourie-anosov} and 
Guichard-Wienhard \cite[Theorem 5.13]{guichard-wienhard}
imply that the set  of projective Anosov homomorphisms is an open subset  of  $\hom(\Gamma,\sln)$ 
(see also Proposition \ref{Comega}).
Therefore, $\widetilde{\mathcal{C}}(\Gamma,m)$ is an open subset of $\hom(\Gamma, \sln)$. Since the former consists of
regular homomorphisms, it is an analytic manifold.

Lubotzky--Magid  (\cite[Theorem 1.27]{lubotzky-magid}) also proved that $\mathsf{SL}_m(\mathbb C)$ acts properly
(by conjugation) on the set of irreducible representations in $\hom(\Gamma,\ms{SL}_m(\mathbb C))$. It  follows that $\sln$
acts properly on  $\widetilde{\mathcal{C}}(\Gamma,m)$. Schur's Lemma guarantees that the centralizer of an irreducible representation is
contained in the center of $\sln$. Therefore, $\ms{PSL}_m(\mathbb R)$  acts freely, analytically and properly on 
the analytic manifold $\widetilde{\mathcal{C}}(\Gamma,m)$, so its quotient $\Cm$ is also an analytic manifold.

Since the set of $\ms G$-generic elements of $\ms G$ is an open $\ms G$-invariant subset of $\ms G$, we may argue
exactly as above to show that $\widetilde{\mathcal{C}}_g(\Gamma,\ms G)$ is an open subset of $ \hom(\Gamma,\ms G)$
which is an analytic manifold. The action of $\ms G/Z(\ms G)$ on $\widetilde{\mathcal{C}}_g(\Gamma,\ms G)$ is again free, analytic and
proper, so its quotient $\Cg$ is again an analytic manifold.
\end{proof}

If $\rho \in \widetilde{\mathcal{C}}(\Gamma,m)$, then one may identify $\TT_\rho \widetilde{\mathcal{C}}(\Gamma,m)$
with the space $Z^1_\rho(\Gamma,\mk{sl}_m(\mathbb R))$ of cocycles and one may  then
identify $\TT_{[\rho]}\Cm$ with the cohomology group $H^1_\rho(\Gamma,\mk{sl}_m(\mathbb R))$
(see \cite{lubotzky-magid,johnson-millson}). In particular, the space $B^1_\rho(\Gamma,\mk{sl}_m(\mathbb R) )$ is identified with the tangent
space of the $\sln$-orbit of $\rho$.
Similarly, if $\rho\in \widetilde{\mathcal{C}}_g(\Gamma,\ms G)$, we identify
$\TT_\rho \widetilde{\mathcal{C}}_g(\Gamma,\ms G)$ with $Z^1(\Gamma,\mk g)$ and
$\TT_{[\rho]}\Cg$ with $H^1_\rho(\Gamma,\mk g)$. More generally, if $\rho$ is an irreducible 
representation in $\hom(\Gamma,\ms G)$, the tangent vector to any analytic path through
$\rho$ may be identified with an element of $Z^1_\rho(\Gamma,\mk g)$ (see \cite[Section 2]{johnson-millson}).

A simple calculation in cohomology gives that  irreducible projective Anosov representations of fundamental groups of
3-manifolds with non-empty boundary are regular. These include free groups and fundamental groups of closed
surfaces.

\begin{proposition}
\label{3-manifold case}
If $\Gamma$  is isomorphic to the fundamental group of a compact  orientable 3-manifold $M$ with non empty
boundary, then $\mathcal C(\Gamma,m)$ is the set of conjugacy classes of irreducible projective Anosov representations.
\end{proposition}

\begin{proof}
Let $\Gamma=\pi_1(M)$ where $M$ is a compact orientable 3-manifold with non-empty boundary.
It suffices to show that the open subset of $\hom(\Gamma,\sln)$ consisting of  irreducible projective Anosov homomorphisms
consists entirely of regular points. We recall that $\rho_0\in \hom(\Gamma,\sln)$ is regular if there exists a neighborhood
$U$ of $\rho_0$ so that  $\dim(Z^1_\rho(M,\mk g))$   is constant on $U$  and the centralizer of any representation $\rho\in U$ is 
trivial  \cite{lubotzky-magid}.  

If $\rho_0$ is projective Anosov and irreducible, we
can take $U$ to be any open neighborhood of $\rho_0$ consisting of 
irreducible projective Anosov representations.
Since $\rho\in U$ is irreducible, Schur's Lemma guarantees that  the centralizer of $\rho(\Gamma)$ is  the center of $\sln$.
Moreover, if $\rho\in U$, then
$$
\dim(H^0_\rho(M,\mk g))-\dim(H^1_\rho(M,\mk g))+\dim(H^2_\rho(M,\mk g))=\chi(M)\dim(\mathsf G).
$$ 
Since the centralizer is trivial, $\dim(H^0_\rho(M,\mk g))=0$. By Poincar\'e duality,
$\dim(H^2_\rho(M,\mk g))=\dim(H^0_\rho(M,\partial M,\mk g))$. 
Since $\dim(H^0_\rho(M,\mk g))=0$, the long exact sequence for relative homology implies that 
\hbox{$\dim(H^0_\rho(M,\partial M,\mk g))=0$}. Thus,
$$
\dim(H^1_\rho(M,\mk g))=-\chi(M)\dim(\mathsf G).
$$ 
Therefore,
$\dim(Z^1_\rho(M,\mk g))=(1-\chi(M))\dim(\mathsf G)$
for all $\rho\in U$, so $\rho$ is a regular point.
\end{proof}

\subsection{Virtually Zariski dense representations} 

We recall that if $\Gamma$ is a word hyperbolic group, $\ms G$ is a semi-simple Lie group
with finite center and $\ms P$ is a non-degenerate parabolic subgroup, then $\mathcal{Z}(\Gamma;\ms G,\ms P)$
is the space of (conjugacy classes of) regular virtually Zariski dense $(\ms G,\ms P)$-Anosov representations of 
$\Gamma$ into $\ms G$.  We will prove that $\mathcal{Z}(\Gamma;\ms G,\ms P)$ is a real analytic orbifold.

\begin{proposition}
\label{zd variety}
Suppose that $\Gamma$ is a word hyperbolic group, $\ms G$ is a  semi-simple Lie group with finite center and $\ms P$ is
a non-degenerate parabolic subgroup of $\ms G$. Then
$\mathcal{Z}(\Gamma;\ms G,\ms P)$ is a real analytic orbifold. 

Moreover, if $\ms G$ is connected,
then $\mathcal{Z}(\Gamma;\ms G,\ms P)$ is a real analytic manifold.
\end{proposition}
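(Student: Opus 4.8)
The plan is to show that $\mathcal{Z}(\Gamma;\ms G,\ms P)$ is cut out inside the character variety by an open condition and then analyze the group action. First I would let $\widetilde{\mathcal{Z}}(\Gamma;\ms G,\ms P)\subset \hom(\Gamma,\ms G)$ denote the set of regular, virtually Zariski dense $(\ms G,\ms P)$-Anosov representations. The openness of the Anosov condition (Labourie \cite{labourie-anosov}, Guichard-Wienhard \cite[Theorem 5.13]{guichard-wienhard}, as already used in the proof of Proposition \ref{convexsmooth}) shows that being $(\ms G,\ms P)$-Anosov is an open condition on $\hom(\Gamma,\ms G)$. For virtual Zariski density, I would argue that if $\overline{\rho_0(\Gamma)}^Z$ has finite index in $\ms G$, then the same holds for all $\rho$ in a neighborhood of $\rho_0$: the Zariski closure cannot drop to a proper positive-codimension subgroup under small deformation because a finitely generated group whose image lies in a fixed proper algebraic subgroup is a closed condition, and there are only finitely many conjugacy classes of such subgroups containing a given Anosov image up to the constraints forced by the boundary maps — more carefully, one uses that the set of representations landing in some proper algebraic subgroup is a proper subvariety, so its complement (intersected with the Anosov locus) is open. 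Combined with the regularity hypothesis, $\widetilde{\mathcal{Z}}(\Gamma;\ms G,\ms P)$ is an open subset of the smooth locus of $\hom(\Gamma,\ms G)$, hence a real analytic manifold.

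Next I would analyze the action of $\ms G$ by conjugation. The key point is that a virtually Zariski dense representation has finite centralizer: since $\overline{\rho(\Gamma)}^Z$ has finite index in $\ms G$, any element centralizing $\rho(\Gamma)$ centralizes a finite-index subgroup of $\ms G$, hence lies in a subgroup commuting with a finite-index (in particular Zariski dense when $\ms G$ is connected, or finite-index in general) subgroup, forcing it into the center $Z(\ms G)$, which is finite. Properness of the action follows from the well-displacing property (Theorem \ref{anosov properties}(2)) exactly as in the proof of Proposition \ref{convexsmooth}: well-displacing representations form a set on which $\ms G/Z(\ms G)$ acts properly. Therefore $\ms G/Z(\ms G)$ acts properly and with finite stabilizers (the stabilizer of $[\rho]$ being the image in $\ms G/Z(\ms G)$ of the normalizer-type finite group, i.e.\ of $N(\rho)/Z(\ms G)$ where $N(\rho)=\{g : g\rho g^{-1}=\rho\}$). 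A proper real-analytic action of a Lie group with finite stabilizers on a real-analytic manifold has quotient a real-analytic orbifold; this gives the first statement.

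For the second statement, when $\ms G$ is connected I would show the action is actually free, so the quotient is a manifold. Here virtual Zariski density becomes genuine Zariski density (a finite-index subgroup of a connected group is the whole group), and then the centralizer of $\rho(\Gamma)$ equals the centralizer of $\ms G$, namely $Z(\ms G)$; more to the point, I need the stabilizer in $\ms G/Z(\ms G)$ to be trivial, i.e.\ that $g\rho g^{-1}=\rho$ forces $g\in Z(\ms G)$. Since $\rho(\Gamma)$ is Zariski dense, $g\rho g^{-1}=\rho$ means $g$ centralizes $\overline{\rho(\Gamma)}^Z=\ms G$, so $g\in Z(\ms G)$. Hence $\ms G/Z(\ms G)$ acts freely and properly and analytically on the analytic manifold $\widetilde{\mathcal{Z}}(\Gamma;\ms G,\ms P)$, and the quotient $\mathcal{Z}(\Gamma;\ms G,\ms P)$ is a real analytic manifold.

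The main obstacle I expect is verifying that virtual Zariski density is an open condition in $\hom(\Gamma,\ms G)$: unlike irreducibility (handled by Lubotzky--Magid in the linear case), one must rule out that under an arbitrarily small deformation the Zariski closure of the image drops into one of infinitely many proper algebraic subgroups. The resolution is that the Anosov property already pins down the boundary maps and forces the image to be ``large'' in a controlled way; concretely, one can use that for Anosov representations the limit set spans enough of $\ms G/\ms P$, or invoke that the set of homomorphisms whose image lies in a fixed proper algebraic subgroup $\ms H$ is Zariski closed and $\ms G$-invariant, and a short argument (using that $\widetilde{\mathcal{Z}}$ consists of \emph{regular} points, so lies in a single irreducible component structure near $\rho_0$) shows a deformation staying Anosov and regular cannot leave the virtually Zariski dense locus. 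This step, together with carefully checking the orbifold-chart compatibility with the algebraic structure, is where the real work lies; the centralizer and properness arguments are essentially identical to those already carried out for $\Cg$ in Proposition \ref{convexsmooth}.
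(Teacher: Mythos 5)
Your overall architecture matches the paper's: openness of the Anosov condition plus openness of virtual Zariski density gives that $\widetilde{\mathcal{Z}}(\Gamma;\ms G,\ms P)$ is an open set of regular points, hence an analytic manifold, and the quotient discussion (finite centralizers, proper analytic action, free action when $\ms G$ is connected because Zariski density forces the stabilizer into $Z(\ms G)$) is essentially the paper's. But the step you yourself flag as ``where the real work lies'' --- openness of virtual Zariski density inside the Anosov locus --- is not actually proved, and the justification you sketch does not work. The set of homomorphisms whose image lies in \emph{some} proper algebraic subgroup is a union of closed sets indexed by infinitely many subgroups (indeed continua of conjugates and infinitely many conjugacy classes), and such a union is neither a subvariety nor closed in general; so ``the complement is open'' does not follow, and regularity of $\rho_0$ (smoothness of the point in $\hom(\Gamma,\ms G)$) gives no control on the Zariski closures of nearby images. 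What must be excluded is precisely a sequence $\rho_n\to\rho_0$ of Anosov representations whose images have Zariski closures $\ms Z_n$ that are proper subgroups varying with $n$, while $\rho_0$ is virtually Zariski dense.

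The paper's proof of this is a genuine compactness argument that your proposal does not contain. One passes to a finite-index subgroup so all images lie in $\ms G^0$, decomposes $\mk g=\bigoplus \mk g_i$ into simple factors, and uses the Anosov limit maps $\xi_n$ (via the projections $\ms G/\ms P\to\ms G_i/\ms P_i$) to show each projection $\pi_i(\mk z_n)$ of the Lie algebra of $\ms Z_n$ is positive-dimensional; then, extracting a limit $\mk z_0$ of the $\mk z_n$ in the Grassmannian of subalgebras, one finds a proper, positive-dimensional subalgebra normalized by $\rho_0(\Gamma)$, which by virtual Zariski density must be a proper sum of simple factors. This is not yet a contradiction (an ideal is normalized by everything), so the paper then writes $\mk z_n$, for large $n$, as the graph of a homomorphism $f_n$ from $\mk z_0$ to the complementary factors, uses the finiteness of conjugacy classes of such homomorphisms to write $f_n=\ad(g_n)\circ f_0\circ\pi_{\mk h_1}$, and runs a Cartan decomposition argument on the $g_n$ to contradict $\mk z_0=\bigoplus_{i=1}^q\mk g_i$. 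Your appeal to ``the limit set spans enough of $\ms G/\ms P$'' gestures at the first ingredient but supplies neither the positivity of the factor projections nor the graph/Cartan argument that handles the ideal case, so the proposal has a genuine gap at the proposition's central point. (Two minor further remarks: in the disconnected case the centralizer of a virtually Zariski dense image is finite but need not lie in $Z(\ms G)$, which is why the paper only claims finite stabilizers and an orbifold quotient; and properness of the conjugation action is not taken from Lubotzky--Magid here, though your well-displacing route is a reasonable substitute.)
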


\begin{proof} 
Let $\hom^*(\Gamma,\ms G)$ be the set of regular homomorphisms. By definition,
$\hom^*(\Gamma,\ms G)$  is an open subset of $\hom(\Gamma,\ms G)$ and hence it is an analytic manifold, 
since it is the set of smooth points of a real algebraic variety.
Results of Labourie \cite[Prop. 2.1]{labourie-anosov} and 
Guichard-Wienhard \cite[Theorem 5.13]{guichard-wienhard} again imply that the set of
$(\ms G,\ms P)$-Anosov homomorphisms is open in $\hom^*(\Gamma,\ms G)$.  The main difficulty in
the proof is to show that the set $\widetilde{\mathcal{Z}}(\Gamma;\ms G,\ms P)$
of virtually Zariski dense Anosov homomorphisms is open in $\hom^*(\Gamma,\ms G)$ and hence an analytic manifold.

Once we have shown that $\widetilde{\mathcal{Z}}(\Gamma;\ms G,\ms P)$ is an analytic manifold, we may
complete the proof in the same spirit as the proof of Proposition \ref{convexsmooth}. We observe that
if $\rho\in\widetilde{\mathcal{Z}}(\Gamma;\ms G,\ms P)$ then its centralizer is finite, 
since the Zariski closure of $\rho(\Gamma)$ has
finite index in $\ms G$. Then,  $\ms G/Z(\ms G)$ acts properly and analytically on
$\widetilde{\mathcal{Z}}(\Gamma;\ms G,\ms P)$ with finite point stabilizers, so the quotient 
$\mathcal{Z}(\Gamma;\ms G,\ms P)$ is
an analytic orbifold. If $\ms G^0$ is the connected component of $\ms G$, 
then the Zariski closure of any representation $\rho\in\widetilde{\mathcal{Z}}(\Gamma;\ms G,\ms P)$
contains $\ms G^0$, so the intersection of the centralizer of $\rho$ with $\ms G^0$ is simply
$Z(\ms G)\cap\ms G^0.$ Therefore, 
$\widetilde{\mathcal{Z}}(\Gamma;\ms G,\ms P)/\ms G^0$ is an analytic manifold.
In particular, if $\ms G$ is connected, 
$\mathcal{Z}(\Gamma;\ms G,\ms P)$ is an analytic manifold.

We complete the proof by showing that the 
set of virtually Zariski dense $(\ms G,\ms P)$-Anosov homomorphisms is open in $\hom^*(\Gamma,\ms G)$. 
If not, then there exists a sequence $\seq{\rho}$ of  $(\ms G,\ms P)$-Anosov representations  which are not
virtually Zariski dense converging to a virtually Zariski dense $(\ms G,\ms P)$-Anosov representation $\rho_0$. 

Since $\ms G$ has finitely many components, $\rho_n^{-1}(\ms G^0)$ has bounded finite index for all $n$.
Since $\Gamma$ is finitely generated, it contains only finitely many subgroups of a given index, so
we may pass to a finite index subgroup $\Gamma_0$ of $\Gamma$ so that $\rho_n(\Gamma_0)$ is contained
in the identity component $\ms G^0$ of $\ms G$ for all $n$. Since each $\rho_n|_{\Gamma_0}$ is 
$(\ms G,\ms P)$-Anosov 
and $\rho_0(\Gamma_0)$ 
is also virtually Zariski dense,
we may assume for the remainder of the proof that $\ms G$ is the Zariski closure of $\ms G^0$.
 
Let $\ms Z_n$ be the Zariski closure of $\im(\rho_n)$ and let $\mathfrak z_n$ be the  Lie algebra of $\ms Z_n$.
Consider the decomposition of the Lie algebra $\mk g$ of $\ms G$
$$\mk g=\bigoplus_{i=1}^p \mk g_i,$$
where $\mk g_i$ are simple  Lie algebras. 
Let   $\ms G_i=\operatorname{Aut}(\mk g_i).$ We consider the adjoint representation $\ad:\ms G\to \operatorname{Aut}(\mk g)$.
Let $\ms H$ be the subgroup of $\ms G$ consisting of all $g\in\ms G$ so that $\ad(g)$ preserves the factors of $\mk g$.
Then $\ms H$ is a finite index, Zariski closed subgroup of $\ms G$. Hence, with our assumptions, $\ms H=\ms G$.
Therefore, we get a well-defined projection map $\pi_i:\ms G \to \ms G_i$. If $\mk p$ is the Lie algebra of $\ms P$,
then $\mk p=\bigoplus_{i=1}^p \mk p_i,$ where $\mk p_i$ is a Lie subalgebra of $\mk g_i$.
Let  $\ms P_i$ be the stabilizer of $\mk p_i$ in $\ms G_i$. 
Then we also obtain a $\ms G$-equivariant projection, also denoted $\pi_i$,
$$\pi_i:\ms G/\ms P \to \ms G_i/\ms P_i=\ms G\mk p_i\subset\operatorname{Gr}_{\dim(\mk p_i)}(\mk g_i)$$
where  $\operatorname{Gr}_{\dim(\mk p_i)}(\mk g_i)$ is the Grassmanian space of  $\dim(\mk p_i)$-dimensional vector spaces in $\mk g_i$.

If $\xi_n:\bg\to \ms G/\ms P$ is the limit map of $\rho_n$, $\pi_i\circ\xi_n$ is a $\rho_n$-equivariant map from $\bg$ to $\ms G_i/\ms P_i$. 
If $\pi_i\circ\xi_n$ is constant, then $\rho_n(\Gamma)$ would normalize a conjugate of $\mk p_i$. So, if 
$\pi_i\circ\xi_n$ is constant for infinitely many $n$, then $\rho_0(\Gamma)$ would normalize  a conjugate of $\mk p_i$,which is
impossible since $\rho_0(\Gamma)$ is Zariski dense and $\ms P_i$ is a proper parabolic subgroup of $\ms G_i$.
Therefore, we may assume that $\pi_i\circ \xi_n$ is non-constant for all $i$ and all $n$.
Since $\Gamma$ acts topologically transitively on $\bg$, we then know that the image must then be infinite.
Therefore, for all $i$ and $n$,
\begin{equation}
\dim(\pi_i(\mk z_n))>0.\label{eq:vzd2} 
\end{equation}

We may thus assume that $\{{\mathfrak z}_n\}$ converges  to a proper  Lie subalgebra $\mk z_0$ which is normalized by $\rho_0(\Gamma)$ with \begin{equation}
\dim(\mk z_0)>0.
\end{equation}
Since $\rho_0$ is virtually Zariski dense, $\mk z_0$ must be a strict factor in the Lie algebra $\mk g$ of $\ms G$.
Thus, after reordering, we may assume that 
\begin{equation}
\mk z_0=\bigoplus_{i=1}^q \mk g_i.\label{eq:vzd1} 
\end{equation}
For $n$ large enough, $\mk z_n$ is thus a graph  of an homomorphim 
$$
f_n:\mk z_0\to \mk h=\bigoplus_{i=q+1}^p \mk g_i.
$$

Since there are only finitely many  conjugacy classes (under the adjoint representation) of 
homomorphisms of $\mk z_0$ into $\mk h$,
we may  pass to a subsequence such that
$$
f_n=\ad (g_n)\circ f_0\circ \pi_{\mk h_1},
$$
where $f_0$ is a fixed isomorphism from an ideal $\mk h_1$ in $\mk z_0$ to an ideal  $\mk h_2$ in $\mk h$, 
$\pi_{\mk h_1}$ is the projection from $\mk z_0$ to $\mk h_1$ and $g_n\in \ms H_2$ where $\ms H_i$ is the subgroup  of $\ms G$
whose Lie algebra is $\mk h_i$.

Let $\ms Z_0$ be the subgroup of $\ms G$ whose Lie algebra is $\mk z_0$ and consider 
$\ms A_1=\exp \mk a_{{\ms Z}_0},$ where $\mk a_{{\ms Z}_0}$ is a Cartan subspace of 
$\mk z_0,$ and let $\ms A_2=\exp \mk a_{{\ms H_2}},$ where the Cartan subspace $\mk a_{{\ms H_2}}$ is chosen 
so that $f_0(\pi_{\mk h_1}(\ms A_1))= \ms A_2$.
Considering the Cartan  decomposition $\ms H_2=\ms K\ms A_2\ms K$ of $\ms H_2$ where $\ms K$
is a maximal compact subgroup, 
we may write $g_n=k_na_nc_n$ with $a_n\in \ms A_2$ and $k_n,c_n\in\ms K$.
Moreover we may write $\ad(c_n)=f_0(\ad(d_n))$, where $d_n$ lies in a fixed   compact subgroup  of $\ms H_1$.
Thus, if $u\in \mk a_{\ms Z_0}$, since  $\ms A_2$ is commutative, we have
$$
f_n( \ad (d_n^{-1})u)=\ad (g_n) f_0(\ad (d_n^{-1})u)=\ad(k_n) f_0(u).
$$
We may extract a subsequence so that that $\{k_n\}_{n\in\mathbb N}$ and  $\{ d_n\}_{n\in\mathbb N}$  converge respectively to $k_0$ and $d_0$. Therefore,
$$
\{(\ad(d_0^{-1})u, \ad(k_0)f_0(u))\mid u\in\mk a_{\ms Z_0}\}\subset \mk z_0,
$$
which contradicts the fact that $\mk z_0=\bigoplus_{i=1}^q \mk g_i$. 
This contradiction establishes the fact that the set of Anosov, virtually Zariski dense regular homomorphisms is open, which
completes the proof.
\end{proof}

We record the following observation, established in the proof of Proposition \ref{zd variety} which will
be useful in the proof of Corollary \ref{metric on anosov}.

\begin{proposition}
\label{zd manifold}
Suppose that $\Gamma$ is a word hyperbolic group, $\ms G$ is a  semi-simple Lie group with finite center and $\ms P$ is
a non-degenerate parabolic subgroup of $\ms G$. Then $\widetilde{\mathcal{Z}}(\Gamma;\ms G,\ms P)/\ms G^0$ is an analytic manifold.
\end{proposition}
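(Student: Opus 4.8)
The plan is to harvest the ingredients already assembled in the proof of Proposition \ref{zd variety}. First I would recall from that proof that $\widetilde{\mathcal{Z}}(\Gamma;\ms G,\ms P)$ is an open subset of the analytic manifold $\hom^*(\Gamma,\ms G)$ of regular homomorphisms, and is therefore itself an analytic manifold; this rests on the openness of the $(\ms G,\ms P)$-Anosov locus (Labourie, Guichard--Wienhard) together with the openness of the virtually Zariski dense locus, which is the genuinely hard point proved by the degeneration argument in Proposition \ref{zd variety}. Next I would observe that $\ms G^0$, being a closed subgroup of $\ms G$, acts analytically on $\widetilde{\mathcal{Z}}(\Gamma;\ms G,\ms P)$ by conjugation, and that this action is proper: $\ms G/Z(\ms G)$ was shown there to act properly on this space (using that virtually Zariski dense Anosov representations have finite centralizer), and properness is inherited by the image subgroup $\ms G^0 Z(\ms G)/Z(\ms G)$.

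The key step, which is precisely the observation recorded at the end of the proof of Proposition \ref{zd variety}, is the computation of the $\ms G^0$-stabilizer of a point $\rho$. Since $\rho$ is virtually Zariski dense and $\ms G$ has only finitely many components, the Zariski closure of $\rho(\Gamma)$ contains $\ms G^0$; hence the centralizer of $\rho(\Gamma)$ inside $\ms G^0$ coincides with $Z(\ms G)\cap\ms G^0$, a finite central subgroup that acts trivially on the entire space. Consequently the effective action is that of the quotient group $\ms G^0/(Z(\ms G)\cap\ms G^0)$, which acts analytically, properly and freely, so its orbit space — which coincides with $\widetilde{\mathcal{Z}}(\Gamma;\ms G,\ms P)/\ms G^0$ — is an analytic manifold by the quotient manifold theorem.

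I do not expect a real obstacle here beyond bookkeeping, since every substantive input (openness of the virtually Zariski dense Anosov locus, properness of the $\ms G$-action, and the centralizer computation) has already been established. The only point requiring a little care is to check that the $\ms G^0$-quotient and the $\bigl(\ms G^0/(Z(\ms G)\cap\ms G^0)\bigr)$-quotient genuinely agree as topological spaces and that the analytic structure descends correctly; this is immediate once one notes that $\ms G^0/(Z(\ms G)\cap\ms G^0)$ is a Lie group acting freely and properly by analytic diffeomorphisms.
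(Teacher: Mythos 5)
Your proposal is correct and follows essentially the same route as the paper: Proposition \ref{zd manifold} is recorded there precisely as the observation made inside the proof of Proposition \ref{zd variety}, namely that $\widetilde{\mathcal{Z}}(\Gamma;\ms G,\ms P)$ is open in $\hom^*(\Gamma,\ms G)$ and hence an analytic manifold, that the conjugation action is proper, and that the stabilizer of any point in $\ms G^0$ is the central finite subgroup $Z(\ms G)\cap\ms G^0$, so the induced free proper analytic action yields a manifold quotient. The only difference is presentational: you spell out the passage to the effective group $\ms G^0/(Z(\ms G)\cap\ms G^0)$ and the quotient manifold theorem, which the paper leaves implicit.
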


\subsection{Kleinian groups}

Let  ${\Cc}(\Gamma,\ms{PSL}_2(\mathbb C))$ be the set of (conjugacy classes of)
convex cocompact representations of $\Gamma$ into $\ms{PSL}_2(\mathbb C))$. We say that a convex cocompact representation $\rho$ in   $\ms{PSL}_2(\mathbb C)$ is {\em Fuchsian} if
its image is conjugate into $\ms{PSL}_2(\mathbb R)$.  Since every non-elementary Zariski closed, connected
subgroup of $\ms{PSL}_2(\mathbb C)$ is conjugate to $\ms{PSL}_2(\mathbb R)$, we note that 
$\rho\in{\Cc}(\Gamma,\ms{PSL}_2(\mathbb C))$ is Zariski dense unless $\rho$ is {\em virtually Fuchsian}, i.e.
there exists a finite index subgroup of $\rho(\Gamma)$ which is conjugate into $\ms{PSL}_2(\mathbb R)$
(see also Johnson-Millson \cite[Lemma 3.2]{johnson-millson}).
Notice that if $\rho$ is virtually Fuchsian, then $\rho(\Gamma)$ contains a finite index subgroup which
is isomorphic to a free group or a closed surface group.

Bers \cite{bers-spaces} proved that ${\Cc}(\Gamma,\ms{PSL}_2(\mathbb C))$ is a complex analytic manifold.
which has real dimension $-6 \chi(\Gamma)$ if $\Gamma$ is torsion-free.
(See also Kapovich \cite[Section 8.8]{kapovich} where a proof of this is given
in the spirit of Proposition \ref{convexsmooth}.) We summarize these results in the following proposition.

\begin{proposition}
\label{CC manifold}
Let $\Gamma$ be a word hyperbolic group. Then
\begin{enumerate}
\item
${\Cc}(\Gamma,\ms{PSL}_2(\mathbb C))$ is a smooth analytic manifold.
\item
$\rho\in{\Cc}(\Gamma,\ms{PSL}_2(\mathbb C))$ is Zariski dense if and only if $\rho$ is not virtuallyFuchsian.
\item
If $\Gamma$ is torsion-free, then ${\Cc}(\Gamma,\ms{PSL}_2(\mathbb C))$ has dimension
$-6\chi(\Gamma)$.
\end{enumerate}
\end{proposition}

\subsection{Hitchin components}

Let  $S$ be  a closed  orientable surface of genus at least 2 and let
$\tau_m:\ms{PSL}_2(\mathbb R)\to \ms{PSL}_m(\mathbb R)$ be an irreducible
homomorphism.
If $\rho:\pi_1(S)\to\ms{PSL}_2(\mathbb R)$  is discrete and faithful, hence uniformizes $S$, 
then $\tau_m\circ\rho$ is  called a {\em Fuchsian representation}. A representation
$\rho:\pi_1(S)\to \ms{PSL}_m(\mathbb R)$  that can be deformed into a Fuchsian representation
is called a {\em Hitchin representation}. 
Lemma 10.1 of \cite{labourie-anosov} implies that all Hitchin representations are irreducible.

Let $H_m(S)$ be the space of Hitchin representations into $\ms{PSL}_m(\mathbb R)$ and let
$$
\mathcal H_m(S)=H_m(S)/\ms{PGL}_m(\mathbb R).
$$
Each  $\mathcal H_m(S)$ is called a {\em Hitchin component} and Hitchin \cite{hitchin} proved that 
$H_m(S)$ is  an analytic manifold diffeomorphic to $\mathbb R^{(m^2-1)|\chi(S)|}.$ 

One may identify the Teichm\"uller space $\mathcal{T}(S)$ with $\mathcal{H}_2(S)$.
The irreducible representation gives rise to an analytic embedding  that we also denote
$\tau_m$, of $\mathcal{T}(S)$ into the Hitchin component $\mathcal H_m(S)$ and we
call its image the {\em Fuchsian locus} of the Hitchin component. 

Each Hitchin representation lifts to a representation into $\sln$. Labourie \cite{labourie-anosov} showed that
all lifts of Hitchin representations are irreducible and  $(\sln, \ms B)$-Anosov where
$\ms B$ is a minimal parabolic subgroup of $\sln$.
In particular, lifts of Hitchin representations are projective Anosov.
Moreover, Labourie \cite{labourie-anosov} showed that the
image of every non-trivial element of $\pi_1(S)$ under the lift of a Hitchin representation
is diagonalizable with distinct eigenvalues.
In particular,  every lift of a Hitchin representation is $\sln$-generic, so is contained in
$\mathcal{C}_g(\pi_1(S), \sln)$.
Moreover, notice that distinct lifts of a given Hitchin representation must be contained in
distinct components of $\mathcal{C}_g(\pi_1(S), \sln)$.

We summarize what we need from Hitchin and Labourie's work in the following result.

\begin{theorem}
\label{hitchin component}
Every  Hitchin component  lifts to a component of the analytic manifold  $\mathcal{C}_g(\pi_1(S), \sln)$.
 \end{theorem}

\section{Thermodynamic formalism on the deformation space of projective Anosov representations}

In Section \ref{analyticity of entropy}, we  show that entropy, intersection and renormalized intersection vary analytically 
over $\Cm$, then in section \ref{thermodynamic mapping} we construct the thermodynamic mapping of $\Cm$ into 
the space of Liv\v sic cohomology classes of pressure zero functions on $\Gg$ and use it to define the pressure
form on $\Cm$ and $\Cg$.

\subsection{Analyticity of entropy and intersection}
\label{analyticity of entropy}

Let $\Gamma$ be a word hyperbolic group admitting a  projective Anosov representation. 
By Proposition \ref{pro:geodano}, the Gromov  geodesic flow  on $\Gg$  admits a H\"older reparametrization
which turns it into a topologically transitive metric Anosov flow. 
Since the Gromov geodesic flow is only well defined up to reparametrization, we choose a fixed  H\"older reparametrization 
which gives rise to a  topologically transitive metric Anosov flow, and use the corresponding flow, 
denoted by $\psi=\{\psi_t\}_{t\in\mathbb R}$, as a background flow on $\Gg$.

Let $\rho:\Gamma\to\sln$ be a projective Anosov representation.
By Proposition  \ref{geoflowreparam}, the geodesic flow  $(\Ug, \{\phi_t\}_{t\in\mathbb R})$ of $\rho$ is 
H\"older conjugate to a H\"older reparametrization of the flow $\{\psi_t\}_{t\in\mathbb R}$.
Periodic orbits of $\{\phi_t\}_{t\in\mathbb R}$ are in one-to-one correspondence with
conjugacy classes of infinite order elements of
$\Gamma.$ The periodic orbit associated to the conjugacy class $[\gamma]$ has period
$\Lambda(\rho)(\gamma)$.

If $\rho:\Gamma\to \sln$ is projective Anosov, 
let $f_\rho:\Gg\to\mathbb R$ be a H\"older function  such that the reparameterization of $\Gg$
by $f_\rho$ is H\"older conjugate to $\Ug$.
Liv\v sic's theorem \ref{theorem:livsic} implies that the correspondence $\rho\mapsto f_\rho$ 
is well defined modulo Liv\v sic cohomology and invariant under conjugation of the homomorphism $\rho$. 
Therefore, we may define
\begin{eqnarray}
h(\rho_1)&=&h(f_{\rho_1}),\\
\II(\rho_1,\rho_2)&=&\II(f_{\rho_1},f_{\rho_2}), \ {\rm and}\\
\JJ(\rho_1,\rho_2)&= &\JJ(f_{\rho_1},f_{\rho_2})=\frac{h(\rho_2)}{h(\rho_1)}\II(\rho_1, \rho_2),
\end{eqnarray}
for  projective Anosov representations $\rho_1:\Gamma\to\sln$ and \hbox{$\rho_2:\Gamma\to\sln$}.
These quantities  are well defined and agree with the definition given in the Introduction.
Proposition 7.3.1 implies that 
$$h(f_{\rho_1})=\lim_{T\to\infty}\frac{1}{T}\log\sharp(R_T(\rho_1))$$
while equation (\ref{inter1}) implies that
$$\II(f_{\rho_1},f_{\rho_2})=\lim_{T\to\infty}\left(\frac{1}{\sharp (R_T(\rho_1))}\sum_{[\gamma]\in R_T(\rho_1)}\frac{\log(\Lambda(\gamma)(\rho_2))}{\log(\Lambda(\gamma)(\rho_1))}\right).$$

Proposition \ref{Creparam} implies that  if  $\{\rho_u\}_{u\in D}$ is an analytic family of
of projective Anosov homomorphisms defined on a disc $D$, then we can choose,  at least locally, the  
map $u\mapsto f_{\rho_u}$ to be analytic.  Proposition \ref{pro:ana0} then implies that entropy, intersection and
renormalized intersection all vary analytically.

\begin{proposition}\label{pro:ana}
Given  two analytic families $\{\rho_u\}_{u\in D}$ and $\{\eta_v\}_{v\in D'}$  of projective Anosov homomorphisms,  
the functions $u\mapsto h(\rho_u)$,  $(u,v)\mapsto\II(\rho_u,\eta_v)$ and  $(u,v)\mapsto\JJ(\rho_u,\eta_v)$ 
are analytic on their domains of definition.
\end{proposition}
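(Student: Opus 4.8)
The plan is to reduce Proposition~\ref{pro:ana} directly to the combination of Proposition~\ref{Creparam} and Proposition~\ref{pro:ana0}, with the only genuine issue being that Proposition~\ref{Creparam} is a \emph{local} statement, while the quantities $h$, $\II$ and $\JJ$ are manifestly well defined (independent of all choices) on the whole parameter space. First I would recall that by the discussion preceding the proposition, for a convex Anosov homomorphism $\rho$ we have fixed a H\"older function $f_\rho:\Gg\to\mathbb R$, well defined up to Liv\v sic cohomology, such that the reparameterization of $\Gg$ by $f_\rho$ is H\"older conjugate to $\Ug$, and
$$
h(\rho)=h(f_\rho),\qquad \II(\rho_0,\rho_1)=\II(f_{\rho_0},f_{\rho_1}),\qquad \JJ(\rho_0,\rho_1)=\JJ(f_{\rho_0},f_{\rho_1}).
$$
Since $h(f)$, $\II(f,g)$ and $\JJ(f,g)$ depend only on the Liv\v sic cohomology classes of $f$ and $g$ (as noted after the definition of the intersection, and as follows from Liv\v sic's Theorem~\ref{theorem:livsic}), these three numbers are well defined functions of $(\rho)$ and $(\rho_0,\rho_1)$ regardless of which representative $f_\rho$ is chosen.

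Next, since analyticity is a local property, it suffices to prove analyticity of $u\mapsto h(\rho_u)$ near an arbitrary point $u_0\in D$, and similarly for $(u,v)\mapsto\II(\rho_u,\eta_v)$ and $(u,v)\mapsto\JJ(\rho_u,\eta_v)$ near an arbitrary point $(u_0,v_0)\in D\times D'$. Fix such a point. Restricting $\{\rho_u\}_{u\in D}$ to a disk about $u_0$ and applying Proposition~\ref{Creparam} (after an affine reparameterization of the parameter disk that moves $u_0$ to the origin), we obtain a sub-disk $D_0\ni u_0$ and a real analytic family $\{f_u:\Gg\to\mathbb R\}_{u\in D_0}$ of positive H\"older functions such that the reparameterization of $\Gg$ by $f_u$ is H\"older conjugate to $\Ug[\rho_u]$ for every $u\in D_0$; by Liv\v sic's theorem, $f_u$ is Liv\v sic cohomologous to $f_{\rho_u}$ for each $u$, so $h(\rho_u)=h(f_u)$, and likewise after shrinking $D'$ about $v_0$ we get an analytic family $\{g_v\}_{v\in D_0'}$ with $\II(\rho_u,\eta_v)=\II(f_u,g_v)$ and $\JJ(\rho_u,\eta_v)=\JJ(f_u,g_v)=\tfrac{h(g_v)}{h(f_u)}\II(f_u,g_v)$ on $D_0\times D_0'$. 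Here I would record that "real analytic family of positive H\"older functions" in Proposition~\ref{Creparam} means precisely that $u\mapsto f_u$ is a real analytic map of $D_0$ into $C^\alpha(\Gg,\mathbb R)$ for some $\alpha>0$ (this is the output of Lemma~\ref{inversion} as used in the proof of Proposition~\ref{Creparam}), which is exactly the hypothesis needed to invoke Proposition~\ref{pro:ana0}.

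Finally I would apply Proposition~\ref{pro:ana0} to the analytic families $\{f_u\}_{u\in D_0}$ and $\{g_v\}_{v\in D_0'}$ of H\"older functions: it yields directly that $u\mapsto h_{f_u}=h(f_u)$ is analytic on $D_0$, that $v\mapsto h(g_v)$ is analytic on $D_0'$, and that $(u,v)\mapsto \II(f_u,g_v)$ is analytic on $D_0\times D_0'$. Hence $u\mapsto h(\rho_u)$ is analytic near $u_0$, $(u,v)\mapsto\II(\rho_u,\eta_v)$ is analytic near $(u_0,v_0)$, and $(u,v)\mapsto\JJ(\rho_u,\eta_v)$, being the quotient $\tfrac{h(g_v)}{h(f_u)}\II(f_u,g_v)$ of analytic functions with nowhere-vanishing denominator (the entropy is positive by Proposition~\ref{pos-fin}), is analytic near $(u_0,v_0)$ as well. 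Since $u_0$ and $(u_0,v_0)$ were arbitrary, the three functions are analytic on their domains of definition. The only real subtlety to be careful about is the bookkeeping in the previous paragraph---matching the ambient background flow $\psi$ on $\Gg$ used throughout this section with the reparameterizations produced by Proposition~\ref{Creparam}, and checking that the Liv\v sic class of $f_u$ does not depend on the chart---but this is routine given Liv\v sic's Theorem~\ref{theorem:livsic} and the cohomological invariance of $h$, $\II$, $\JJ$; there is no new analytic input required beyond Propositions~\ref{Creparam} and~\ref{pro:ana0}.
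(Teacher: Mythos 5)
Your proposal is correct and follows essentially the same route as the paper: localize, invoke Proposition \ref{Creparam} to get an analytic family of reparameterizing functions, and then apply Proposition \ref{pro:ana0}, using Liv\v sic cohomology invariance of $h$, $\II$, $\JJ$ to see the result is independent of the local choices. The extra bookkeeping you include (well-definedness, positivity of entropy for the quotient defining $\JJ$) is exactly the implicit content of the paper's shorter argument.
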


Combining Propositions \ref{proposition:lowerbound}, \ref{proposition:deriv-presion} and \ref{prop:deriv2} one obtains the following.
\begin{corollary}\label{corollary:c}
\label{J non-negative}
For every pair $\rho_1:\Gamma\to\sln$ and \hbox{$\rho_2:\Gamma\to\sln$} of
projective Anosov representations, one has 
$$\JJ(\rho_1,\rho_2)\geq1.$$
If $\JJ(\rho_1,\rho_2)=1$,
then there exists a constant $c\geq1$ such that $$\Lambda_{\rho_1}(\gamma)^c=\Lambda_{\rho_2}(\gamma)$$
for every $\gamma\in\Gamma.$ 

Moreover, if $\{\rho_t\}$ is a smooth one parameter family of projective Anosov 
representations and $\{f_t\}$ is an 
associated smooth family of reparametrizations,  then 
$$\left.\frac{\partial^2}{\partial t^2}\right|_{t=0}\JJ(\rho_0,\rho_t)=0$$
if and only if 
$$
\left.\frac{\partial}{\partial t}\right\vert_{t=0}\left(h_{\rho_t}f_t\right)
$$
is  Liv\v sic cohomologous to 0.
\end{corollary}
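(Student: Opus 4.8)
The plan is to assemble the three assertions by applying Propositions \ref{proposition:lowerbound}, \ref{proposition:deriv-presion} and \ref{prop:deriv2} to the reparametrization functions $f_\rho,f_\eta:\Gg\to\mathbb R_+$, which are positive H\"older functions on the compact space $\Gg$ carrying the fixed topologically transitive metric Anosov flow $\psi$. Throughout one uses that $\JJ(\rho,\eta)=\JJ(f_\rho,f_\eta)$, that $h(\rho)=h_{f_\rho}$, and that the period $\braket{\delta_a|f_\rho}$ of the $\psi^{f_\rho}$-orbit $a$ attached to the conjugacy class of an infinite order element $\gamma$ equals $\log\Lambda_\rho(\gamma)$ (Proposition \ref{geoflowreparam} and the definition of $h(\rho)$).

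For the inequality, I would simply invoke Proposition \ref{proposition:lowerbound} with $f=f_\rho$ and $g=f_\eta$: this gives $\JJ(\rho,\eta)\geq1$, with equality exactly when $h_\rho f_\rho$ and $h_\eta f_\eta$ are Liv\v sic cohomologous. For the rigidity clause, assume equality; then, since Liv\v sic cohomologous functions have the same integral over every periodic orbit (Theorem \ref{theorem:livsic}), evaluating on the orbit attached to $[\gamma]$ yields $h_\rho\log\Lambda_\rho(\gamma)=h_\eta\log\Lambda_\eta(\gamma)$ for every infinite order $\gamma$, and trivially for torsion elements since both spectral radii then equal $1$. Setting $c=h_\rho/h_\eta$ — a positive real, as entropies are finite and positive by Proposition \ref{pos-fin} — this reads $\Lambda_\rho(\gamma)^{c}=\Lambda_\eta(\gamma)$. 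Since ``$h_\rho f_\rho$ is Liv\v sic cohomologous to $h_\eta f_\eta$'' is symmetric in $\rho$ and $\eta$, interchanging them replaces $c$ by $1/c$, so one may arrange $c\geq1$.

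For the second derivative, let $\{\rho_t\}_{t\in(-1,1)}$ be a smooth family of convex Anosov representations, $\{f_t\}$ an associated smooth family of reparametrizations, and put $\Phi_t=-h_{\rho_t}f_t$. By Lemma \ref{lemma:entropia2}, $\PP(\Phi_t)=0$ for every $t$, so $\{\Phi_t\}$ is a smooth curve in $\mathcal P(\Gg)$; in particular $\dot\Phi_0\in\TT_{\Phi_0}\mathcal P(\Gg)$, i.e. $\int\dot\Phi_0\,\d m_{\Phi_0}=0$. Proposition \ref{prop:deriv2} then gives
$$\left.\frac{\partial^2}{\partial t^2}\right|_{t=0}\JJ(\rho_0,\rho_t)=\|\dot\Phi_0\|_\PP^2=-\frac{\Var(\dot\Phi_0,m_{\Phi_0})}{\int\Phi_0\,\d m_{\Phi_0}},$$
and the denominator equals $-h_{\rho_0}\int f_0\,\d m_{\Phi_0}<0$ because $f_0>0$ and $h_{\rho_0}>0$. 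Hence the second derivative vanishes if and only if $\Var(\dot\Phi_0,m_{\Phi_0})=0$, which by Proposition \ref{proposition:deriv-presion}(4) forces $\dot\Phi_0$ to be Liv\v sic cohomologous to zero; the converse implication (cohomologous to zero $\Rightarrow$ zero variance) is immediate from the boundedness of Birkhoff integrals. Since $\dot\Phi_0=-\frac{\d}{\d t}\big|_{t=0}(h_{\rho_t}f_t)$ and being cohomologous to zero is a linear condition, this is equivalent to $\frac{\d}{\d t}\big|_{t=0}(h_{\rho_t}f_t)$ being Liv\v sic cohomologous to zero, as required.

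The argument contains no genuine obstacle; everything reduces to substitution into the three quoted propositions. The points demanding care are the bookkeeping of signs — in particular that $\int\Phi_0\,\d m_{\Phi_0}<0$, which is where positivity of $f_0$ (hence of the reparametrization functions) enters — the identification of the reparametrized periods with $\log\Lambda_\rho(\gamma)$, and supplying the elementary converse to Proposition \ref{proposition:deriv-presion}(4), which is not literally among the quoted statements.
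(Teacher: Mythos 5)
Your proposal is correct and follows exactly the route the paper intends: the paper gives no written proof, stating only that the corollary is obtained by combining Propositions \ref{proposition:lowerbound}, \ref{proposition:deriv-presion} and \ref{prop:deriv2}, and your argument is precisely that combination, with the right supplementary observations (evaluating the Liv\v sic cohomology on periodic orbits to get $h_\rho\log\Lambda_\rho(\gamma)=h_\eta\log\Lambda_\eta(\gamma)$, the sign of $\int\Phi_0\,\d m_{\Phi_0}$, and the easy converse to vanishing variance). The only cosmetic point is the normalization $c\geq 1$, which as you note is achieved only after possibly interchanging $\rho$ and $\eta$ — the same looseness already present in the paper's statement, and harmless for its later uses, which only need $c>0$.
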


\subsection{The thermodynamic mapping and the pressure form}
\label{thermodynamic mapping}
If $\rho\in\Cm$ and $f_\rho$ is a reparametrization of the Gromov geodesic flow giving rise to the geodesic
flow of $\rho$, we define 
\hbox{$\Phi_\rho:\Gg\to\Real$} by $$\Phi_\rho(x) = -h(\rho)f_\rho(x).$$
Lemma \ref{lemma:entropia2} implies that $\Phi_\rho\in{\mathcal P}(\Gg)$. Let $\mathcal H(\Gg)$ be the set of Liv\v sic cohomology classes of pressure zero function, we saw that the class of $\Phi_\rho$ in  $\mathcal H(\Gg)$ only depends on $\rho$.  We define
the {\em thermodynamic mapping} to be
$$
\mathfrak T:\left\{\begin{array}{rcl}\Cm &\rightarrow &{\mathcal H}(\Gg)\cr
 \rho& \mapsto& [\Phi_\rho]\end{array}\right.
 $$
 
By Proposition \ref{Creparam},  the thermodynamic mapping is ``analytic" in the following sense: 
for every representation $\rho$ in the analytic manifold  $\Cm$, there exists a neighborhood $U$ of $\rho$ in $\Cm$
and an analytic mapping from $U$ to $\mathcal P(\Gg)$ which lifts the thermodynamic mapping.

We use the thermodynamic mapping to define a 2-tensor on our deformation spaces.

\begin{definition}{\sc[Pressure Form]}\label{def:pressureform}
Let $\{\rho_u\}_{u\in M}$ be an analytic family of projective Anosov homomorphims
parametrized by an analytic manifold  $M$. 
If $z\in M$, we define $\JJ_z:M\to \mathbb R$ by letting
$$\JJ_z(u)=\JJ(\rho_z,\rho_u).$$
The associated {\em pressure form} ${\bf p}$  on $M$ is the 2-tensor such that if $v,w\in \TT_zM$,
then
$$
{\bf p}(v,w)={\rm D}_z^2\JJ_z(v,w).
$$
Notice that,
by Corollary \ref{J non-negative}, the pressure form is non-negative.
\end{definition} 

In particular, we get pressure forms on $\widetilde{\mathcal C}(\Gamma,m)$ and on
$\widetilde{\mathcal C}(\Gamma,\ms G)$ when $\ms G$ is a reductive subgroup of $\sln$.
Since $\JJ$ is invariant under the action of conjugation on each variable,
these pressure forms descend to 2-tensors, again called 
pressure forms, on the analytic manifolds $\Cm$ 
and $\Cg$.

\section{Degenerate vectors for the pressure metric}

In this section, we analyze the norm zero vectors for the pressure metric.  
If $\Gamma$ is a word hyperbolic group, $\alpha$ is an infinite order element  of $\Gamma$
and $\{\rho_u\}_{u\in M}$ is an analytic family of  projective Anosov homomorphisms parameterized by an analytic manifold $M$,
one may view $\LL(\alpha)$ as an analytic function on
$M$ where we abuse notation by letting $\LL(\alpha)(u)=\LL(\alpha)(\rho_u)$  denote the eigenvalue of
$\rho_u(\alpha)$ of maximal modulus. The following is the  main result of the section.

\begin{proposition}
\label{generic implies log type 0}
Let $\Gamma$ be a word hyperbolic group and let $\ms G$ be a reductive subgroup of $\sln$.
Suppose that $\{\rho_u:\Gamma\to \ms G\}_{u\in D}$ is an analytic  family of projective Anosov 
$\ms G$-generic homomorphisms  defined on a disc $D$ with associated pressure form ${\bf p}$.
Suppose that $z\in D$, $v\in \TT_z D$ and
$${\bf p}(v,v)  = 0.$$
Then, for every element $\alpha$ of infinite order in $\Gamma$,
$${\rm D}_{z}\LL(\alpha)(v) =0.$$
\end{proposition}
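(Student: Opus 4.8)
\emph{Reduction to $K=0$.} By Lemma~\ref{degenerate implies log type}, the hypothesis ${\bf p}(v,v)=0$ produces a constant $K\in\mathbb R$ with
$$
{\rm D}_0\log(\Lambda(\alpha))(v)=K\log(\Lambda(\alpha))(0)
$$
for every infinite-order $\alpha\in\Gamma$. Since each $\rho_0(\alpha)$ is proximal one has $\log\Lambda(\alpha)(0)>0$, so the proposition is equivalent to the single assertion $K=0$; recall moreover from the proof of that lemma that $K=-{\rm D}_0h(v)/h(\rho_0)$, so this amounts to showing that the entropy is critical at $\rho_0$ in the direction $v$. The whole point is thus to extract, from the mere fact that the marked spectral radii are infinitesimally \emph{rescaled}, a contradiction with $K\ne 0$, and this is where $\ms G$-genericity — absent from Lemma~\ref{degenerate implies log type} — must enter.

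\emph{Setting up with a generic element.} By Lemma~\ref{pro:gen} choose $\gamma\in\Gamma$ with $g:=\rho_0(\gamma)$ a generic element of $\ms G$, i.e.\ $Z_{\ms G}(g)$ a maximal torus of $\ms G$; since being generic is a Zariski-open condition we shrink $D$ so that $\rho_u(\gamma)$ is generic, hence regular semisimple, for all $u\in D$, and in particular Proposition~\ref{proxi2} applies uniformly to the $\rho_u(\gamma)$. Represent $v$ by a cocycle $\dot u\in Z^1_{\rho_0}(\Gamma,\mk g)$, with $\dot u(\alpha)=\tfrac{d}{ds}\big(\rho_s(\alpha)\rho_0(\alpha)^{-1}\big)\big|_{s=0}$ along a path in direction $v$. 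First-order eigenvalue perturbation, together with the simplicity of the dominant eigenvalue (Proposition~\ref{proxi2}), identifies ${\rm D}_0\log(\Lambda(\alpha))(v)$ with $\operatorname{tr}\!\big(\mathsf p_\alpha\,\dot u(\alpha)\big)$, so the log-type relation becomes $\operatorname{tr}(\mathsf p_\alpha\dot u(\alpha))=K\log\Lambda(\alpha)(0)$ for all infinite-order $\alpha$. Applying this to $\gamma^{-1}$, using $\dot u(\gamma^{-1})=-\operatorname{Ad}(g^{-1})\dot u(\gamma)$ and the fact that $\mathsf p_\gamma$ and $\mathsf q_\gamma=\mathsf p_{\gamma^{-1}}$ commute with $g$, one also gets $\operatorname{tr}(\mathsf q_\gamma\dot u(\gamma))=-K\log\Lambda(\gamma^{-1})(0)$. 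Thus the components of $\dot u(\gamma)$ along the extreme eigenlines of $g$ are pinned; the remaining task is to pin the others.

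\emph{Forcing $K=0$.} To access the non-extreme eigenvalues of $g$ I would apply the log-type relation to words of the form $\gamma^{n}\delta$, with $\delta\in\Gamma$ chosen so that each $\gamma^n\delta$ has infinite order and $\rho_0(\delta)$ is in general position with respect to the attracting and repelling data of $g$, and then let $n\to\infty$. On the one hand Proposition~\ref{proxi2} controls $\rho_0(\gamma^n\delta)$ — in particular it gives $\log\Lambda(\gamma^n\delta)(0)=n\log\Lambda(\gamma)(0)+\log|\operatorname{tr}(\mathsf p_\gamma\rho_0(\delta))|+o(1)$ — and, together with the cocycle identity, it also controls the asymptotics of the left-hand side $\operatorname{tr}(\mathsf p_{\gamma^n\delta}\dot u(\gamma^n\delta))$. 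Matching the $n$-linear parts recovers the relation already known for $\gamma$, while matching the bounded parts, as $\delta$ ranges over $\Gamma$, should produce precisely the further relations on $\dot u$ needed to conclude $K=0$, with the regular semisimplicity of $g$ guaranteeing that this system of relations is non-degenerate. The hard part is exactly this asymptotic analysis: the error terms in the $\gamma^n\delta$-expansions must be controlled uniformly, and here both the uniform-proximality estimate $\delta^{\ell(\gamma)}\Lambda(\gamma)$ of Proposition~\ref{proxi2} and the genericity of $g$ (needed both to select a sufficiently well-behaved $\gamma$ and to force the relevant spectral data to converge at a definite geometric rate) are indispensable. Once $K=0$ is in hand, the conclusion ${\rm D}_0\log(\Lambda(\alpha))(v)=0$ for every infinite-order $\alpha$ is immediate from the first paragraph.
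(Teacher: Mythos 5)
Your first paragraph is exactly the paper's own reduction: Lemma \ref{degenerate implies log type} gives the constant $K$, and the whole proposition is equivalent to $K=0$. Your first-variation identity ${\rm D}_0\log\Lambda(\alpha)(v)=\tr(\ms p_\alpha\,\dot u(\alpha))$ and the consequent pair of relations at $\gamma$ and $\gamma^{-1}$ are also correct. But the decisive step — actually forcing $K=0$ — is only gestured at, and the mechanism you point to would not close the argument. Matching the $n$-linear and the bounded parts of the log-type relation along the words $\gamma^n\delta$ only shows that the limits $\lim_n \LL(\gamma^n\delta)/\LL(\gamma)^n=\TT(\p(\gamma),\delta)$ are themselves functions of log-type $K$; that is precisely the paper's Proposition \ref{typk}, and it yields no contradiction with $K\neq 0$ and no genuinely new linear conditions on $\dot u$, because every relation produced at that level is again of the same homogeneous form ``logarithmic derivative $=K\cdot\log$''. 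In particular ``matching the bounded parts as $\delta$ ranges over $\Gamma$'' cannot by itself pin down $K$, and invoking regular semisimplicity to claim the resulting system is ``non-degenerate'' is not an argument.

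The paper's proof of Lemma \ref{K is 0} works one order deeper in the asymptotics, and that is exactly the content you have left out. Writing $\rho_u(\beta^n)$ in the fixed maximal torus containing the generic element (so that all eigenvalues and rotation angles are analytic in $u$), one expands $\TT(\p(\alpha),\beta^n)\big/\bigl(\LL(\beta)^n\,\TT(\p(\alpha),\p(\beta))\bigr)$ as $1$ plus exponentially small oscillatory corrections, the slowest non-oscillating one carrying the rate $\bigl(\LL(\beta)\LL(\beta^{-1})\bigr)^{-n}$ and a nonzero real coefficient (here coprimeness and transversality of the limit maps are used). The log-type hypothesis, fed into the technical Lemma \ref{techl} — whose proof is the real work: comparing terms of size $n f_p^n$ against a rearranged convergent series $\sum_s \mu_s^n$, handling the phases $e^{in\theta_p}$ via rational versus irrational angles and accumulation points, and thereby justifying the interchange of $n\to\infty$ with differentiation in $u$ (cf.\ Lemma \ref{pro:logtypedecay}) — forces the derivative of that slowest rate to vanish, i.e.\ ${\rm D}\bigl(\LL(\beta)\LL(\beta^{-1})\bigr)(v)=0$. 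Combined with log-type $K$ for both $\LL(\beta)$ and $\LL(\beta^{-1})$ and the positivity of $\log\Lambda(\beta)$ and $\log\Lambda(\beta^{-1})$, this gives $K\log\Lambda(\beta)=-K\log\Lambda(\beta^{-1})$, hence $K=0$. Your proposal neither identifies this specific relation (a constraint tying the top eigenvalues of $\beta$ and $\beta^{-1}$, which your two relations at $\gamma,\gamma^{-1}$ do not imply) nor supplies the uniform control of the oscillatory error terms needed to extract it, so the core of the proposition remains unproved.
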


\subsection{Log-type functions}
We begin by showing that if $v$ is a norm zero vector, then each $\LL(\alpha)$ 
is of log-type $K$at $v$
for some fixed $K$.

\begin{definition}
We say that an analytic function $f$ has {\em log-type} $K$ at $v\in\ms T_uM$, if $f(u)\ne 0$ and
$$
{\rm D}_u{\log(|f|)}(v)=K\log(|f(u)|),
$$
and is of {\em log-type}  if it is of log-type $K$ for some $K$.
\end{definition}

\begin{lemma}
\label{degenerate implies log type}
Let $\{\rho_u\}_{u\in M}$ be an analytic family of projective Anosov homomorphims
parametrized by an analytic manifold  $M$ and let ${\bf p}$ be the associated pressure form.
If $v \in \ms T_zM$ and
$${\bf p}(v,v)  = 0,$$
 then there exists  $K\in\mathbb R$ such that if $\alpha$ is any element of  infinite order in $\Gamma$, then 
$\LL(\alpha)$ is of log-type $K$ at $v$.
\end{lemma}

\begin{proof} 
Consider a smooth one parameter family $\{u_s\}_{s\in (-1,1)}$ in $M$ such that
\hbox{$u_0=z$} and $\dot u_0=v.$ Let $\rho_s=\rho_{u_s}$ and let $f_s=f_{u_s}$
where $\{f_{u_s}\}$ is a smooth family of reparametrizations obtained from Proposition \ref{Creparam}.
We define, for all $s\in(-1,1)$,
$$\Phi_s=\Phi_{\rho_s}=-h(\rho_s)f_{s},$$
 
By Corollary \ref{corollary:c}, $\dot\Phi_0$ is Liv\v sic cohomologous to zero. 
In particular, the integral of $\dot\Phi_0$ is zero on any $\phi_s$-invariant measure. 
Thus for any infinite order element  $\alpha\in\Gamma$ one has
$$\braket{\delta_\alpha|\dot{\Phi}_0}=0.$$ 
By definition, $\Phi_s=-h(\rho_s)f_{\rho_s}$ and thus 
$$\braket{\delta_\alpha|\Phi_s}=-h(\rho_s)\log\Lambda(\alpha)(u_s).$$

It then follows that
$$
0=\braket{\delta_\alpha|\left.\frac{\d {\Phi}_s(x)}{\d s}\right\vert_{s=0}}
=\left.\frac{\d (\braket{\delta_\alpha|\Phi_s})(x) }{\d s}\right\vert_{s=0}
=\left.\frac{\d\left(h(\rho_s)\log(\Lambda(\alpha)(u_s)\right)}{\d s}\right\vert_{s=0}.
$$
Applying the chain rule we get
$$
0=\left(\left.\frac{\d h(\rho_s)}{\d s}\right\vert_{s=0}\right)\log(\Lambda(\alpha)(u_s))+h(\rho_s)\left(\left.\frac{\d \log(\Lambda(\alpha)(u_s)}{\d s}\right\vert_{s=0}\right).
$$
It follows that setting $$
K=-\frac{1}{h(\rho_0)}
\left.\frac{\d \left(h(\rho_s)\right)}{\d s}\right\vert_{s=0}
,
$$ 
we get that for all $\alpha\in\Gamma$, 
$${\rm D}_{z}\log(\Lambda(\alpha))(v) =
\left.\frac{\d}{\d s}\right\vert_{s=0}\left(\log(\Lambda(\alpha)(\rho_s)\right)=K\log\left(\Lambda(\alpha)(z)\right).$$
Since $\Lambda(\alpha)=|\LL(\alpha)|$, $\LL(\alpha)$ has log-type $K$ at $v$.
\end{proof}

\subsection{Trace functions}

Recall, from Proposition \ref{proxi2}, that if  $\alpha$
is an infinite order element of $\Gamma$ and $\rho$ is a projective Anosov representation in $\Cm$, then we may write
$$\rho(\alpha)=\LL(\alpha)(\rho)\p(\rho(\alpha)) +\m(\rho(\alpha))+\frac{1}{\LL(\alpha^{-1})(\rho)}\q(\rho(\alpha)),
$$
where \begin{enumerate}
\item
$
\LL(\alpha)(\rho)
$
is the eigenvalue of $\rho(\alpha)$ of maximum modulus and $\p(\rho(\alpha))$ is the projection on 
$\xi(\alpha^+)$ parallel to $\theta(\alpha^-)$
\item $
\LL(\alpha^{-1})(\rho)
$
is the eigenvalue of $\rho(\alpha^{-1})$ of maximal modulus and  $\q(\rho(\alpha))$ is the projection onto the line
$\xi(\alpha^-)$ parallel to $\theta(\alpha^+)$,  
and
\item 
the spectral radius of $\m(\rho(\alpha))$ is less than $\delta^{l(\alpha)}\Lambda(\alpha)(\rho)$ for some 
$\delta=\delta(\rho)\in(0,1)$ 
which depends only on $\rho$.
\end{enumerate}

It will be useful to define
$${\bf r}(\rho(\alpha))=\m(\rho(\alpha))+\frac{1}{\LL(\alpha^{-1})(\rho)}\q(\rho(\alpha))$$
which also has spectral radius less than 
$\delta^{l(\alpha)}\Lambda(\alpha)(\rho)$.

If $\{\rho_u\}_{u\in D}$ is an analytic  family of projective Anosov $\ms G$-generic homomorphisms  defined on a disc $D$
and $\alpha$  and $\beta$ are infinite order elements of  $\Gamma$, we consider the following analytic functions on $D$:
\begin{eqnarray*}
\TT(\alpha,\beta):&u\mapsto&\tr(\rho_u(\alpha)\rho_u(\beta))\cr
\TT(\p(\alpha),\beta):&u\mapsto&\tr(\p(\rho_u(\alpha))\rho_u(\beta)),\cr
\TT(\p(\alpha),\p(\beta)):&u\mapsto&\tr(\p(\rho_u(\alpha))\p(\rho_u(\beta))),\cr
\TT(\p(\alpha),{\bf r}(\beta)):&u\mapsto&\tr(\p(\rho_u(\alpha)){\bf r}(\rho_u(\beta))),\cr
\TT({\bf r}(\alpha),\p(\beta)):&u\mapsto&\tr({\bf r}(\rho_u(\alpha))\p(\rho_u(\beta))),\cr
\TT({\bf r}(\alpha),{\bf r}(\beta)):&u\mapsto&\tr({\bf r}(\rho_u(\alpha)){\bf r}(\rho_u(\beta))).\cr
\end{eqnarray*}
We say that two infinite order elements of $\Gamma$ are {\em coprime} if they have distinct fixed points in
$\bg$ (i.e. they do not share a common power).

We then  have

\begin{proposition}
\label{typk}
Let  $\{\rho_u\}_{u\in D}$ be an analytic  family of projective Anosov homomorphisms  defined on a disc $D$.
If $\alpha $ and $\beta$ are infinite order, coprime elements of $\Gamma$, then
$$
\TT(\p(\alpha),\p(\beta))=\lim_{n\to\infty} \frac{\LL(\alpha^n\beta^n)}{\LL(\alpha)^n\LL(\beta)^n}
$$
and 
$$
\TT(\p(\alpha),\beta)=\lim_{n\to\infty}\frac{\LL(\alpha^n\beta)}{\LL(\alpha)^n}.
$$

Moreover, if  $\LL(\gamma)$ has log-type $K$ at $v\in {\ms T }_uD$ for all infinite order $\gamma\in\Gamma$,
then both 
$\TT(\p(\alpha),\p(\beta))$ and $\TT(\p(\alpha),\beta)$ have log-type $K$ at $v$.
\end{proposition}

We say that a family $\{f_n\}_{n\in\mathbb N}$ of analytic functions defined on a disk $D$
{\em decays} at $v\in \TT_zD$ if
$$\lim_{n\rightarrow\infty}f_n(z)=0\ \ \textrm{and}\ \
\lim_{n\rightarrow\infty}{\rm D}_z f_n(v)=0.$$
The following observation will be useful in the proof of Proposition \ref{typk}. 

\begin{lemma}\label{pro:logtypedecay}
Let $G$ be an analytic function that may be written, for all positive integers $n$, as
$$
G=G_n(1+h_n),
$$
where $G_n$ has log-type $K$ and $\{h_n\}_{n\in\mathbb N}$ decays at $v\in\ms T_uM$, then
$G$ has log-type $K$.
\end{lemma}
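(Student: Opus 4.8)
The final statement to prove is Lemma~\ref{pro:logtypedecay}, which asserts that if an analytic function $G$ can be written, for every $n$, as $G = G_n(1+h_n)$ where each $G_n$ has log-type $K$ at $v \in \ms T_uM$ and the family $\{h_n\}$ decays at $v$, then $G$ itself has log-type $K$ at $v$.

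\textbf{Plan.} The plan is to expand the defining identity $\log G = \log G_n + \log(1+h_n)$ and differentiate it in the direction $v$, then pass to the limit $n\to\infty$ using the decay hypothesis. First I would evaluate at the base point $u$: since $h_n(u)\to 0$, we have $\log(1+h_n(u)) \to 0$, so $\log G(u) = \lim_n \big(\log G_n(u) + \log(1+h_n(u))\big) = \lim_n \log G_n(u)$; that is, $\log G_n(u) \to \log G(u)$. Next I would differentiate: for each $n$,
$$
\frac{{\rm D}_u G(v)}{G(u)} = \frac{{\rm D}_u G_n(v)}{G_n(u)} + \frac{{\rm D}_u h_n(v)}{1+h_n(u)}.
$$
The left-hand side is independent of $n$. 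In the second term on the right, the numerator ${\rm D}_u h_n(v)\to 0$ by the decay hypothesis and the denominator $1+h_n(u)\to 1$, so that term tends to $0$. Hence $\dfrac{{\rm D}_u G_n(v)}{G_n(u)} \to \dfrac{{\rm D}_u G(v)}{G(u)}$, i.e. ${\rm D}_u\log(G_n)(v) \to {\rm D}_u\log(G)(v)$.

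\textbf{Conclusion of the argument.} Now I would invoke the log-type hypothesis on each $G_n$: ${\rm D}_u\log(G_n)(v) = K\log(G_n(u))$. Taking $n\to\infty$ on both sides and using the two limits just established — ${\rm D}_u\log(G_n)(v)\to {\rm D}_u\log(G)(v)$ on the left and $K\log(G_n(u))\to K\log(G(u))$ on the right — yields ${\rm D}_u\log(G)(v) = K\log(G(u))$, which is precisely the assertion that $G$ has log-type $K$ at $v$. One small point worth checking at the outset: for the expressions $\log G$, $\log G_n$ and ${\rm D}_u\log(G)$ to make sense we need $G(u)\neq 0$ and $G_n(u)\neq 0$ near $u$; this is implicit in the statement (log-type is only defined for such functions), and since $1+h_n(u)\to 1$ the factorization $G=G_n(1+h_n)$ is consistent with $G$ being nonvanishing near $u$.

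\textbf{Main obstacle.} There is essentially no obstacle here — the lemma is a routine limiting argument once one writes down the logarithmic derivative of the product. The only care needed is bookkeeping: making sure that ``decays'' is used in both of its two clauses (the value $h_n(u)\to 0$ is needed for the base-point limit, and ${\rm D}_u h_n(v)\to 0$ for the derivative limit), and that the independence of $n$ of the left-hand side ${\rm D}_u\log(G)(v)$ is what allows the passage to the limit. The lemma will be applied later with $G$ a trace function of an analytically varying representation and $G_n$ built from $n$-th powers, which is why the ``for all $n$'' factorization is phrased the way it is; but the proof itself uses nothing beyond elementary calculus.
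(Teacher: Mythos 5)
Your argument is correct and is essentially the paper's own proof: both decompose ${\rm D}_u\log(G)(v)={\rm D}_u\log(G_n)(v)+{\rm D}_uh_n(v)/(1+h_n(u))$, apply the log-type hypothesis to $G_n$, and pass to the limit using both clauses of the decay hypothesis so that the right-hand side converges to $K\log G(u)$. No difference in substance.
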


\begin{proof} 
Notice that
\begin{eqnarray*}
D_u \log(G)(v) & = & D_u \log(G_n)(v)+D_u\log( 1+h_n)(v)\\
 & = & K\log G_n(u) +\frac{D_u h_n(v)}{1+h_n(u)}.\\
\end{eqnarray*}
We now simply notice that the right hand side of the equation converges to $K\log G(u)$
\end{proof}

\noindent
{\em Proof of Proposition \ref{typk}:}
First  notice that
$$\TT(\alpha^n,\beta^n)=\LL(\alpha^n\beta^n)(1+g_n)$$
where
$$g_n=\frac{\tr({\bf r}(\alpha^n\beta^n))}{\LL(\alpha^n\beta^n)}.$$
Since ${\bf r}(\alpha^n\beta^n)(\rho_u)$ has spectral radius at most 
$\delta(\rho_u)^{\ell(\alpha^n\beta^n)}|\LL(\alpha^n \beta^n)|$,
\hbox{$\delta(\rho_u)\in(0,1)$},  and $\lim_{n\to\infty}\ell(\alpha^n\beta^n)=+\infty$, we see that
$\lim_{n\to\infty} g_n(\rho_u)=0$ for all $\rho_u\in\Cm$.
Since $\{g_n\}$ is a sequence of analytic functions, $g_n$ decays at $v$.

On the other hand,
$$
\rho_u(\alpha^n\beta^n)=\LL(\alpha)^n\LL(\beta)^n\p(\alpha)\p(\beta) +\LL(\alpha)^n\p(\alpha){\bf r}(\beta^n)
+\LL(\beta)^n{\bf r}(\alpha^n)\p(\beta) +{\bf r}(\alpha^n){\bf r}(\beta^n),$$
so
$$\TT(\alpha^n,\beta^n)=\LL(\alpha)^n\LL(\beta)^n \TT(\p(\alpha),\p(\beta))(1+\hat g_n)$$
where
$$\hat g_n=\frac{\LL(\alpha)^n\TT(\p(\alpha),{\bf r}(\beta^n))
+\LL(\beta)^n\TT({\bf r}(\alpha^n),\p(\beta))+\TT({\bf r}(\alpha^n),{\bf r}(\beta^n))}{
\TT(\p(\alpha),\p(\beta))\LL(\alpha)^n\LL(\beta)^n}.$$
and again $\hat g_n$ decays at $v$.
(Notice that, since $\alpha$ and $\beta$ are co-prime, $\xi_{\rho_u}(\beta^+)$ is not contained in
$\theta_{\rho_u}(\alpha^-)$ for any $u\in D$, so $\TT(\p(\alpha),\p(\beta))$ is non-zero on $D$.)

Combining, we see that 
$$\TT(\p(\alpha),\p(\beta))=\frac{\LL(\alpha^n\beta^n)(1+g_n)}{\LL(\alpha)^n\LL(\beta)^n(1+\hat g_n)},$$
which implies, since $\lim g_n=0$ and $\lim\hat g_n=0$, that
$$
\TT(\p(\alpha),\p(\beta))=\lim_{n\to\infty} \frac{\LL(\alpha^n\beta^n)}{\LL(\alpha)^n\LL(\beta)^n}.
$$
Moreover, if $\LL(\gamma)$ has log-type $K$ at $v$ for all infinite order $\gamma\in\Gamma$,
then $G_n=\frac{\LL(\alpha^n\beta^n)}{\LL(\alpha)^n\LL(\beta)^n}$ has log-type $K$, being the
ratio of log-type $K$ functions and we may apply
Lemma \ref{pro:logtypedecay} to see that $\TT(\p(\alpha),\p(\beta))$ has log-type $K$.

We similarly derive the claimed facts about $\TT(\p(\alpha),\beta)$ by noting that
$$\TT(\alpha^n,\beta)=\LL(\alpha^n\beta)(1+h_n)$$
where 
$$h_n=\frac{\tr({\bf r}(\alpha^n\beta))}{\LL(\alpha^n\beta)},$$
and that
$$\TT(\alpha^n,\beta)=\LL(\alpha)^n\TT(\p(\alpha),\beta)
(1+\hat h_n)$$
where
$$\hat h_n=\frac{\TT({\bf r}(\alpha^n),\beta)}{\LL(\alpha^n)\TT(\p(\alpha),\beta)}$$
and applying an argument similar to the one above.
\qed

\medskip
\noindent {\bf Remark:} Dreyer \cite{dreyer} previously established that 
$$\left\{\frac{\Lambda(\alpha^n\beta)(\rho)}{\Lambda(\alpha)(\rho)^n}\right\}$$
has a finite limit when $\rho$ is  a Hitchin representation.

\subsection{Technical lemmas}

We will need a rather technical lemma, Lemma \ref {techl}, in the proof of Lemma \ref{K is 0}, which is 
itself the main ingredient in
the proof of Proposition \ref{generic implies log type 0}.

We first prove a preliminary lemma, which may be viewed
as a complicated version of the fact that exponential functions grow
faster than polynomials. If $a_s$ is a polynomial in $q$ variables and their conjugates,  we will use the notation
$$
\Vert a_s\Vert=\sup\{\vert a_s(z_1,\ldots,z_q)\vert\mid \vert z_i\vert=1\}.
$$

\begin{lemma}
Let $(f_1,\ldots, f_q)$ and $(\theta_1,\ldots,\theta_q)$ be two $q$-tuples of real numbers and
let $(g_1,\ldots,g_q)$ be a $q$-tuple of complex numbers, such that
$$1>f_1>\cdots>f_q>0.$$ 
Suppose that there exists a  strictly decreasing sequence $\{\mu_s\}_{s\in\mathbb N}$ of positive real numbers
so that \hbox{$\mu_1<1$} and a sequence of
complex-valued polynomials
$\{a_s\}_{s\in\mathbb N}$ in $q$ variables and their conjugates,
such that, for all $n\in\mathbb N$,
\begin{equation}\label{eq:basic}
\sum_{p=1}^{q}nf_p^n\, \Re(e^{in\theta_p}g_p)=\sum_{s=1}^{\infty}\mu_s^n \Re (a_{s}(e^{in\theta_1},\ldots, e^{in\theta_{q}})),
\end{equation}
and there exists $N$ such that
$$\sum_{s=1}^\infty \vert\mu_s\vert^n\Vert a_s\Vert$$
is convergent for  all $n\ge N$.
Then, for all $p=1,\ldots,q$,
\begin{eqnarray*}
\Re(g_p)=0 &\hbox{ if }& \theta_p\in 2\pi\mathbb Q,\cr
g_p=0 &\hbox{ if }& \theta_p\not\in 2\pi\mathbb Q.
\end{eqnarray*}

\end{lemma}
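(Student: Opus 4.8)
The plan is to treat the identity \eqref{eq:basic} as an equation between two almost-periodic functions of the integer variable $n$ and to isolate the "slowest decaying" terms on each side. The left-hand side decays like $n f_1^n$ (more precisely, its dominant term, if $\Re(e^{in\theta_1}g_1)$ does not vanish identically in $n$, has size comparable to $n f_1^n$ along a suitable arithmetic progression of $n$'s), whereas the right-hand side is a convergent sum $\sum_s \mu_s^n \Re(a_s(e^{in\theta_1},\dots,e^{in\theta_q}))$ in which each summand is bounded by $|\mu_s|^n\|a_s\|$. The first thing I would do is dispose of the terms with $\mu_s\ge f_1$: collect all $s$ with $\mu_s\ge f_1$ into a finite head (there are finitely many since $\mu_s\downarrow 0$), and observe that if any such term survived it would force, after dividing by the largest $\mu_s^n$ and letting $n\to\infty$ along a progression making the relevant $e^{in\theta}$'s converge, a contradiction with the left side being $o(\mu_s^n)$ when $\mu_s>f_1$, or with the polynomial (rather than $n\times$exponential) growth when $\mu_s=f_1$. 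The point is that the left side is exactly $n$ times a sum of pure exponentials, while no single term on the right carries that extra factor of $n$.

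Next I would run an induction on $p=1,\dots,q$. Having shown (as base case, or as the inductive hypothesis) that the contributions of $f_1,\dots,f_{p-1}$ on the left have been accounted for and the corresponding $g_j$ satisfy the claimed vanishing, I divide \eqref{eq:basic} by $n f_p^n$ and analyze the limit behaviour. The tail $\sum_{\mu_s<f_p}$ contributes $o(1)$ after dividing by $n f_p^n$, using the hypothesis that $\sum_s|\mu_s|^n\|a_s\|$ converges for $n\ge N$ (so the tail is $O((\mu^*)^n)$ for some $\mu^*<f_p$, hence negligible against $n f_p^n$). The terms with $\mu_s\ge f_p$ again cannot match the $n f_p^n$ growth rate coming from the $p$-th term on the left. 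To extract the coefficient cleanly I would use a standard averaging/Weyl-equidistribution device: if $\theta_p\in 2\pi\mathbb Q$, say $\theta_p=2\pi a/b$, restrict to $n\equiv n_0 \pmod b$ so that $e^{in\theta_p}$ is constant and $n f_p^n\Re(e^{in\theta_p}g_p)$ becomes a genuine $n f_p^n$-sized term unless $\Re(e^{in_0\theta_p}g_p)=0$; running over the residues $n_0$ forces $\Re(g_p)=0$. If $\theta_p\notin 2\pi\mathbb Q$, then $\{n\theta_p\}$ equidistributes mod $2\pi$, and choosing subsequences along which $e^{in\theta_p}\to e^{i\phi}$ for arbitrary $\phi$ while all other $e^{in\theta_j}$ also converge (possible after a diagonal extraction, using simultaneous equidistribution on the sub-torus generated by the $\theta_j$'s, or simply by compactness of the torus), one gets $\Re(e^{i\phi}g_p)=0$ for all $\phi$, hence $g_p=0$.

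The main obstacle I anticipate is handling the right-hand side rigorously when several $\mu_s$ are comparable to $f_p$ and the polynomials $a_s$ depend on all the $e^{in\theta_j}$'s simultaneously: one must make sure that passing to a subsequence to control $e^{in\theta_p}$ does not destroy the estimate on the other terms, and that the interchange of limit and infinite sum is legitimate. This is exactly where the uniform bound $\sum_s|\mu_s|^n\|a_s\|<\infty$ for $n\ge N$ is used — it provides a dominated-convergence majorant so that limits along subsequences can be taken term by term. A secondary technical point is the base case $p=1$: one needs that the extra factor $n$ on the left genuinely cannot be absorbed, which ultimately rests on the elementary fact that $n f_1^n / \mu^n \to \infty$ if $\mu<f_1$ and $n f_1^n/\mu^n\to 0$ if $\mu>f_1$, together with the observation that a finite trigonometric sum $\Re(a_s(e^{in\theta_1},\dots))$ cannot grow with $n$. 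Once the subsequence bookkeeping is set up carefully, each step is a routine limiting argument, and the induction closes after $q$ steps.
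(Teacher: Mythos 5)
Your proposal follows essentially the same route as the paper's proof: both compare the growth rates $n f_p^n$ against $\mu_s^n$, use the convergence of $\sum_s \vert\mu_s\vert^n\Vert a_s\Vert$ to pass to limits term by term along subsequences where $(e^{in\theta_1},\ldots,e^{in\theta_q})$ converges, exploit the extra factor of $n$ on the left-hand side, and iterate over $p$, treating the rational case (handled in the paper by replacing the $\theta_i$ by $r\theta_i$, in your sketch by restricting to arithmetic progressions) separately from the irrational case, where density of $\{e^{in\theta_p}\}$ in the circle forces $g_p=0$. The sketch is correct and matches the paper's argument in all essential points.
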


\begin{proof} 
There exists $r\in\mathbb N$, so that, for all $i$, either $r\theta_i\in 2\pi\mathbb Z$ or
$r\theta_i\not\in 2\pi\mathbb Q$. Equation (\ref{eq:basic}) remains true if we replace
$(\theta_1,\ldots,\theta_q)$ with $(r\theta_1,\ldots,r\theta_q)$, so we may assume that
either $\theta_i\not\in2\pi\mathbb Q$ or $\theta_i\in\mathbb Z$.

Let $V$ be the set of accumulation points of $\{ (e^{in\theta_1},\ldots,e^{in\theta_q})\ |\ n\in\mathbb N\}$.
We first show that if   $(z_1,\ldots,z_q)\in V$, then
$\Re(g_1 z_1)=0$. This will suffice to prove our claim if $p=1$, since
if $\theta_i\in 2\pi\mathbb Z$, then $z_1=1$ and $\Re(g_1)=0$. If not,
any $z_1\in S^1$ can arise in such a limit, so $\Re(z_1g_1)=0$ for all $z_1\in S^1$, which implies that
$g_1=0$.

So, suppose that $\{n_m\}$ is an increasing sequence in $\mathbb N$ and
$\{ (e^{in_m\theta_1},\ldots,e^{in_m\theta_q})\}$ converges to $(z_1,\ldots,z_q)$.
Then either
\begin{itemize}\item[(1)]   $$\Re(a_s(z_1,z_2,\ldots,z_q))=0$$
for all $s$, or 
\item[(2)] there exists  $s_0\in\mathbb N$ so that 
$$\mathfrak A_0=\Re(a_{s_0}(z_1,z_2,\ldots,z_q))\neq 0,$$
and  for all $s<s_0$ 
$$\Re(a_{s}(z_1,z_2,\ldots,z_q))=0.$$ 
\end{itemize}

If (1) holds, then Equation \eqref{eq:basic} implies
\begin{eqnarray}
\lim_{m\to\infty} n_m\Re(e^{in_m\theta_1}g_1)+\epsilon_0(n_m)=0.\label{proof:techl2}
\end{eqnarray} 
where
$$\epsilon_0(n_m)=\sum_{p=2}^{q}n_m\left(\frac{f_p}{f_1}\right)^{n_m}\Re(e^{in_m\theta_p}g_p).$$
Since, $\lim_{m\to\infty} \Re(e^{in_m\theta_1}g_1)=\Re(z_1g_1)$ and
$\lim_{m\to\infty}\epsilon_0(n_m)=0$, we conclude that $\Re(z_1g_1)=0.$

If (2) holds, then Equation \eqref{eq:basic} implies that
$$
\lim_{m\to\infty} n_m \Re(z_1g_1)+\epsilon_0(n_m)-\left(\frac{\mu_{s_0}}{f_{1}}\right)^{n_m}\mathfrak A_m(1+\epsilon_1(n_m))=0
$$
where
\begin{eqnarray}
\mathfrak A_m&=&\Re (a_{s_0}(e^{i{n_m}\theta_1},e^{i{n_m}\theta_2},\ldots,e^{i{n_m}\theta_q})),\cr
A_{m,s}&=&\frac{1}{\mathfrak A_m}\left(\frac{\mu_s}{\mu_{s_0}}\right)^{n_m}\Re \left(a_{s}(e^{in_m\theta_1},\ldots, e^{in_m\theta_{q}})\right), \ {\rm and}\cr
\epsilon_1(n_m)&=&\sum_{s=s_0+1}^\infty A_{m,s}.
\end{eqnarray}
Observe that
$$
\lim_{m\to\infty}\mathfrak A_m= 
\mathfrak A_0\not=0$$
If $m$ is large enough that $\vert \mathfrak A_m\vert\geq \frac{1}{2}\vert \mathfrak A_0\vert$ and $n_m>N$, then
$$
\vert A_{m,s}\vert\leq\frac{\mu_{s_0+1}^{n_m-N}}{\mu_{s_0}^{n_m}}B_s\ \ {\rm where}\ \ B_s= \frac{2}{\mathfrak A_0}\vert \mu_s\vert^N\Vert a_s\Vert.
$$
Since $\lim_{m\to\infty} \frac{\mu_{s_0+1}^{n_m-N}}{\mu_{s_0}^{n_m}}=0$ and $\sum_{s=1}^\infty B_s$ is convergent, 
 $\lim_{n\to\infty}\epsilon_1(n_m)=0$. 
It then follows that the sequence 
$$
\left\{\frac{1}{n_m}\left(\frac{\mu_{s_0}}{f_1}\right)^{n_m}\right\}_{m\in\mathbb N}
$$
is bounded. Thus $\mu_{s_0}\leq f_1$   and it follows that $\Re(z_1g_1)=0$.
 
Once  we have proved that $\Re(z_1g_1)=0$ for all $(z_1,\ldots,z_q)\in V$, we may use the same argument to prove
that $\Re(z_2g_2)=0$ for all $(z_1,z_2,\ldots,z_q)$ and proceed iteratively to complete the proof for all $p$.
\end{proof}

We are now read to prove the technical lemma used in the proof of  Lemma \ref{K is 0}

\begin{lemma}\label{techl}
Let $\{f_p\}_{p=1}^q$ and $\{\theta_p\}_{p=1}^q$ be 2 families of real analytic functions defined on $(-1,1)$
such that, for all $t\in(-1,1)$,
$$
1>|f_1(t)|>\cdots >|f_q(t)|>0\ \  \hbox{ and } \ \ \dot\theta_q(0)=0
$$ 
Let  $\{g_p\}_{p=1}^q$  be a family of complex valued analytic functions defined on $(-1,1)$ 
so that $g_q(0)\in \mathbb R\setminus\{0\}$. 
For all $n\in\mathbb N$,  let
$$
F_n=1+\sum_{p=1}^{q}f_p^n\Re(e^{in\theta_p}g_p).
$$
If there exists a constant $K$ such that for all large enough $n$, 
$$
\dot{F_n}(0)=KF_n(0)\log(F_n(0)).
$$
Then, $\dot f_q(0)=0$.
\end{lemma}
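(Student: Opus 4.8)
The plan is to differentiate the defining relation $\dot F_n(0)=K F_n(0)\log F_n(0)$, bring both sides into the shape required by the preliminary lemma proved just above, and then read off the vanishing of $\dot f_q(0)$ from the conclusion at the index $p=q$.

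First I would compute the two sides at $t=0$. Writing $x_n=F_n(0)-1=\sum_{p=1}^q f_p(0)^n\,\Re(e^{in\theta_p(0)}g_p(0))$, we have $x_n\to 0$ since each $f_p(0)<1$, so in particular $F_n(0)>0$ for all large $n$. Differentiating $F_n$ term by term gives
\[
\dot F_n(0)=\sum_{p=1}^q n\,f_p(0)^n\,\Re\!\big(e^{in\theta_p(0)}G_p\big)+\sum_{p=1}^q f_p(0)^n\,\Re\!\big(e^{in\theta_p(0)}\dot g_p(0)\big),
\]
where $G_p=\big(\dot f_p(0)/f_p(0)+i\dot\theta_p(0)\big)g_p(0)$. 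For the right-hand side I would substitute $x=x_n$ into the Taylor expansion $(1+x)\log(1+x)=x+\sum_{k\ge 2}\frac{(-1)^k}{k(k-1)}x^k$, expand each power $x_n^k$ by the multinomial theorem, and use $\Re(a)\Re(b)=\tfrac12\Re(ab)+\tfrac12\Re(a\bar b)$. This rewrites $K F_n(0)\log F_n(0)$ as a series of terms $\mu^n\,\Re(\text{monomial in }e^{in\theta_1(0)},\ldots,e^{in\theta_q(0)}\text{ and their conjugates})$, where $\mu$ runs over $\{f_p(0):1\le p\le q\}$ together with all products $f_{p_1}(0)\cdots f_{p_k}(0)$ with $k\ge 2$. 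All these lie in $(0,1)$, and for each $\epsilon>0$ only finitely many of them exceed $\epsilon$, so after grouping equal values they enumerate as a strictly decreasing sequence $\mu_1>\mu_2>\cdots\to 0$. Moving the $\dot g_p$-sum from the left to the right and absorbing it into the $k=1$ contribution (it carries the same modulus $f_p(0)$), the relation becomes exactly
\[
\sum_{p=1}^q n\,f_p(0)^n\,\Re\!\big(e^{in\theta_p(0)}G_p\big)=\sum_{s\ge 1}\mu_s^n\,\Re\!\big(a_s(e^{in\theta_1(0)},\ldots,e^{in\theta_q(0)})\big)
\]
for all large $n$, with each $a_s$ a genuine (finite) polynomial.

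Next I would verify the summability hypothesis of the preliminary lemma. Bounding every $\Re(e^{i\cdot}g_p(0))$ by $|g_p(0)|$ gives $\sum_s|\mu_s|^n\|a_s\|\le 2|K|\sum_{k\ge1}\tfrac1k\big(y_n^k+y_n^{k+1}\big)=-2|K|\log(1-y_n)$ with $y_n=\sum_p f_p(0)^n|g_p(0)|$, which is finite as soon as $y_n<1$, hence for all $n\ge N$; this absolute convergence also justifies the regrouping above. Since the proof of the preliminary lemma only uses the identity for large $n$ (it proceeds through accumulation points of $\{(e^{in\theta_1(0)},\ldots,e^{in\theta_q(0)})\}$), the fact that our identity holds only eventually is harmless. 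Applying the lemma with $f_p\mapsto f_p(0)$, $\theta_p\mapsto\theta_p(0)$, $g_p\mapsto G_p$, and using $f_1(0)>\cdots>f_q(0)>0$, we conclude that $\Re(G_p)=0$ whenever $\theta_p(0)\in 2\pi\mathbb Q$ and $G_p=0$ whenever $\theta_p(0)\notin 2\pi\mathbb Q$.

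Finally I would specialize to $p=q$. By hypothesis $\dot\theta_q(0)=0$, so $G_q=\big(\dot f_q(0)/f_q(0)\big)g_q(0)$, which is real because $g_q(0)\in\mathbb R\setminus\{0\}$; in either alternative ($\Re(G_q)=0$ or $G_q=0$) this forces $G_q=0$, and since $g_q(0)\ne 0$ and $f_q(0)\ne 0$ we get $\dot f_q(0)=0$. The main obstacle is the middle step: one has to make sure the power-series expansion of $(1+x_n)\log(1+x_n)$ really reorganizes into a single series with strictly decreasing moduli $\mu_s$ and finite polynomial coefficients $a_s$ meeting the summability bound, and that the first-order terms produced by $\dot g_p$ and by $K x_n$ merge cleanly at modulus $f_p(0)$ — it is precisely this mismatch (an $n$-factor on the left, none on the right) that the preliminary lemma is tailored to exploit.
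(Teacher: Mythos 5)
Your proposal is correct and follows essentially the same route as the paper: differentiate $F_n$, expand $K(1+x)\log(1+x)$ at $x=F_n(0)-1$, reorganize (via the multinomial theorem and the real-part product identity) into a series $\sum_s \mu_s^n\,\Re\bigl(a_s(e^{in\theta_1(0)},\ldots,e^{in\theta_q(0)})\bigr)$ with strictly decreasing moduli, absorb the $\dot g_p$ terms at modulus $f_p(0)$, apply the preceding lemma, and conclude at $p=q$ from $\dot\theta_q(0)=0$ and $g_q(0)\in\mathbb R\setminus\{0\}$. Your explicit verification of the summability hypothesis and your remark that the identity holding only for large $n$ suffices (since the preceding lemma's proof only uses subsequences $n_m\to\infty$) merely make precise two points the paper leaves implicit.
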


\begin{proof}
We first notice that it suffices to prove the lemma in the restricted setting where  $f_p(t)>0$ for all $p$ and all $t$.
In general, we can then replace each $f_p$ with $f_p^2$ and
each $\theta_p$ with $2\theta_p$ and apply the restricted form of the lemma to
conclude that $\left.\frac{\d}{\d t}\right\vert_{t=0} f_q^2=0$, which implies that $\dot f_q(0)=0$.
For the remainder of the proof, we will assume that $f_p(t)>0$ for all $p$ and all $t$.

Let $g(x)=K(1+x)log(1+x)$. Then $g$ is analytic at $0$. Consider the expansion
$$g(x)=\sum_{n>0} a_mx^m $$
with radius of convergence $\delta>0$.
Notice that there exists $N$ such that if $n\ge N$, then 
$$\sum_{p=1}^q f_p(0)^n |g_p(0)|<\frac{\delta}{2}.$$

If $n\ge N$, then
\begin{eqnarray*}
KF_n(0)\log(F_n(0)) & = & g\left(\sum_{p=1}^{q}f_p(0)^n\Re(e^{in\theta_p(0)}g_p(0))\right)\cr
 & =  & \sum_{m>0} a_m\left(\sum_{p=1}^{q}f_p(0)^n\Re(e^{in\theta_p(0)}g_p(0))\right)^m.
\end{eqnarray*}
If we expand this out, for each $q$-tuple of non-negative integers $\vec m=(m_1,\ldots,m_q)$, we get
a  term of the form 
\begin{equation}
\label{m term}
a_{m_1+\cdots+m_q}\left(\Pi_{p=1}^q f_p(0)^{m_p}\right)^n\binom{m_1+\cdots+m_q}{m_1\  m_2\ \cdots\  m_q} \left(\Pi_{p=1}^q (\Re(g_p(0)e^{in\theta_p(0)})^{m_p}\right).
\end{equation}
Let 
$$h_{\vec m}= \Pi_{p=1}^q f_p(0)^{m_p} <1.$$
Using the equality $\Re (z(w+\bar w))=2\Re(z)\Re(w)$ repeatedly, we may rewrite the term in (\ref{m term})
in the form
$$h_{\vec m}^n \Re(H_{\vec m}(e^{in\theta_1(0)},\ldots,e^{in\theta_q(0)})$$
where $H_{\vec m}$ is a complex polynomial in $q$ variables and their conjugates.
Since the series $\sum_{\vec m} h_{\vec m}^n \Vert H_{\vec m}\Vert$ is  convergent for all $n\ge N$, 
we are free to re-arrange the terms.
We group all terms where the coefficient $h_{\vec m}$ agrees (of which there are only finitely many for each value
of $h_{\vec m}$) and
order the resulting terms in decreasing order of co-efficient to express
$$
KF_n(0)\log(F_n(0))=\sum_{s=0}^\infty h_s^n\Re(H_{s}( e^{in\theta_1},\ldots, e^{in\theta_{q}})),
$$
where each $H_s$ is a complex polynomial in $q$ variables and their conjugates
and
$\{ h_s\}_{s\in\mathbb N}$ 
is a strictly decreasing sequence  of positive numbers less than 1.
Moreover, for all $n\ge N$ the series
$$\sum_{s=0}^\infty h_s^n \Vert H_s\Vert$$
is convergent.

On the other hand,
\begin{eqnarray*}
\dot {F_n}(0)
=\sum_{p=1}^{q} nf_p^n\Re\left(e^{in\theta_p}g_p\left(\frac{\dot f_p}{f_p}+i\dot\theta_p  \right)\right)+\sum_{p=1}^{q}f_p^n
\Re(e^{in\theta_p}\dot g_p)
\end{eqnarray*}
where all functions on the right hand side are evaluated at 0.
Since 
$\dot{F_n}(0)=KF_n(0)\log(F_n(0))$
we see that 
$$
\sum_{p=1}^{q}n f_p^n
\Re\left(e^{in\theta_p}g_p\left(\frac{\dot f_p}{f_p}+i\dot\theta_p  \right)\right)=\sum_{s=1}^{\infty}h_s^n \Re(H_s(e^{in\theta_1},\ldots,e^{in\theta_{q}})).
$$

The previous lemma then implies that for all $p$
$$
\Re\left(g_p\left(\frac{\dot f_p}{f_p}+i\dot\theta_p \right)\right)=0
$$
Since $g_q(0)$ is a non zero real number, $f_q(0)\not=0$ and $\dot\theta_q(0)=0$,  we get that $\dot f_q(0)=0$.
\end{proof}

\subsection{Degenerate vectors have log-type zero}
Proposition \ref{generic implies log type 0} then follows from the following lemma and Lemma
\ref{degenerate implies log type}. 

\begin{lemma}
\label{K is 0} 
Let $\Gamma$ be a word hyperbolic group and let $\ms G$ be a reductive subgroup of $\sln$.
If $\{\rho_u\}_{u\in D}$ is an analytic  family of projective Anosov $\ms G$-generic homomorphisms  defined on a disc $D$
and $\LL(\alpha)$ has log-type $K$ at $v\in \ms T_zD$ for all infinite order $\alpha\in\Gamma$, then 
${\rm D}_z\LL(\alpha)(v)=0$ for all infinite order $\alpha\in\Gamma$.
\end{lemma}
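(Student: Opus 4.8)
The strategy is to suppose $K \neq 0$ and derive a contradiction by exploiting the genericity hypothesis together with the asymptotic trace estimates of Proposition \ref{typk}. Since $\rho_z$ is $\ms G$-generic, Lemma \ref{pro:gen} gives an element $\gamma_0 \in \Gamma$ such that $\rho_z(\gamma_0)$ is a generic element of $\ms G$, i.e. $\rho_z(\gamma_0)$ is diagonalizable (over $\mathbb C$) with distinct eigenvalues whose moduli we may order. Writing $\alpha = \gamma_0$, the element $\rho_u(\alpha)$ then has, for $u$ near $z$, a full set of analytically-varying eigenvalues $\LL(\alpha)=\lambda_0(u),\lambda_1(u),\dots$ with $|\lambda_0|>|\lambda_1|>\cdots$, and associated rank-one spectral projectors varying analytically. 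The plan is to pick a second element $\beta$ coprime to $\alpha$ and examine the analytic function $\TT(\p(\alpha),\beta): u \mapsto \tr(\p(\rho_u(\alpha))\rho_u(\beta))$, together with the higher spectral-projector trace functions obtained by replacing $\alpha$ by $\alpha^n$ and extracting, via Proposition \ref{typk}, the limits $\TT(\p(\alpha),\p(\beta))$. More precisely, for each eigenvalue $\lambda_p$ of $\rho_u(\alpha)$ one gets a trace function $\TT_p := \tr(\mathsf{p}_p(\rho_u(\alpha))\rho_u(\beta))$ where $\mathsf p_p$ is the spectral projector for $\lambda_p$; summing, $\tr(\rho_u(\alpha^n)\rho_u(\beta)) = \sum_p \lambda_p(u)^n \TT_p(u)$.

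The key move is to compare the derivative of $\tr(\rho_u(\alpha^n)\rho_u(\beta))$ computed two ways. On one hand, since $\LL(\gamma)$ has log-type $K$ at $v$ for every infinite-order $\gamma$, in particular for $\gamma = \alpha^n\beta$, and since $\tr(\rho_u(\alpha^n\beta)) = \LL(\alpha^n\beta)(u)(1 + h_n(u))$ with $\{h_n\}$ decaying at $v$ (as in the proof of Proposition \ref{typk}), one finds that the normalized trace $F_n := \tr(\rho_u(\alpha^n\beta))/\LL(\alpha)^n$ satisfies an expansion of exactly the shape $F_n = 1 + \sum_{p\ge 1} f_p^n \Re(e^{in\theta_p}g_p)$ after writing $\lambda_p/\lambda_0 = f_p e^{i\theta_p}$ with $f_p = |\lambda_p/\lambda_0| \in (0,1)$ and absorbing the projector-trace functions into the $g_p$. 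On the other hand, the log-type hypothesis applied to $\LL(\alpha^n\beta)$ and to $\LL(\alpha)$, together with the product rule, forces $\dot F_n(0) = K F_n(0)\log F_n(0)$ for all large $n$. Now invoke Lemma \ref{techl}: after arranging (by replacing $\beta$ by a suitable element, or by passing to the appropriate $\alpha$-eigenline so that the corresponding $g_q(0)$ is a nonzero real number and $\dot\theta_q(0)=0$) that the hypotheses of Lemma \ref{techl} hold, we conclude $\dot f_q(0) = 0$, i.e. $\mathrm D_z \log|\lambda_q/\lambda_0|(v) = 0$.

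Running this for every pair of consecutive eigenvalue moduli of $\rho_z(\alpha)$ — using that $\rho_z(\alpha)$ is generic so the moduli are genuinely ordered and the spectral data varies analytically — shows that $\mathrm D_z\log|\lambda_p(\alpha)|(v)$ is independent of $p$; call the common value $c_\alpha$. Since $\det\rho_u(\alpha) = 1$ (or is constant), summing over all eigenvalues gives $\sum_p \mathrm D_z\log|\lambda_p|(v) = 0$, hence $c_\alpha \cdot m = 0$ and so $\mathrm D_z\log\LL(\alpha)(v) = c_\alpha = 0$. But log-type $K$ means $\mathrm D_z\log\LL(\alpha)(v) = K\log\LL(\alpha)(z)$, and since $\LL(\alpha)(z) > 1$ (as $\alpha$ has infinite order and $\rho_z$ is convex Anosov, $\rho_z(\alpha)$ is proximal with leading eigenvalue of modulus $>1$ — note $\log\Lambda(\alpha)(\rho_z)>0$ by the displacing property, Proposition \ref{displac}), we get $K = 0$, the desired conclusion.

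The main obstacle I expect is the bookkeeping needed to put the normalized trace $F_n$ into exactly the normal form required by Lemma \ref{techl}: one must verify that the coefficient functions $g_p$ arising as spectral-projector trace functions are analytic, that the dominant surviving term has $g_q(0) \in \mathbb R\setminus\{0\}$ and $\dot\theta_q(0) = 0$ (the latter presumably arranged by choosing $\alpha$ or by restricting attention to a real eigenvalue, using genericity to guarantee one exists or by replacing $\alpha$ with a power so that the relevant eigenvalue ratio has argument in $2\pi\mathbb Q$), and that the convergence/summability hypotheses of Lemma \ref{techl} are met uniformly for large $n$ — this last point following from the decay estimates on the remainder terms $\mathsf r(\rho_u(\alpha^n\beta))$ in Proposition \ref{proxi2}, whose spectral radius is bounded by $\delta^{\ell(\alpha^n\beta)}\Lambda(\alpha^n\beta)$. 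Handling the case where some eigenvalue ratios are not of modulus-type requiring the full strength of the first auxiliary lemma (distinguishing $\theta_p \in 2\pi\mathbb Q$ from $\theta_p \notin 2\pi\mathbb Q$) is where the genericity of $\rho_z(\alpha)$ — distinctness of eigenvalues — is essential, since it is what makes the $f_p$ strictly ordered and the spectral projectors rank one and analytic.
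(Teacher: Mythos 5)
Your overall frame (pick a generic element, expand a trace function over its spectral decomposition, feed the resulting expansion into Lemma \ref{techl} via the log-type hypothesis and Proposition \ref{typk}) is the same as the paper's, but the two places where the argument must actually close are not right as you have set them up. First, your $F_n=\tr(\rho_u(\alpha^n\beta))/\LL(\alpha)^n$ does not satisfy the hypothesis of Lemma \ref{techl} exactly: $\tr(\rho_u(\alpha^n\beta))=\LL(\alpha^n\beta)(1+h_n)$ with $h_n$ merely decaying, so $\dot F_n(0)=KF_n(0)\log F_n(0)$ holds only up to an error tending to $0$, whereas the lemma demands the identity for all large $n$ (also, without dividing by the leading projector-trace your expansion starts at $\TT(\p(\alpha),\beta)$ rather than at $1$). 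This is exactly why the paper proves in Proposition \ref{typk} that $\TT(\p(\alpha),\p(\beta))$ and $\TT(\p(\alpha),\beta^n)$ themselves have log-type $K$, and then applies Lemma \ref{techl} to the exact ratio $\TT(\p(\alpha),\beta^n)/\bigl(\LL(\beta)^n\,\TT(\p(\alpha),\p(\beta))\bigr)$, which is genuinely of log-type $K$ for every $n$. This part of your plan is repairable by the same substitution (with your roles of $\alpha$ and $\beta$ interchanged).

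The genuine gap is in your endgame. Lemma \ref{techl} only yields $\dot f_q(0)=0$ for the \emph{smallest} modulus in the expansion, and only under the hypotheses that the corresponding coefficient $g_q(0)$ is a nonzero real number and $\dot\theta_q(0)=0$. For the intermediate spectral blocks of a generic element these hypotheses are unavailable: the eigenvalues come in complex conjugate pairs (so the moduli are not even strictly ordered as you assert), the angles $\theta_p$ vary and nothing controls $\dot\theta_p(0)$, and there is no transversality argument forcing $\tr(\mathsf{p}_p\rho_u(\beta))\neq0$ for an intermediate projector $\mathsf p_p$ (transversality of the limit maps only controls the top and bottom ones). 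Consequently the claim that ``running this for every pair of consecutive eigenvalue moduli'' gives ${\rm D}_z\log|\lambda_p|(v)$ independent of $p$ is unjustified, and the determinant argument that follows cannot be run; note also that ratios of \emph{consecutive} eigenvalues never appear in your expansion, only ratios to $\lambda_0$. The paper closes differently: the smallest term in its expansion is $\bigl(\LL(\beta)\LL(\beta^{-1})\bigr)^{-n}$, whose coefficient $\tr(\p(\rho(\alpha))\q(\rho(\beta)))/\TT(\p(\alpha),\p(\beta))$ is real and nonzero precisely because $\xi(\beta^-)\oplus\theta(\alpha^-)=\mathbb R^m$; Lemma \ref{techl} then gives ${\rm D}_z\log\bigl(\LL(\beta)\LL(\beta^{-1})\bigr)(v)=0$, and combining this with the log-type $K$ property of \emph{both} $\LL(\beta)$ and $\LL(\beta^{-1})$ and the positivity of $\log\Lambda(\beta)$ and $\log\Lambda(\beta^{-1})$ forces $K=0$. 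Your proposal never brings in $\beta^{-1}$ or the bottom-eigenvalue pairing, and without that (or a substitute giving nonvanishing real coefficients for the terms you use) the contradiction does not materialize.
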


\begin{proof}
Notice that if we replace  the family $\{\rho_u\}_{u\in D}$ by a  conjugate family 
\hbox{$\{\rho'_u=g_u\rho_ug_u^{-1}\}_{u\in D}$}
where $\{g_u\}_{u\in D}$ is an analytic family of elements of $\sln$,
then $\LL(\alpha)(\rho_u)=\LL(\alpha)(\rho_u')$ for all $u\in D$. Therefore, we are free to conjugate our original
family when proving the result.

By Proposition \ref{pro:gen}, we may choose $\beta\in\Gamma$, so that $\rho_u(\beta)$ is generic.
After possibly restricting to a smaller disk about $z$, we may assume that  $\rho_u(\beta)$ is generic
for all $u\in D$.  We may then conjugate the family so that
$\rho_u(\beta)$ lies in the same maximal torus for all $u$,
we can write
$$
\rho_u(\beta^n)=\LL(\beta)^n\mathsf p+\sum_{p=1}^{q-1}\lambda^n_p\left( \cos(n\theta_p)\mathsf p_p+
\sin(n\theta_p) \widehat{\mathsf p_p}\right)+ \frac{1}{\LL(\beta^{-1})^n}\mathsf q,
$$
where $\LL(\beta)$, $\LL(\beta^{-1})$, $\lambda_p$, and $\theta_p$ are analytic functions of $u$
and 
$$|\LL(\beta)(u)|>|\lambda_1(u)|>|\lambda_2(u)|>\cdots >|\lambda_{q-1}(u)|>\frac{1}{|\LL(\beta^{-1}(u)|}>0$$
for all $u\in D$.

Choose an infinite order element $\alpha\in\Gamma$  which is coprime to $\beta$.
Proposition \ref{typk}, implies that, for all $n$,
\begin{eqnarray*}
& &\frac{\TT(\p(\alpha),\beta^n)}
{\LL(\beta^n)\TT(\p(\alpha),\p(\beta))}=1+\left(\frac{1}{\LL(\beta)\LL(\beta^{-1})}\right)^n\left(\frac{\tr(
\p(\rho(\alpha))\ms q))}
{\TT(\p(\alpha),\p(\beta))}\right)\cr
& &+\sum_{p=1}^{q-1}\left(\frac{\lambda_p}{\LL(\beta)}\right)^n \Re\left(e^{in\theta_p}\left(\frac{\tr(
\p(\rho(\alpha))\mathsf p_p)}
{\TT(\p(\alpha),\p(\beta))}+i\frac{\tr(
\p(\rho(\alpha))\widehat{\mathsf p_p})}
{\TT(\p(\alpha),\p(\beta))}\right)\right).
\end{eqnarray*}
has log-type $K$ at $v$, since the numerator has log-type $K$ at $v$  and the denominator is
a product of two functions which have log-type $K$ at $v$.

Since $\alpha$ and $\beta$ are coprime and $\rho$ is projective Anosov,
$\xi(\beta^-)\oplus\theta(\alpha^-)=\mathbb R^m$, so
$
\tr(\p(\rho(\alpha)),\ms q)\not=0
$
(since $\p(\rho(\alpha))$ is a projection onto the line $\xi(\alpha^+)$ parallel to $\theta(\alpha^-)$ and 
$\ms q=\q(\rho(\beta))$ is a projection onto the line $\xi(\beta^-)$). Similarly, $\TT(\p(\alpha),\p(\beta))\ne 0$, since
$\xi(\beta^+)\oplus \theta(\alpha^-)=\mathbb R^m$. 

Let $\{u_s\}_{s\in(-1,1)}$ be a smooth family in $D$ so that $u_0=z$ and $\dot u_0=v$.
We now apply Lemma \ref{techl}, taking
\begin{eqnarray*}f_p(s)&=&\frac{\lambda_p(u_s)}{\LL(\beta)(u_s)},\cr
g_p(s)&=&\left(\frac{\tr(
\p(\rho_{u_s}(\alpha))\mathsf p_p)}
{\TT(\p(\alpha),\p(\beta))(u_s)}+i\frac{\tr(
\p(\rho_{u_s}(\alpha))\widehat{\mathsf p_p})}
{\TT(\p(\alpha),\p(\beta))(u_s)}\right),
\end{eqnarray*}
if $p=1,\ldots,q-1$,  and taking
\begin{eqnarray*}
f_q(s)&=&\frac{1}{\LL(\beta)(u_s)\LL(\beta^{-1})(u_s)},\cr 
g_q(s)&=&\frac{\tr(\p(\rho_{u_s}(\alpha))\ms q)}
{\TT(\p(\alpha),\p(\beta))(u_s)}, \ {\rm and}\cr
\theta_q(s)& =& 0.
\end{eqnarray*}
We conclude from  Lemma \ref{techl} that $\dot f_q=0$. Thus
\begin{eqnarray}
{\rm D}_z\LL(\beta)(v)\cdot\LL(\beta^{-1})(z)=-\LL(\beta)(z)\cdot {\rm D}_z\LL(\beta^{-1})(v).\label{eq:tech1}
\end{eqnarray}
Since $\LL(\beta)$ and $\LL(\beta^{-1})$ both have log-type $K$ at $v$, we get that 
\begin{eqnarray}
\frac{{\rm D}_z\LL (\beta)(v)}{\LL(\beta)(z)}=K\log(|\LL(\beta)(z)|)\ \ \textrm{and}\ \ 
\frac{{\rm D}_z\LL (\beta^{-1})(v)}{\LL(\beta^{-1})(z)}=K\log(|\LL(\beta^{-1})(z)|)
.\label{eq:tech2}
\end{eqnarray}
Combining (\ref{eq:tech1}) and (\ref{eq:tech2}) we see that
$$K\log(|\LL(\beta)(z)|)=\frac{{\rm D}_z\LL(\beta)(v)}{\LL(\beta)(z)}=
-\frac{{\rm D}_z\LL(\beta^{-1})(v)}{\LL(\beta^{-1})(z)}=-K\log(|\LL(\beta^{-1})(z)|).$$
Since $\log|\LL(\beta)(z)|>0$ and $\log|\LL(\beta^{-1})(z)|>0$, this implies that $K=0$.
Therefore, $\LL(\alpha)$ has log-type 0 at $v$ for all infinite order $\alpha\in\Gamma$, so
${\rm D}_z\LL(\alpha)(v)=0$ for all infinite order $\alpha\in\Gamma$.
\end{proof}

\section{Variation of length and cohomology classes}

The aim of this section is to prove the following proposition.

\begin{proposition}\label{typ0ba}
Let $\Gamma$ be a word hyperbolic group and let $\ms G$ be a reductive subgroup of $\sln$.
Suppose that $\eta:D\to \hom(\Gamma,\ms G)$ is an analytic map such that for each $u\in D$,
$\eta(u)=\rho_u$ is  irreducible, projective Anosov, and $\ms G$-generic.
If $v\in \TT_zD$ and
$$
{\rm D}_z \LL(\alpha)(v)=0
$$ 
for all infinite order elements $\alpha\in\Gamma$,
then ${\rm D}_z\eta(v)$ defines a zero cohomology class in $H^1_{\eta(z)}(\Gamma,\mathfrak g)$.
\end{proposition}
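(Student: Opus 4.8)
\textbf{Proof proposal for Proposition \ref{typ0ba}.}

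The plan is to reconstruct the cocycle $c = {\rm D}\eta(v) \in Z^1_{\eta(z)}(\Gamma,\mathfrak g)$ from the variation of the eigenvalue data and show it must be a coboundary. Write $\rho = \eta(z)$ and think of a smooth path $u_s$ with $u_0 = z$, $\dot u_0 = v$, so $\frac{d}{ds}|_{s=0}\rho_{u_s}(\gamma)\,\rho(\gamma)^{-1} = c(\gamma)$ for all $\gamma$. First I would record what the hypothesis ${\rm D}{\ms L}(\alpha)(v)=0$ means concretely: differentiating the spectral decomposition of Proposition \ref{proxi2} along the path, the leading eigenvalue ${\ms L}(\alpha)$ of $\rho(\alpha)$ does not move to first order, for every infinite order $\alpha$. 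Since $\Gamma$ is not virtually cyclic and $\rho$ is convex Anosov, there are enough such $\alpha$ (indeed, applying the hypothesis to $\alpha$, $\alpha^{-1}$, and products $\alpha^n\beta$ as in Proposition \ref{typk}) to pin down the variation of the full projected spectral data: the variation of $\xi(\alpha^+)$, the variation of $\theta(\alpha^-)$, and the variation of all the trace functions $\TT(\p(\alpha),\p(\beta))$, $\TT(\p(\alpha),\beta)$ must vanish to first order as well, by the log-type-zero versions of the limit formulas in Proposition \ref{typk} together with the relation ${\ms L}(\alpha^n\beta)/{\ms L}(\alpha)^n \to \TT(\p(\alpha),\beta)$.

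The key step is then a rigidity argument using the projective frame from Lemma \ref{projective frame}. Since $\rho$ is convex and irreducible, pick $\{x_0,\dots,x_m\}\subset\bg$ with $\{\xi(x_0),\dots,\xi(x_m)\}$ a projective frame for $\mathbb R^m$; by density one may take each $x_i$ to be an attracting fixed point $\alpha_i^+$ of an infinite order element $\alpha_i$. The vanishing of ${\rm D}{\ms L}(\alpha)(v)$ for all $\alpha$ forces, via the trace identities, that the first-order variation of $\xi(\alpha_i^+)$ in the path can be realized by an infinitesimal conjugation: concretely, I would show there exists $X \in \mathfrak{gl}_m(\mathbb R)$ such that $\dot\xi_{u_s}(\alpha_i^+)|_{s=0} = $ (the infinitesimal action of $X$ on $\xi(\alpha_i^+)$) for all $i$ simultaneously — this is possible because a projective frame has trivial infinitesimal stabilizer in $\mathsf{PGL}_m$, so the variation of the frame is determined by a unique $X \in \mathfrak{sl}_m(\mathbb R)$. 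Replacing the path $\rho_{u_s}$ by its conjugate $\exp(-sX)\rho_{u_s}\exp(sX)$ — which changes $c$ only by a coboundary — I may assume the limit curve $\xi_{u_s}$ is fixed to first order on the frame points, hence (again by the trace formulas controlling $\TT(\p(\alpha_i),\beta)$ for varying $\beta$, and irreducibility) fixed to first order at every point of $\bg$. Symmetrically, after a further coboundary adjustment, $\theta_{u_s}$ is also stationary to first order.

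With both limit maps infinitesimally stationary and all leading eigenvalues stationary, I claim the modified cocycle $c$ vanishes. Indeed, for each infinite order $\alpha$, differentiating $\rho_{u_s}(\alpha) = {\ms L}(\alpha)(u_s)\,\mathsf p_\alpha(u_s) + \mathsf m_\alpha(u_s) + {\ms L}(\alpha^{-1})(u_s)^{-1}\mathsf q_\alpha(u_s)$ from Proposition \ref{proxi2}: the terms $\dot{\ms L}(\alpha)$ and $\dot{\ms L}(\alpha^{-1})$ vanish by hypothesis; $\dot{\mathsf p}_\alpha$ and $\dot{\mathsf q}_\alpha$ vanish because they are built from $\xi(\alpha^\pm)$ and $\theta(\alpha^\pm)$ which are stationary; and one must control $\dot{\mathsf m}_\alpha$, the variation of the ``middle'' block. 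Here I would use that $\rho$ is $\ms G$-generic: by Lemma \ref{pro:gen} there is $\beta$ with $\rho(\beta)$ generic, hence with $m$ distinct eigenvalues, and its eigenlines are all of the form $\xi(\cdot)$-type data controlled by taking powers and products — so the variation of $\rho_{u_s}(\beta)$ is zero, and since $\rho(\beta)$ generates (with the rest of the group) an irreducible representation, the trace functions $\TT(\gamma,\beta^n)$ being stationary for all $n$ forces $c(\gamma)=0$ for all $\gamma$. Therefore the original cocycle ${\rm D}\eta(v)$ differs from $0$ by a coboundary, i.e.\ it defines the zero class in $H^1_{\eta(z)}(\Gamma,\mathfrak g)$.

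\textbf{Main obstacle.} The delicate point is controlling the variation of the middle block $\mathsf m_\alpha$ (equivalently $\mathbf{r}(\alpha)$): the hypothesis directly constrains only the top eigenvalue, not the rest of the spectrum, so one cannot argue eigenvalue-by-eigenvalue for a single element. I expect the real work is to leverage $\ms G$-genericity plus irreducibility — via the trace functions $\TT(\p(\alpha),\beta)$, $\TT(\p(\alpha),\p(\beta))$ of Proposition \ref{typk} applied to coprime pairs, whose stationarity follows from the hypothesis through the log-type-zero limit formulas — to recover the stationarity of \emph{all} matrix entries of $\rho_{u_s}(\gamma)$, not merely the leading eigenvalue. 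The genericity is what converts ``the attracting eigenline and eigenvalue don't move'' into ``nothing moves,'' since for a generic element every eigenline is detectable as a limit of $\xi$-frame data and every eigenvalue as a limit of ratios of ${\ms L}$'s.
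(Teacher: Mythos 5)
Your opening move is sound and matches the paper's: the hypothesis says exactly that every $\LL(\alpha)$ has log-type $0$ at $v$, and via the limit formulas of Proposition \ref{typk} (equivalently Proposition \ref{cr formula} and Corollary \ref{bir0}) one gets that the cross ratios ${\rm b}_{\rho_u}(x,y,z,w)$, i.e.\ the quantities $\TT(\p(\alpha),\p(\beta))$ and $\TT(\p(\alpha),\beta)$, are stationary at $v$; choosing a projective frame via Lemma \ref{projective frame} and conjugating to freeze the $\xi$-frame is also the paper's normalization. But from that point your argument has two genuine gaps. First, once you have conjugated so that $\xi_{u_s}$ is stationary on the frame points, there is no conjugation freedom left: the stabilizer of a projective frame in $\ms{PGL}_m(\mathbb R)$ is trivial, so your ``further coboundary adjustment'' making $\theta_{u_s}$ stationary does not exist, and stationarity of $\theta$ is never actually derived (note also that your first paragraph asserts the variations of $\xi(\alpha^+)$ and $\theta(\alpha^-)$ themselves vanish, which is false before normalization, since conjugating by a path leaves all $\LL$'s fixed while moving the limit maps). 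Second, the step where genericity is supposed to control the middle block is only asserted: the hypothesis constrains top eigenvalues only, and for a general convex Anosov representation the non-leading eigenvalues and eigenlines of a generic $\rho(\beta)$ are not limits of $\xi$-frame data or of ratios of $\LL$'s (that is special to the Hitchin/hyperconvex case); likewise the full traces $\TT(\gamma,\beta^n)$ are not known to be stationary --- only the projected traces $\TT(\p(\gamma),\p(\beta))$ and $\TT(\p(\gamma),\beta)$ are. So the passage from ``stationary projective/cross-ratio data'' to ``stationary matrices $\rho_t(\gamma)$'' is exactly what is missing.

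The paper closes this gap without ever touching the middle block or the full spectrum, and without needing $\theta$ to be frozen by conjugation. It encodes $\pi_m(\rho_t(\alpha))$ by its matrix of entries with respect to the frame $F$ and the dual frame $F_t^*$, taken modulo independent row and column rescalings: the map $\mu_{F,F^*}:\psln\to\ms W(m)$ of Lemma \ref{prel} is an injective immersion, and by Lemma \ref{mub} its value on $\pi_m(\rho_t(\alpha))$ is a matrix of cross ratios $[{\rm b}_{\rho_t}(y_i,z,\alpha(x_j),w)]$ --- all stationary by Corollary \ref{bir0}. The quotient by row/column scalings absorbs precisely the ambiguity in choosing vectors and covectors in the frames, which is why the $\theta$-frame never needs to be normalized; instead one writes $\mu_t(\rho_t(\alpha))=\mu_0(C_t\rho_t(\alpha))$ for $C_t$ carrying $F_t^*$ to $F_0^*$, uses the immersion property to get $\frac{\rm d}{{\rm d}t}\big\vert_{t=0}(C_t\rho_t(\alpha))=0$ for all $\alpha$, and then evaluates at $\alpha={\rm id}$ to force $\dot C_0=0$, hence $\dot\rho_0(\alpha)=0$ for every $\alpha$ after the single normalization. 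Finally the class is killed in $H^1_{\eta(z)}(\Gamma,\mk{sl}_m(\mathbb R))$ and one uses the splitting $\mk{sl}_m(\mathbb R)=\mk g\oplus\mk g^{\perp}$ (so $H^1_{\eta(z)}(\Gamma,\mk g)$ injects) to conclude in $\mk g$ --- a reduction your proposal also omits. Note in particular that $\ms G$-genericity plays no role in this part of the paper's argument (it is used elsewhere, to show the log-type constant $K$ is zero), so building your proof around genericity is not just unproven but aimed at the wrong lever.
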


We recall that ${\rm D}_z\eta(v)$ defines a zero cohomology class in $H^1_{\eta(z)}(\Gamma,\mathfrak g)$
if and only if it is tangent to the orbit $\ms G \eta(z)$ in $\hom(\Gamma,\ms G)\subset \ms G^r$.

Propositions   \ref{generic implies log type 0}  and \ref{typ0ba} together imply 
that the pressure form is non-degenerate on $\mathcal{C}_g(\Gamma,\ms G)$.
More generally, we obtain the following corollary.

\begin{corollary}\label{typ0b}
Let $\Gamma$ be a word hyperbolic group and let $\ms G$ be a reductive subgroup of $\sln$.
Suppose that $\eta:D\to \widetilde{\mathcal{C}}_g(\Gamma,\ms G)$ is an analytic map and $\bf p$ is
the associated pressure form on $D$.
If $v\in\ms T_zD$ and 
$$
{\bf p}(v,v)=0,
$$ 
then ${\rm D}_z\eta(v)$ defines a zero cohomology class in $H^1_{\eta(z)}(\Gamma,\mathfrak g)$.
\end{corollary}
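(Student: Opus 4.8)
The plan is to read the corollary off from Propositions \ref{generic implies log type 0} and \ref{typ0ba}, the only genuine content being the elementary passage from the spectral radius $\Lambda$ to the top eigenvalue $\ms L$. First I would note that, since $\widetilde{\mathcal{C}}_g(\Gamma,\ms G)$ is an open subset of the algebraic variety $\hom(\Gamma,\ms G)$ (Proposition \ref{convexsmooth}), composing $\eta$ with the inclusion $\hom(\Gamma,\ms G)\hookrightarrow \ms G^r$ exhibits $\{\rho_u=\eta(u)\}_{u\in D}$ as an analytic family of homomorphisms, each of which is $\ms G$-generic, regular, convex and irreducible, hence convex Anosov by Proposition \ref{irredano}. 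Passing to a small sub-disc about $z$ — which we may relabel so that $z=0$, noting that both the hypothesis ${\bf p}(v,v)=0$ and the desired conclusion only involve jets at $z$ — we obtain precisely the hypotheses of Proposition \ref{generic implies log type 0}.

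Applying Proposition \ref{generic implies log type 0} to this family then gives ${\rm D}_0\log(\Lambda(\alpha))(v)=0$ for every infinite order $\alpha\in\Gamma$. To invoke Proposition \ref{typ0ba} I need the stronger-looking ${\rm D}_0\ms L(\alpha)(v)=0$. For this I would recall from Proposition \ref{proxi2} that $\rho_u(\alpha)$ is proximal, so $\ms L(\alpha)(\rho_u)$ is the simple eigenvalue by which $\rho_u(\alpha)$ acts on the real line $\xi_{\rho_u}(\alpha^+)\subset\mathbb R^m$; in particular $\ms L(\alpha)$ is a real-valued analytic function on $D$, nowhere zero, with $|\ms L(\alpha)|=\Lambda(\alpha)$. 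Hence on a neighborhood of $0$ the sign $\epsilon_\alpha=\pm 1$ of $\ms L(\alpha)$ is constant, so $\log\Lambda(\alpha)=\log(\epsilon_\alpha\ms L(\alpha))$ and therefore ${\rm D}_0\ms L(\alpha)(v)=\ms L(\alpha)(0)\,{\rm D}_0\log(\Lambda(\alpha))(v)=0$. (The neighborhood on which $\epsilon_\alpha$ is constant may depend on $\alpha$, but this is harmless since the vanishing of ${\rm D}_0\ms L(\alpha)(v)$ only sees the $1$-jet of $\ms L(\alpha)$ at $0$.)

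Finally, with ${\rm D}_0\ms L(\alpha)(v)=0$ established for all infinite order $\alpha\in\Gamma$, Proposition \ref{typ0ba}, applied to $\eta$ on the disc recentered at $z$, yields that ${\rm D}\eta(v)$ represents the zero class in $H^1_{\eta(z)}(\Gamma,\mathfrak g)$ — equivalently, that it is tangent to the $\ms G$-orbit of $\eta(z)$ — which is exactly the assertion of the corollary. I do not expect any real obstacle here: the substantive work is buried in the two cited propositions, namely the thermodynamic argument (via Lemmas \ref{degenerate implies log type} and \ref{K is 0}) producing ${\rm D}\log\Lambda(\alpha)(v)=0$, and the cohomological vanishing argument of Section 11; the corollary itself only requires the observation that differentiating $\log\Lambda(\alpha)$ and differentiating $\ms L(\alpha)$ record the same first-order vanishing.
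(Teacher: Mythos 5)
Your proposal is correct and follows the paper's own route exactly: the paper deduces this corollary directly from Propositions \ref{generic implies log type 0} and \ref{typ0ba}, with no further argument given. The one detail you spell out — converting ${\rm D}_z\log(\Lambda(\alpha))(v)=0$ into ${\rm D}_z\ms L(\alpha)(v)=0$ using that $\ms L(\alpha)$ is a nonvanishing analytic function of the parameter (proximality), so its sign is locally constant and ${\rm D}_z\ms L(\alpha)(v)=\ms L(\alpha)(z)\,{\rm D}_z\log(\Lambda(\alpha))(v)$ — is precisely the implicit step the paper leaves to the reader, and your handling of it is right.
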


In the course of the proof of Proposition \ref{typ0ba} we also obtain the following fact which is
of independent interest.

\begin{proposition}\label{typ0}
Suppose that $\ms G$ is a reductive subgroup of $\sln$ and \hbox{$\rho\in\Cg$}.
Then
the set
$$\left\{D_\rho \LL(\alpha)\mid\alpha \hbox{ infinite order in }\Gamma\right\},$$
generates the cotangent space $\ms T^*_\rho\Cg$.
\end{proposition}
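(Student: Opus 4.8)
\textbf{Proof proposal for Proposition \ref{typ0}.}
The plan is to show that if some tangent vector $v\in\ms T_\rho\Cg$ is annihilated by every $\d\LL(\alpha)$ (for $\alpha$ of infinite order), then $v=0$; this is the dual formulation of the claim that the $\d\LL(\alpha)$ span the cotangent space. Concretely, I would fix an analytic path $\{u_s\}$ in a chart with $u_0=\rho$ and $\dot u_0$ representing $v$, and let $\dot\rho\in Z^1_\rho(\Gamma,\mk g)$ be the associated cocycle (using the identification of $\ms T_\rho\Cg$ with $H^1_\rho(\Gamma,\mk g)$ recalled in Section \ref{MCR}). The hypothesis is $\d\LL(\alpha)(v)=0$ for all infinite order $\alpha$, i.e. $\frac{\d}{\d s}\big|_{s=0}\LL(\alpha)(\rho_{u_s})=0$. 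The goal is to deduce that $\dot\rho$ is a coboundary, i.e. $[\dot\rho]=0$ in $H^1_\rho(\Gamma,\mk g)$, equivalently $\d\eta(v)$ is tangent to the $\ms G$-orbit; since the action of $\ms G$ on $\Cg$ is free (centralizers are central, as $\rho$ is irreducible — see the proof of Proposition \ref{convexsmooth}), this forces $v=0$.

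The key steps, in order: (1) Reduce to understanding $\frac{\d}{\d s}\log\LL(\alpha)$ and more refined trace data. Using Proposition \ref{proxi2}, write $\rho_u(\alpha)=\LL(\alpha)(\rho_u)\p_\alpha + {\bf r}(\rho_u(\alpha))$, and for a coprime pair $\alpha,\beta$ invoke Proposition \ref{typk} to express $\TT(\p(\alpha),\p(\beta))$ and $\TT(\p(\alpha),\beta)$ as limits of ratios of $\LL$'s of products $\alpha^n\beta^n$, $\alpha^n\beta$. Since $\d\LL(\gamma)(v)=0$ for \emph{all} infinite order $\gamma$ — in particular all the products — step (1) yields that $\d$ of all these trace functions vanishes at $v$: the functions $\TT(\p(\alpha),\beta)$, $\TT(\p(\alpha),\p(\beta))$ are of ``log-type $K$'' with $K=0$, hence have vanishing derivative along $v$. (2) Upgrade from the projection-trace functions to the ordinary trace functions $u\mapsto \tr(\rho_u(\gamma_1)\cdots\rho_u(\gamma_k))$ for arbitrary words: since $\rho$ is convex irreducible, Lemma \ref{projective frame} supplies a projective frame $\{\xi(x_0),\ldots,\xi(x_m)\}$ in the image of the limit curve; approximating each $x_i$ by attracting fixed points $\alpha_i^+$ and writing $\p_{\alpha_i}$ for the corresponding rank-one projectors, one can express any matrix entry (hence any trace of a product) as a convergent combination of the $\TT(\p(\alpha_{i_1}),\ldots)$-type quantities, so that $\d$ of every such trace function vanishes at $v$. (3) Conclude via rigidity of the character variety: a cocycle $\dot\rho\in Z^1_\rho(\Gamma,\mk g)$ for an irreducible $\rho$ which kills the differential of every trace function $u\mapsto\tr(\rho_u(\gamma))$ is a coboundary — this is the standard fact that, for irreducible representations, the trace functions generate the ring of functions on the character variety and their differentials span the cotangent space to the quotient (see \cite{lubotzky-magid}); equivalently, $\sum_j \tr(\dot\rho(\gamma_j)\,\rho(\delta_j))=0$ for enough words forces $\dot\rho = \d(\mathrm{Ad})\,X$ for some $X\in\mk g$ by nondegeneracy of the Killing form together with irreducibility (Schur). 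Thus $[\dot\rho]=0$ in $H^1_\rho$, i.e. $v=0$.

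The main obstacle will be step (2): controlling the passage from the special trace functions $\TT(\p(\alpha),\cdot)$ — which Proposition \ref{typk} handles because they are built from the dominant eigenline — to \emph{all} trace functions of \emph{all} words. One has to be careful that the ``rank-one projector'' expansions converge analytically in $u$ (not just pointwise) so that the vanishing of differentials passes to the limit; here the uniform estimates of Proposition \ref{proxi2} (the constant $\delta\in(0,1)$ and the $\delta^{\ell(\gamma)}$ decay of the remainder) and the displacing property Proposition \ref{displac} are what make the geometric series converge uniformly on a neighborhood of $z$, and the frame from Lemma \ref{projective frame} is what guarantees the relevant trace pairings are nondegenerate. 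Once analyticity of the expansions is established, Lemma \ref{pro:logtypedecay} (the ``$G=G_n(1+h_n)$ with $h_n$ decaying'' mechanism) does the bookkeeping, and step (3) is then a routine invocation of the representation-variety formalism.
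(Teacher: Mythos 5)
Your overall skeleton is the same as the paper's: dualize the statement, assume $v$ is killed by every ${\rm D}\LL(\alpha)$, and show the corresponding cocycle class in $H^1_\rho(\Gamma,\mk g)$ vanishes (this is exactly what the paper proves in Lemma \ref{log type zero implies zero}, from which Proposition \ref{typ0} follows), with Proposition \ref{typk} as the first move. But the bridge you propose in step (2) is where your argument has a genuine gap. The quantities $\TT(\p(\alpha),\beta)$ and $\TT(\p(\alpha),\p(\beta))$ are conjugation-invariant functions of the family, while "matrix entries'' of $\rho_u(\gamma)$ in a fixed basis are not; so matrix entries simply cannot be expressed as combinations of these quantities without first normalizing the family by conjugation (pinning a frame coming from the limit map). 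Even if you aim only at the conjugation-invariant traces $\tr(\rho_u(\gamma))$, any expansion of $\rho_u(\gamma)$ against the rank-one projectors $\p(\rho_u(\alpha_i))$ has coefficients and dual-basis elements that depend on $u$ through the limit maps, and differentiating such an expansion produces terms involving the (uncontrolled) derivatives of the projectors. This, and not the convergence of series that you worry about, is the actual difficulty: no infinite series is needed, but one must arrange that the motion of the projectors drops out — either by conjugating so that a projective frame $(\xi_t(x_0),\dots,\xi_t(x_m))$ is constant in $t$, or by expressing everything purely in terms of the invariant pairings, e.g.\ writing $\tr(\rho_u(\gamma))$ via the Gram matrix $\bigl[\TT(\p(\alpha_i),\p(\alpha_j))(u)\bigr]$ of a spanning family of projectors together with the vector $\bigl(\TT(\p(\alpha_j),\gamma)(u)\bigr)$, and checking the needed nonvanishing/transversality (for instance $\TT(\p(\alpha),\gamma)$ can vanish when $\gamma\alpha^+=\alpha^-$, so the choice of test elements must depend on $\gamma$). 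Your step (3) is essentially fine for $\sln$, but for a reductive $\ms G$ you also need the splitting $\mk{sl}_m(\mathbb R)=\mk g\oplus\mk g^\perp$ so that $H^1_\rho(\Gamma,\mk g)$ injects into $H^1_\rho(\Gamma,\mk{sl}_m(\mathbb R))$, exactly as the paper does.

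For contrast, the paper never passes through general trace functions or character-variety generalities. It converts the hypothesis into the vanishing of the derivative of every cross ratio ${\rm b}_{\rho_t}(x,y,z,w)$ (Propositions \ref{typk} and \ref{cr formula}, density of fixed-point pairs, and analytic variation of the limit maps), then conjugates the path so that the frame $F_t=(\xi_t(x_0),\dots,\xi_t(x_m))$ is constant, and uses the identity $\mu_t(\rho_t(\alpha))=[{\rm b}_{\rho_t}(x_i,z,\alpha(y_j),w)]$ of Lemma \ref{mub} together with the fact that $\mu_{F,F^*}$ is an injective immersion (Lemma \ref{prel}) to conclude that, after the normalization, $\frac{\rm d}{{\rm d}t}\big|_{t=0}\rho_t(\alpha)=0$ for every $\alpha$; the frame normalization and the evaluation at $\alpha={\rm id}$ are precisely what absorb the motion of the limit maps that your sketch leaves uncontrolled. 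If you want to keep your route through trace functions, you must add the Gram-matrix (or frame-pinning) step and the nondegeneracy checks explicitly; as written, the passage from the $\TT$-quantities to all trace or entry functions does not go through.
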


Both propositions will be established in section \ref{vectors with log type zero}.

\subsection{Invariance of the cross-ratio}
\label{cross ratio section}

We recall the definition of the \emph{cross ratio} of a pair of hyperplanes and a pair of lines. First
define
$$\rpn^{(4)}=\{(\varphi,\psi,u,v)\in{\rpn^*}^2\times{\rpn^2}: (\varphi,v)\textrm{ and }(\psi,u)\ \textrm{span}\ \mathbb R^m\}.$$
We then define ${\bf b}:\rpn^{(4)}\to\Real$ by
$${\bf b}(\varphi,\psi,u,v)=\frac{\braket{\varphi| u}\braket{\psi|v}}{\braket{\varphi|v}\braket{\psi|u}}.$$
Notice that for this formula to make sense we must make choices of elements in $\varphi$, $\psi$, $u$, and $v$, 
but that the result is independent of our choices.

If $\rho$ is a projective Anosov representation with limit curves $\xi:\bg\to\rpn$ and 
$\theta:\bg\to \rpn^*$, we define the associated {\em cross ratio}  
on $\bg^{(4)}$,  as in \cite{cross}, to be 
\begin{equation}\label{crossatio}
{\rm b}_\rho(x,y,z,w)={\bf b}(\theta(x),\theta(y),\xi(z),\xi(w)).
\end{equation}

We first derive a formula for the cross-ratio at points associated to co-prime elements.
This formula generalizes the formula in Corollary 1.6 from Benoist \cite{benoist2}.

\begin{proposition}
\label{cr formula}
If $\rho:\Gamma\to \sln$ is a projective Anosov representation and
$\alpha$ and $\beta$ are infinite order co-prime elements of $\Gamma$, then
$${\bf b}_{\rho}(\alpha^-,\beta^-,\beta^+,\alpha^+)=\TT(\p(\alpha),\p(\beta))=
\lim_{n\to\infty}\frac{\LL(\alpha^n\beta)}{\LL(\alpha)^n}.$$
\end{proposition}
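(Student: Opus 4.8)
The plan is to reduce the cross-ratio ${\bf b}_\rho(\alpha^-,\beta^-,\beta^+,\alpha^+)$ to a trace of a product of rank-one projectors, and then invoke Proposition \ref{typk} to identify that trace with the two claimed limits. First I would set up notation using Proposition \ref{proxi2}: since $\alpha$ and $\beta$ are infinite order and coprime, $\rho(\alpha)$ and $\rho(\beta)$ are proximal, with $\xi(\alpha^+)$ (resp.\ $\xi(\beta^+)$) the attracting line and $\theta(\alpha^-)$ (resp.\ $\theta(\beta^-)$) the repelling hyperplane. Let $\p(\rho(\alpha))=\mathsf p_\alpha$ be the projection onto $\xi(\alpha^+)$ parallel to $\theta(\alpha^-)$, and similarly $\mathsf p_\beta$ for $\beta$. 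Choose nonzero vectors $a^+\in\xi(\alpha^+)$, $b^+\in\xi(\beta^+)$ and nonzero functionals $\phi_{\alpha^-}\in\theta(\alpha^-)$, $\phi_{\beta^-}\in\theta(\beta^-)$ (in the sense that these functionals have the displayed hyperplanes as kernels). Then $\mathsf p_\alpha(v)=\dfrac{\braket{\phi_{\alpha^-}\mid v}}{\braket{\phi_{\alpha^-}\mid a^+}}\,a^+$ for all $v$, and likewise for $\mathsf p_\beta$; note coprimality guarantees $\braket{\phi_{\alpha^-}\mid a^+}\neq 0$, etc.

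Next I would compute $\tr(\mathsf p_\alpha \mathsf p_\beta)$ directly. Composing the two rank-one maps, $\mathsf p_\alpha\mathsf p_\beta(v)=\dfrac{\braket{\phi_{\beta^-}\mid v}}{\braket{\phi_{\beta^-}\mid b^+}}\cdot\dfrac{\braket{\phi_{\alpha^-}\mid b^+}}{\braket{\phi_{\alpha^-}\mid a^+}}\,a^+$, which is rank one with image $\xi(\alpha^+)$, so its trace equals its unique nonzero eigenvalue:
$$
\tr(\mathsf p_\alpha\mathsf p_\beta)=\frac{\braket{\phi_{\alpha^-}\mid b^+}\,\braket{\phi_{\beta^-}\mid a^+}}{\braket{\phi_{\alpha^-}\mid a^+}\,\braket{\phi_{\beta^-}\mid b^+}}.
$$
(One should check $\braket{\phi_{\beta^-}\mid a^+}\neq0$, again from coprimality, since $\xi(\alpha^+)\not\subset\theta(\beta^-)$.) Comparing with the definition
$$
{\bf b}_\rho(\alpha^-,\beta^-,\beta^+,\alpha^+)={\bf b}(\theta(\alpha^-),\theta(\beta^-),\xi(\beta^+),\xi(\alpha^+))=\frac{\braket{\phi_{\alpha^-}\mid b^+}\,\braket{\phi_{\beta^-}\mid a^+}}{\braket{\phi_{\alpha^-}\mid a^+}\,\braket{\phi_{\beta^-}\mid b^+}},
$$
these two expressions are literally identical, which identifies ${\bf b}_\rho(\alpha^-,\beta^-,\beta^+,\alpha^+)$ with $\TT(\p(\alpha),\p(\beta))$ evaluated at the representation $\rho$ (taking the constant family $\rho_u\equiv\rho$ in the notation of Proposition \ref{typk}).

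Finally, Proposition \ref{typk} applied to the constant family gives $\TT(\p(\alpha),\p(\beta))=\lim_{n\to\infty}\dfrac{\LL(\alpha^n\beta^n)}{\LL(\alpha)^n\LL(\beta)^n}$ and $\TT(\p(\alpha),\beta)=\lim_{n\to\infty}\dfrac{\LL(\alpha^n\beta)}{\LL(\alpha)^n}$; but the second displayed limit in the statement of Proposition \ref{cr formula} is exactly $\lim_n \LL(\alpha^n\beta)/\LL(\alpha)^n=\TT(\p(\alpha),\beta)$, so it remains to show $\TT(\p(\alpha),\p(\beta))=\TT(\p(\alpha),\beta)$. This follows from the observation that $\tr(\mathsf p_\alpha\rho(\beta))=\tr(\mathsf p_\alpha\mathsf p_\beta)$: since $\mathsf p_\alpha$ is the projection onto the line $\xi(\alpha^+)$ parallel to $\theta(\alpha^-)$, the trace of $\mathsf p_\alpha X$ depends only on the component of $Xa^+$ along $a^+$ as seen through $\phi_{\alpha^-}$, i.e.\ on $\braket{\phi_{\alpha^-}\mid Xa^+}/\braket{\phi_{\alpha^-}\mid a^+}$; writing $\rho(\beta)=\LL(\beta)\mathsf p_\beta+{\bf r}(\rho(\beta))$ with ${\bf r}(\rho(\beta))$ acting on $\xi(\alpha^+)$'s complement in a way that contributes a lower-order term — more cleanly, $\mathsf p_\beta$ and $\rho(\beta)$ agree on $\xi(\beta^+)$ up to the scalar $\LL(\beta)$, and composing with $\mathsf p_\alpha$ on the left picks out precisely $\xi(\beta^+)\to\xi(\alpha^+)$ — one gets $\TT(\p(\alpha),\beta)=\LL(\beta)\,\TT(\p(\alpha),\p(\beta))$ when $\LL(\beta)$ is normalized, but since the cross-ratio formula uses the \emph{non-normalized} projectors the cleanest route is: replace $\beta$ by $\beta^n$ in the first limit, use $\LL(\beta^n)=\LL(\beta)^n$, and observe both limits compute $\tr(\mathsf p_\alpha\mathsf p_\beta)$ because $\rho(\beta^n)/\LL(\beta)^n\to\mathsf p_\beta$ and $\rho(\beta)$ restricted appropriately already has the same $\mathsf p_\alpha$-trace as $\mathsf p_\beta$. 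The main obstacle is purely bookkeeping: being careful about the non-uniqueness in the choice of representatives $a^+,b^+,\phi_{\alpha^-},\phi_{\beta^-}$ and confirming all denominators are nonzero using coprimality and the direct-sum condition $\xi(x)\oplus\theta(y)=\mathbb R^m$ for $x\neq y$; there is no analytic difficulty, since everything reduces to Proposition \ref{typk} and an elementary rank-one trace computation.
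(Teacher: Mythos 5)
Your identification of the cross ratio with a rank-one trace is exactly the paper's argument: choose $a^+\in\xi(\alpha^+)$, $a^-\in\theta(\alpha^-)$, $b^+\in\xi(\beta^+)$, $b^-\in\theta(\beta^-)$, compose the two projections, and read off
$$\tr\bigl(\p(\rho(\alpha))\p(\rho(\beta))\bigr)=\frac{\braket{a^-|b^+}\braket{b^-|a^+}}{\braket{a^-|a^+}\braket{b^-|b^+}}={\bf b}_\rho(\alpha^-,\beta^-,\beta^+,\alpha^+),$$
with coprimality and the transversality $\xi(x)\oplus\theta(y)=\mathbb R^m$ guaranteeing nonzero denominators. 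Up to this point your proof and the paper's coincide essentially word for word.

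The final step of your proposal is where it breaks. The identity you assert, $\tr(\p(\rho(\alpha))\rho(\beta))=\tr(\p(\rho(\alpha))\p(\rho(\beta)))$, i.e. $\TT(\p(\alpha),\beta)=\TT(\p(\alpha),\p(\beta))$, is false in general: since $\tr(\p(\rho(\alpha))X)=\braket{a^-|Xa^+}/\braket{a^-|a^+}$ and $a^+$ spans $\xi(\alpha^+)$, which is \emph{not} contained in $\xi(\beta^+)$, the decomposition $\rho(\beta)=\LL(\beta)\p(\rho(\beta))+{\bf r}(\rho(\beta))$ yields $\TT(\p(\alpha),\beta)=\LL(\beta)\,\TT(\p(\alpha),\p(\beta))+\tr(\p(\rho(\alpha)){\bf r}(\rho(\beta)))$, and the remainder term does not vanish. (Already in $\ms{SL}_2(\mathbb R)$: writing $\rho(\beta)=\lambda\,\p(\rho(\beta))+\lambda^{-1}\q(\rho(\beta))$ and $t=\TT(\p(\alpha),\p(\beta))$, one gets $\TT(\p(\alpha),\beta)=\lambda t+\lambda^{-1}(1-t)\neq t$ generically.) The subsequent maneuver with $\beta^n$ cannot repair this, because by Proposition \ref{typk} the limit $\lim_n\LL(\alpha^n\beta)/\LL(\alpha)^n$ \emph{is} $\TT(\p(\alpha),\beta)$, a genuinely different number from the cross ratio. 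What Proposition \ref{typk} does give, and what the paper's one-line conclusion (and its later use, e.g. in the proof of Theorem \ref{theorem:length0}) actually relies on, is
$${\bf b}_\rho(\alpha^-,\beta^-,\beta^+,\alpha^+)=\TT(\p(\alpha),\p(\beta))=\lim_{n\to\infty}\frac{\LL(\alpha^n\beta^n)}{\LL(\alpha)^n\LL(\beta)^n},$$
so the third expression as printed in the statement should be read as this limit (it is a misprint). The correct move at the end is simply to quote Proposition \ref{typk} for $\TT(\p(\alpha),\p(\beta))$, not to try to equate the two traces; your instinct that something needed reconciling was right, but the reconciliation is to correct the displayed limit, not to prove $\TT(\p(\alpha),\beta)=\TT(\p(\alpha),\p(\beta))$.
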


\begin{proof}
Choose $a^+\in \xi(\alpha^+)$, $a^-\in\theta(\alpha^-)$, $b^+\in\xi(\beta^+)$ and
$b^-\in\theta(\beta^-)$.
Observe  that
$$
\p(\alpha)(u)= \frac{\braket{a^-|u}}{\braket{a^-|a^+}}\ a^+.
$$
for all $u\in\mathbb R^m$.
In particular,
$$
\p(\beta)\p(\alpha)(u)= \frac{\braket{b^-|a^+}}{\braket{a^-|a^+}\braket{b^-|b^+}}\braket{a^-|u}b^+.
$$
Therefore,
$$
\TT(\p(\alpha)\p(\beta))=\frac{\braket{a^-|b^+}\braket{b^-|a^+}}{\braket{a^-|a^+}\braket{b^-|b^+}}={\bf b}_{\rho}(\alpha^-,\beta^-,\beta^+,\alpha^+).
$$
The last equality in the formula follows immediately from Proposition \ref{typk}.
\end{proof}

As a corollary, we see that if $\LL(\alpha)$ has log-type zero for all infinite order $\alpha\in\Gamma$, then
the cross-ratio also has log-type zero.

\begin{corollary}\label{bir0}
Let $\Gamma$ be a word hyperbolic group and let $\ms G$ be a reductive subgroup of $\sln$.
Suppose that  $\{\rho_u:\Gamma\to\ms G\}_{u\in D}$ is an analytic family of projective Anosov $\ms G$-generic homomorphisms
parametrized by a disc $D$.
If  $\LL(\alpha)$ has log-type 0  at $v\in\ms T_zD$ for all infinite order $\alpha\in\Gamma$,
then for all distinct collections of points $x, y, z,w\in\bg$, the function
$$
u\mapsto {\rm b}_{\rho_u}(x,y,z,w),
$$
is of log-type 0 at $v$.
\end{corollary}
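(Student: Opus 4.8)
The strategy is to reduce the cross-ratio at an arbitrary quadruple of distinct points to a cross-ratio at points associated to coprime group elements, using the already-established Proposition \ref{cr formula}, and then to transfer the log-type~$0$ property from the functions $\LL(\gamma)$ via the product formula in Proposition \ref{cr formula}. First I would fix distinct points $x,y,z,w\in\bg$ and a smooth path $\{u_s\}$ in $D$ with $u_0=z$ and $\dot u_0=v$ (abusing the letter $z$, which also denotes a boundary point — I will rename the path parameter base point to $u_0$ in the writeup). The goal is to show
$$
{\rm D}_{u_0}\log\!\bigl({\rm b}_{\rho_u}(x,y,z,w)\bigr)(v)=0,
$$
i.e. that the analytic function $u\mapsto {\rm b}_{\rho_u}(x,y,z,w)$ has log-type $0$ in the sense of Definition preceding Lemma \ref{pro:logtypedecay} with $K=0$: its logarithmic derivative in the direction $v$ vanishes.

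\textbf{Step 1: density of coprime-pair quadruples.} Since $\Gamma$ is word hyperbolic and acts as a convergence group on $\bg$, pairs $(\gamma^+,\gamma^-)$ of fixed points of infinite order elements are dense in $\bg^{(2)}$, and in fact for any two disjoint pairs of distinct points one can find coprime $\alpha,\beta$ with $(\alpha^+,\alpha^-)$ and $(\beta^+,\beta^-)$ arbitrarily close to the prescribed pairs. Thus there exist coprime infinite order elements $\alpha_n,\beta_n$ with $\alpha_n^-\to x$, $\alpha_n^+\to w$, $\beta_n^-\to y$, $\beta_n^+\to z$. By Proposition \ref{cr formula}, for each $n$ and each $u\in D$,
$$
{\bf b}_{\rho_u}(\alpha_n^-,\beta_n^-,\beta_n^+,\alpha_n^+)=\TT(\p(\alpha_n),\p(\beta_n))(u),
$$
and by hypothesis together with Proposition \ref{typk} (applied with $K=0$), the right-hand side has log-type $0$ at $v$, since $\LL(\gamma)$ has log-type $0$ at $v$ for every infinite order $\gamma$. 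So each function $u\mapsto{\bf b}_{\rho_u}(\alpha_n^-,\beta_n^-,\beta_n^+,\alpha_n^+)$ has vanishing logarithmic derivative at $u_0$ in the direction $v$.

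\textbf{Step 2: passing to the limit.} Now use that the cross-ratio function is jointly continuous — indeed real analytic — in its boundary arguments and in $u$, because the limit maps $\xi_u,\theta_u$ vary continuously (and the map $(u,x)\mapsto\xi(u,x)$ is transversely analytic by Theorem \ref{Comegareal}), and because ${\bf b}$ is a rational function of its arguments on the open set $\rpn^{(4)}$ where the denominator does not vanish. The quadruple $(\theta_{u}(x),\theta_{u}(y),\xi_{u}(z),\xi_{u}(w))$ lies in $\rpn^{(4)}$ precisely because $x,y,z,w$ are distinct and $\rho_u$ is convex, so the span conditions hold; and this persists for the approximating quadruples once $n$ is large and $u$ near $u_0$. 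Hence ${\bf b}_{\rho_u}(\alpha_n^-,\beta_n^-,\beta_n^+,\alpha_n^+)\to {\bf b}_{\rho_u}(x,y,z,w)$ as $n\to\infty$, \emph{together with the first derivatives in $u$} — this is where I need the convergence of $\xi(u,\cdot)$ to be in a topology controlling derivatives in the $u$-direction, which Theorem \ref{Comegareal}(4) provides: $u\mapsto\xi_u$ is analytic into $C^\alpha(\bg,\cdot)$, so evaluation at the converging points converges in $C^1$ on the $u$-disc. Therefore $\log{\bf b}_{\rho_u}(x,y,z,w)$ is a $C^1$-limit of functions with vanishing $v$-derivative at $u_0$, so its $v$-derivative at $u_0$ vanishes as well, which is exactly log-type $0$.

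\textbf{Main obstacle.} The delicate point is Step 2: justifying that convergence of the boundary arguments yields convergence of the cross-ratio functions \emph{in the $C^1$ topology on the parameter disc}, not merely pointwise. Pointwise convergence alone would not let us conclude anything about derivatives. The resolution is to invoke the transverse regularity of the limit maps from Theorem \ref{Comegareal} — namely that $x\mapsto\xi_x\in C^\omega(D_0,\ms G/\ms P)$ is $\alpha$-Hölder — so that $\xi_{\alpha_n^\pm}\to\xi_x$ not just as points but as analytic functions of $u$ (in a fixed $C^1$ neighborhood), and similarly for $\theta$; then composing with the rational function ${\bf b}$, which is smooth on $\rpn^{(4)}$, gives $C^1$-convergence of the composed functions of $u$ on a neighborhood of $u_0$. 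With that in hand the interchange of limit and derivative is routine. A minor bookkeeping point: one must check the approximating quadruples stay in $\rpn^{(4)}$ uniformly, which follows from openness of $\rpn^{(4)}$ and the fact that the limiting quadruple lies in it.
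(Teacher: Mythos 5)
Your proposal is correct and follows essentially the same route as the paper: apply Propositions \ref{typk} and \ref{cr formula} to see that $u\mapsto{\bf b}_{\rho_u}(\alpha^-,\beta^-,\beta^+,\alpha^+)$ has log-type $0$ for coprime infinite order $\alpha,\beta$, then use density of fixed-point pairs in $\bg^{(2)}$ together with the analytic variation of the limit maps (Theorem \ref{Comegareal}) to pass to arbitrary distinct quadruples. Your Step 2 merely makes explicit the point the paper compresses into one sentence, namely that the transverse regularity of $(u,x)\mapsto\xi(u,x)$ gives convergence of the $u$-derivatives, not just pointwise convergence, so the interchange of limit and derivative is legitimate.
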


\begin{proof}
Suppose that $\alpha,\beta\in\Gamma$ have infinite order.
Propositions \ref{typk} and \ref{cr formula} imply that
${\bf b}_{\rho}(\alpha^-,\beta^-,\beta^+,\alpha^+)$ has log-type 0.

Since pairs of fixed points of infinite order elements are dense in $\bg^{(2)}$ and
$\xi_u$ and $\theta_u$ vary analytically by Proposition \ref{Comega},
we see that
$$
\rho\mapsto {\rm b}_\rho(x,y,z,w),
$$
has log-type 0 for all pairwise distinct $x,y,z,w\in\bg$.
\end{proof}

\subsection{An useful immersion}

We define a mapping from $\psln$ into a quotient $\ms W(m)$ of the
vector space $\mathsf M^{m+1}$ of  all $(m+1)\times (m+1)$-matrices
and use it to encode a collection of cross ratios.

Consider the action of the multiplicative group $(\mathbb R\setminus\{0\})^{2(m+1)}$ on 
$\mathsf M^{m+1}$ given  by
$$
(a_0,\ldots,a_m,b_0,\ldots,b_m)(M_{i,j})=(a_ib_j M_{i,j}). 
$$
We denote the quotient by 
$$
\mathsf W(m)=\mathsf M^{m+1}/(\mathbb R\setminus\{0\})^{2(m+1)}.
$$

Given a projective frame  $F=(x_0,\ldots,x_{m})$ for ${\rpn}$ 
and a projective frame $F^*=(X_0,\ldots,X_{m})$ for the dual $\rpn^*$, let
\begin{itemize}
\item $\hat x_i$ be non zero vectors in $x_i$, such that
\begin{eqnarray}
0=\sum_{i=0}^m\hat x_i,\label{projframe1}
\end{eqnarray}
\item $\hat X_i$ be non zero covectors in $X_i$ such that 
\begin{eqnarray}
0=\sum_{i=0}^m\hat X_i.\label{projframe2}
\end{eqnarray}
\end{itemize}
Observe that $\hat x_i$, respectively $\hat X_i$, are uniquely defined up to a common multiple.
Then, the mapping  
$$\mu_{F,F^*}:\psln\to \mathsf W(m)$$
given by
$$
\mu_{F,F^*}:A\mapsto \hat X_i(A(\hat x_j)) 
$$
is well defined, independent of the choice of $\hat x_i$ and $\hat X_i$.

\begin{lemma}\label{prel}
The mapping $\mu_{F,F^*}$ is a smooth injective  immersion.
\end{lemma}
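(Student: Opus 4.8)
\textbf{Proof proposal for Lemma \ref{prel}.}

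The plan is to prove smoothness, injectivity, and immersivity separately, reducing everything to linear algebra about projective frames. First I would address \emph{smoothness}: the assignment $A\mapsto (\hat X_i(A\hat x_j))_{i,j}$ is manifestly a polynomial (in fact linear) map from $\mathsf{M}_m(\mathbb R)$ into $\mathsf M^{m+1}$ once representatives $\hat x_j$ and $\hat X_i$ are fixed, and post-composing with the quotient projection $\mathsf M^{m+1}\to\mathsf W(m)$ (a smooth submersion onto a manifold away from the locus where a whole row or column vanishes, which does not meet the image since $A$ is invertible and $F,F^*$ are frames) yields a smooth map. I would note that the normalizations \eqref{projframe1} and \eqref{projframe2} pin down $\hat x_i$ and $\hat X_i$ up to a single common scalar each, so the composite descends to a well-defined smooth map on all of $\psln$; the ambiguity in the $\hat x_i$'s is absorbed by the $b_j$-action and the ambiguity in the $\hat X_i$'s by the $a_i$-action.

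Next, \emph{injectivity}. Suppose $\mu_{F,F^*}(A)=\mu_{F,F^*}(B)$ in $\mathsf W(m)$. Then there exist scalars $a_i,b_j\in\mathbb R\setminus\{0\}$ with $\hat X_i(A\hat x_j)=a_i b_j\,\hat X_i(B\hat x_j)$ for all $i,j$. I would first use that $F$ is a projective frame: any $m$ of the $\hat x_j$ form a basis of $\mathbb R^m$, and the single linear relation $\sum_j \hat x_j=0$ is (up to scale) the only one. The key point is that the collection of one-dimensional data $\hat X_i(Av)$ as $v$ ranges over a basis determines $Av$ up to the action of the dual frame, because $F^*$ is also a projective frame: knowing $\hat X_i(w)$ for all $i$ determines $w$ up to scalar, since $m$ of the covectors $\hat X_i$ are linearly independent and hence $w\mapsto(\hat X_i(w))_i$ is injective modulo scalars. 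I would then show that after rescaling $A$ by the common scalar one may take all $b_j=1$, using the relation $\sum_j\hat x_j=0$ applied inside $\hat X_i(A(\cdot))$ and $\hat X_i(B(\cdot))$ to force the $b_j$ equal; symmetrically the relation $\sum_i\hat X_i=0$ forces the $a_i$ equal. Once $a_i=b_j=$ const, we get $\hat X_i(A\hat x_j)=\hat X_i(B\hat x_j)$ for all $i,j$, hence $A\hat x_j$ and $B\hat x_j$ agree (the $\hat X_i$ separate points), so $A=B$ in $\psln$.

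Finally, \emph{immersion}. Since everything is homogeneous under $\psln$ acting by, say, left translation composed with the corresponding frame change, it suffices to check injectivity of the differential at the identity. A tangent vector is $\mathfrak{sl}_m(\mathbb R)\ni \dot A$, and $D\mu_{F,F^*}(\dot A)$ is represented by the matrix $(\hat X_i(\dot A\hat x_j))$ modulo the tangent space of the torus action, which is spanned by the ``row'' and ``column'' directions $(\delta_i M_{ij})$ and $(\epsilon_j M_{ij})$. So I must show: if $\hat X_i(\dot A\hat x_j)=\delta_i\,\hat X_i(\hat x_j)+\epsilon_j\,\hat X_i(\hat x_j)$ for some $\delta_i,\epsilon_j$, then $\dot A$ is $0$ in $\mathfrak{sl}_m$. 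Using the frame relations to solve for the $\delta_i$ and $\epsilon_j$ as before, one reduces to $\hat X_i(\dot A\hat x_j)=c\,\hat X_i(\hat x_j)$ for a single constant $c$, i.e. $\dot A$ acts as the scalar $c$ on each $\hat x_j$ modulo... — here one concludes $\dot A - c\,\mathrm{Id}$ kills a spanning set, so $\dot A=c\,\mathrm{Id}$, which is trivial in $\mathfrak{sl}_m$ (indeed $c=0$ since $\operatorname{tr}\dot A=0$). The main obstacle I anticipate is the bookkeeping in the injectivity/immersion step: correctly exploiting the two affine relations \eqref{projframe1}, \eqref{projframe2} to strip off the $(\mathbb R\setminus\{0\})^{2(m+1)}$-ambiguity down to a single global scalar, and making sure the ``a frame of $m+1$ points and its dual determine a projective map up to scalar'' principle is applied with the correct transversality hypotheses (which is exactly what being a projective frame guarantees). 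Everything else is routine.
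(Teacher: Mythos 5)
Your proposal is correct and follows essentially the same route as the paper: injectivity because the data determines the image of a projective frame, and infinitesimal injectivity by using the normalizations $\sum_i\hat x_i=0$ and $\sum_i\hat X_i=0$ together with the linear independence of the rows and columns of $(\hat X_i(A\hat x_j))$ to force the scaling parameters to be constant, hence trivial in $\mathfrak{sl}_m(\mathbb R)$. The only cosmetic difference is that you reduce the immersion check to the identity by equivariance, whereas the paper runs the same linear-algebra computation at an arbitrary point using time-dependent lifts of the frame vectors and covectors.
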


\begin{proof} 
Since  $\mu_{F,F^*}(A)$ determines the projective coordinates of the image of the projective frame $(x_0,\ldots,x_n)$ by $A$, $\mu_{F,F^*}$ is injective.

Let $\mu=\mu_{F,F^*}$. Let $\{A_t\}_{t\in (-1,1)}$ be a smooth one-parameter family in $\ms{PSL}_m(\mathbb R)$ such
that 
$$\dot A\in \TT_{A_0}(\ms{PSL}_m(\mathbb R)) \ \ \textrm{and}\ \  {\ms D}\mu(\dot A)=0.$$
Let $\{\hat X^t_i\}_{t\in(-1,1)}$ and $\{\hat x^t_j\}_{t\in(-1,1)}$ be time dependent families of
covectors in $X_i$ 
and vectors  $x_j$ respectively, and  let
$$
a^t_{i,j}=\hat X_i^t(A_t(\hat x_j^t)).
$$
If $\mathsf D\mu (\dot A)=0$, then there exists $\lambda_i$ and $\mu_j$ such that
$$
\dot a_{i,j}=\lambda_i a_{i,j}+\mu_j a_{i,j}.
$$ 
Multiplying each  $\hat X_i^t$ by $e^{-\lambda_0 t}$ and each $\hat x_i^t$ by $e^{-\mu_0 t}$ has the effect of replacing
$\lambda_i$ and $\mu_j$  by $\lambda_i-\lambda_0$ and $\mu_j-\mu_0$ respectively. 
Thus,  we may assume  that $\lambda_0=\mu_0=0$.

We now use the normalization \eqref{projframe1} and \eqref{projframe2}, to see that
$$
\sum_{i=1}^m \lambda_i a_{i,j}=0=\sum_{j=1}^m \mu_j a_{i,j}.
$$
On the other hand, since the collections of vectors $\{v_i=(a_{i,j})_{1\leq j\leq m}\}$ and 
\hbox{$\{w_j= (a_{i,j})_{1\leq i\leq m}\}$ }
are linearly independent, this implies that $\lambda_i=\mu_j=0$ for all $i$ and $j$.
\end{proof}

The following lemma  relates the immersion $\mu$ and the cross ratio.

\begin{lemma}
\label{mub}
Let $\{x_0,\ldots,x_m\}$  and $\{y_0,\ldots,y_m\}$ be collections of $m+1$  pairwise distinct points in $\bg$.
Suppose that $\rho:\Gamma\to\sln$ is projective Anosov with limit maps
$\xi$ and $\theta$ and that
 \begin{eqnarray*}
F&=&(\xi(x_0),\ldots,\xi(x_m)),\cr
F^*&=&(\theta(y_0),\ldots,\theta(y_m)).
\end{eqnarray*} 
are projective frames for $\rpn$ and $\rpn^*$.
If
$\alpha\in\Gamma$, then
$$
\mu_{F,F^*}(\pi_m(\rho(\alpha)))=[{\rm b}_\rho(y_i,z,\alpha(x_j),w)]
$$
where $z$ and $w$ are arbitrary points in $\bg$.
\end{lemma}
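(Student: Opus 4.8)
The plan is to unwind both sides of the claimed identity directly from the definitions of $\mu_{F,F^*}$ and of the cross ratio ${\rm b}_\rho$, exploiting the fact that the equivalence class in $\mathsf W(m)$ kills all row and column rescalings. First I would fix lifts: choose nonzero vectors $\hat x_j \in \xi(x_j)$ and nonzero covectors $\hat X_i \in \theta(y_i)$ satisfying the normalizations \eqref{projframe1} and \eqref{projframe2}; by definition $\mu_{F,F^*}(\pi_m(\rho(\alpha)))$ is the class of the matrix with entries $a_{i,j} = \hat X_i\big(\rho(\alpha)(\hat x_j)\big)$, and this class is independent of all the choices made. On the cross-ratio side, I would also pick a nonzero vector $\hat w \in \xi(w)$ and a nonzero covector $\hat Z \in \theta(z)$ (the hypotheses on transversality, which I'd note follow since $\rho$ is convex Anosov with the stated limit maps, guarantee the relevant pairings are nonzero). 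Then by the defining formula \eqref{crossatio} together with the formula for $\mathbf b$,
$$
{\rm b}_\rho(y_i,z,\alpha(x_j),w) = \frac{\braket{\hat X_i\,|\,\rho(\alpha)\hat x_j}\,\braket{\hat Z\,|\,\hat w}}{\braket{\hat X_i\,|\,\hat w}\,\braket{\hat Z\,|\,\rho(\alpha)\hat x_j}},
$$
using $\xi(\alpha(x_j)) = \rho(\alpha)\xi(x_j)$ by $\rho$-equivariance of $\xi$.

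Next I would isolate the dependence on $(i,j)$. Write the right-hand side as
$$
{\rm b}_\rho(y_i,z,\alpha(x_j),w) = a_{i,j}\cdot b_i \cdot c_j \cdot \braket{\hat Z|\hat w},
$$
where $a_{i,j} = \braket{\hat X_i\,|\,\rho(\alpha)\hat x_j}$ is exactly the matrix entry appearing in $\mu_{F,F^*}$, $b_i = \braket{\hat X_i|\hat w}^{-1}$ depends only on the row index $i$, and $c_j = \braket{\hat Z\,|\,\rho(\alpha)\hat x_j}^{-1}$ depends only on the column index $j$ (the scalar $\braket{\hat Z|\hat w}$ is a global constant, which can be absorbed into either the $b_i$'s or the $c_j$'s). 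Thus the matrix $[{\rm b}_\rho(y_i,z,\alpha(x_j),w)]$ is obtained from $[a_{i,j}]$ by multiplying row $i$ by $b_i$ and column $j$ by $c_j c$ where $c = \braket{\hat Z|\hat w}$. Since these are precisely the moves by which $(\mathbb R\setminus\{0\})^{2(m+1)}$ acts on $\mathsf M^{m+1}$ in the definition of $\mathsf W(m)$, the two matrices represent the same class, which gives the claimed equality $\mu_{F,F^*}(\pi_m(\rho(\alpha))) = [{\rm b}_\rho(y_i,z,\alpha(x_j),w)]$.

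I would then record why the formula is well posed: $z$ and $w$ are arbitrary because changing them multiplies all entries of the matrix by a fixed nonzero scalar (coming from $\braket{\hat Z|\hat w}$ and the reparametrization of the $b_i$, $c_j$), which does not change the class in $\mathsf W(m)$; and I should double-check that the quadruple $(y_i, z, \alpha(x_j), w)$ really lies in the domain $\bg^{(4)}$ of the cross ratio, i.e. that $\theta(y_i)\oplus\xi(w) = \mathbb R^m$ and $\theta(z)\oplus\xi(\alpha(x_j)) = \mathbb R^m$. The first holds provided $w \ne y_i$, using that $\{\xi(y_\ell)\}$ is a frame... actually more carefully it holds because $F^*$ being a frame forces $\xi(w) \oplus \theta(y_i) = \mathbb R^m$ only when $w$ is suitably generic, so strictly I may need to note that $z,w$ should be chosen generically (which is harmless since any generic choice gives the same class); the cleanest route is to take $z$ and $w$ among fixed points of infinite-order elements disjoint from the relevant data and invoke density, or simply to observe that the set of valid $(z,w)$ is open and dense and the class is locally constant in them. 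The only mild obstacle is this bookkeeping about transversality and the arbitrariness of $z,w$; the algebraic heart — recognizing the cross-ratio matrix as a row/column rescaling of the $\mu$-matrix — is immediate once the lifts are set up.
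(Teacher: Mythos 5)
Your proposal is correct and is essentially the paper's proof: choose lifts $\phi_i\in\theta(y_i)$, $v_j\in\xi(x_j)$, $\phi\in\theta(z)$, $v\in\xi(w)$, observe that $[{\rm b}_\rho(y_i,z,\alpha(x_j),w)]$ is obtained from the matrix $\left[\braket{\phi_i|\rho(\alpha)v_j}\right]$ defining $\mu_{F,F^*}(\pi_m(\rho(\alpha)))$ by row and column rescalings, and these are exactly the moves quotiented out in $\mathsf W(m)$. Your only hesitation, about why the cross ratios are defined, resolves more simply than the genericity/density detour you sketch: convexity of $\rho$ gives $\xi(w)\oplus\theta(y_i)=\mathbb R^m$ whenever $w\neq y_i$ and $\xi(\alpha(x_j))\oplus\theta(z)=\mathbb R^m$ whenever $z\neq\alpha(x_j)$, so ``arbitrary'' $z,w$ need only avoid these finitely many coincidences.
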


\begin{proof} Choose, for each $i=0,\ldots,m$, $\phi_i\in\theta(y_i)$ and $v_i\in \xi(x_i)$, and choose
$\phi\in\theta(z)$ and $v\in\xi(w)$. Then
$$\mu_{F,F^*}(\pi_m(\rho(\alpha)))=\left[\braket{\phi_i| \alpha(v_j)}\right]$$
while
$$
[{\rm b}_\rho(y_i,z,\alpha(x_j),w)]=\left[\frac{\braket{\phi_i|\alpha(v_j)}\braket{\phi|  v}}{\braket{\phi_i| v}\braket{\phi| \alpha(v_j)}}
\right].$$
The equivalence is given by taking $a_i=\frac{\braket{\phi|  v}}{\braket{\phi_i| v}}$ and
$b_j=\frac{1}{\braket{\phi| \alpha(v_j)}}$.
\end{proof}

\subsection{Vectors with log type zero}
\label{vectors with log type zero}

Propositions \ref{typ0ba} and \ref{typ0} follow from Proposition \ref{generic implies log type 0} and the following lemma.

\begin{lemma}
\label{log type zero implies zero} 
Let $\Gamma$ be a word hyperbolic group and let $\ms G$ be a reductive subgroup of $\sln$.
Suppose that $\eta:D\to \hom(\Gamma,\ms G)$ is an analytic map such that for each $u\in D$,
$\eta(u)=\rho_u$ is  irreducible, projective Anosov and $\ms G$-generic.
Suppose that $v\in \ms T_zD$ and that ${\rm D}_z \LL(\alpha)(v)=0$ for all
infinite order $\alpha\in\Gamma$. 
Then the cohomology class of ${\rm D}\eta(v)$ vanishes in $H^1_{\eta(z)}(\Gamma,\mathfrak g)$.
\end{lemma}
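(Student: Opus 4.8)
The plan is to show that the hypothesis ${\rm D}_z\LL(\alpha)(v)=0$ for all infinite order $\alpha$, together with Corollary \ref{bir0}, forces all cross ratios ${\rm b}_{\rho_u}(x,y,z,w)$ to have vanishing derivative at $v$; then to use the immersion $\mu_{F,F^*}$ of Lemma \ref{prel} together with Lemma \ref{mub} to convert this into the statement that ${\rm D}_z(\pi_m\circ\rho_u(\alpha))(v)$ lies in the tangent to a $\psln$-orbit in $\psln$ for every $\alpha$, uniformly; and finally to lift this back to $\ms G$ and conclude that the cocycle ${\rm D}\eta(v)\in Z^1_{\eta(z)}(\Gamma,\mathfrak g)$ is a coboundary.

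\medskip\noindent
\emph{Step 1 (cross ratios are constant to first order).} Since ${\rm D}_z\LL(\alpha)(v)=0$ for all infinite order $\alpha$, each $\LL(\alpha)$ has log-type $0$ at $v$ (in the sense of the definition preceding Lemma \ref{pro:logtypedecay}, with $K=0$). Corollary \ref{bir0} then shows that for any four pairwise distinct points $x,y,z,w\in\bg$ the function $u\mapsto {\rm b}_{\rho_u}(x,y,z,w)$ has log-type $0$ at $v$, i.e. ${\rm D}_z\,{\rm b}_{\rho_u}(x,y,z,w)(v)=0$.

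\medskip\noindent
\emph{Step 2 (use the immersion to kill the orbit direction).} Fix, using Lemma \ref{projective frame} (valid since each $\rho_u$ is irreducible) and Lemma \ref{mutual frame}, collections $\{x_0,\dots,x_m\}$ and $\{y_0,\dots,y_m\}$ of pairwise distinct points in $\bg$ so that $F_u=(\xi_u(x_0),\dots,\xi_u(x_m))$ and $F^*_u=(\theta_u(y_0),\dots,\theta_u(y_m))$ are projective frames for $\rpn$ and $\rpn^*$ for all $u$ near $z$; this is possible because the limit maps vary continuously by Theorem \ref{Comegareal} and the projective-frame condition is open. By Lemma \ref{mub}, $\mu_{F_u,F^*_u}(\pi_m(\rho_u(\alpha)))=[{\rm b}_{\rho_u}(y_i,w_0,\alpha(x_j),w_1)]$ for fixed auxiliary points $w_0,w_1$. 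Differentiating at $u=z$ in the direction $v$: the left side's derivative decomposes into (a) the variation of the frames $F_u,F^*_u$, which by Lemma \ref{prel} only moves things along the $(\mathbb R\setminus\{0\})^{2(m+1)}$-action and hence contributes nothing to the immersed image direction, plus (b) ${\rm D}\mu_{F_z,F^*_z}$ applied to ${\rm D}_z(\pi_m\circ\rho_u(\alpha))(v)$; the right side's derivative is $0$ by Step 1. Since $\mu_{F_z,F^*_z}$ is an immersion (Lemma \ref{prel}), it follows that ${\rm D}_z(\pi_m\circ\rho_u(\alpha))(v)$ is tangent to the $\psln$-orbit of $\pi_m(\rho_z(\alpha))$, for every infinite order $\alpha$.

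\medskip\noindent
\emph{Step 3 (from ``tangent to the orbit at each $\alpha$'' to ``coboundary'').} Write $c\in Z^1_{\eta(z)}(\Gamma,\mathfrak g)$ for the cocycle ${\rm D}\eta(v)$ and let $\bar c\in Z^1(\Gamma,\mathfrak{psl}_m)$ be its image under $\mathfrak{sl}_m\to\mathfrak{psl}_m$. Step 2 says precisely that for each infinite order $\gamma$ there is $\zeta_\gamma\in\mathfrak{psl}_m$ with $\bar c(\gamma)=\zeta_\gamma-\ad(\pi_m\rho_z(\gamma))\zeta_\gamma$, i.e. $\bar c(\gamma)$ is a coboundary when restricted to the cyclic subgroup $\langle\gamma\rangle$. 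The standard argument (as in the proof of Theorem \ref{theorem:length}, or Dal'Bo--Kim) upgrades this to $\bar c$ being a global coboundary: choosing $\gamma_1,\gamma_2$ with distinct fixed point pairs so that $\rho_z(\gamma_1),\rho_z(\gamma_2)$ have no common invariant subspace beyond the forced transversality, one shows $\zeta_{\gamma_1}$ and $\zeta_{\gamma_2}$ can be taken equal to a single $\zeta$, then uses irreducibility (so that the $\rho_z(\gamma)$ for infinite order $\gamma$ generate, and the centralizer is the center) to conclude $\bar c(\gamma)=\zeta-\ad(\pi_m\rho_z(\gamma))\zeta$ for all $\gamma$. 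Hence $\bar c$, and therefore $c$ up to the center (which contributes trivially to $H^1$ with coefficients in $\mathfrak g$), is a coboundary; lifting $\zeta$ to $\mathfrak g$ using $\mathfrak{sl}_m=\mathfrak{psl}_m$ as vector spaces (or working directly in $\mathfrak g$ when $\ms G\subset\sln$), the class of $c$ in $H^1_{\eta(z)}(\Gamma,\mathfrak g)$ vanishes.

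\medskip\noindent
\emph{Main obstacle.} The delicate point is Step 3: passing from the \emph{pointwise} (in $\gamma$) coboundary statement, with a priori $\gamma$-dependent $\zeta_\gamma$, to a \emph{single} $\zeta$ trivializing $\bar c$ on all of $\Gamma$. This requires exploiting that the proximal elements $\rho_z(\gamma)$ are ``generic enough'' — concretely, that one can find two elements whose attracting/repelling data pin down $\zeta$ uniquely — and then that irreducibility propagates the identity from generators to all of $\Gamma$. The bookkeeping between $\mathfrak{sl}_m$, $\mathfrak{psl}_m$, and the reductive $\mathfrak g$, and the role of the center, also needs care but is routine. Everything else (Steps 1 and 2) is a formal consequence of the already-established analyticity of limit maps, the cross-ratio formula, and the immersion lemma.
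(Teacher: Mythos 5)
Your Steps 1 and 2 use exactly the ingredients of the paper's proof (Corollary \ref{bir0}, Lemma \ref{mub}, Lemma \ref{prel}), but the way you handle the frame variation in Step 2 contains a gap that then forces the whole difficulty into your Step 3, which you do not actually prove. Lemma \ref{prel} says that $\mu_{F,F^*}$ is an injective immersion \emph{for a fixed pair of frames}; it says nothing about the derivative of $u\mapsto\mu_{F_u,F_u^*}(A)$ as the frames move, and that derivative is not ``along the $(\mathbb R\setminus\{0\})^{2(m+1)}$-action'': moving the dual frame by $B^*$ changes the value by $\mu_{F,F^*}(B^{-1}A)$, i.e.\ by a translation in the matrix argument, not by the torus action that is quotiented out in $\ms W(m)$. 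Because you discard this term incorrectly, the best you can honestly extract is the pointwise statement that ${\rm D}_z(\pi_m\circ\rho_u(\alpha))(v)$ is tangent to the conjugacy orbit of the single element $\pi_m(\rho_z(\alpha))$, with a trivializing element $\zeta_\alpha$ a priori depending on $\alpha$. Your Step 3 then has to upgrade ``coboundary on each cyclic subgroup'' to ``global coboundary,'' and the argument you sketch is not standard and is not what the proof of Theorem \ref{theorem:length} does (that proof also goes through cross ratios and the immersion, not through such an upgrade); this infinitesimal marked-spectrum rigidity is precisely the content of the lemma, so leaving it as a ``routine'' step leaves the proof incomplete.

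The paper's proof avoids the problem by making the trivializing conjugation manifestly independent of $\alpha$: after choosing the points via Lemma \ref{projective frame}, one conjugates the path $\rho_t$ by an analytic family of elements of $\sln$ (which does not change the cohomology class of the tangent vector) so that $F_t=F_0$ for all small $t$, and absorbs the remaining variation of the dual frame using the identity $\mu_{\chi,B^*\chi^*}(A)=\mu_{\chi,\chi^*}(B^{-1}A)$, writing $\mu_t(\rho_t(\alpha))=\mu_0(C_t\rho_t(\alpha))$ for an analytic family $C_t$ with $C_0=I$ that does not depend on $\alpha$. Vanishing of the cross-ratio derivatives plus the immersion property of $\mu_0$ then give $\frac{\rm d}{{\rm d}t}\vert_{t=0}(C_t\rho_t(\alpha))=0$ for every $\alpha$; evaluating at $\alpha={\rm id}$ forces $\dot C_0=0$, hence $\dot\rho_0(\alpha)=0$ for all $\alpha$ after the normalization, so the original tangent vector is a coboundary with a single, $\alpha$-independent trivializing element. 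Finally, the conclusion is first obtained in $H^1_{\eta(z)}(\Gamma,\mk{sl}_m(\mathbb R))$, and one needs the reductive splitting $\mk{sl}_m(\mathbb R)=\mk g\oplus\mk g^\perp$ to see that $H^1_{\eta(z)}(\Gamma,\mk g)$ injects there; your remark about ``lifting $\zeta$ to $\mk g$'' glosses over this last point as well.
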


\begin{proof}
Let $\{ u_t\}_{t\in (-1,1)}$ be a path in $D$ so that $u_0=z$ and $\dot u_0=v$.
Let $\rho_t=\rho_{u_t}$.
By Corollary \ref{bir0}, 
$$
\left.\frac{\d}{\d t}\right\vert_{t=0}\left({\rm b}_{\rho_t}(x,y,z,w)\right)=0
$$
for any pairwise distinct $(x,y,z,w)$ in $\bg$.

Lemma \ref{projective frame} allows us to choose collections
$\{x_0,\ldots,x_m\}$ and $\{ y_0,\ldots,y_m\}$  of pairwise distinct points in $\bg$ such that
if
\begin{eqnarray*}
F_t&=&(\xi_t(x_0), \ldots \xi_t(x_m)),\cr
F_t^*&=&(\theta_t(y_0), \ldots \theta_t(y_m)).
\end{eqnarray*}
then $F_0$ and $F^*_0$ are both projective frames.
For some $\epsilon>0$, $F_t$ and $F_t^*$ are projective frames for all $t\in (-\epsilon,\epsilon)$.
(We will restrict to this domain for the remainder of the argument.)
We may then normalize, by conjugating $\rho_t$ by an appropriate element of $\sln$,
so that $F_t=F_0$ for all $t\in (-\epsilon,\epsilon)$.

Let 
$$
\mu_t=\mu_{F_t,F_t^*}\circ\pi_m.
$$
Then, by Lemma \ref{mub}, 
$$\mu_t(\rho_t(\alpha))=[{\rm b}_{\rho_t}(x_i,z,\alpha(y_j),w)]$$
for all $\alpha\in\Gamma$.
Therefore,
$$
\left.\frac{\rm d}{{\rm d}t}\right\vert_{t=0}{\mu_t(\rho_t(\alpha))}=0.
$$
for all $\alpha\in\Gamma$.
Notice that if $\chi$ and $\chi^*$ are projective frames, then
$$
\mu_{\chi,B^*\chi^*}(A)=\mu_{\chi,\chi^*}(B^{-1}\circ A),
$$
for all $A,B\in\sln$.
If we choose $C_t\in\sln$ so that $(C_t^{-1})^*(F_t^*)=F_0^*$, then
\begin{eqnarray*}
0&=&\left.\frac{\rm d}{{\rm d}t}\right\vert_{t=0}(\mu_t(\rho_t(\alpha)))=\left.\frac{\rm d}{{\rm d}t}\right\vert_{t=0}(\mu_0(C_t\rho_t(\alpha)))\cr
&=&{\rm D}\mu_0\left(\left.\frac{\rm d}{{\rm d}t}\right\vert_{t=0}(C_t\circ \rho_t(\alpha))\right).
\end{eqnarray*}
Lemma \ref{prel} implies that $\mu_0$ is an immersion, so
$$
\left.\frac{\rm d}{{\rm d}t}\right\vert_{t=0}(C_t\circ \rho_t(\alpha))=0
$$
Thus,
\begin{eqnarray}
C_0\circ \left.\frac{\rm d}{{\rm d}t}\right\vert_{t=0}\rho_t(\alpha) +\dot C_0\circ \rho(\alpha)=0.\label{cocycle}
\end{eqnarray}
Taking $\alpha={\rm id}$ in Equation \eqref{cocycle}, we see that $\dot C_0=0$. 
Since $C_0=I$,
$$
\left.\frac{\rm d}{{\rm d}t}\right\vert_{t=0}\rho_t(\alpha)=0
$$
for all  $\alpha\in\Gamma$.
Therefore the cohomology class of ${\rm D}\eta(v)$ vanishes  in $H^1_{\eta(z)}(\Gamma,\mathfrak{sl}_m(\mathbb R))$.
Since $\ms G$ is a reductive subgroup of $\sln$, $\mk{sl}_m{\mathbb R}=\mk g\oplus \mk g^\perp$, so
$H^1_{\eta(z)}(\Gamma,\mathfrak g)$ injects into $H^1_{\eta(z)}(\Gamma,\mathfrak{sl}_n(\mathbb R))$.
Therefore, ${\rm D}\eta(v)$ vanishes in $H^1_{\eta(z)}(\Gamma,\mathfrak g)$
as claimed.
\end{proof}

\section{Rigidity results}
\label{rigidity}

In this section, we establish two rigidity results for projective Anosov representations.
We first establish Theorem \ref{theorem:length} which states that the signed spectral radii  
determine the limit map of a projective Anosov representation,
up to the action of $\sln$, and that they determine the conjugacy class, in $\ms{GL}_m(\mathbb R)$,
of an irreducible projective Anosov representation. 

\begin{theorem}{\sc [Spectral rigidity]}
\label{theorem:length0} 
Let $\Gamma$ be a word hyperbolic group and let  \hbox{$\rho_1:\Gamma\to\sln$} and
\hbox{$\rho_2:\Gamma\to\sln$} be  projective Anosov representations such that 
$$\LL(\gamma)(\rho_1)=\LL(\gamma)(\rho_2)$$
for all infinite order $\gamma\in\Gamma.$ 
Then there exists $g\in\ms{GL}_m(\mathbb R)$ such that $g\circ \xi_1=\xi_2.$ 

Moreover, if $\rho_1$ is irreducible, then $\rho_2=g\rho_1 g^{-1}$.
\end{theorem}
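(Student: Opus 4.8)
The key is to reconstruct the limit maps (and hence the representation, in the irreducible case) from the collection of cross-ratios $\mathbf{b}_{\rho}(\alpha^-,\beta^-,\beta^+,\alpha^+)$, which by Proposition \ref{cr formula} equals $\lim_{n\to\infty} \LL(\alpha^n\beta)/\LL(\alpha)^n$ and is therefore determined by the signed spectral radii $\LL(\gamma)$ alone. First I would observe that since $\LL(\gamma)(\rho_1)=\LL(\gamma)(\rho_2)$ for all infinite order $\gamma$, Proposition \ref{cr formula} gives
$$
\mathbf{b}_{\rho_1}(\alpha^-,\beta^-,\beta^+,\alpha^+)=\mathbf{b}_{\rho_2}(\alpha^-,\beta^-,\beta^+,\alpha^+)
$$
for every pair of infinite order coprime elements $\alpha,\beta\in\Gamma$. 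Since the set of pairs $(\alpha^+,\alpha^-)$ of fixed points of infinite order elements is dense in $\bg^{(2)}$, and the limit maps $\xi_i$ and $\theta_i$ are continuous (indeed H\"older, by Lemma \ref{anosov-holder}), this extends to
$$
\mathbf{b}_{\rho_1}(x,y,z,w)=\mathbf{b}_{\rho_2}(x,y,z,w)
$$
for all pairwise distinct quadruples $(x,y,z,w)$ in $\bg$ on which the cross-ratio is defined.

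\textbf{From cross-ratios to a projective transformation.} Next I would use the dimension hypothesis together with Lemma \ref{mutual frame}: both $\rho_1$ and $\rho_2$ are convex, so set $p_i=\dim\braket{\xi_i(\bg)}$. One first argues $p_1=p_2$; indeed if, say, $p_1<p_2$, then any $p_1+1$ points of $\xi_1(\bg)$ are linearly dependent while (generically) $p_1+1$ points of $\xi_2(\bg)$ are independent, and such degeneracy is detected by a vanishing generalized minor, which in turn can be written in terms of cross-ratios of the limit maps — contradicting their equality. (Alternatively, and more simply, apply Lemma \ref{mutual frame} after reducing to the spans: since $\braket{\xi_i(\bg)}$ is $\rho_i(\Gamma)$-invariant, $\rho_i$ factors through a convex representation into $\ms{SL}(\braket{\xi_i(\bg)})$ with full-span limit map, and the spectral data is unchanged; so without loss of generality $\braket{\xi_i(\bg)}=\Real^m$ for both, forcing $p_1=p_2=m$ after this reduction.) With $p:=p_1=p_2$, Lemma \ref{mutual frame} provides $p+1$ distinct points $x_0,\dots,x_p\in\bg$ so that $\{\xi_1(x_j)\}$ and $\{\xi_2(x_j)\}$ are projective frames for $\braket{\xi_1(\bg)}$ and $\braket{\xi_2(\bg)}$ respectively. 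There is then a unique $g_0\in\ms{GL}_m(\Real)$ (in the reduced picture, $g_0\in\ms{PGL}$) with $g_0\,\xi_1(x_j)=\xi_2(x_j)$ for $j=0,\dots,p$. I would then show $g_0\,\xi_1=\xi_2$ on all of $\bg$: for any $x\in\bg$, the projective coordinates of $\xi_1(x)$ with respect to the frame $\{\xi_1(x_j)\}$ are expressible through cross-ratios $\mathbf{b}_{\rho_1}$ involving $x$ and the $x_j$ (and auxiliary dual frame points), and likewise for $\xi_2(x)$; equality of all cross-ratios forces the two coordinate vectors to agree, hence $g_0\,\xi_1(x)=\xi_2(x)$. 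Since the dual cross-ratio $\mathbf{b}_{\rho}$ also pins down $\theta$, the same argument (or duality together with the transversality relation $\xi(x)\oplus\theta(y)=\Real^m$) yields $(g_0^{-1})^{*}\,\theta_1=\theta_2$. Undoing the reduction gives the desired $g\in\ms{GL}_m(\Real)$ with $g\,\xi_1=\xi_2$.

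\textbf{The irreducible case.} Finally, suppose $\rho_1$ is irreducible. After conjugating $\rho_2$ by $g$ we may assume $\xi_1=\xi_2=:\xi$ and (by the dual argument) $\theta_1=\theta_2=:\theta$. For an infinite order $\gamma\in\Gamma$, Proposition \ref{proxi2} gives the spectral decomposition of $\rho_i(\gamma)$; the projection $\mathsf p_\gamma$ onto $\xi(\gamma^+)$ parallel to $\theta(\gamma^-)$ depends only on the limit maps, hence is the same for both, and the leading eigenvalue $\LL(\gamma)$ is equal by hypothesis. To conclude $\rho_1(\gamma)=\rho_2(\gamma)$ I would run the same immersion argument as in Section \ref{vectors with log type zero}: by Lemma \ref{projective frame} choose $x_0,\dots,x_m$ with $\{\xi(x_j)\}$ a projective frame and $y_0,\dots,y_m$ with $\{\theta(y_j)\}$ a projective frame, form $\mu_{F,F^*}$, and apply Lemma \ref{mub}, which expresses $\mu_{F,F^*}(\pi_m(\rho_i(\alpha)))$ entirely in terms of the cross-ratios $\mathbf{b}_{\rho_i}(y_i,z,\alpha(x_j),w)$. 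Since those cross-ratios coincide for $\rho_1$ and $\rho_2$, we get $\mu_{F,F^*}(\pi_m(\rho_1(\alpha)))=\mu_{F,F^*}(\pi_m(\rho_2(\alpha)))$ for all $\alpha$; injectivity of $\mu_{F,F^*}$ (Lemma \ref{prel}) forces $\pi_m\circ\rho_1=\pi_m\circ\rho_2$, i.e. $\rho_1(\alpha)=\pm\rho_2(\alpha)$ for each $\alpha$. The sign ambiguity is removed by the hypothesis $\LL(\gamma)(\rho_1)=\LL(\gamma)(\rho_2)$ on \emph{signed} spectral radii: this forces the $+$ sign on every infinite order element, and since $\Gamma$ has no finite-index cyclic subgroup it is generated by infinite order elements, so $\rho_1=\rho_2=g\rho_1 g^{-1}$ (after tracing back the conjugation).

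\textbf{Main obstacle.} The delicate point is the passage from equality of cross-ratios on a dense set of "fixed-point quadruples" to the genuine geometric coincidence $g\xi_1=\xi_2$ — in particular justifying that projective coordinates of an arbitrary point of the limit set with respect to a limit-map frame are continuous functions of cross-ratios, so that density plus continuity of $\xi_i,\theta_i$ suffices, and handling the case where the spans $\braket{\xi_i(\bg)}$ might a priori differ. The immersion/cross-ratio bookkeeping of Lemmas \ref{prel}, \ref{mub} is exactly the tool that makes this clean, so the proof largely reduces to assembling those ingredients in the right order; the genuinely new verification is matching the equality of \emph{signed} leading eigenvalues to kill the $\pm$ sign at the end.
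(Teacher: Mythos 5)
Your overall route is the paper's: Proposition \ref{cr formula} converts equality of the signed spectral radii into equality of the cross ratios at quadruples of fixed points of coprime elements, density of such pairs in $\bg^{(2)}$ plus continuity of the limit maps gives $\mathrm{b}_{\rho_1}=\mathrm{b}_{\rho_2}$, and the frame/coordinate and immersion arguments you then describe are exactly the content of the paper's Lemma \ref{lemma:4.3}, finished off by the same observation that two proximal matrices with equal projectivization and top eigenvalues of the same sign coincide. Two points, however, need attention. First, your ``simpler alternative'' for proving $p_1=p_2$ does not work: restricting $\rho_1$ and $\rho_2$ to $\braket{\xi_1(\bg)}$ and $\braket{\xi_2(\bg)}$ produces representations into groups of sizes $p_1$ and $p_2$ respectively, so the reduction cannot ``force $p_1=p_2$''. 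The equality of dimensions genuinely has to be read off from the cross ratio, which is your first suggestion and is precisely the paper's Lemma \ref{cross ratio and dim} (Labourie's $\chi^p$ criterion); keep that argument and drop the alternative.

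Second, and more seriously, in the irreducible case you normalize so that $\xi_1=\xi_2$ \emph{and} $\theta_1=\theta_2$. Your justification (``the dual argument'', or ``duality together with transversality'') only yields some $h\in\ms{GL}_m(\Real)$ with $h\cdot\theta_1=\theta_2$; nothing in what you wrote makes $h$ compatible with the $g_0$ you built from the $\xi$'s, and transversality alone certainly does not determine $\theta$ from $\xi$. You do need the two transverse maps to agree, at least at the chosen dual frame points, in order to apply Lemma \ref{mub} to both representations with the \emph{same} immersion $\mu_{F,F^*}$ and then invoke injectivity (Lemma \ref{prel}); as stated this step is circular, since agreement of the $\theta$'s is essentially what you are trying to prove. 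The gap is repairable with tools you already have: either construct $g$ from the dual data in the first place, as the paper does in Lemma \ref{lemma:4.3} (there $g$ is defined by $g\varphi_i=\psi_i$ with $\varphi_i\in\theta_1(x_i)$, $\psi_i\in\theta_2(x_i)$, so it matches the $\theta$'s at the frame points while the cross-ratio coordinate computation gives $g\circ\xi_1=\xi_2$ everywhere), or else allow two different dual frames, use the identity $\mu_{\chi,B^*\chi^*}(A)=\mu_{\chi,\chi^*}(B^{-1}\circ A)$ to compare the two evaluations, and kill the resulting fixed discrepancy by evaluating at $\alpha=\mathrm{id}$, as in the proof of Lemma \ref{log type zero implies zero}. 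With that repair, the remaining steps you give (injectivity of $\mu_{F,F^*}$, resolution of the sign via the signed radii, and generation of $\Gamma$ by infinite order elements) are correct and coincide with the paper's conclusion.
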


We next establish our rigidity result for 
renormalised intersection. If $\ms H$ is a Lie group, denote by $Z(\ms H)$ its center and by $\ms H^0$ the connected component of the identity. We denote by $\pi_m$ the projection from $\sln$ to $\ms{PSL}_m(\mathbb R).$ If $\ms H\subset\sln$ denote by $\ms P\ms H=\pi_m(\ms H)$ the projectivised group. Finally, if $\rho:\Gamma\to\sln$ is a representation, denote by $\ms G_{\rho}$ the Zariski closure of $\rho(\Gamma).$
 
\begin{theorem}{\sc [Intersection rigidity]}\label{theorem:intersectionrig}
Let $\Gamma$ be a word hyperbolic group and let $\rho_1:\Gamma\to \ms{SL}_{m_1}(\mathbb R)$ and
$\rho_2:\Gamma\to \ms{SL}_{m_2}(\mathbb R)$ be projective Anosov representations such that 
$${\bf J}(\rho_1,\rho_2)= 1.$$ 
If $\ms G_{\rho_1}$ and $\ms G_{\rho_2}$ are connected, 
then there exists an isomorphism \hbox{$\sigma:\ms G_{\rho_1}/Z(\ms G_{\rho_1}) \to\ms G_{\rho_2}/Z(\ms G_{\rho_2})$}
such that 
$$\sigma \bar\rho_1=\bar\rho_2,$$
where $\bar\rho_i:\Gamma\to \ms G_{\rho_i}/Z(\ms G_{\rho_i})$ is the composition of $\rho_i$ and the projection of
$\ms G_{\rho_i}$ onto $\ms G_{\rho_i}/Z(\ms G_{\rho_i}).$
\end{theorem}

\rmks \begin{enumerate} 
\item
If either $\ms G_{\rho_1}$ or $\ms G_{\rho_2}$ is not connected, 
then Theorem \ref{theorem:intersectionrig} holds for the finite index subgroup 
$$\Gamma_0=\Gamma\cap\rho_1^{-1}(\ms G_{\rho_1}^0)\cap\rho_2^{-1}(\ms G_{\rho_2}^0).$$  
Indeed, each $\rho_i|\Gamma_0$ is again projective Anosov (see \cite[Cor. 3.4]{guichard-wienhard}), 
and Corollary \ref{corollary:c} implies that $\JJ(\rho_1|_{\Gamma_0},\rho_2|_{\Gamma_0})=1.$
\item
Consequently, if $\ms G^0_{\rho_1}$ and $\ms G^0_{\rho_2}$ are not isomorphic, then Theorem
\ref{theorem:intersectionrig} implies that $\JJ(\rho_1,\rho_2)>1.$ 
\item 
The representations need not actually be conjugate if \hbox{$\JJ(\rho_1,\rho_2)=1.$ }
Let $\rho:\pi_1(S)\to\ms{PSL}_2(\Real)$ be  a Fuchsian representation and let
\hbox{$\tau_{k}:\ms{PSL}_2(\Real)\to\ms{PSL}_{k}(\Real)$} be the irreducible representation,
then $$\JJ(\tau_n\circ\rho,\tau_m\circ\rho)=1$$ but $\tau_n\circ\rho$ and $\tau_m\circ\rho$ are not conjugate if $n\neq m.$
\end{enumerate}

\subsection{Spectral rigidity}
Our spectral rigidity theorem will follow from Proposition \ref{cr formula} and  work of Labourie \cite{cross}.

Recall, from Section \ref{cross ratio section}, that we defined the cross ratio $\bf b$ of a
pair of hyperplanes and a pair of lines. Then, given a projective Anosov representation $\rho$ with limit
maps $\xi$ and $\theta$, we defined a cross ratio $\rm b_\rho$ on $\bg^{(4)}$ by letting
\begin{equation}
{\rm b}_\rho(x,y,z,w)={\bf b}(\theta(x),\theta(y),\xi(z),\xi(w)).
\end{equation}

Labourie \cite[Theorem 5.1]{cross} showed that if $\rho$ is a projective Anosov representation with limit map $\xi$,
then the dimension $\dim\braket{\xi(\bg)}$  can be read directly from the cross ratio $\rm b_\rho$.
(In \cite{cross}, Labourie explicitly handles the case where $\Gamma=\pi_1(S)$, but his proof
generalizes immediately.)
Consider $S^p_*$ the set of pairs $(e,u)=(e_0,\ldots,e_p,u_0,\ldots,u_p)$ of $(p+1)$-tuples in $\bg$ such that
$e_j\neq e_i\neq u_0$ and $u_j\neq u_i\neq e_0$ when $j>i>0$. If $(e,u)\in S^p_*$, he defines 
$$\chi^p_{\rm b_\rho}(e,u)=\det_{i,j>0}({\rm b}_\rho(e_i,e_0,u_j,u_0)).$$

\begin{lemma}
\label{cross ratio and dim}
If $\rho:\Gamma\to\sln$ is projective Anosov, then
$$\dim\braket{\xi(\bg)}=\inf\{p\in\mathbb N:\chi^p_{\rm b_\rho}\equiv0\}-1.$$
\end{lemma}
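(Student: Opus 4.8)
The plan is to extract $\dim\braket{\xi(\bg)}$ from the rank of a suitable family of matrices built out of the limit map $\xi$ and its dual companion $\theta$. First I would recall the structure of the cross ratio: by definition
$$
{\rm b}_\rho(e_i,e_0,u_j,u_0)={\bf b}(\theta(e_i),\theta(e_0),\xi(u_j),\xi(u_0))
=\frac{\braket{\theta(e_i)|\xi(u_j)}\braket{\theta(e_0)|\xi(u_0)}}{\braket{\theta(e_i)|\xi(u_0)}\braket{\theta(e_0)|\xi(u_j)}},
$$
after choosing representative vectors and covectors. Fixing the $0$-indexed points and rescaling each chosen $\theta(e_i)$ by $1/\braket{\theta(e_i)|\xi(u_0)}$ and each $\xi(u_j)$ by $1/\braket{\theta(e_0)|\xi(u_j)}$ (both nonzero by the genericity conditions defining $S^p_*$ together with the transversality $\xi(x)\oplus\theta(y)=\mathbb R^m$), the matrix $\big({\rm b}_\rho(e_i,e_0,u_j,u_0)\big)_{i,j>0}$ becomes, up to multiplication of each row and column by a nonzero scalar, the Gram-type matrix $\big(\braket{\theta(e_i)|\xi(u_j)}\big)_{1\le i,j\le p}$. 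Hence $\chi^p_{{\rm b}_\rho}(e,u)=0$ if and only if this $p\times p$ matrix is singular.

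Next I would identify the rank of the matrix $\big(\braket{\theta(e_i)|\xi(u_j)}\big)$. Write $V=\braket{\xi(\bg)}$ and let $q=\dim V$. The covectors $\theta(e_i)$ act on $\mathbb R^m$ but only their restrictions to $V$ matter when paired against vectors $\xi(u_j)\in V$; these restrictions lie in the $q$-dimensional space $V^*$. Therefore the matrix factors through $V^*\times V$ and has rank at most $q$, which already gives $\chi^p_{{\rm b}_\rho}\equiv 0$ for all $p>q$, i.e. $\dim\braket{\xi(\bg)}\ge \inf\{p:\chi^p_{{\rm b}_\rho}\equiv0\}-1$. For the reverse inequality I need to produce, for $p=q$, a single point $(e,u)\in S^q_*$ with $\chi^q_{{\rm b}_\rho}(e,u)\ne 0$; equivalently $(p+1)$-tuples $\{e_0,\dots,e_q\}$ and $\{u_0,\dots,u_q\}$ of points of $\bg$ in the required "general position" with $\det\big(\braket{\theta(e_i)|\xi(u_j)}\big)_{1\le i,j\le q}\ne 0$. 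Here I would invoke Lemma \ref{mutual frame} applied to the pair of convex data $(\xi,\theta)$ — more precisely, one wants $\{\xi(u_0),\dots,\xi(u_q)\}$ to be a projective frame for $V$ and $\{\theta(e_0),\dots,\theta(e_q)\}$ to be a projective frame for the image $\braket{\theta(\bg)}|_V$ inside $V^*$, arranged so that the $S^q_*$ incidence conditions ($e_i\ne u_0$, $u_j\ne e_0$, etc.) also hold; the lemma on preimages of subspaces having empty interior lets one choose these frames while avoiding the finitely many forbidden coincidences. Given such frames, a standard argument (subtracting the $0$-th column/row and using that a frame spans) shows the $q\times q$ determinant is nonzero.

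The main obstacle I anticipate is the bookkeeping in this last step: one must simultaneously arrange that $\{\xi(u_j)\}$ and $\{\theta(e_i)\}$ restrict to honest projective frames of the $q$-dimensional spaces $V$ and $V^*$ (note $\theta$ maps into hyperplanes of $\mathbb R^m$, so one is really working with the induced map to $\mathbb{P}(V^*)$, and one must check its image spans $V^*$, which follows from transversality since $\ker\theta(y)\cap V$ has dimension $q-1$ for generic $y$), while also respecting the asymmetric non-coincidence conditions built into $S^p_*$. This is a density/general-position argument using the two lemmas already proved (that $\xi^{-1}$ and $\theta^{-1}$ of a proper subspace have empty interior, and the mutual-frame lemma), so it is routine in spirit but needs care. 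Once the existence of one good configuration is established, combining it with the rank bound from the previous paragraph yields $\inf\{p\in\mathbb N:\chi^p_{{\rm b}_\rho}\equiv0\}-1=q=\dim\braket{\xi(\bg)}$, completing the proof.
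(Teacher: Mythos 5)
Your reduction of $\chi^p_{{\rm b}_\rho}$ to the matrix $\bigl(\braket{\theta(e_i)|\xi(u_j)}\bigr)$ up to row and column rescalings is correct, as is the resulting vanishing $\chi^p_{{\rm b}_\rho}\equiv 0$ for $p>\dim\braket{\xi(\bg)}$; note that the paper itself does not prove this lemma but quotes Theorem 5.1 of \cite{cross}, so yours is a from-scratch argument rather than a reconstruction. The genuine gap is in the reverse inequality, exactly at the step you describe as routine: you need the restrictions $\theta(y)|_V$, with $V=\braket{\xi(\bg)}$, to span $V^*$, and your justification --- that $\theta(y)\cap V$ has codimension one in $V$ --- only says each restriction is a nonzero covector, not that the family spans. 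What your Gram-matrix computation really shows is that $\inf\{p:\chi^p_{{\rm b}_\rho}\equiv0\}-1$ equals the rank of the pairing between $\braket{\{\theta(y)|_V\}}\subset V^*$ and $V$, that is $\dim V-\dim\bigl(V\cap\bigcap_{y\in\bg}\theta(y)\bigr)$; the lemma is precisely the assertion that this intersection is trivial, and that is the part requiring proof.

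Moreover, this nondegeneracy does not follow from transversality of $\xi$ and $\theta$ alone, so no general-position argument based on Lemma \ref{mutual frame} and the empty-interior lemma can close the gap. For instance, let $\rho_0:\Gamma\to\ms{SL}_2(\mathbb R)$ be Fuchsian and let $\rho:\Gamma\to\ms{SL}_3(\mathbb R)$ be a non-split extension $\rho(\gamma)(t,u)=(t+\beta(\gamma)(u),\rho_0(\gamma)u)$ with $[\beta]\neq0$ in $H^1(\Gamma,(\mathbb R^2)^*)$. Conjugating by $\operatorname{diag}(s,1,1)$ makes $\rho$ an arbitrarily small deformation of $1\oplus\rho_0$, so $\rho$ is convex Anosov by openness; its limit hyperplanes $\theta(y)$ all contain the pointwise-fixed line $L=\mathbb R\times\{0\}$ (the eigenvalue-one line lies in every repelling hyperplane, and such points are dense), while $\braket{\xi(\bg)}=\mathbb R^3$, since an invariant span meeting $L$ trivially would split the extension. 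Hence every matrix $\bigl(\braket{\theta(e_i)|\xi(u_j)}\bigr)$ has rank at most $2$ and $\chi^3_{{\rm b}_\rho}\equiv0$, although $\dim\braket{\xi(\bg)}=3$. So the spanning input must come from somewhere else: under irreducibility one has $\braket{\xi(\bg)}=\mathbb R^m$ and $\braket{\theta(\bg)}=(\mathbb R^m)^*$ by invariance, and then your frame argument (together with the easy observation that $\chi^q\not\equiv0$ forces $\chi^p\not\equiv0$ for $p<q$, via nonsingular submatrices) does complete the proof; in general one must work with the nondegeneracy hypotheses on the pair $(\xi,\theta)$ built into Labourie's Theorem 5.1 of \cite{cross} rather than with transversality alone.
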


Lemma 4.3 of Labourie  \cite{cross} extends in our setting to give:

\begin{lemma}
\label{lemma:4.3}
If $\rho_1:\Gamma\to\sln$ and $\rho_2:\Gamma\to\sln$ are projective Anosov and
$\rm b_{\rho_1}=\rm b_{\rho_2},$
then there exists \hbox{$g\in \ms{GL}_m(\mathbb R)$} such that $g\circ\xi_1=\xi_2.$

Moreover, if  $\rho_1$ is irreducible, then $g\left(\pi_m\circ\rho_1\right) g^{-1}=\pi_m\circ\rho_2.$
\end{lemma}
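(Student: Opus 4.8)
The plan is to reduce the statement to the classical fact that a cross-ratio identity forces the two limit curves to differ by a projective transformation, and then to upgrade this to conjugacy of the representations using irreducibility and equivariance. First I would fix collections $\{x_0,\dots,x_p\}$ and $\{y_0,\dots,y_p\}$ of pairwise distinct points of $\bg$, where $p=\dim\langle\xi_1(\bg)\rangle$; by Lemma \ref{cross ratio and dim} the hypothesis $\rm b_{\rho_1}=\rm b_{\rho_2}$ guarantees that $\dim\langle\xi_1(\bg)\rangle=\dim\langle\xi_2(\bg)\rangle=p$, so Lemma \ref{mutual frame} lets me choose the $x_i$ (and, by the same lemma applied to the dual curves $\theta_i$, the $y_j$) so that $\{\xi_k(x_0),\dots,\xi_k(x_p)\}$ and $\{\theta_k(y_0),\dots,\theta_k(y_p)\}$ are projective frames of $\langle\xi_k(\bg)\rangle$ and $\langle\theta_k(\bg)\rangle$ for $k=1,2$. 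Working inside these subspaces, a point $\xi_k(z)$ for $z\in\bg$ is recorded, up to scalars, by the tuple of cross-ratios $({\bf b}(\theta_k(y_i),\theta_k(y_0),\xi_k(z),\xi_k(x_0)))_{i>0}$, exactly as in the proof of Lemma \ref{mub}; since $\rm b_{\rho_1}=\rm b_{\rho_2}$, these tuples agree for the two representations for every $z$.

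From this I would build the element $g$. There is a unique linear map $g_0:\langle\xi_1(\bg)\rangle\to\langle\xi_2(\bg)\rangle$ sending the frame $(\xi_1(x_i))$ to the frame $(\xi_2(x_i))$, and the coincidence of cross-ratio coordinates forces $g_0(\xi_1(z))=\xi_2(z)$ for all $z\in\bg$ (this is where I mimic Labourie \cite[Lemma 4.3]{cross}, reading the coincidence of the $\chi^p$-type determinants as the statement that the two projective-coordinate functions on $\bg$ are identical). Extending $g_0$ arbitrarily to an element $g\in\ms{GL}_m(\mathbb R)$ gives $g\circ\xi_1=\xi_2$, which is the first assertion. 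For the moreover part, assume $\rho_1$ is irreducible, so $\langle\xi_1(\bg)\rangle=\mathbb R^m$ and $g$ is genuinely invertible. Equivariance gives, for every $\gamma\in\Gamma$ and $z\in\bg$,
$$
\bigl(g\,\rho_1(\gamma)\,g^{-1}\bigr)\xi_2(z)=g\,\rho_1(\gamma)\,\xi_1(z)=g\,\xi_1(\gamma z)=\xi_2(\gamma z)=\rho_2(\gamma)\,\xi_2(z).
$$
Thus $g\rho_1(\gamma)g^{-1}$ and $\rho_2(\gamma)$ agree on the set $\xi_2(\bg)$, which by Lemma \ref{projective frame} (applied to $\rho_2$, which is also irreducible since it has the same limit curve up to $\ms{GL}_m(\mathbb R)$ and hence spans $\mathbb R^m$) contains a projective frame of $\rpn$. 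A projective automorphism of $\rpn$ is determined by its values on a projective frame, so $\pi_m(g\rho_1(\gamma)g^{-1})=\pi_m(\rho_2(\gamma))$ for all $\gamma$, i.e. $g(\pi_m\circ\rho_1)g^{-1}=\pi_m\circ\rho_2$.

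The main obstacle is the middle step: turning the pointwise coincidence of cross-ratio coordinates into the existence of the single linear map $g_0$ intertwining the two limit curves, i.e. faithfully transcribing the argument of Labourie \cite[Lemma 4.3]{cross} into the word-hyperbolic, not-necessarily-surface setting. The potential subtlety is that Labourie's proof is phrased for $\Gamma=\pi_1(S)$ and uses density of fixed-point pairs of infinite-order elements in $\bg^{(2)}$ together with continuity of the curves; one must check that the only properties used are (i) that $\bg$ admits the projective frames provided by Lemmas \ref{mutual frame} and \ref{projective frame}, and (ii) that $\xi_k,\theta_k$ are continuous and equivariant — both of which hold here. Once that transcription is in place the rest is the elementary frame-and-equivariance bookkeeping above.
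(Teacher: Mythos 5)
For the first assertion your plan is essentially the paper's proof: equality of the cross ratios gives $\dim\braket{\xi_1(\bg)}=\dim\braket{\xi_2(\bg)}$ via Lemma \ref{cross ratio and dim}, Lemma \ref{mutual frame} provides simultaneous frames, and the projective coordinates of $\xi_k(z)$ in a basis adapted to the dual curve are cross-ratio values, so the frame-matching map intertwines the limit curves. One caveat: the paper does not pick a second set of points $y_j$; it takes covectors $\varphi_i\in\theta_1(x_i)$ at the same points $x_i$ and checks they give a basis of $\braket{\theta_1(\bg)}$. If you instead apply Lemma \ref{mutual frame} to the dual curves you must first know $\dim\braket{\theta_1(\bg)}=\dim\braket{\theta_2(\bg)}$, which requires a dual version of Lemma \ref{cross ratio and dim} (it does follow, since the cross ratio of the dual maps is $b_\rho(z,w,x,y)$, but you should say so or follow the paper's normalization).

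For the ``moreover'' part your route genuinely differs and is more elementary: the paper computes $\mu_{F,F^*}(\pi_m(\rho_i(\gamma)))$ in cross-ratio terms (Lemma \ref{mub}) and invokes injectivity of the immersion $\mu_{F,F^*}$ (Lemma \ref{prel}), whereas you use equivariance to see that $g\rho_1(\gamma)g^{-1}$ and $\rho_2(\gamma)$ agree as projective transformations on $\xi_2(\bg)$, which contains a projective frame, and conclude by uniqueness of a projective map with prescribed values on a frame. This is correct and bypasses Lemmas \ref{prel} and \ref{mub}; the paper's $\mu$-machinery pays off elsewhere (it is reused in Lemma \ref{log type zero implies zero}). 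One justification of yours is off: $\braket{\xi_2(\bg)}=\mathbb R^m$ does not make $\rho_2$ irreducible, and you do not need that --- the frame $\{\xi_1(x_i)\}$ provided by Lemma \ref{projective frame} for $\rho_1$ is carried by the invertible $g$ to a projective frame inside $\xi_2(\bg)$, which is all your argument uses.
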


\begin{proof} 
Lemma \ref{cross ratio and dim} implies that $$\dim\braket{\xi_1(\bg)}= \dim\braket{\xi_2(\bg)}=p.$$ 
Choose $\{x_0,\ldots,x_p\}\subset\bg$  so that 
$$\{\xi_1(x_0),\ldots,\xi_1(x_p)\}\textrm{ and }\{\xi_2(x_0),\ldots,\xi_2(x_p)\}$$ 
are projective frames for $\braket{\xi_1(\bg)}$ and $\braket{\xi_2(\bg)}$ (see Lemma \ref{mutual frame}).

Choose $u_0\in\xi_1(x_0)$ and $\{\varphi_1,\ldots,\varphi_p\}\subset(\Real^m)^*$ such that $\varphi_i\in\theta_1(x_i)$ 
and $\varphi_i(u_0)=1.$
One may check that $\{\varphi_1,\ldots,\varphi_p\}$ is a basis for $\braket{\theta_1(\bg)}.$
Complete $\{\varphi_1,\ldots,\varphi_p\}$  to  a basis
$$\mathcal{B}_1=\{\varphi_1,\ldots,\varphi_p,\varphi_{p+1},\ldots,\varphi_m\}$$ 
for $(\Real^m)^*$ such that $\varphi_i(\braket{\xi_1(\bg)})=0$ for all $i>p$.
For $y\in\bg,$ the  projective coordinates
of $\xi_1(y)$ with respect to  the dual basis of $\mathcal{B}_1$ are given by
$$[\ldots:\braket{\varphi_i|\xi_1(y)}:\ldots]=[\ldots:\frac{\braket{\varphi_i|\xi_1(y)}}{\braket{\varphi_1|\xi_1(y)}}\frac{\braket{\varphi_1|u_0}}{\braket{\varphi_i|u_0}}:\ldots]$$ 
which reduces to
$$[{\rm b}_{\rho_1}(x_1,x_1,y,x_0),\ldots,{\rm b}_{\rho_1}(x_p,x_1,y,x_0),0,\ldots,0].$$

Now choose
$v_0\in\xi_2(x_0)$ and $\{\psi_1,\ldots,\psi_p\}$ such that $\psi_i\in\theta_2(x_i)$ and \hbox{$\psi_i(v_0)=1.$} 
One sees that $\{\psi_1,\ldots,\psi_p\}$ is a basis of $\braket{\theta_2(\bg)}.$ One can then
complete $\{\psi_1,\ldots,\psi_p\}$  to  a basis
$$\mathcal{B}_2=\{\psi_1,\ldots,\psi_p,\psi_{p+1},\ldots,\psi_m\}$$ 
for $(\Real^m)^*$ such that $\psi_i(\braket{\xi_2(\bg)})=0$ for all $i>p$.
One checks, as above, that if $y\in\bg$, then the projective coordinates
$\xi_2(y)$ with respect to  the dual basis of $\mathcal{B}_2$ are given by
$$[{\rm b}_{\rho_2}(x_1,x_1,y,x_0),\ldots,{\rm b}_{\rho_2}(x_p,x_1,y,x_0),0,\ldots,0].$$

We now choose $g\in\ms{GL}_m(\mathbb R)$ so that $g\varphi_i=\psi_i$ for all $i$. It follows from the
fact that ${\rm b}_{\rho_1}(x_i,x_1,y,x_0)={\rm b}_{\rho_2}(x_i,x_1,y,x_0)$ for all $i\leq p$, that
$g\circ\xi_1=\xi_2.$

Assume now that $\rho_1$ is irreducible, so that $p=m$. Lemma \ref{mutual frame} implies that
there exists a $(m+1)$-tuple  $(x_0,\ldots, x_m)$ of points in  $\bg$ such that 
$F=(\xi_1(x_0),\ldots,\xi_1(x_m))$ is a projective frame for $\rpn$ and $F^*=(\theta_1(x_0),\ldots,\theta_1(x_m))$ is
a projective frame for $\rpn^*$. Thus, using the notation of Lemma \ref{mub}, we have
that, given arbitrary distinct points  $z,w\in\bg$,
$$
\mu_{F,F^*}(\pi_m(\rho_1(\gamma)))=\left[{\rm b}_{\rho_1}(x_i,z,\gamma(x_j),w)\right]$$
Similarly
$$
\mu_{F,F^*}(g^{-1}\pi_m(\rho_2(\gamma))g)=\mu_{gF,gF^*}(\pi_m(\rho_2(\gamma)))=\left[{\rm b}_{\rho_2}(x_i,z,\gamma(x_j),w)\right]
$$
Thus, since ${\rm b}_{\rho_1}={\rm b}_{\rho_2}$,
$$
\mu_{F,F^*}(\rho_1(\gamma))=\mu_{F,F^*}(g^{-1}\rho_2(\gamma)g).
$$
Since  $\mu_{F,F*}$  is injective, see Lemma \ref{prel}, it follows that
$$g\left(\pi_m\circ\rho_1\right)g^{-1}=\pi_m\circ\rho_2.$$
\end{proof}

We can now prove our spectral rigidity theorem:

\begin{proof}[Proof of Theorem \ref{theorem:length0}:] 
Consider two projective Anosov representations \hbox{$\rho_1:\Gamma\to\sln$} and \hbox{$\rho_2:\Gamma\to\sln$} such that 
$\LL(\gamma)(\rho_1)=\LL(\gamma)(\rho_2)$ for all $\gamma\in\Gamma.$ 
Suppose that $\alpha$ and $\beta$ are infinite order, co-prime elements of $\Gamma$.
Proposition \ref{cr formula} implies that 
\begin{eqnarray*}
\rm b_{\rho_1}(\beta^-,\alpha^-,\alpha^+,\beta^+) &= &\lim_{n\to\infty}\frac{\LL(\alpha^n\beta^n)(\rho_1)}{\LL(\alpha)(\rho_1)^n
\LL(\beta)(\rho_1)^n}\cr
&=&\lim_{n\to\infty}\frac{\LL(\alpha^n\beta^n)(\rho_2)}{\LL(\alpha)(\rho_2)^n
\LL(\beta)(\rho_2)^n}\cr
&=& \rm b_{\rho_2}(\beta^-,\alpha^-,\alpha^+,\beta^+).
\end{eqnarray*}

Since pairs of fixed points of infinite order elements of $\Gamma$ are dense in $\bg^{(2)}$ \cite{gromov}
and $\rm b_{\rho_1}$ and $\rm b_{\rho_2}$ are continuous, we see
that $\rm b_{\rho_1}=\rm b_{\rho_2}.$ 

Lemma \ref{lemma:4.3} implies that there exists $g\in\ms{GL}_m(\mathbb R)$ such that $g\circ \xi_1=\xi_2$.
If $\rho_1$ is irreducible,
then Lemma \ref{lemma:4.3} guarantees that $g\left(\pi_m\circ\rho_1\right) g^{-1}=\pi_m\circ\rho_2,$
so
$$\pi_m \circ (g\rho_1 g^{-1})=\pi_m\circ\rho_2.$$
Notice that if $A$ and $B$ are proximal matrices such 
that $\pi(A)=\pi(B)$  and that the eigenvalues of $A$ and $B$ of maximal absolute value have the same sign, then $A=B$.
Therefore, if $\alpha$ is any infinite order element of $\Gamma$, $g\rho_2(\alpha)g^{-1}=\rho_1(\alpha)$.
It follows that $g\rho_2g^{-1}=\rho_1$ as claimed.
\end{proof}

\subsection{Renormalized intersection rigidity}

Theorem \ref{theorem:intersectionrig} follows from Corollary  \ref{Zclosureconvex}, Corollary \ref{corollary:c} and 
Corollary \ref{corquint} below, which is a consequence of a deep result of Benoist \cite{limite}.

If $\ms G$ is a real-algebraic semi-simple Lie group, let $\mathfrak a_{\ms G}$ be a Cartan subspace of the 
Lie algebra $\mk g$ of $\ms G$ and let $\mathfrak a_{\ms G}^+$ be a Weyl Chamber. 
Let \hbox{$\mu_{\ms G}:\ms G\to\mathfrak a_{\ms G}^+$} be the Jordan projection.

Let 
$$(\frak a_{\ms G}^+)^*=\{\varphi\in\frak a_{\ms G}^*:\varphi|\frak a_{\ms G}^+\geq0\}.$$
If $\varphi$ lies in the interior $\interior(\frak a_{\ms G}^+)^*$ of $(\frak a_{\ms G}^+)^*$,
then if $v\in\frak a_{\ms G}^+$ and $\varphi(v)=0$, then $v=0.$

For a subgroup $\Delta$ of $\ms G$ the \emph{limit cone} $\mathscr L_\Delta$ of $\Delta$ is the smallest closed cone in
$\mathfrak a_{\ms G}^+$ that contains $$\{\mu(g):g\in \Delta\}.$$

Benoist \cite{limite} proved that Zariski dense subgroups have limit cones with non-empty interior.

\begin{theorem}{\sc[Benoist]} \label{limit cone}
If $\Delta$ is a Zariski dense subgroup of a connected real-algebraic semi-simple Lie group $\ms G$, then $\mathscr L_\Delta$ has non empty interior.
\end{theorem}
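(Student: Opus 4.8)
The statement to prove is Benoist's Theorem on limit cones: if $\Gamma$ is a Zariski dense subgroup of a semi-simple Lie group $\ms G$, then the limit cone $\mathscr L_\Gamma$ has non-empty interior in $\mathfrak a_{\ms G}^+$. This is a deep theorem of Benoist \cite{limite}, so in the present paper it is simply quoted. Nonetheless, here is how I would organize a proof sketch, following the strategy of Benoist's original argument.

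\textbf{Overall approach.} The plan is to exhibit, inside the closed cone generated by the Jordan projections $\mu(h)$ of loxodromic elements $h\in\Gamma$, an honest open subcone. The engine is a ``ping-pong with loxodromic elements'' argument combined with an estimate showing that the Jordan projection of a product of two loxodromic elements in general position is approximately the sum of their Jordan projections. First I would reduce to the case where $\ms G$ is connected, adjoint, and $\Gamma$ contains loxodromic (i.e. $\mathfrak a_{\ms G}^+$-regular, or at least ``nice'') elements: Zariski density of $\Gamma$ forces the existence of an element whose Jordan projection lies in the interior of $\mathfrak a_{\ms G}^+$, since the set of such elements is Zariski open and non-empty in $\ms G$ (this is analogous to Lemma \ref{pro:gen} in the excerpt, which shows Zariski dense implies the existence of generic elements).

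\textbf{Key steps.} (1) Establish an \emph{addition formula up to bounded error}: if $g,h\in\ms G$ are loxodromic and their attracting/repelling flags are ``transverse enough'', then for the product $gh$ one has $\mu(gh)=\mu(g)+\mu(h)+O(1)$, with the error controlled uniformly in terms of the transversality. This is the technical heart; it is proved by writing $g$ and $h$ in Jordan form relative to well-separated flags and estimating the singular values of the product via the Cartan projection, then passing from the Cartan projection to the Jordan projection using the fact that $\mu(x)=\lim_n \frac1n\kappa(x^n)$ where $\kappa$ is the Cartan projection. (2) Iterating this, for loxodromic elements $\gamma_1,\dots,\gamma_k\in\Gamma$ in sufficiently general position, $\mu(\gamma_1^{n_1}\cdots\gamma_k^{n_k})$ is within bounded distance of $\sum_i n_i\mu(\gamma_i)$, so after dividing by $\sum n_i$ and letting the $n_i\to\infty$ along appropriate ratios, every positive rational combination $\sum t_i\mu(\gamma_i)$ lies in $\mathscr L_\Gamma$; taking closure, the whole cone generated by $\mu(\gamma_1),\dots,\mu(\gamma_k)$ lies in $\mathscr L_\Gamma$. (3) Finally, use Zariski density to produce loxodromic elements $\gamma_1,\dots,\gamma_k$ in general position whose Jordan projections $\mu(\gamma_i)$ span $\mathfrak a_{\ms G}$ as a vector space (equivalently, are not all contained in a proper wall or hyperplane): the set of $k$-tuples in $\ms G^k$ that are loxodromic, mutually transverse, and have linearly independent Jordan projections (for $k=\dim\mathfrak a_{\ms G}$, with the projections chosen near a fixed regular direction) is Zariski open, and it is non-empty because $\ms G$ itself contains such tuples; Zariski density of $\Gamma$ then gives such a tuple inside $\Gamma$. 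The convex cone generated by $\dim\mathfrak a_{\ms G}$ linearly independent vectors lying in the open Weyl chamber has non-empty interior, and it is contained in $\mathscr L_\Gamma$, which finishes the proof.

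\textbf{Main obstacle.} The principal difficulty is step (1), the proximality/transversality estimate controlling $\mu(gh)$ in terms of $\mu(g)$ and $\mu(h)$: one must make precise what ``transverse enough'' means (a quantitative bound on the distance between the attracting flag of $h$ and the repelling flag of $g$, and vice versa) and then show the error term is uniformly bounded once this transversality is bounded below. This requires careful linear-algebra estimates on singular value decompositions of products, and is exactly the place where Benoist's paper does the real work. The passage from the Cartan projection estimate to the Jordan projection statement, and the verification that the relevant tuples form a non-empty Zariski open set, are comparatively routine by contrast. In the present paper none of this is reproved; Theorem \ref{limit cone} is invoked as a black box from \cite{limite}.
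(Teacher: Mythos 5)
You are right that the paper offers no proof of this statement: Theorem \ref{limit cone} is invoked purely as a black box from \cite{limite}, so there is no in-paper argument to compare yours against, and the honest comparison is between your sketch and Benoist's actual proof. Your steps (1)--(2), the approximate additivity $\mu(gh)=\mu(g)+\mu(h)+O(1)$ for loxodromic elements with uniformly transverse attracting/repelling flags and the resulting convexity of the closed cone generated by the Jordan projections, do reflect the structure of Benoist's argument.

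The genuine gap is in your step (3), and already in your preliminary reduction. The conditions you want to impose -- that an element be loxodromic (Jordan projection in the open Weyl chamber), and that a $k$-tuple have linearly independent Jordan projections -- are \emph{not} Zariski open, so Zariski density of $\Gamma$ cannot be invoked the way you propose. The Jordan projection involves logarithms of moduli of eigenvalues; even in $\ms{SL}_2(\mathbb R)$ the set of hyperbolic elements is only semi-algebraically open (its complement is not Zariski closed), and a linear relation among $\mu(\gamma_1),\ldots,\mu(\gamma_k)$ is a transcendental, not algebraic, condition. This is exactly why the analogy with Lemma \ref{pro:gen} fails: genericity (distinct eigenvalues over $\mathbb C$) is an algebraic condition, loxodromy and independence of Jordan projections are not. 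Indeed, the whole content of the interior statement is that a non-algebraic constraint (the limit cone lying in a hyperplane) is nonetheless incompatible with Zariski density, so it cannot follow from a formal ``non-empty Zariski open set meets a Zariski dense subgroup'' argument. In Benoist's paper the existence of loxodromic elements in a Zariski dense subgroup is itself a theorem (via Abels--Margulis--Soifer type finite-set arguments), and the non-empty interior is obtained by a finer analysis -- roughly, by controlling the deviation $\mu(g\gamma^n)-n\mu(\gamma)$ as $g$ varies in $\Gamma$ through an algebraic expression in $g$ and the flags of $\gamma$, and using Zariski density at that level to produce Jordan projections filling an open cone of directions around $\mu(\gamma)$. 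So the ``comparatively routine'' part of your outline is in fact the second place where the real work happens, alongside the product estimates of step (1).
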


Benoist's theorem implies the following corollary, which was explained to us  by J.-F. Quint. 
This corollary  is a stronger version of a result of  Dal'Bo-Kim \cite{dalbo-kim} (see also Labourie \cite[Prop. 5.3.6]{Lbook}).

\begin{corollary}{\sc[Quint]}\label{corquint} 
Suppose that $\Delta$ is a group, $\ms G_\rho$ and $\ms G_\eta$ are center-free connected real-algebraic 
semi-simple Lie groups without compact factors, and
\hbox{$\rho:\Delta\to \ms G_\rho$} and \hbox{$\eta:\Delta\to \ms G_\eta$} are Zariski dense representations.
If there exist 
\hbox{$\varphi_1\in\interior(\frak a_{G_\rho}^+)^*$} and \hbox{$\varphi_2\in\interior(\frak a_{\ms G_\eta}^+)^*$} such that
for all $g\in\Delta$ one has 
$$\varphi_1(\mu_{\ms G_\rho}(\rho (g)))=\varphi_2(\mu_{\ms G_2}(\eta (g))),$$
then $\eta\circ\rho^{-1}:\Delta\to\Delta$ extends to an isomorphism $\ms G_\rho\to \ms G_\eta.$
\end{corollary}

\begin{proof} Let $\ms H$ be the Zariski closure of the image of the product representation 
\hbox{$\rho\times\eta:\Delta\to \ms G_\rho\times \ms G_\eta,$} defined by $g\mapsto (\rho g,\eta g).$ Since the equation \begin{equation}\label{radio}\varphi_1(\mu_{\ms G_\rho} (g_1))=\varphi_2(\mu_{\ms G_\eta} (g_2))\end{equation} holds for every pair $(g_1,g_2)\in\rho\times\eta\,(\Delta),$ Benoist's \cite{limite} Theorem \ref{limit cone} implies that the same relation holds for every pair $(g_1,g_2)\in \ms H.$

The group $\ms H\cap (\ms G_\rho\times\{e\})$ is a normal subgroup of $\ms G_\rho,$ it is hence (up to finite index) a product of simple factors. Equation (\ref{radio}) implies that for all $(g,e)\in \ms H\cap (\ms G_\rho\times\{e\})$ necessarily one has $\varphi_1(\mu_{\ms G_\rho} g)=0.$ Since $\varphi_1(v)>0$ for all $v\in\frak a_{\ms G_\rho}^+-\{0\},$ one has $\mu_{\ms G_\rho}(g)=0.$ This implies that $\ms H\cap (\ms G_\rho\times\{e\})$ is a normal compact subgroup of $\ms G_\rho.$ Since $\ms G_\rho$ does not have compact factors and is center free one concludes that $\ms H\cap (\ms G_\rho\times{e})=\{e\}.$

The same argument implies that $\ms H\cap(\{e\}\times \ms G_\eta)=\{e\}$ and hence $\ms H$ is the graph of an isomorphism extending $\eta\circ\rho^{-1}.$
\end{proof}

\subsection{Rigidity for Hitchin representations} 
\label{hitchin rigidity}

O. Guichard \cite{guichard-or} has announced a classification of the Zariski closures of 
lifts of Hitchin representations.
 
\begin{theorem}{\sc[Guichard]} \label{Zariski-Guichard}
If $\rho:\pi_1(S)\to\sln$ is the lift of a Hitchin representation and $\ms H$ is the Zariski
closure of $\rho(\pi_1(S))$, then
\begin{itemize}
\item If $m=2n$ is even, $\ms H$ is conjugate to either $\tau_{m}(\ms{SL}_2(\mathbb R))$,
$\ms{Sp}(2n,\mathbb R)$ or $\ms{SL}_{2n}(\mathbb R)$.
\item
If $m=2n+1$ is odd and $m\ne 7$, then $\ms H$ is conjugate to either
$\tau_m(\ms{SL}_2(\mathbb R))$, $\ms{SO}(n,n+1)$ or $\ms{SL}_{2n+1}(\mathbb R)$.
\item 
If $m=7$, then $\ms H$ is conjugate to either $\tau_7(\ms{SL}_2(\mathbb R))$,
$\ms{G}_2$, $\ms{SO}(3,4)$  or $\ms{SL}_7(\mathbb R)$.
\end{itemize}
where $\tau_m:\ms{SL}_2(\mathbb R)\to\sln$ is the irreducible representation.
\end{theorem}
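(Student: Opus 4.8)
\textbf{Proof plan for Theorem \ref{Zariski-Guichard}.}

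The statement is attributed to Guichard and is quoted without proof in this excerpt, so I will only sketch the strategy one would follow to establish it. The guiding principle is that the Zariski closure $\ms H$ of (the lift of) a Hitchin representation is a connected reductive subgroup of $\sln$ containing a principal $\ms{SL}_2$ — indeed, Labourie's work (see Theorem \ref{hitchin component}) shows the image of every nontrivial element is diagonalizable with distinct eigenvalues, so $\ms H$ contains a regular semisimple, in fact principal, one-parameter subgroup. The plan is therefore: (1) reduce to the case where $\ms H$ is semisimple and irreducibly embedded in $\sln$; (2) classify the connected semisimple subgroups of $\ms{SL}_m(\mathbb R)$ that act irreducibly on $\mathbb R^m$ and contain a principal $\ms{SL}_2$; (3) rule out, using positivity/convexity properties of Hitchin representations, all candidates in that list except the ones appearing in the statement; (4) observe that each surviving candidate is actually realized.

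For step (1), first note $\ms H$ is connected since the Hitchin component is connected and the Zariski closure varies lower-semicontinuously, and one can apply the argument in the proof of Proposition \ref{zd variety} to see the closure is the same along the whole component after passing to the relevant finite-index subgroup; in fact for Hitchin representations irreducibility (Lemma 10.1 of \cite{labourie-anosov}) forces $\ms H$ to act irreducibly on $\mathbb R^m$, hence $\ms H$ is reductive with one-dimensional center intersecting $\sln$ trivially, i.e.\ semisimple. For step (2), the key input is the representation theory of semisimple Lie groups: a connected semisimple $\ms H$ with an irreducible $m$-dimensional representation whose restriction to a principal $\ms{PSL}_2\subset\ms H$ is still irreducible (which is exactly the statement that the highest weight is the sum of the fundamental weights, forced by the fact that $\tau_m$ is the irreducible $m$-dimensional $\ms{SL}_2$-representation and the Hitchin curve osculates to order $m$) is extremely constrained. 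Running through Dynkin's classification of such ``principal'' irreducible representations one is left precisely with $\mathfrak{sl}_m$ in its standard representation, $\mathfrak{sp}_{2n}$ in its standard $2n$-dimensional representation, $\mathfrak{so}_{n,n+1}$ in its standard $(2n+1)$-dimensional representation, $\mathfrak g_2$ in its $7$-dimensional representation, and $\mathfrak{sl}_2$ itself via $\tau_m$; the split real forms are forced because Hitchin representations are Anosov with respect to a minimal parabolic, so $\ms H$ must contain a split Cartan acting with distinct real eigenvalues on $\mathbb R^m$.

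Step (3) is where the input specific to \emph{this} paper is relevant and where I expect the real work to lie, but in Guichard's argument it is handled by showing each of the listed groups (other than the principal $\ms{SL}_2$) actually occurs as a Zariski closure — for $\ms{Sp}(2n,\mathbb R)$, $\ms{SO}(n,n+1)$ and $\ms G_2$ one exhibits explicit Hitchin representations valued in those subgroups (these are themselves ``Hitchin components'' for those groups, studied by Hitchin and by Fock--Goncharov), and then one uses the rigidity principle that if $\rho$ is conjugate into one of these proper subgroups then the Zariski closure cannot jump down further because each of $\ms{Sp}(2n,\mathbb R)$, $\ms{SO}(n,n+1)$, $\ms G_2$ contains no intermediate subgroup satisfying the constraints of step (2) except the principal $\ms{SL}_2$. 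The dichotomy ``$\ms H$ equals $\sln$ or is conjugate into one of the proper subgroups'' then follows from the density of the locus where the Zariski closure is maximal together with the algebraicity of the condition of being conjugate into a fixed proper closed subgroup. The main obstacle is thus not the enumeration of candidates — that is pure Lie theory — but verifying that each proper candidate on the list is genuinely realized and that the list of intermediate subgroups is complete (in particular the exceptional phenomenon at $m=7$, where $\ms G_2\subset\ms{SO}(3,4)\subset\ms{SL}_7(\mathbb R)$ produces a three-step, rather than two-step, chain); this is the delicate case-by-case part of Guichard's announced argument.
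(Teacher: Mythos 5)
You should first note that the paper does not prove this statement at all: it is quoted as an announced result of Guichard, with the only citation being an oral communication, and it enters the paper purely as an input to Corollary \ref{hitchrigid}. So there is no proof in the paper to compare your sketch against, and your submission is itself explicitly a plan rather than a proof; it therefore cannot be accepted as a proof of the theorem, only assessed as a strategy.

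As a strategy it follows the expected route (reduce to a connected, semisimple, split, irreducibly embedded Zariski closure; classify via representations that restrict irreducibly to a principal $\mathfrak{sl}_2$), but several of the steps you assert would not survive as written. First, connectedness of $\ms H$ does not follow from connectedness of the Hitchin component together with semicontinuity of the Zariski closure; semicontinuity says nothing about whether the closure of a single $\rho(\pi_1(S))$ is connected, and this point needs a genuine argument (e.g. passing to the finite-index subgroup $\rho^{-1}(\ms H^0)$ and using irreducibility of the restricted limit curve). Second, the fact that every $\rho(\gamma)$ is diagonalizable with distinct real eigenvalues only produces $\mathbb R$-regular elements lying in a split torus; it does not give a principal one-parameter subgroup of $\ms H$, nor does it show that the standard $m$-dimensional representation restricts to the irreducible representation of a principal $\mathfrak{sl}_2$ of $\ms H$. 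That principality statement is precisely the hard input (it is where hyperconvexity/positivity of the limit curve is used), and you gesture at it ("the Hitchin curve osculates to order $m$") without supplying an argument; without it, step (2) has no hypothesis to feed into Dynkin's classification. Third, your step (3) dichotomy via "density of the locus where the Zariski closure is maximal together with algebraicity" is not a valid mechanism: the theorem classifies the closure of each individual Hitchin representation, and no density argument over the component can rule out intermediate closures for a particular $\rho$ — that exclusion must come from the Lie-theoretic classification itself (including the exceptional chain $\ms G_2\subset\ms{SO}(3,4)\subset\ms{SL}_7(\mathbb R)$, which you correctly flag). Finally, realization of each candidate (your step (4)) is not needed for the statement as quoted, which only asserts that $\ms H$ is conjugate to one of the listed groups.
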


Notice in particular, that the Zariski closure of the lift of a Hitchin representation is always
simple and connected. 
We can then apply our rigidity theorem for renormalized intersection to get a rigidity statement
which is independent of dimension in the Hitchin setting.

\begin{corollary}{\sc[Hitchin rigidity]}
\label{hitchrigid}
Let $S$ be a closed, orientable surface and let
$\rho_1\in \mathcal H_{m_1}(S)$ and $\rho_2\in \mathcal H_{m_2}(S)$ 
be two Hitchin representations such that  
$$
\JJ(\rho_1,\rho_2)=1.
$$
Then, \begin{itemize}
\item either $m_1=m_2$ and $\rho_1=\rho_2$ in $\mathcal H_{m_1}(S)$,
\item or there exists an element $\rho$ of the Teichmüller space $\mathcal T(S)$ so that
$\rho_1=\tau_{m_1}(\rho)$ and  $\rho_2=\tau_{m_2}(\rho)$.\end{itemize}
 \end{corollary}

Observe that the second case in the corollary only happens if both $\rho_1$ and $\rho_2$ are Fuchsian.

\begin{proof} 
In order to apply our renormalized intersection rigidity theorem, we will need the following
analysis of the outer automorphism groups of the Lie algebras of  Lie groups which arise as Zariski closures of lifts
of Hitchin representations. This analysis was carried about by G\"undo\u gan \cite{gundogan}
(see Corollary 2.15 and its proof).

\begin{theorem}
\label{gundogan}
{\sc [G\"undo\u gan \cite{gundogan}]}
Let $\operatorname{Out}(\mathfrak g)$ be the group of exterior automorphism of the Lie algebra $\mk g$. Then, if $n>0$,
\begin{enumerate}
\item
If $\mk g=\mk{sl}_{2n+2}(\mathbb R)$, then $\operatorname{Out}(\mathfrak g)$ is isomorphic to $\left(\mathbb Z/2\mathbb Z\right)^2$
and is generated by $X\mapsto -X^t$, and conjugation by an element of $\ms{GL}_{2n+2}(\mathbb R)$.
\item 
If $\mk g=\mk{sl}_{2n+1}(\mathbb R)$, then $\operatorname{Out}(\mathfrak g)$ is isomorphic to $\mathbb Z/2\mathbb Z$
and is generated by $X\mapsto -X^t$.
\item 
If $\mk g=\mk{so}(n,n+1,\mathbb R)$, then $\operatorname{Out}(\mathfrak g)$ is isomorphic to $\mathbb Z/2\mathbb Z$ 
and is generated by conjugation by an element of $\ms{SL}_{2n+1}(\mathbb R)$.
\item
If $\mk g=\mk{sp}(2n+2,\mathbb R)$, then $\operatorname{Out}(\mathfrak g)$ is isomorphic to $\mathbb Z/2\mathbb Z$ and 
is generated by conjugation by an element of $\ms{GL}_{2n+2}(\mathbb R)$.
\item 
If $\mk g=\mk{g}_2$ then $\operatorname{Out}(\mathfrak g)$ is  trivial.
\item
If $\mk g=\mk{sl}_2(\mathbb R)$, then $\operatorname{Out}(\mathfrak g)$ is isomorphic to $\mathbb Z/2\mathbb Z$
and is generated by  conjugation by an element of $\ms{GL}_{2}(\mathbb R)$.
\item
If $\mk g=\mk{so}(n,1,\mathbb R)$, then $\operatorname{Out}(\mathfrak g)$ is isomorphic to $\mathbb Z/2\mathbb Z$
and is generated by  conjugation by an element of $\ms{GL}_{n+1}(\mathbb R)$.
\end{enumerate}
\end{theorem}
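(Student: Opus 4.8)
The plan is to reduce the statement to a computation of component groups and then treat the seven algebras by explicit matrix algebra. Since each $\mk g$ in the list is semisimple, the group $\operatorname{Aut}(\mk g)$ of Lie algebra automorphisms is a real Lie group whose identity component is precisely the group $\operatorname{Int}(\mk g)$ of inner automorphisms (those generated by the $e^{\operatorname{ad}X}$, $X\in\mk g$); hence $\operatorname{Out}(\mk g)=\operatorname{Aut}(\mk g)/\operatorname{Int}(\mk g)=\pi_0(\operatorname{Aut}(\mk g))$. So it suffices, in each case, to (i) identify $\operatorname{Aut}(\mk g)$ explicitly as a group of operations on matrices, (ii) locate $\operatorname{Int}(\mk g)$ inside it, and (iii) compute the quotient, exhibiting the asserted generators.

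For step (i) I would invoke (or re-derive by a standard descent argument) the classical descriptions of the automorphism groups of the real classical simple Lie algebras: any automorphism of $\mk g$ extends $\mathbb C$-linearly to $\mk g\otimes\mathbb C$ and must commute with the conjugation defining the real form, and combining this with the classification of automorphisms of the complex classical Lie algebras (inner automorphisms, in semidirect product with the Dynkin diagram automorphism when there is one) yields: $\operatorname{Aut}(\mk{sl}_m(\Real))=\ms{PGL}_m(\Real)\rtimes\langle\tau\rangle$ with $\tau(X)=-X^{t}$ when $m\ge 3$, while $\tau$ is inner and $\operatorname{Aut}(\mk{sl}_2(\Real))=\ms{PGL}_2(\Real)$; $\operatorname{Aut}(\mk{so}(p,q))=\ms O(p,q)/\{\pm I\}$ whenever $p+q$ is odd (the diagram of type $B$ being rigid); $\operatorname{Aut}(\mk{sp}_{2m}(\Real))=\ms{GSp}_{2m}(\Real)/\Real^{\times}$, the projective group of symplectic similitudes; and $\operatorname{Aut}(\mk g_2)=\operatorname{Int}(\mk g_2)$, since $G_2$ is simultaneously simply connected and adjoint and has no diagram automorphism. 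These facts may also be quoted directly from Jacobson or from Onishchik--Vinberg.

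Step (iii) is then elementary. For $\mk{sl}_m(\Real)$ one has $\operatorname{Int}(\mk{sl}_m(\Real))=\ms{PSL}_m(\Real)$ and $\ms{PGL}_m(\Real)/\ms{PSL}_m(\Real)\cong\Real^{\times}/(\Real^{\times})^{m}$ via the determinant, a group that is trivial for $m$ odd and is $\mathbb Z/2\mathbb Z$ for $m$ even, generated by $\operatorname{Ad}(\operatorname{diag}(-1,1,\dots,1))$; since for $m\ge 3$ the class of $\tau$ is nontrivial and one checks that $\tau$ commutes with that class modulo $\operatorname{Int}$, this gives $\operatorname{Out}(\mk{sl}_{2n+1}(\Real))=\langle[\tau]\rangle\cong\mathbb Z/2\mathbb Z$ and $\operatorname{Out}(\mk{sl}_{2n+2}(\Real))\cong(\mathbb Z/2\mathbb Z)^{2}$, with $\operatorname{Out}(\mk{sl}_2(\Real))\cong\mathbb Z/2\mathbb Z$ generated by $\operatorname{Ad}(\operatorname{diag}(-1,1))$. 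For $\mk{so}(p,q)$ with $p+q$ odd, $\operatorname{Int}(\mk{so}(p,q))=\operatorname{Ad}(\ms{SO}_0(p,q))$ is the identity component and $\ms O(p,q)/\{\pm I\}$ has two components, so $\operatorname{Out}=\mathbb Z/2\mathbb Z$; a generator is $\operatorname{Ad}(g)$ with $g\in\ms O(p,q)$, $\det g=-1$, and, $p+q$ being odd, $-g\in\ms{SO}(p,q)\subset\ms{SL}_{p+q}(\Real)$ with $\operatorname{Ad}(-g)=\operatorname{Ad}(g)$ and $-g\notin\ms{SO}_0(p,q)$, which realizes the generator inside $\ms{SL}_{p+q}(\Real)$ (this handles $\mk{so}(n,n+1,\Real)$; taking $q=1$ and reading the generator in $\ms{GL}_{n+1}(\Real)$ handles $\mk{so}(n,1,\Real)$ when $n+1$ is odd). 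For $\mk{sp}_{2m}(\Real)$ the similitude character is trivial on $\ms{Sp}_{2m}(\Real)$ and takes the positive value $\lambda^{2}$ on $\lambda I$, so $\ms{GSp}_{2m}(\Real)/(\Real^{\times}\ms{Sp}_{2m}(\Real))\cong\Real^{\times}/\Real_{>0}\cong\mathbb Z/2\mathbb Z$, giving $\operatorname{Out}=\mathbb Z/2\mathbb Z$ generated by conjugation by a symplectic similitude of negative multiplier, an element of $\ms{GL}_{2m}(\Real)$; and $\operatorname{Out}(\mk g_2)$ is trivial. Finally, when $n+1$ is even $\mk{so}(n,1,\Real)$ is a real form of $D_{(n+1)/2}$; for $(n+1)/2\ne 4$ the complex outer group is $\mathbb Z/2\mathbb Z$, realized by conjugation by a determinant $-1$ orthogonal matrix lying already in $\ms O(n,1)$, so the same analysis gives $\operatorname{Out}=\mathbb Z/2\mathbb Z$, while the low-rank identifications $\mk{so}(3,1)\cong\mk{sl}_2(\mathbb C)$ and $\mk{so}(5,1)\cong\mk{sl}(2,\mathbb H)$ and the triality case $\mk{so}(7,1)$ are checked individually by the same method and again yield $\mathbb Z/2\mathbb Z$.

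The technical heart is step (i): the Skolem--Noether--type fact that every automorphism of a classical real simple Lie algebra is built from conjugation by a similitude of the defining bilinear form (or, for $\mk{sl}$, by an arbitrary invertible matrix) together with, in type $A$, the transpose-inverse. Once that is in place the rest reduces to the arithmetic of $\Real^{\times}/(\Real^{\times})^{k}$ recorded above; the only remaining subtleties are verifying that the two order-two classes in the $\mk{sl}_{2n+2}(\Real)$ case commute and are independent, so that $\operatorname{Out}$ is $(\mathbb Z/2\mathbb Z)^{2}$ rather than $\mathbb Z/4\mathbb Z$, and the triality bookkeeping needed for $\mk{so}(7,1,\Real)$. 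In lieu of carrying all this out, one may simply cite Gündoğan's thesis (Corollary 2.15 and its proof), which performs exactly this case analysis.
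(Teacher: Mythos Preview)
The paper does not give its own proof of this theorem: it is quoted as a result of G\"undo\u gan (with the pointer ``Corollary 2.15 and its proof'') and is used as a black box in the proof of Corollary~\ref{hitchrigid}. Your proposal, which sketches the standard reduction to $\pi_0(\operatorname{Aut}(\mk g))$ and the case-by-case identification of automorphism groups of real classical simple Lie algebras, and then explicitly defers to G\"undo\u gan for the full verification, is therefore entirely consistent with---and strictly more informative than---what the paper does.
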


Let 
$\rho_1:\pi_1(S)\to \ms{PSL}_{m_1}(\mathbb R)$ and $\rho_2:\pi_1(S)\to \ms{PSL}_{m_2}(\mathbb R)$
be two Hitchin representations such that  
$$
\JJ(\rho_1,\rho_2)=1.
$$
Theorem \ref{Zariski-Guichard}
implies that $\ms G_{\rho_1}$ and $\ms G_{\rho_2}$ are simple and connected and have center contained in $\{\pm I\}$.

Theorem \ref{theorem:intersectionrig} implies that  there exists an isomorphism $\sigma:\sf G_{\rho_1}\to \ms G_{\rho_2}$ 
such that $\rho_2=\sigma\circ\rho_1$.  
If $\ms G_1$ is not conjugate to $\tau_{m_1}(\ms{SL}_2(\mathbb R))$, then it follows from 
Theorem \ref{Zariski-Guichard}, that $m_1=m_2=m$, and  that, after conjugation of $\rho_1$, $\ms G_{\rho_1}=\ms G_{\rho_2}=\ms H$
so that $\sigma$ is an automorphism of $\ms H$.

We  first observe  that, since $\ms H$ is connected, there is an injective map from $\operatorname{Out}(\ms H)$ to $\operatorname{Out}(\mk h)$. 
We now analyze the situation in a case-by-case manner using G\"undo\u gan's Theorem \ref{gundogan}.

(1) If $\ms H=\ms {PG}_2$, then $\sigma$ is an inner automorphism, so $\rho_1=\rho_2$ in $\mathcal H_{7}(S)$.

(2) If $\ms H=\ms{PSO}(n,n+1)$  or $\ms H=\ms{PSp}(2n,\mathbb R)$,
$\sigma$ is either the identity or the conjugation by an element of $\ms{PGL}_{2n+1}(\mathbb R)$ or $\ms{PGL}_{2n}(\mathbb R)$, so
$\rho_1=\rho_2$ in $\mathcal H_{2n+1}(S)$ or $\mathcal H_{2n}(S)$.

(3) If $\ms H=\sln$, then,  after  conjugation  of $\rho_1$ by an element of $\ms{PGL}_m(\mathbb R)$, $\sigma$ is either trivial or
$\rho_2=\eta\circ\rho_1$ where $\eta(g)=\operatorname{transpose}(g^{-1})$. 
If $\sigma$ is non-trivial, then since $\JJ(\rho_1,\rho_2)=1$ Corollary \ref{corollary:c} implies that there exists $c>0$ so that
$$c\mu_1(\rho_1(\gamma))=\mu_1((\rho_2(\gamma))=-\mu_m(\rho_1(\gamma))$$
for all $\gamma\in\Gamma,$ where 
$$(\mu_1,\ldots,\mu_m):\sln\to\{(a_1,\ldots,a_m)\in\Real^m:\sum a_i=0\textrm{ and }a_1\geq\cdots\geq a_m\}$$
is the Jordan projection of $\sln.$ Thus, the limit cone of $\rho_1(\Gamma)$ has empty interior. 
Since $\rho_1(\Gamma)$ is Zariski dense, this contradicts
Benoist's Theorem \ref{limit cone}. Therefore, $\rho_1=\rho_2$ in $\mathcal H_m(S)$ in this case as well.

(4) If $\ms G_{\rho_1}$ is conjugate to $\tau_{m_1}(\ms{SL}_2(\mathbb R))$, then $\ms G_{\rho_2}$ is
conjugate to $\tau_{m_2}(\ms{SL}_2(\mathbb R))$. So, after conjugation, there exist Fuchsian representations,
$\eta_1:\pi_1(S)\to \ms{SL}_2(\mathbb R)$ and $\eta_2:\pi_1(S)\to \ms{SL}_2(\mathbb R)$, such
that $\rho_1=\tau_{m_1}\circ\eta_1$, $\rho_2=\tau_{m_1}\circ\eta_1$ and there exists an automorphism
of $\sigma$ of $\ms{SL}_2(\mathbb R)$ such that $\sigma\circ\eta_1=\eta_2$.
Case (6) of G\"undo\u gan's Theorem then implies
that $\eta_1$ is conjugate to $\eta_2$ by an element of $\ms{GL_2}(\mathbb R)$. Therefore, we are in the
second case of Theorem \ref{hitchrigid}. This completes the proof.

\end{proof}

\subsection{Benoist representations}
\label{convex projective}

We say that an open subset $\Omega$ of $\rpn$ is \emph{properly convex} if its intersection with any 
projective line is connected and its closure $\bar\Omega$ is contained in the complement of a projective hyperplane.
Moreover, a properly convex open set $\Omega$ is said to be {\em strictly convex} if its boundary $\partial\Omega$
does not contain a projective line segment. A subgroup 
\hbox{$\Delta\subset\operatorname{Aut}(\Omega)=\{g\in\mathsf{PGL}_m(\mathbb R): g\Omega=\Omega\}$} is said to \emph{divide} the open properly convex set  $\Omega$ if the quotient $\Delta\backslash\Omega$ is compact.
Benoist \cite[Thm. 1.1]{benoist-divisible1} proved that if $\Delta$ divides the properly convex open set $\Omega$, 
then $\Omega$ is  strictly convex if and only if $\Delta$ is hyperbolic.

\begin{definition}\label{benoistrep} 
If $\Gamma$ is a torsion-free hyperbolic group, a faithful representation 
\hbox{$\rho:\Gamma\to \mathsf{PGL}_m(\mathbb R)$} is a {\em Benoist representation} if $\rho(\Gamma)$ divides
an open strictly convex set $\Omega\subset\rpn.$
\end{definition}

It is a consequence of Benoist's work \cite{benoist-divisible1} that a Benoist representation is  irreducible
and projective Anosov
(see Guichard-Wienhard \cite[Proposition 6.1]{guichard-wienhard} for a detailed explanation).

Benoist \cite[Corollary 1.2]{benoist-divisible3}  (see also Koszul \cite{koszul})
proved that the space $B_m(\Gamma)$ of Benoist representations of
$\Gamma$ into $\psln$ is a collection of components of
$\operatorname{Hom}(\Gamma,\psln)$. Let 
$$\mathcal B_m(\Gamma)=B_m(\Gamma)/\mathsf{PGL}_m(\mathbb R).$$
We call the components of $\mathcal B_m(\Gamma)$ {\em Benoist components}.

Benoist \cite[Theorem 1.3]{benoist-divisible0} proved that the Zariski closure of any  Benoist representation is either
$\psln$ or is conjugate to \hbox{$\ms{PSO}(m-1,1)$}.
We may thus apply the technique of proof of Theorem \ref{hitchrigid} to prove:

\begin{corollary}{\sc [Benoist rigidity]}
\label{convex rigid} Let $\rho_1,\rho_2\in \mathcal B_m(\Gamma).$ If ${\bf J}(\rho_1,\rho_2)=1,$
then $\rho_1=\rho_2$ in $\mathcal B_m(\Gamma)$.
\end{corollary}

The same techniques also provide the following related rigidity result for Benoist representations.
Observe that if $\rho$ is a projective Anosov representation, 
then so is $\ad\rho:\Gamma\to\ms{PGL}(\frak{sl}(m,\Real))$ 
(see the discussion in Guichard-Wienhard \cite[Section 10.2]{guichard-wienhard})
If $\eta(g)=(g^{-1})^t$ for all $g\in \mathsf{PGL}_m(\mathbb R)$, and 
$\rho\in \mathcal B_m(\Gamma)$, then $\eta\circ\rho$ is the dual (or contragredient) representation of $\rho$.

\begin{corollary} 
\label{adjoint Benoist}
If $\rho_1,\rho_2\in\mathcal B_m(\Gamma)$, then $\JJ(\ad\rho_1,\ad\rho_2)=1$ if and only 
if either $\rho_1=\rho_2$ or $\rho_2=\eta\circ\rho_1.$
\end{corollary}

As a consequence, we recover a result of Cooper-Delp \cite{cooper-delp} and Kim \cite{kim-JDG}
which asserts that if $\rho_1,\rho_2\in\mathcal B_m(\Gamma)$ are the holonomies of strictly convex projective structures
with the same Hilbert marked length spectrum, then $\rho_1$ and $\rho_2$ either agree or are dual.
Recall that if $\rho\in\mathcal B_m(\Gamma)$ and $\gamma\in\Gamma,$ then the length, in the Hilbert metric, 
of the closed geodesic on $\rho(\Gamma)\backslash\Omega_\rho$ associated to $[\gamma]$ is 
$$\frac{\mu_1(\rho(\gamma))-\mu_m(\rho(\gamma))}2$$
(see, for example, Benoist \cite[Proposition 5.1]{benoist-divisible1}).
Furthermore, if \hbox{$g\in \mathsf{PGL}_m(\mathbb R)$} then 
$$\log(\Lambda(\ad g))=\mu_1(g)-\mu_m(g).$$ 
Hence if $\rho_1$ and $\rho_2$ are the holonomies of strictly convex projective structures
with the same Hilbert marked length 
spectrum, then $\Lambda(\ad \rho_1(\gamma))=\Lambda(\ad \rho_2(\gamma))$ for all $\gamma\in\Gamma$.
Hence,  $\JJ(\ad\rho,\ad\rho_2)=1$, so the result follows from Corollary \ref{adjoint Benoist}.

\section{Proofs of main results}\label{proofmainresult}

In this section, we assemble the proofs of the results claimed in the introduction.
Several of the results have already been established.

The inequality in Theorem \ref{intersection} follows from Corollary \ref{J non-negative} and rigidity follows from 
Theorem \ref{theorem:intersectionrig}. 
Theorem \ref{theorem:length}  is proven in Section \ref{rigidity} as Theorem \ref{theorem:length0}, while
Corollary  \ref{hitch-rigid} is proven as Corollary \ref{hitchrigid}.

Theorem \ref{analytic} follows from Proposition \ref{pro:ana} and Corollary \ref{J non-negative}.
Theorem \ref{geodesic flow} combines the results of Propositions \ref{geoflowreparam} and
\ref{pro:geodano}.

The proof of Theorem \ref{path metric} is easily assembled.

\medskip\noindent
{\em Proof of Theorem \ref{path metric}:} 
Consider the pressure form defined on 
$\mathcal{C}_g(\Gamma,\ms G)$ as in Definition \ref{def:pressureform}. Recall that by 
Corollary \ref{J non-negative} the pressure form is  non-negative.
Moreover, by Corollary \ref{typ0b} the  pressure form is positive definite, so gives a Riemannian metric.
The invariance with respect to $\operatorname{Out}(\Gamma)$ follows directly from the definition.

\medskip\noindent
{\em Proof of Corollary \ref{hitchin metric}:}
Corollary \ref{hitchin component} implies that every Hitchin component lifts to a component of 
$\mathcal{C}_g(\pi_1(S),\sln)$ which is an analytic manifold.
Theorem \ref{path metric} then assures that the
pressure form is an analytic Riemannian metric which is invariant under the action of the mapping class group.
Entropy is constant on the Fuchsian locus, so if $\rho_1,\rho_2\in\mathcal{T}(S)$,
the renormalized intersection has the form
\begin{eqnarray*}
\JJ(\tau_m\circ\rho_1,\tau_m\circ\rho_2) &=&
\lim_{T\to\infty}\frac{1} {\#(R_{\tau_m\circ\rho_1}(T))}\sum_{[\gamma]\in R_{\tau_m\circ\rho_1} }\frac{\log\Lambda(\tau_m\circ\rho_2)(\gamma)}{\log\Lambda(\tau_m\circ \rho_1)(\gamma)}\\
& = &\lim_{T\to\infty}\frac{1} {\#(R_{\rho_1}(T))}\sum_{[\gamma]\in R_{\rho_1} }\frac{\log\Lambda(\rho_2)(\gamma)}{\log\Lambda(\rho_1)(\gamma)}\\
\end{eqnarray*}

Wolpert \cite{wolpert} showed that the Hessian of the final expression, regarded as a function on $\mathcal{T}(S)$, is
a multiple of the  Weil-Petersson metric (see also Bonahon \cite{bonahon} and McMullen \cite[Theorem 1.12]{mcmullen}).

\medskip\noindent
{\em Proof of Corollary \ref{psl2c}:}
We may assume that $\Gamma$ is  the fundamental group of a compact 3-manifold with non-empty
boundary, since otherwise ${\Cc}(\Gamma,\ms{PSL}_2(\mathbb C))$ consists of 0 or 2 points.

We recall, from  Theorem  \ref{CC manifold}, that the deformation space  $\Cc(\Gamma,\ms{PSL}_2(\mathbb C))$ is an analytic manifold. Let $\alpha:\ms{PSL}_2(\mathbb C)\to \sln$ be the Pl\"ucker representation given by 
Proposition \ref{plucker}. 

If we choose co-prime infinite order elements $\alpha$ and $\beta$ of $\Gamma$,
we may define a global analytic lift
$$\omega:\Cc(\Gamma,\ms{PSL}_2(\mathbb C))\to\hom(\Gamma,\ms{PSL}_2(\mathbb C))$$
by choosing $\omega([\rho])$  to be a representative $\rho\in[\rho]$ so that $\rho(\alpha)$ has attracting fixed point 0 and
repelling fixed point $\infty$ and $\rho(\beta)$ has attracting fixed point $1$.
Then 
$$A=\alpha\circ\omega:\Cc(\Gamma,\ms{PSL}_2(\mathbb C))\to\hom(\Gamma,\sln)$$
is an analytic family of projective Anosov homomorphisms.

We define the associated entropy $\bar h$ and  renormalised intersection $\bar\JJ$ functions on 
${\Cc}(\Gamma,\ms{PSL}_2(\mathbb C))$
by setting
$$\bar h([\rho])=h(A([\rho]))\ \ \textrm{and}\ \ \bar\JJ([\rho_1],[\rho_2])=\JJ(A([\rho_1]),A([\rho_2)]).$$
Since $\omega$ is analytic, both $\bar h$ and $\bar\JJ$ vary analytically over ${\Cc}(\Gamma,\ms{PSL}_2(\mathbb C))$
and we may again define a non-negative 2-tensor on  the tangent space $\ms T{\Cc}(\Gamma,\ms{PSL}_2(\mathbb C))$
which we again call the pressure form, by considering the Hessian of $\bar\JJ$. 

Let $\ms G=\alpha(\ms{PSL}_2(\mathbb C))$. Then $\ms G$ is a reductive subgroup of $\sln$.
If $\rho(\Gamma)$ is Zariski dense, then $A(\rho)(\Gamma)$ is Zariski dense in $\ms G$, so Lemma \ref{pro:gen}
implies that $\rho(\Gamma)$ contains a $\ms G$-generic element. Since $\alpha$ is an immersion, 
$$\alpha_*:H^1_\rho(\Gamma,\mk{sl}_2(\mathbb C))\to H^1_{\alpha([\rho])}(\Gamma,\mk g)$$
is injective where
$\mathfrak g$ is the Lie algebra of $\ms G$. 
Corollary \ref{typ0b} then implies that the pressure form 
on $\ms T_\rho{\Cc}(\Gamma,\ms{PSL}_2(\mathbb C))$ is Riemannian if $\rho$ is Zariski dense.

If $\rho=\omega([\rho])$ is not Zariski dense, then its limit set is a subset of $\hat{\mathbb R}\subset\hat{\mathbb C}$,
and the Zariski closure of
$\rho(\Gamma)$ is either \hbox{$\ms H_1=\ms{PSL}(2,\mathbb R)$} or 
\hbox{$\ms H_2=\ms{PSL}(2,\mathbb R)\cup (z\to -z)\ms{PSL}(2,\mathbb R)$}. 
Since each $\ms H_i$ is a real semi-simple Lie group, Proposition \ref{3-manifold case} then implies that
the subset of non-Zariski dense
representations in ${\Cc}(\Gamma,\ms{PSL}_2(\mathbb C))$ is an analytic submanifold. 
We then again apply
Corollary \ref{typ0b} to see that the restriction of the pressure form to the submanifold
of non-Zariski dense representations is Riemannian.

The pressure form determines a path pseudo-metric on the deformation space ${\Cc}(\Gamma,\ms{PSL}_2(\mathbb C))$, 
which is a Riemannian metric off the analytic submanifold of non-Zariski dense representations and restricts
to a Riemannian metric on the submanifold.
Lemma \ref{pro:path-metric} then implies that the path metric is actually a metric. This establishes the main claim.

Theorem \ref{CC manifold} implies that if $\Gamma$ is not either virtually free or virtually a surface group,
then every $\rho\in{\Cc}(\Gamma,\ms{PSL}_2(\mathbb C))$ is Zariski dense. Auxiliary claim (1)
then follows from our main claim.

In the case that $\Gamma$  is the fundamental group of a closed orientable surface, then the 
restriction of the pressure metric to the Fuchsian locus is given by the Hessian of the  intersection form $\II$.
It again follows from work of Wolpert \cite{wolpert} that the restriction to the
Fuchsian locus is a multiple of the Weil--Petersson metric. This establishes auxiliary claim (2).

\medskip\noindent
{\em Proof of Corollary \ref{rank 1}:}
Let $\alpha:\ms{G}\to \sln$ be the Pl\"ucker representation given by Proposition \ref{plucker}.
An analytic family \hbox{$\{\rho_u:\Gamma\to\ms G\}_{u\in M}$} of convex cocompact homomorphisms
parameterized by an analytic manifold $M$, gives rise to an analytic family 
$\{\alpha\circ\rho_u\}_{u\in M}$ of projective Anosov homomorphisms of $\Gamma$ into $\sln$.
Theorem \ref{analytic}, and Corollary \ref{entropy:rk1}
then imply that topological entropy varies analytically for this family.  Results of Patterson \cite{patterson}, Sullivan \cite{sullivan}, Yue \cite{yue} and 
Corlette-Iozzi \cite{corlette-iozzi} imply that the topological entropy agrees with the Hausdorff dimension of the limit set, so
Corollary \ref{rank 1} follows.

\medskip\noindent
{\em Proof of Corollary \ref{metric on anosov}:}
Given a semi-simple real Lie group $\ms G$ with finite center and a non-degenerate parabolic subgroup $\ms P$,
let  $\alpha:\ms G\to \sln$ be the Pl\"ucker representation  given by 
Proposition \ref{plucker}.  Then $\ms H=\alpha(\ms G)$ is a reductive subgroup of $\sln$.  

We will adapt the notation of Proposition \ref{zd variety}.
Let 
$$\widehat{\mathcal Z}(\Gamma;\ms G,\ms P)=\tilde{\mathcal Z}(\Gamma;\ms G,\ms P)/\ms G_0$$
where $\ms G_0$
is the connected component of $\ms G$. Then, $\widehat{\mathcal Z}(\Gamma;\ms G,\ms P)$ is a finite analytic manifold
cover of  the analytic orbifold $\mathcal Z(\Gamma;\ms G,\ms P)$ with covering transformations given by $\ms G/\ms G_0$,
see Proposition \ref{zd manifold}.
Since $\mathsf G^0$ acts freely on  $\tilde{\mathcal Z}(\Gamma;\ms G,\ms P)$,  the slice theorem
implies that if $[\rho]\in\widehat{\mathcal Z}(\Gamma;\ms G,\ms P)$, 
then there exists a neighborhood $U$ of $[\rho]$ and a lift 
$$\beta:U\to \tilde{\mathcal Z}(\Gamma;\ms \ms G,\ms P)\subset\hom(\Gamma,\ms G).$$
Then $\omega=\alpha\circ\beta$ is an analytic family of 
$\ms H$-generic projective Anosov homomorphisms parameterized by $U$.
The  Hessian of the pull-back of the renormalized intersection gives
rise to an analytic 2-tensor, again called the pressure form,
on $\ms T U$.  Suppose that  $v\in\ms T_z \tilde U$ has pressure norm  zero.
Then Corollary \ref{typ0b} implies that   ${\rm D}\omega(v)$ is trivial in $H^1_{\omega(z)}(\Gamma, \mk h)$
where $\mk h$ is the Lie algebra of $\ms H$.
Since $\alpha$ is an immersion,
$$\alpha_*:H^1_{\beta(z)}(\Gamma,\mk{g})\to H^1_{\omega(z)}(\Gamma,\mk h)$$
is an isomorphism. Since  $\beta_*$ identifies $\ms T_z U$ with
$H^1_{\beta(z)}(\Gamma,\mk{g})$ this implies that $v=0$, so the pressure form on $\ms T U$ is non-degenerate.
Therefore, the pressure form is an analytic Riemannian metric on $\widehat{\mathcal Z}(\Gamma;\ms G,\ms P)$.
Since the pressure form is invariant under the action of $\ms G/\ms G_0$ it descends to a 
Riemannian metric on $\widehat{\mathcal Z}(\Gamma;\ms G,\ms P)$
This completes the proof.

\section{Appendix}

We used the following lemma in the proof of Corollary \ref{psl2c}.

\begin{lemma}
\label{pro:path-metric}
Let $M$ be a smooth manifold and let $W$ be a submanifold of $M$. Suppose that 
$g$ is a smooth non negative symmetric 2-tensor $g$ such that 
\begin{itemize}
\item $g$ is positive definite on $\ms T_xM$ if $x\in M\setminus W$,
\item  the restriction of $g$ to $\ms T_xW$ is positive definite if $x\in W$.
\end{itemize}
Then the path pseudo metric defined by $g$ is a metric.
\end{lemma}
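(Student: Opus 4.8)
\emph{Proof plan.} The object $d$ is the path pseudometric associated with $g$: $d(x,y)=\inf_\gamma L_g(\gamma)$, the infimum over piecewise $C^1$ paths from $x$ to $y$ of $L_g(\gamma)=\int_0^1\sqrt{g(\dot\gamma,\dot\gamma)}\,dt$ (working inside a fixed connected component of $M$). Symmetry, the triangle inequality and $d(x,x)=0$ are immediate, so the only point is that $d(x,y)>0$ whenever $x\neq y$. I would first reduce this to a local statement: near any $p\in M$ pick a chart $V$ with compact closure and an open $V'$ with $p\in V'$, $\overline{V'}\subset V$; it suffices to produce a \emph{continuous} function $\lambda\colon V\to[0,\infty)$ with $\lambda(p)=0$, $\lambda>0$ on $V\setminus\{p\}$, and $L_g(\gamma)\geq\lambda(\gamma(1))$ for every path $\gamma$ starting at $p$ and remaining in $V$. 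Indeed, $\lambda$ then has a positive minimum $\varepsilon$ on the compact set $\partial V'$, so any path from $p$ leaving $V'$ has $g$-length $\geq\varepsilon$, whence $d(p,q)\geq\min(\varepsilon,\lambda(q))>0$ for all $q\neq p$.

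\emph{The cases where one endpoint lies off $W$.} Fix an auxiliary Riemannian metric $h$ on $M$, with distance $d_h$. If $y\notin W$, choose $r=\tfrac15\min(d_h(x,y),d_h(y,W))>0$; then $\overline{B_h(y,2r)}$ is a compact subset of the open set $M\setminus W$, so $g\geq c\,h$ there for some $c>0$. Any path from $x$ to $y$ must cross the annulus $\{r\leq d_h(\cdot,y)\leq 2r\}$, which lies in $M\setminus W$, and on that crossing its $g$-length is $\geq\sqrt c$ times its $h$-length $\geq\sqrt c\,r$; hence $d(x,y)\geq\sqrt c\,r>0$. (Taken as a statement about $\lambda$ near $p$: if $p\notin W$, this argument, together with the fact that on a small $h$-ball about $p$ in $M\setminus W$ the tensor $g$ is a genuine Riemannian metric whose distance function serves as $\lambda$, finishes the construction.) It remains to treat $x,y\in W$; by the local reduction we may take $y$ in a fixed chart around $x$.

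\emph{The hard case: $x,y\in W$.} Now $g|_{TW}$ is a genuine Riemannian metric on $W$, with distance $d_W$. Work in a product chart $V=V_W\times V_N$ with $x=(0,0)$, $W\cap V=V_W\times\{0\}$; write a path as $\gamma=(\alpha,\beta)$ and $y=(u_1,0)$. Adjusting the $u$-coordinate by $v$-dependent terms one may assume the mixed block $g_{uv}$ of $g$ vanishes on $W$; setting $A=g_{uu}^{-1}g_{uv}$ and completing the square in $\dot\alpha$ gives, after shrinking $V$,
$$g(\dot\gamma,\dot\gamma)=g_{uu}(\dot\alpha+A\dot\beta,\dot\alpha+A\dot\beta)+S(\beta)(\dot\beta,\dot\beta)\ \geq\ c\,|\dot\alpha+A\dot\beta|^{2}+S(\beta)(\dot\beta,\dot\beta),$$
where $c>0$, $A$ (and $\partial_uA$) vanish on $W$, and $S$ is the Schur complement of $g_{uu}$, which is positive semidefinite everywhere and positive definite off $W$. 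From the second term I would build a continuous function $\Psi$ of the normal coordinate, vanishing exactly on $W$, with $|\tfrac{d}{dt}\Psi(\beta(t))|\leq\sqrt{g(\dot\gamma,\dot\gamma)}$ along any path; this yields $L_g(\gamma)\geq\Psi\bigl(\max_t\mathrm{dist}(\gamma(t),W)\bigr)$, so paths of small length stay in a thin slab around $W$, and it already supplies the part of $\lambda$ that detects targets $y$ lying off $W$. To detect motion along $W$ I would construct a modified coordinate $G\colon V\to\mathbb R^{k}$ with $G|_W=\mathrm{id}$, obtained by transporting the coordinate of $W$ along the nonvanishing vector fields $\partial_{v_i}-\sum_j A_{ji}(\cdot)\,\partial_{u_j}$ adapted to $A$; since $A$ and $\partial_uA$ vanish on $W$, along a path the difference between $\tfrac{d}{dt}G(\gamma)$ and $(\partial_uG)(\dot\alpha+A\dot\beta)$ can be arranged to be controlled by $\sqrt{S(\beta)(\dot\beta,\dot\beta)}$ near $W$, so that $|\tfrac{d}{dt}G(\gamma)|\leq C\sqrt{g(\dot\gamma,\dot\gamma)}$. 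As both endpoints lie on $W$, where $G$ is the identity, $|u_1|=|G(\gamma(1))-G(\gamma(0))|\leq C\,L_g(\gamma)$; combined with the $\Psi$-bound this gives the required $\lambda$, hence $d(x,y)>0$ and the lemma.

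\emph{Expected obstacle.} The delicate step is the modified coordinate in codimension greater than one: there $A\,dv$ is not a gradient, so one cannot hope for $\tfrac{d}{dt}G(\gamma)$ to be \emph{exactly} proportional to $\dot\alpha+A\dot\beta$, and the transport equation for $G$ must be solved only approximately, with the residual dominated by the square root of the Schur complement $S$. This amounts to a quantitative Frobenius-type estimate near $W$, exploiting that the integrability obstruction is built from derivatives of $A$ that vanish on $W$; making this estimate uniform is the heart of the matter. (In codimension one the step is immediate, as a one-form in the single normal variable is automatically exact, and $G$ can be chosen with $\tfrac{d}{dt}G(\gamma)=(\partial_uG)(\dot\alpha+A\dot\beta)$ identically.)
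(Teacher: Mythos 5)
Your local reduction, the annulus argument when an endpoint lies off $W$, the normal form with $g_{uv}|_W=0$ and the Schur complement, the detector $\Psi$, and your parenthetical that everything closes up in codimension one are all fine. But the step you flag as the "expected obstacle" -- producing $G$ with $G|_W=\mathrm{id}$ and $|\tfrac{d}{dt}G(\gamma)|\le C\sqrt{g(\dot\gamma,\dot\gamma)}$ by an approximate Frobenius argument whose residual is dominated by $\sqrt{S}$ -- is not a technicality that can be pushed through: it fails, and in fact the statement itself fails in codimension at least two without further hypotheses. Take $M=\mathbb R^3$ with coordinates $(u,v_1,v_2)$, $W$ the $u$-axis, and
\[
g=(du+v_1\,dv_2-v_2\,dv_1)^2+\left(v_1^2+v_2^2\right)^2\left(dv_1^2+dv_2^2\right).
\]
This is smooth (even real analytic), non-negative, positive definite off $W$, and restricts to $du^2$ on $TW$. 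Here $g_{uv}$ already vanishes on $W$, $A=(-v_2,v_1)$ and $S=|v|^4|dv|^2$; the integrability obstruction $\partial_{v_1}A_2-\partial_{v_2}A_1=2$ does \emph{not} vanish on $W$ (only $A$ and its tangential derivatives do -- this is where your claim that the obstruction is built from derivatives of $A$ vanishing on $W$ goes wrong), and $S$ is far too degenerate to absorb it. Concretely, a path spiralling $N$ times around the circle $|v|=\rho$ while following the null direction of the first square changes $u$ by $2\pi N\rho^2$ at $g$-length $2\pi N\rho^3$; choosing $2\pi N\rho^2=\delta$ and letting $\rho\to0$ joins $(0,0,0)$ to $(\delta,0,0)$ at cost roughly $\delta\rho\to0$, so the path pseudometric vanishes between distinct points of $W$. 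In particular no $d_g$-Lipschitz map $G$ extending the identity of $W$ can exist, and no choice of $\Psi$ and $G$ can produce the function $\lambda$ you want.

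For comparison, the paper's own proof is much cruder than your scheme: in a product chart it notes that, by continuity, there is $r>0$ with $g(v,v)\ge r^2\|v\|^2$ for every vector $v$ tangent to a horizontal slice $\mathbb R^k\times\{\mathrm{pt}\}$, treats pairs with an endpoint off $W$ by local Riemannian-ness, and then asserts that the horizontal bound forces $d_{U,g}(z,w)\ge r\,d_B(z,w)$ for $z,w\in W$. The example above satisfies that horizontal bound with $r=1$ yet has $d_{U,g}(z,w)=0$, since for non-horizontal velocities $g(\dot\gamma,\dot\gamma)$ does not dominate the horizontal component of $\dot\gamma$; so the difficulty you isolated is genuine, and it is exactly the point the paper's one-line assertion passes over. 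The lemma (and both arguments) are sound when $W$ has codimension one, or under an added hypothesis ruling out such flat transverse degeneracy -- for instance $g\ge c\,d_h(\cdot,W)^2\,h$ near $W$ for an auxiliary Riemannian metric $h$, or a lower bound on the Schur complement strong enough to dominate the non-integrability of the distribution $\ker(du+A\,dv)$; some such additional input would also be needed to justify the paper's application of the lemma in codimension greater than one.
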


\begin{proof} 
It clearly suffices to show that if $x\in M$, then there exists an open neighborhood $U$ of $M$ such that the restriction
of $g$ to $U$ gives a path metric on $U$. If $x\in M\setminus W$, then we simply choose a neighborhood $U$ of $x$
contained in $M\setminus W$ and the restriction of $g$ to $U$ is Riemannian, so determines a path metric.

If $x\in W$ we can find a neighborhood $U$ which is identified with a ball $B$ in $\mathbb R^n$ so that $W\cap U$ is identified
with $B\cap (\mathbb R^k\times \{0^{n-k}\})$.
Possibly after restricting to a smaller neighborhood, we can assume that there exists $r>0$ so that if
$v\in \ms T_zB$ and $v$ is tangent to $\mathbb R^k\times\{(z_{k+1},\ldots,z_n)\}$, then
$g(v,v)\ge r^2 ||v||^2$, where $||v||$ is the Euclidean norm of $v$.
If $z,w\in B$, $z\ne w$ and one of them, say $z$, is contained in $M\setminus W$, then $g$ is Riemannian in a neighborhood
of $z$, so $d_{U,g}(z,w)>0$ where $d_{U,g}$ is the path pseudo-metric on $U$ induced by $g$.
If $z,w\in W$, then the estimate above implies that
$d_{U,g}(z,w)\ge r d_B(z,w)$ where $d_B$ is the Euclidean metric on $B$. Therefore, $d_{U,g}$ is a metric on $U$ and
we have completed the proof.
\end{proof}

\end{document}